\newcommand{\bfW}{\mathbf{W}}
\newcommand{\C}{\mathbb{C}}
\newcommand{\calC}{\mathcal{C}}
\newcommand{\calD}{\mathcal{D}}
\newcommand{\calF}{\mathcal{F}}
\newcommand{\calT}{\mathcal{T}}
\newcommand{\length}{\mathsf{length}}
\newcommand{\E}{\mathbb{E}}
\newcommand{\Var}{\mathbb{V}}
\newcommand{\lambdamax}{\lambda_{\mathrm{max}}}
\renewcommand{\P}{\mathbb{P}}
\newcommand{\N}{\mathbb{N}}
\DeclareMathOperator{\Res}{Res}
\newcommand{\Z}{\mathbb{Z}}
\newcommand{\R}{\mathbb{R}}
\newcommand{\TODO}[1]%
{\par\fbox{\begin{minipage}{0.9\linewidth}\textbf{TODO:} #1\end{minipage}}\par}
\newcommand{\abs}[2][]{\parentheses[#1]{\lvert}{\rvert}{#2}}
\newtheorem{theorem}{Theorem}
\newtheorem{lemma}{Lemma}[section]
\newtheorem{proposition}[lemma]{Proposition}
\theoremstyle{remark}
\newtheorem{remark}[lemma]{Remark}
\numberwithin{equation}{section}
\numberwithin{figure}{section}
\newcounter{constant}
\newcommand{\newc}{\refstepcounter{constant}c_{\arabic{constant}}}
\newcommand{\newinvisiblec}{\refstepcounter{constant}}
\newcounter{epsilon}
\newcommand{\neweps}{\refstepcounter{epsilon}\varepsilon_{\arabic{epsilon}}}
\newcommand{\hp}[1]{\llbracket{#1}\rrbracket}
\renewcommand{\MR}[1]{}
\newcommand{\parentheses}[4][]%
{\ifthenelse{\equal{#1}{}}{\left#2}{\csname#1\endcsname#2}%
    {#4}\ifthenelse{\equal{#1}{}}{\right#3}{\csname#1\endcsname#3}}
\newcommand{\foperator}[1]{\ensuremath{%
    \mathop{{#1}\thinspace\negthinspace}
    \mathchoice{\negthinspace}{\negthinspace}{}{}}}
\newcommand{\f}[3][]{\ensuremath{\foperator{#2}\parentheses[#1]{(}{)}{#3}}}
\WithSuffix\newcommand{\f}*[2]{\ensuremath{
\newcommand{\dd}{\ensuremath{\operatorname{d}\negthinspace{}}}
\newcommand{\alignpheqop}[1]{\mathrel{\phantom{=}} \mathop{#1}}  
\newif\ifproc\procfalse
\begin{document}

\title[Canonical Trees, Compact Prefix-free Codes and Sums of Unit Fractions]{Canonical Trees,\\ Compact Prefix-free Codes and \\Sums of Unit Fractions:\\ A Probabilistic Analysis}

\subjclass[2010]{60C05, 05A16}
\keywords{Canonical $t$-ary trees, compact prefix-free codes, unit fractions, limit theorems}

\author{Clemens Heuberger}
\address{Institut f\"ur Mathematik\\Alpen-Adria-Universit\"at Klagenfurt\\Austria}
\email{\href{mailto:clemens.heuberger@aau.at}{clemens.heuberger@aau.at}}

\author{Daniel Krenn}
\address{Institute of Analysis and Computational Number Theory (Math A)\\TU
  Graz\\Austria}
\email{\href{mailto:math@danielkrenn.at}{math@danielkrenn.at} \textit{or}
  \href{mailto:krenn@math.tugraz.at}{krenn@math.tugraz.at}}

\author{Stephan Wagner}
\address{Department of Mathematical Sciences\\Stellenbosch University\\South Africa}
\email{\href{mailto:swagner@sun.ac.za}{swagner@sun.ac.za}}

\thanks{C.\ Heuberger and D.\ Krenn are supported by the Austrian Science Fund
  (FWF): W1230, Doctoral Program ``Discrete Mathematics'' and the Austrian
  Science Fund (FWF): P24644-N26.}

\thanks{S.\ Wagner is supported by the National Research Foundation of South
  Africa, grant number 70560.}

\thanks{An extended abstract with less general results without proofs
appeared as~\cite{Heuberger-Krenn-Wagner:2013:analy-param}.}

\begin{abstract}
  For fixed $t\ge 2$, we consider the class of representations of $1$ as sum
  of unit fractions whose denominators are powers of $t$ or equivalently the
  class of canonical compact $t$-ary Huffman codes or equivalently rooted
  $t$-ary plane ``canonical'' trees.

  We study the probabilistic behaviour of the height (limit distribution is
  shown to be normal), the
  number of distinct summands (normal distribution), the path length (normal
  distribution), the width (main term of the expectation and concentration property) and the number of
  leaves at maximum distance from the root (discrete distribution).
\end{abstract}

\maketitle







\newcommand{\tableheight}{{\small{
\begin{table}[htbp]
  \centering
  \begin{equation*}
  \begin{array}{r|l|l}
\multicolumn{1}{c|}{t} &
\multicolumn{1}{c|}{\mu_h} &
\multicolumn{1}{c}{\sigma_h^2} \\
\hline
2 & 0.5517980333242771 & 0.3191028720021838 \\
3 & 0.5330219170893142 & 0.2640876574238174 \\
4 & 0.5216130806307567 & 0.2465933142213578 \\
5 & 0.5137644952434437 & 0.2404182939877220 \\
6 & 0.5084950082062925 & 0.2396633993742431 \\
7 & 0.5051047365215813 & 0.2411570855092153 \\
8 & 0.5030001253275540 & 0.2432575483836212 \\
9 & 0.5017308605343554 & 0.2452173961787762 \\
10 & 0.5009832278618640 & 0.2467757623911673 \\
\end{array}
\end{equation*}

\caption{Numerical values of the constants in mean and variance of the 
  height for small values of $t$, cf.\ Theorem~\ref{thm:height}.
  See also Remark~\ref{rem:numerical-calc}.
  For the accuracy of these numerical results see the note at the end of
  the introduction.}
\label{tab:special-values-height}
\end{table}
}}}

\newcommand{\tabledepths}{{\small{
\begin{table}[htbp]
  \centering
  \begin{equation*}
  \begin{array}{r|l|l}
\multicolumn{1}{c|}{t} &
\multicolumn{1}{c|}{\mu_d} &
\multicolumn{1}{c}{\sigma_d^2} \\
\hline
2 & 0.4151957394337730 & 0.2449371766120133 \\
3 & 0.4869093777539261 & 0.2893609775712220 \\
4 & 0.5024588321518999 & 0.2741197923680785 \\
5 & 0.5050331956677906 & 0.2607084483093273 \\
6 & 0.5043408269340902 & 0.2530808413006747 \\
7 & 0.5030838633817897 & 0.2495578056054622 \\
8 & 0.5020050053196332 & 0.2483362931739359 \\
9 & 0.5012375070905982 & 0.2482103208441571 \\
10 & 0.5007377066674932 & 0.2485046286268308 \\
\end{array}
\end{equation*}

\caption{Values of the constants in mean and variance of the number of distinct
  depths of leaves for small values of $t$, cf.\ 
  Theorem~\ref{thm:distinct-depths}.
  See also Remark~\ref{rem:numerical-calc}.
  For the accuracy of these numerical results see the note at the end of
  the introduction.}
\label{tab:special-values-distinct-depths}
\end{table}
}}}

\newcommand{\tablepathlength}{{\small{
\begin{table}[htbp]
  \centering
  \begin{equation*}
  \begin{array}{r|l|D{.}{.}{2.12}}
\multicolumn{1}{c|}{t} &
\multicolumn{1}{c|}{\mu_{\mathit{tpl}}} &
\multicolumn{1}{c}{\sigma_{\mathit{tpl}}^2} \\
\hline
2 & 0.5517980333242771 & 0.4254704960029117 \\
3 & 0.7995328756339714 & 0.7922629722714524 \\
4 & 1.0432261612615134 & 1.3151643425139087 \\
5 & 1.2844112381086093 & 2.0034857832310170 \\
6 & 1.5254850246188775 & 2.8759607924909180 \\
7 & 1.7678665778255347 & 3.9388990633171834 \\
8 & 2.0120005013102160 & 5.1894943655172528 \\
9 & 2.2577888724045994 & 6.6208696968269586 \\
10 & 2.5049161393093200 & 8.2258587463722461 \\
\end{array}
\end{equation*}
\caption{Values of the constants in mean and variance of the 
  total path length for small values of $t$, cf.\ 
  Theorem~\ref{thm:pathlength}.
  See also Remark~\ref{rem:numerical-calc}.
  For the accuracy of these numerical results see the note at the end of
  the introduction.}
\label{tab:special-values-pathlength}
\end{table}
}}}

\section{Introduction}

We consider three combinatorial classes, which all turn out to be equivalent: partitions of $1$ into powers of $t$, canonical compact $t$-ary Huffman codes, and ``canonical'' $t$-ary trees,  see the precise discussion below. In this paper, we are interested in the structure of these objects under a uniform random model, and we study the distribution of various structural parameters, for which we obtain rather precise limit theorems. Let us first define all three classes precisely and explain the connections between them. Throughout the paper, $t \geq 2$ will be a fixed positive integer. Figure~\ref{fig:equivalence} shows examples in the case $t  = 2$.
\begin{enumerate}
\item Partitions of $1$ into powers of $t$ (representations of $1$ as sum of
  unit fractions whose denominators are powers of $t$) are formally defined as follows:
  \ifproc
  \begin{multline*}
    \calC_{\mathit{Partition}}=\Bigl\{(x_1,\ldots,x_\tau)\in \Z^\tau \Bigm\vert
    \tau\ge 0, \\ 0\le x_1\le x_2\le\cdots\le x_\tau,\ 
    \sum_{i=1}^\tau \frac1{t^{x_i}}=1 \Bigr\}.
  \end{multline*}
  \else
  \begin{equation*}
    \calC_{\mathit{Partition}}=\Bigl\{(x_1,\ldots,x_\tau)\in \Z^\tau \Bigm\vert \text{$\tau\ge 0$, $0\le x_1\le x_2\le\cdots\le x_\tau$,
    $\sum_{i=1}^\tau \frac1{t^{x_i}}=1$} \Bigr\}.
  \end{equation*}
  \fi
  The \emph{external size} $\abs{(x_1,\ldots,x_\tau)}$ of such a representation $(x_1,\ldots,x_\tau)$ is
  defined to be the number~$\tau$ of summands. 

\item Secondly, we consider canonical compact $t$-ary Huffman codes:
  \ifproc
  \begin{multline*}
    \calC_{\mathit{Code}}=\{ C\subseteq \{1,\ldots,t\}^* \mid \text{$C$ is }
    \text{prefix-free,}\\
    \text{compact and canonical}\}.
  \end{multline*}
  \else
  \begin{equation*}
    \calC_{\mathit{Code}}=\{ C\subseteq \{1,\ldots,t\}^* \mid C \text{ is
      prefix-free, compact and canonical}\}.
  \end{equation*}
  \fi
  Here, we use the following notions.
  \begin{itemize}
  \item $\{1,\ldots,t\}^*$ denotes the set of finite words over the alphabet
    $\{1,\ldots,t\}$.
  \item A code $C$ is said to be \emph{prefix-free} if no word in $C$ is a
    proper prefix of any other word in $C$.
  \item A code $C$ is said to be \emph{compact} if the following property
    holds: if $w$ is a proper prefix of a word in $C$, then for every letter
    $a\in\{1,\ldots,t\}$, $wa$ is a prefix of a word in $C$.
  \item A code $C$ is said to be \emph{canonical} if the lexicographic ordering
    of its words corresponds to a non-decreasing ordering of the word
    lengths. This condition corresponds to taking equivalence classes with
    respect to permutations of the alphabet (at each position in the words).
  \end{itemize}
  The \emph{external size} $\abs{C}$ of a code $C$ is defined to be the cardinality
  of $C$.

  If $C\in\calC_{\mathit{Code}}$ with $C=\{w_1,\ldots,w_\tau\}$ and the property
  that $\length(w_i)\le\length(w_{i+1})$ holds for all $i$, then
  $(\length(w_1),\ldots,\length(w_\tau))\in\calC_{\mathit{Partition}}$. This
  is a bijection between $\calC_{\mathit{Code}}$ and
  $\calC_{\mathit{Partition}}$ preserving the external size. This connection
  can be explained by the \emph{Kraft--McMillan
    inequality~\cite{Kraft:1949:thesis, McMillan:1956:inequalities}},
  which states that for any prefix-free code $C=\{w_1,\ldots,w_\tau\}$, one must have
  \begin{equation*}
    \sum_{i=1}^{\tau} t^{-\length(w_i)} \leq 1,
  \end{equation*}
and compact codes are precisely those for which equality holds (meaning that they are optimal in an information-theoretic sense).

\item Finally, both partitions and codes are related to so-called canonical rooted $t$-ary trees:
  \ifproc
  \begin{multline*}
    \calC_{\mathit{Tree}}=\{ T \text{ rooted $t$-ary plane tree} \mid \\T\text{ is
      canonical} \}.
  \end{multline*}
  \else
  \begin{equation*}
    \calC_{\mathit{Tree}}=\{ T \text{ rooted $t$-ary plane tree} \mid T\text{ is
      canonical} \}.
  \end{equation*}
  \fi
  Here, we use the following notions.
  \begin{itemize}
  \item \emph{$t$-ary} means that each vertex has no or $t$ children.
  \item \emph{Plane tree} means that an ordering ``from left to right'' of the
    children of each vertex is specified.
  \item \emph{Canonical} means that the following holds for all $k$: if the
    vertices of depth (i.e., distance to the root) $k$ are denoted by $v_1$,
    \ldots, $v_K$ from left to right, then $\deg(v_i)\le\deg(v_{i+1})$ holds
    for all $i$.
  \end{itemize}
  The \emph{external size} $\abs{T}$ of a tree is given by the number of its
  leaves, i.e., the number of vertices of degree $1$.

  If $C\in\calC_{\mathit{Code}}$, then a tree $T\in\calC_{\mathit{Tree}}$ can
  be constructed such that the vertices of $T$ are given by the prefixes of the
  words in $C$, the root is the vertex corresponding to the empty word, and the
  children of a proper prefix $w$ of a code word are given from left to right
  by $wa$ for $a=1$, $\ldots$, $t$. This is a bijection between
  $\calC_{\mathit{Code}}$ to $\calC_{\mathit{Tree}}$ preserving the external
  size.
\end{enumerate}
Further formulations, details and remarks can be found in the recent paper of Elsholtz, Heuberger
and Prodinger~\cite{Elsholtz-Heuberger-Prodinger:2013:huffm}. We will simply
speak of an element in the class $\calC$ when the particular interpretation as
an element of $\calC_{\mathit{Partition}}$, $\calC_{\mathit{Code}}$ or
$\calC_{\mathit{Tree}}$ is not relevant. Our proofs will use the tree model,
therefore $\calC_{\mathit{Tree}}$ is abbreviated as $\calT$.

\newcommand{\figureA}{
\begin{tikzpicture}[vertex/.style={circle,draw=black,fill=black,
          inner sep=0pt,minimum size=1mm},
        edge/.style={draw=black}, xscale=0.18, yscale=\ifproc0.7\else0.7\fi]
    \draw (0,0) node  (R) [vertex] {};
    \draw (-8,-1) node (R0) [vertex, label=below:$0$] {};
    \draw (8,-1) node (R1) [vertex] {};
    \draw[edge] (R)--(R0);
    \draw[edge] (R)--(R1);
    \draw (4,-2) node (R10) [vertex, label=below:$10$] {};
    \draw (12,-2) node (R11) [vertex] {};
    \draw[edge] (R1)--(R10);
    \draw[edge] (R1)--(R11);
    \draw (10,-3) node (R110) [vertex, label=below:$110$] {};
    \draw (14,-3) node (R111) [vertex] {};
    \draw[edge] (R11)--(R110);
    \draw[edge] (R11)--(R111);
    \draw (13,-4) node (R1110) [vertex, label=below left:$1110$] {};
    \draw (15,-4) node (R1111) [vertex, label=below right:$1111$] {};
    \draw[edge] (R111)--(R1110);
    \draw[edge] (R111)--(R1111);
  \end{tikzpicture}
}
\newcommand{\figureB}{
\begin{tikzpicture}[vertex/.style={circle,draw=black,fill=black,
          inner sep=0pt,minimum size=1mm},
        edge/.style={draw=black}, xscale=0.18, yscale=\ifproc0.7\else0.7\fi]
    \draw (0,0) node  (R) [vertex] {};
    \draw (-8,-1) node (R0) [vertex, label=below:$0$] {};
    \draw (8,-1) node (R1) [vertex] {};
    \draw[edge] (R)--(R0);
    \draw[edge] (R)--(R1);
    \draw (4,-2) node (R10) [vertex] {};
    \draw (12,-2) node (R11) [vertex] {};
    \draw[edge] (R1)--(R10);
    \draw[edge] (R1)--(R11);
    \draw (2,-3) node (R100) [vertex, label=below:$100$] {};
    \draw (6,-3) node (R101) [vertex, label=below:$101$] {};
    \draw (10,-3) node (R110) [vertex, label=below:$110$] {};
    \draw (14,-3) node (R111) [vertex, label=below:$111$] {};
    \draw[edge] (R10)--(R100);
    \draw[edge] (R10)--(R101);
    \draw[edge] (R11)--(R110);
    \draw[edge] (R11)--(R111);
  \end{tikzpicture}
}
\newcommand{\figureC}{
\begin{tikzpicture}[vertex/.style={circle,draw=black,fill=black,
          inner sep=0pt,minimum size=1mm},
        edge/.style={draw=black}, xscale=0.09, yscale=\ifproc0.7\else0.7\fi]
    \draw (0,0) node  (R) [vertex] {};
    \draw (-8,-1) node (R0) [vertex] {};
    \draw (8,-1) node (R1) [vertex] {};
    \draw[edge] (R)--(R0);
    \draw[edge] (R)--(R1);
    \draw (-12,-2) node (R00) [vertex, label=below:$00$] {};
    \draw (-4,-2) node (R01) [vertex, label=below:$01$] {};
    \draw (4,-2) node (R10) [vertex, label=below:$10$] {};
    \draw (12,-2) node (R11) [vertex] {};
    \draw[edge] (R0)--(R00);
    \draw[edge] (R0)--(R01);
    \draw[edge] (R1)--(R10);
    \draw[edge] (R1)--(R11);
    \draw (10,-3) node (R110) [vertex, label=below left:$110$] {};
    \draw (14,-3) node (R111) [vertex, label=below right:$111$] {};
    \draw[edge] (R11)--(R110);
    \draw[edge] (R11)--(R111);
  \end{tikzpicture}
}

\begin{figure}
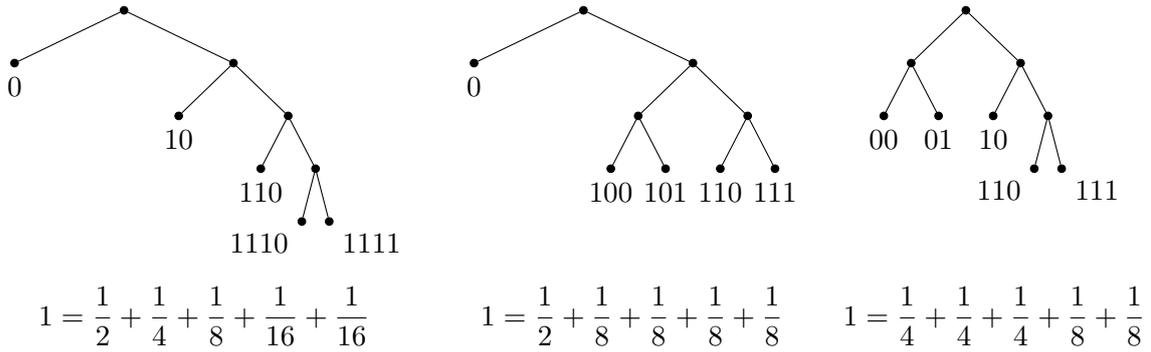

  \ifproc
  \centering
  \figureA
  \begin{equation*}
    1=\frac12+\frac14+\frac18+\frac1{16}+\frac1{16}
  \end{equation*}
  \\[2em]
  \figureB
  \begin{equation*}
    \displaystyle 1=\frac12+\frac18+\frac18+\frac18+\frac18
  \end{equation*}
  \\[2em]
  \figureC
  \begin{equation*}
    \displaystyle 1=\frac14+\frac14+\frac14+\frac1{8}+\frac1{8}
  \end{equation*}
  \else
  \begin{tabular}[t]{@{}ccc@{}}
        \raisebox{-\height}{\figureA}&
        \raisebox{-\height}{\figureB}&
          \raisebox{-\height}{\figureC}\\
\rule{0ex}{5ex}$\displaystyle 1=\frac12+\frac14+\frac18+\frac1{16}+\frac1{16}$&
$\displaystyle 1=\frac12+\frac18+\frac18+\frac18+\frac18$&
  $\displaystyle 1=\frac14+\frac14+\frac14+\frac1{8}+\frac1{8}$
  \end{tabular}
  \fi
  \caption[All elements of external size $5$]{All elements of external size $5$
    (and internal size $4$, respectively) in $\calC_{\mathit{Tree}}$,
    $\calC_{\mathit{Code}}$ and $\calC_{\mathit{Partition}}$ for $t=2$.}
  \label{fig:equivalence}
\end{figure}

The external size of an element in $\calC$ is always congruent to $1$ modulo $t-1$. This can
easily be seen in the tree model, where the number of leaves $\tau$ and the number of internal vertices
$n$ are connected by the identity
\begin{equation*}
  \tau=1+n(t-1).
\end{equation*}

Therefore, we will from now on consider the \emph{internal size:} for a tree
$T\in\calC_{\mathit{Tree}}$ the internal size of $T$ is the number $n(T)$ of
internal vertices, for a code $C\in\calC_{\mathit{Code}}$ the internal size is
the number of proper prefixes of words of $C$, and for a partition
$(x_1,\ldots,x_\tau)\in\calC_{\mathit{Partition}}$ the internal size is defined to
be $(\tau-1)/(t-1)$. We will omit the word ``internal'' and will always use the
variable $n$ (or $n(T)$ for a specific element $T \in \calC$) to denote the size.

The asymptotics of the number of elements in $\calC$ of size $n$ has been
studied by various authors, see the historical overview in \cite{Elsholtz-Heuberger-Prodinger:2013:huffm}. Special cases and weaker versions (without explicit error terms) of the following result, which is given in \cite{Elsholtz-Heuberger-Prodinger:2013:huffm} (building upon the generating function approach by Flajolet and Prodinger~\cite{Flajolet-Prodinger:1987:level}), were obtained earlier and independently by different authors (Boyd \cite{Boyd:1975}, Komlos, Moser and Nemetz \cite{Komlos-Moser-Nemetz:1984}, Flajolet and Prodinger \cite{Flajolet-Prodinger:1987:level} and Tangora \cite{tangora1991level}).

\begin{theorem}[\cite{Elsholtz-Heuberger-Prodinger:2013:huffm}]\label{theorem:asymptotics}
  For $t\ge 2$, the number of elements of size $n$ in $\calC$ is
  (in Bachmann--Landau notation) given by
  \begin{equation*}
    R\rho^{n+1}+\Theta(\rho_2^{n}),
  \end{equation*}
  where $\rho> \rho_2$ and $R$ are positive real constants depending on $t$
  with asymptotic expansions (as $t \to \infty$) \ifproc
  \begin{equation*}
    \rho=2-\frac1{2^{t+1}} +O\!\left(\frac{t}{2^{2t}}\right),
  \end{equation*}
  \begin{equation*}
    \rho_2=1+\frac{\log 2}{t} + O\!\left(\frac1{t^2}\right),
  \end{equation*}
  \begin{equation*}
    R=\frac{1}{8}+\frac{t - 2}{2^{t+5}}+ O\!\left(\frac{t^2}{2^{2t}}\right).
  \end{equation*}
  \else
  \begin{align*}
    \rho&=2-\frac1{2^{t+1}} +O\!\left(\frac{t}{2^{2t}}\right),&
    \rho_2&=1+\frac{\log 2}{t} + O\!\left(\frac1{t^2}\right),&
    R&=\frac{1}{8}+\frac{t - 2}{2^{t+5}}+ O\!\left(\frac{t^2}{2^{2t}}\right).
  \end{align*}
  \fi
\end{theorem}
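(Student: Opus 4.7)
The plan is to establish the count by setting up a generating function for canonical trees and then extracting its coefficients via singularity/residue analysis. Working in the tree model, I would encode each $T \in \calT$ by its level sequence $(m_0, m_1, m_2, \ldots)$, where $m_k$ denotes the number of internal vertices at depth $k$. Since the canonical condition forces the internal vertices at each level to be the rightmost ones, $T$ is uniquely determined by this sequence, subject to $m_0 \in \{0,1\}$, $0 \le m_{k+1} \le t m_k$, $m_k = 0$ for all sufficiently large $k$, and $n(T) = \sum_{k \ge 0} m_k$. Defining auxiliary generating functions
\begin{equation*}
  F_m(z) = \sum_{\substack{(m_0, m_1, \ldots)\\ m_0 = m}} z^{\sum_k m_k},
\end{equation*}
I obtain $F_0(z) = 1$ together with the infinite triangular system $F_m(z) = z^m \sum_{j=0}^{t m} F_j(z)$ for $m \ge 1$, so that the generating function of interest is $T(z) = 1 + F_1(z)$.

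The second step is to condense this infinite system into a single meromorphic object. The partial sums $S_M(z) = \sum_{m=0}^{M} F_m(z)$ satisfy $S_M(z) = S_{M-1}(z) + z^M S_{t M}(z)$, and iterating this recursion (in the spirit of Flajolet and Prodinger) should represent $T(z)$ as a quotient $A(z)/B(z)$ of entire functions of $q$-series type. From such a representation I would identify the dominant singularity $z = 1/\rho$ as the smallest positive zero of $B(z)$, verify that it is simple (so $T(z)$ has a simple pole there), and show that every other zero lies at distance at least $1/\rho_2 > 1/\rho$. A standard contour-shift/residue argument then gives
\begin{equation*}
  [z^n]\, T(z) = R\,\rho^{n+1} + \Theta(\rho_2^{n}),
\end{equation*}
with $R = -\Res_{z = 1/\rho} T(z)$; the $\Theta$-estimate (rather than $O$) is obtained by exhibiting a genuine contribution at $|z|=1/\rho_2$, e.g.\ a further simple pole with nonzero residue.

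Finally, for the asymptotic expansions of $\rho$, $\rho_2$ and $R$ as $t \to \infty$, I would analyse the characteristic equation $B(1/\rho) = 0$ perturbatively. The intuition for $\rho$ is that for large $t$ the behaviour is dominated by the nearly-binary splitting at the top of the tree, so $\rho \to 2$ from below; setting $\rho = 2 - \eta$ and expanding $B(1/(2-\eta))$ in powers of $2^{-t}$ should yield $\eta = 2^{-t-1} + O(t\cdot 2^{-2t})$. The secondary root $1/\rho_2$ stems from a different branch of $B$ (with $\rho_2 \to 1$ from above, originating essentially from a factor of the form $1 - z^t$), and a similar expansion produces $\rho_2 = 1 + (\log 2)/t + O(1/t^2)$; the residue formula for $R$ is expanded in the same way. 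The main obstacle throughout is the second step: the infinite system for $F_m$ does not close in any obvious algebraic way, and one must argue carefully that the iterative definition of $B$ converges on a disk strictly larger than $1/\rho_2$ and that the gap between $\rho$ and $\rho_2$ is genuine rather than an artefact of the chosen representation.
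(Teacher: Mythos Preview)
This theorem is not proved in the present paper; it is quoted from \cite{Elsholtz-Heuberger-Prodinger:2013:huffm}, which in turn builds on the generating function approach of Flajolet and Prodinger~\cite{Flajolet-Prodinger:1987:level}. What the paper \emph{does} contain is a multivariate refinement of the same machinery (Theorem~\ref{theorem:generating-function-height-and-others}, Lemma~\ref{lemma:functional-equation}, Lemma~\ref{lemma:dominant-singularity}, Lemma~\ref{lemma:number-canonical-forests}), which specialises at $u=v=w=1$ to a proof of the $O$-version of the statement.

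Your plan is essentially the Flajolet--Prodinger approach and is correct in outline. The only substantive difference from the paper's presentation is the bookkeeping variable: you track the number $m_k$ of \emph{internal} vertices on each level, whereas the paper tracks the number $m(T)$ of \emph{leaves} on the last level via the variable $u$ and builds the tree one level at a time. This leads to the functional equation in the proof of Theorem~\ref{theorem:generating-function-height-and-others}, which is then solved by the contractive iteration of Lemma~\ref{lemma:functional-equation}; your recursion $S_M = S_{M-1} + z^M S_{tM}$ unwinds to the same $q$-series quotient. The paper's route has the advantage that the iteration is manifestly contractive (the factor $S(qu^t)$ has modulus bounded below $1$ on a suitable domain), which handles cleanly the convergence issue you flag as the ``main obstacle''. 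For the $\Theta$ (rather than $O$) in the error term and the precise expansions of $\rho$, $\rho_2$, $R$, you would indeed have to go to \cite{Elsholtz-Heuberger-Prodinger:2013:huffm}; the present paper only records the outcome.
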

In fact, all $O$-constants can be made explicit and more terms of the
asymptotic expansions in $t$ of $\rho$, $\rho_2$ and $R$ can be given.

In spite of the fact that the counting problem has been studied independently by many different authors, to the best of our knowledge the structure of random elements has not been considered before. Thus the purpose of this contribution is to study the probabilistic behaviour of various parameters of a random element in $\calC$ of size $n$.  We always use the uniform random model: whenever a random tree (equivalently, partition or code) of a given order~$n$ is chosen, all elements are considered to be equally likely.
\begin{enumerate}
\item The \emph{height} $h(T)$ of a tree $T\in\calC_{\mathit{Tree}}$
  is defined to be the maximum distance of a leaf from the root. In
  the interpretation as a code, this is the maximum length of a code
  word. In a representation of $1$ as a sum of unit fractions, this
  corresponds to the largest denominator used (more precisely, to the
  largest exponent of the denominator).

  The height is discussed in Section~\ref{sec:height}. It is
  asymptotically normally distributed with mean $\sim \mu_h n$ and variance
  $\sim \sigma^2_h n$, where
  \begin{equation*}
    \mu_h = \frac{1}{2} + \frac{t-2}{2^{t+3}}
    + O\!\left(\frac{t^2}{2^{2t}}\right)
    \ifproc\end{equation*}and\begin{equation*}\else\quad\text{and}\quad\fi
    \sigma^2_h=  \frac{1}{4} + \frac{-t^2+5t-2}{2^{t+4}} 
    + O\!\left(\frac{t^3}{2^{2t}}\right),
  \end{equation*}
  cf.\ Theorem~\ref{thm:height}. Moreover, we prove a local limit theorem.

\item The \emph{number of distinct summands} of a representation
  $(x_1,\ldots,x_\tau)$ of $1$ as sum of unit fractions is denoted by
  $d(x_1,\ldots,x_\tau)$. In the tree model, this corresponds to the cardinality
  $d(T)$ of the set of depths of leaves in a tree
  $T\in\calC_{\mathit{Tree}}$. In the code model, this is the number of
  distinct lengths of code words.

  The number $d(T)$ is studied in Section~\ref{sec:distinct-depths}. It is
  asymptotically normally distributed with mean $\sim \mu_d n$ and variance
  $\sim \sigma^2_d n$, where
  \begin{equation*}
    \mu_d = \frac{1}{2} + \frac{t-4}{2^{t+3}} 
    + O\!\left(\frac{t^2}{2^{2t}}\right)
    \ifproc\end{equation*}and\begin{equation*}\else\quad\text{and}\quad\fi
    \sigma^2_d = \frac{1}{4} + \frac{-t^2+9t-14}{2^{t+4}} 
    + O\!\left(\frac{t^2}{2^{2t}}\right),
  \end{equation*}
  cf.\ Theorem~\ref{thm:distinct-depths}. Moreover, a local limit theorem is proved again.

\item The \emph{maximum number of equal summands} of a representation
  $(x_1,\ldots,x_\tau)$ of $1$ as sum of unit fractions is denoted by
  $w(x_1,\ldots,x_\tau)$. In the code model, this is the maximum number of code
  words of equal length. In the tree model, this is the ``leaf-width'' $w(T)$,
  i.e., the maximum number of leaves on the same level.

  The number $w(T)$ is studied in Section~\ref{section:width}. We prove that
  $\E(w(T))=\mu_w\log n+O(\log\log n)$ with $\mu_w=1/(t\log 2)+O(1/t^2)$ and a
  concentration property, cf.\ Theorem~\ref{theorem:width}.

\item The \emph{(total) path length} $\ell(T)$ of a tree
  $T\in\calC_{\mathit{Tree}}$ is defined to be the sum of the depths of all
  vertices of the tree. In our context, it is perhaps most natural to consider
  the \emph{external path length} $\ell_{\mathit{external}}(T)$, though, which
  is the sum of depths over all leaves of the tree, as this parameter
  corresponds to the sum of lengths of code words in a code
  $C\in\calC_{\mathit{Code}}$. Likewise, the \emph{internal path length}
  $\ell_{\mathit{internal}}(T)$ is the sum of depths over all
  non-leaves. Clearly, we have
  $\ell_{\mathit{external}}(T)+\ell_{\mathit{internal}}(T)=\ell(T)$, and the
  relations
  \begin{equation*}
    \ell_{\mathit{external}}(T) = \frac{t-1}{t} \ell(T) + n(T)
    \ifproc\end{equation*}and\begin{equation*}\else\quad\text{and}\quad\fi
    \ell_{\mathit{internal}}(T) = \frac{1}{t} \ell(T) - n(T)
  \end{equation*}
  for $t$-ary trees are easily proven. Therefore, all distributional results
  for any one of those parameters immediately cover all three. The total path
  length turns out to be asymptotically normally distributed as well (see
  Theorem~\ref{thm:pathlength}), with mean $\sim \mu_{\mathit{tpl}} n^2$ and
  variance $\sim \sigma_{\mathit{tpl}}^2 n^3$. The coefficients have asymptotic
  expansions
  \begin{equation*}
    \mu_{\mathit{tpl}} = \frac{t}{2} \cdot \mu_h 
    =\frac{t}{4} + \frac{t(t-2)}{2^{t+4}} +
    O\!\left(\frac{t^3}{2^{2t}}\right)
    \ifproc\end{equation*}and\begin{equation*}\else\quad\text{and}\quad\fi
    \sigma_{\mathit{tpl}} = \frac{t^2}{12} + \frac{-t^4+5t^3-2t^2}{3\cdot 2^{t+4}} 
    + O\!\left(\frac{t^5}{2^{2t}}\right).
  \end{equation*}
  The path length is studied in Section~\ref{sec:path-length}. Its analysis is
  based on a generating function approach for the moments, combined with
  probabilistic arguments to obtain the central limit theorem.

\item The \emph{number of leaves on the last level} (i.e., maximum distance from the root) of a tree
  $T\in\calC_{\mathit{Tree}}$ is denoted by $m(T)$. This corresponds to the
  number of code words of maximum length and to the number of smallest summands
  in a representation of $1$ as a sum of unit fractions. 

  This parameter may appear to be the least interesting of the parameters we
  study. However, it is a natural technical parameter when constructing
  generating functions for the other parameters. From these generating
  functions the probabilistic behaviour of $m(T)$ can be read off without too
  much effort, so we do include these results in Section~\ref{sec:number-leaves-maximum-depth}.

  The limit distribution of $m(T)$ is a discrete distribution with mean
  $2t+o(1)$ and variance $2t^2+o(1)$, cf.\ Theorem~\ref{theorem:distribution-m}.
\end{enumerate}

A noteworthy feature of the results listed above is the fact that the
distributions we observe are quite different from those that one obtains for
other probabilistic random tree models. Specifically, the parameters differ
from the ones of Galton--Watson trees (which include, amongst others, uniformly random
$t$-ary trees), but also from the ones of recursive trees and general families
of increasing trees. See~\cite{drmota2009random} for a general reference. In
particular,
\begin{itemize}
\item the asymptotic order of the height of a random Galton--Watson tree of
  order $n$ is only $\sqrt{n}$, and it is known that the limiting distribution
  (which is sometimes called a Theta distribution) coincides with the
  distribution of the maximum of a Brownian
  excursion~\cite{flajolet1993distribution}. The height of random recursive
  trees (or other families of increasing trees) is even only of order $\log n$,
  and heavily concentrated around its mean, see~\cite{drmota2009height}.
\item The path length of random Galton--Watson trees is of order $n^{3/2}$, and
  it follows an Airy distribution (like the area under a Brownian excursion) in
  the limit~\cite{takacs1992total}. For recursive trees, the path length is of
  order $n \log n$ with a rather unusual limiting
  distribution~\cite{mahmoud1991limiting}.
\item While the height of our canonical trees is greater than that of
  Galton--Watson trees, precisely the opposite holds for the width (as one
  would expect): it is of order $\sqrt{n}$ for Galton--Watson trees
  \cite{drmota1997profile,takacs1993limit}, with the same limiting distribution
  as the height, as opposed to only $\log n$ in our setting. For recursive
  trees, the width is even of order $n/\sqrt{\log n}$,
  see~\cite{drmota2005profiles}.
\end{itemize}

Indeed, the structure of our canonical $t$-ary trees is comparable to that of
\emph{compositions}: Counting the number of internal vertices on each level
from the root, we obtain a restricted composition, in which each
summand is at most $t$ times the previous one. In the limit $t \to \infty$ one
obtains compositions of $n$ starting with a $1$ in this way. The recent series of papers
by Bender and Canfield~\cite{bender2005locally,bender2009locally,bender2010locally} and Bender, Canfield and Gao \cite{bender2012locally}
is concerned with compositions with various local restrictions. In fact it would be possible to derive the central limit theorems for the height and the number of distinct summands from Theorem~4 in \cite{bender2009locally}, but in a less explicit fashion (without precise constants, and further work would still be required for a local limit theorem). A parameter related to the ``leaf width'' (the largest part of a composition) is also studied in \cite{bender2012locally}, but in addition to the fact that the parameters are not quite identical, it also seems that the technical conditions required for the main result of \cite{bender2012locally} are not satisfied here.

Lastly, a remark on numerics and notation. Throughout the paper, various constants occur in all our major results,
and we provide numerical values for small $t$ as well as asymptotic formul\ae{} for these constants in terms of $t$.  The error terms that occur in these formul\ae{} have an
explicit $O$-constant, which is indicated by error functions $\f{\varepsilon_j}{\ldots}$. These functions have the property that
$\abs{\f{\varepsilon_j}{\ldots}} \leq 1$ for all values of the indicated parameters. All results were calculated
with the free open-source mathematics software system
SageMath~\cite{Stein-others:2015:sage-mathem-6.5} and are available
online\footnote{The worksheets containing the calculations can be found at \url{http://www.danielkrenn.at/unit-frac-parameters-full}.}
The numerical expressions were
obtained by using interval arithmetic, therefore they are reliable results. Each
numerical value of this paper is given in such a way that its error is at most the magnitude of the last
indicated digit. It would be possible to calculate the
values with higher accuracy. Determining accurate numerical values and asymptotic formul\ae{} is not just interesting in its own right,
it is also important for some of our theorems: specifically, for all Gaussian limit laws it is crucial to ensure that the growth constants associated
with the variance are nonzero. We will therefore comment repeatedly on how reliable numerical values can be obtained.

\section{The Generating Function}
\label{sec:generating-function}

In this section, we derive the generating function which will be used
throughout the article.

The analysis of the path length (Section~\ref{sec:path-length}) also requires
results on canonical forests. For $r\ge 1$, we consider the set $\calF_r$ of
canonical forests with $r$ roots. These $r$ roots are all on the same level and
ordered from left to right. The notion ``canonical'' introduced for trees here is
meant to hold over all connected components of the forest. This means that
a forest may not be seen as a collection of trees, but rather as the subgraph
of a canonical tree induced by its vertices of depths $\ge d$ for some $d$. In
fact, this is also the interpretation for which we will need results on
forests.  We will phrase the generating function in terms of forests, but most
other results will be formulated for trees only.

The height $h(T)$, the cardinality $d(T)$ of the set of different depths of
leaves, and the number $m(T)$ of leaves on the last level of a
forest\footnote{We use the symbol $T$ (instead of $F$) for a canonical forest
in $\calF_r$ since we usually look at the special case $r=1$, where $T$
  is a tree.}
$T\in\calF_r$ of size $n=n(T)$ can be analysed by studying a multivariate
generating function $H(q,u,v,w)$, where $q$ labels the size~$n(T)$, $u$ labels
the number~$m(T)$ of leaves on the last level, $v$ labels the
cardinality~$d(T)$ of the set of depths of leaves and $w$ labels the
height~$h(T)$.

\begin{theorem}\label{theorem:generating-function-height-and-others}
  The generating function
  \begin{equation*}
    H(q,u,v,w) \colonequals \sum_{T\in\calF_r} q^{n(T)}u^{m(T)}v^{d(T)}w^{h(T)}
  \end{equation*}
  can be expressed as 
  \ifproc
  \begin{equation}\label{eq:generating-function-height-and-others}
    \begin{split}
      H(q,u,v,w)&=a(q,u,v,w) \\
      &\phantom{=}+b(q,u,v,w)\frac{a(q,1,v,w)}{1-b(q,1,v,w)}
    \end{split}
  \end{equation}
  \else
  \begin{equation}\label{eq:generating-function-height-and-others}
    H(q,u,v,w)=a(q,u,v,w)+b(q,u,v,w)\frac{a(q,1,v,w)}{1-b(q,1,v,w)}
  \end{equation}
  \fi
  with
  \begin{align}
    a(q,u,v,w)&=\sum_{j=0}^{\infty} vq^{r\hp{j}}u^{rt^j}w^j
    \prod_{i=1}^{j}\frac{1-v-q^{\hp{i}}u^{t^{i}}}{1-q^{\hp{i}}u^{t^{i}}},\notag\\
    b(q,u,v,w)&=\sum_{j=1}^{\infty} 
    \frac{vq^{\hp{j}}u^{t^{j}}w^{j}}{1-q^{\hp{j}}u^{t^{j}}}
    \prod_{i=1}^{j-1}\frac{1-v-q^{\hp{i}}u^{t^{i}}}{1-q^{\hp{i}}u^{t^{i}}},
    \label{eq:b-q-u-v-w-general-formula}
  \end{align}
  where 
   $\hp{j} \colonequals 1+t+\cdots+t^{j-1}$.

   The functions $a(q,u,v,w)$ and $b(q,u,v,w)$ are analytic in $(q,u,v,w)$ when
   \begin{equation*}
     \abs{q} < \frac{1}{\abs{u}^{t-1}}.
   \end{equation*}
\end{theorem}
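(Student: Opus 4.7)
The plan is to exploit the self-similar combinatorial structure of a canonical forest and reduce the claim to a formal algebraic identity. First, viewing $H=H_r$ as a family indexed by the root count~$r$, I would establish the recurrence
\begin{equation*}
  H_r = v u^r + q^r w\, H_{tr} + v \sum_{I=1}^{r-1} q^I w\, H_{tI}
\end{equation*}
by splitting according to the number $I_0$ of internal roots: the term $vu^r$ corresponds to $I_0=0$ (all $r$ roots are leaves, giving height~$0$, one leaf depth, and last-level leaf count~$r$); the term $q^r w\,H_{tr}$ to $I_0=r$ (no leaves at level~$0$, hence no $v$, and a sub-forest in $\calF_{tr}$); and the sum to $1\le I_0<r$ (leaves at level~$0$ add a new depth, contributing~$v$).

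Motivated by the observation that the claimed identity \eqref{eq:generating-function-height-and-others} reduces at $u=1$ to $\tilde H_r=\tilde a/(1-\tilde b)$, I would then adopt the ansatz $H_r=a+b\,\tilde H_r$, with $\tilde H_r:=H_r\big|_{u=1}$. Plugging the ansatz into the recurrence and using the analogous recurrence at $u=1$ to simplify $q^rw\,\tilde H_{tr}+v\sum q^I w\,\tilde H_{tI}=\tilde H_r-v$, the $b\,\tilde H_r$ terms cancel, and the whole claim reduces to the single algebraic identity
\begin{equation*}
  a = v u^r + q^r w\, a_{tr} + v \sum_{I=1}^{r-1} q^I w\, a_{tI} - vb,
\end{equation*}
where $a_{tR}$ denotes the series obtained from~$a$ by replacing~$r$ by~$tR$.

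The algebraic heart is the verification of this identity. Writing $\phi_i:=(1-v-q^{\hp i}u^{t^i})/(1-q^{\hp i}u^{t^i})$, $y_j:=q^{\hp j}u^{t^j}$, and shifting the inner index $j\mapsto j-1$ in $q^rw\,a_{tr}$ via the telescoping relation $r+tr\,\hp{j-1}=r\,\hp{j}$, the quantity $a-vu^r-q^rw\,a_{tr}$ collapses to
\begin{equation*}
  \sum_{j\ge1} v\, y_j^{r}\, w^j\, (\phi_j-1)\prod_{i=1}^{j-1}\phi_i,
\end{equation*}
and the identity $\phi_j-1=-v/(1-y_j)$ brings this into exactly the shape of the series~\eqref{eq:b-q-u-v-w-general-formula} defining~$b$, but with an extra factor $y_j^{r-1}$. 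On the other hand, swapping the order of summation in $v\sum_{I=1}^{r-1}q^I w\,a_{tI}$ after the same index shift and evaluating the inner geometric sum $\sum_{I=1}^{r-1}y_j^I=(y_j-y_j^r)/(1-y_j)$ produces the two missing pieces: the $y_j^r$-part cancels the expression just computed, while the $y_j$-part reassembles into exactly $-vb$. Uniqueness of the formal-power-series solution of the $H_r$-recurrence---each coefficient $[q^nw^h]H_r$ is determined by strictly lower-order coefficients of the $H_{tR}$---then forces $H_r$ to equal the ansatz.

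For the analyticity claim, observe that $|y_i|=(|q|^{1/(t-1)}|u|)^{t^i}\big/|q|^{1/(t-1)}$ decays doubly exponentially in~$i$ whenever $|q|<|u|^{-(t-1)}$; hence each finite product $\prod_{i=1}^{j}\phi_i$ stays uniformly bounded on compact subsets (away from the zero set of $1-y_i$), and the doubly-exponential decay of the leading factors $y_j^r$ in~$a$ and $y_j$ in~$b$ gives absolute and uniform convergence of the outer sums on compact subsets of the stated region. The main obstacle I anticipate is the bookkeeping in the third paragraph: one must keep track of three nested sums (over~$j$ inside~$a$, over~$I$ outside, and over~$j$ inside~$b$) and reindex the products $\prod_i\phi_i$ consistently so that the geometric-series evaluation reproduces the closed form of~$b$ without stray factors. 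Everything else is routine.
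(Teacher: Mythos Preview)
Your argument is correct and takes a genuinely different route from the paper's. The paper does not recurse on the root count~$r$; instead, it fixes~$r$ throughout and recurses on the \emph{height}. Concretely, it sets $H_h(q,u,v)=[w^h]H(q,u,v,w)$ and, by replacing $j$ of the $m(T)$ last-level leaves by internal vertices, obtains
\[
  H_{h+1}(q,u,v)=\frac{qu^t v}{1-qu^t}\,H_h(q,1,v)+\frac{1-v-qu^t}{1-qu^t}\,H_h(q,qu^t,v),
\]
which after summing against $w^{h+1}$ becomes a functional equation in the single variable~$u$ (the substitution $u\mapsto qu^t$). This is then solved by a small abstract lemma that iterates any equation of the shape $f(u)=P(u)+R(qu^t)f(1)+S(qu^t)f(qu^t)$, producing the series $a$ and $b$ directly.

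The trade-offs are as follows. Your approach peels off the root level and reduces everything to verifying one explicit identity among~$a$,~$b$, and their $r\mapsto tR$ substitutions; this is elementary and self-contained, but it is a guess-and-check argument that presupposes the closed form. The paper's approach \emph{derives} the formula from the functional equation, and its iteration lemma is phrased abstractly so that it can be reused verbatim later in the paper (for the path-length generating functions $L_1$ and $L_2$, which satisfy the same kind of functional equation with different inhomogeneous terms). Your uniqueness step also has a slightly heavier bookkeeping burden, since the recursion couples the whole family $\{H_R\}_{R\ge 1}$ rather than a single function. On the other hand, your analyticity paragraph is actually a bit more careful than the paper's: you explicitly flag the zero set of $1-y_i$, whereas the paper only argues that $y_k\to 0$ and concludes analyticity on $\{|q u^{t-1}|<1\}$ without comment.
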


When $u=1$, the generating function can be simplified to
\begin{equation}\label{eq:generating-function-simplified}
  H(q,1,v,w) = \frac{a(q,1,v,w)}{1-b(q,1,v,w)}.
\end{equation}

The proof of Theorem~\ref{theorem:generating-function-height-and-others}
depends on solving a functional equation for the generating function. As we
will encounter similar functional equations for related generating functions in
Section~\ref{sec:path-length}, we formulate the relevant result in the
following lemma.

\begin{lemma}\label{lemma:functional-equation}
  Let $\calD\subseteq \C$ be the closed unit disc and $q\in\C$ with $\abs{q}<1$.
  Let $P$, $R$, $S$, $f$ be bounded
  functions on $\calD$ and $s$ be a constant such that $\abs{S(u)}\le s<1$ for all
  $u\in\calD$.
  
  If 
  \begin{equation}\label{eq:general-functional-equation}
    f(u)=P(u)+R(qu^t)f(1)+S(qu^t)f(qu^t)
  \end{equation}
  holds for all $u\in\calD$, then
  \begin{equation}\label{eq:general-functional-equation-solution}
    f(u)=a(u)+b(u)\frac{a(1)}{1-b(1)}
  \end{equation}
  holds with
  \begin{equation}\label{eq:a_u_b_u_definition}
    \begin{aligned}
      a(u)&=\sum_{j=0}^\infty
      P(q^{\hp{j}}u^{t^j})\prod_{i=1}^jS(q^{\hp{i}}u^{t^i})\\
      b(u)&=\sum_{j=1}^\infty
      R(q^{\hp{j}}u^{t^j})\prod_{i=1}^{j-1}S(q^{\hp{i}}u^{t^i})
    \end{aligned}
  \end{equation}
  provided that $b(1)\neq 1$.
\end{lemma}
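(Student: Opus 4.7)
The plan is to unfold the functional equation \eqref{eq:general-functional-equation} iteratively: each application replaces an occurrence of $f(\cdot)$ by terms in which the ``new'' $f$-argument is further contracted by $u \mapsto qu^t$. Since the coefficient in front of that new $f$-term is $S(\cdot)$, bounded in modulus by $s<1$, the iteration converges, leaving a formula linear in $f(1)$ which can be solved directly.

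Concretely, set $u_0 = u$ and $u_{j+1} = qu_j^t$. A short induction using the identity $1 + t\hp{j} = \hp{j+1}$ yields $u_j = q^{\hp{j}} u^{t^j}$, and since $\abs{q}<1$ and $\abs{u}\le 1$, every $u_j$ lies in $\calD$, so the arguments of $P$, $R$, $S$ never leave the domain on which they are controlled. Replacing $u$ by $u_j$ in \eqref{eq:general-functional-equation} gives
$$f(u_j) = P(u_j) + R(u_{j+1}) f(1) + S(u_{j+1}) f(u_{j+1}),$$
and substituting this into itself $N$ times produces
$$f(u) = \sum_{j=0}^{N} P(u_j) \prod_{k=1}^{j} S(u_k) \; + \; f(1) \sum_{j=1}^{N+1} R(u_j) \prod_{k=1}^{j-1} S(u_k) \; + \; f(u_{N+1}) \prod_{k=1}^{N+1} S(u_k).$$

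The bound $\abs{S(v)} \le s < 1$ on $\calD$ then controls the $j$-th term of the first sum by $(\sup_{\calD}\abs{P})\,s^j$ and the $j$-th term of the second sum by $(\sup_{\calD}\abs{R})\,s^{j-1}$, so both series converge absolutely and uniformly on $\calD$ and their limits agree, term by term, with $a(u)$ and $b(u)$ from \eqref{eq:a_u_b_u_definition}. The remainder is bounded by $(\sup_{\calD}\abs{f})\,s^{N+1}$ and hence vanishes as $N\to\infty$, yielding the identity $f(u) = a(u) + b(u)\,f(1)$. Evaluating at $u=1$ gives $f(1) = a(1) + b(1)\,f(1)$, and under the standing assumption $b(1)\neq 1$ we solve to obtain $f(1) = a(1)/(1-b(1))$. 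Substituting this back recovers \eqref{eq:general-functional-equation-solution}.

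The main technical point is just the bookkeeping in the iteration: the $P$-sum is indexed from $j=0$ (since $P$ appears already before any substitution) while the $R$-sum starts at $j=1$ (since $R$ only enters after one iteration), and each accompanying $S$-product runs over the correspondingly shifted range of indices. Once this is set up correctly, the contraction $\abs{S}\le s<1$ makes convergence immediate, and no deeper analytic input is required.
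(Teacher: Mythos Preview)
Your proof is correct and follows essentially the same approach as the paper: iterate the functional equation, use the bound $\abs{S}\le s<1$ to kill the remainder and ensure convergence of the two partial sums to $a(u)$ and $b(u)$, then set $u=1$ and solve for $f(1)$. Your version is in fact slightly more explicit about the indexing and the verification that each iterate $u_j$ stays in $\calD$, but the underlying argument is identical.
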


\begin{proof}
  We iterate the functional equation \eqref{eq:general-functional-equation} and
  obtain
  \begin{equation*} 
    f(u)=a_k(u)+b_k(u)f(1)+c_k(u)f(q^{\hp{k}}u^{t^{k}})
  \end{equation*}
  for $k\ge 0$ with
  \begin{align*}
    a_k(u)&=\sum_{j=0}^{k-1}P(q^{\hp{j}}u^{t^j})\prod_{i=1}^jS(q^{\hp{i}}u^{t^i}),\\
    b_k(u)&=\sum_{j=1}^kR(q^{\hp{j}}u^{t^j})\prod_{i=1}^{j-1}S(q^{\hp{i}}u^{t^i}),\\
    c_k(u)&=\prod_{i=1}^{k}S(q^{\hp{i}}u^{t^i}).
  \end{align*}
  The assumption $\abs{q}<1$ implies that $\lim_{k\to\infty}q^{\hp{k}}u^{t^k}=0$
  for $\abs{u}\leq1$. Therefore,
  \begin{equation*}
    \lim_{k\to\infty} a_k(u)=a(u),\qquad
    \lim_{k\to\infty} b_k(u)=b(u),\qquad
    \lim_{k\to\infty} c_k(u)=0,
  \end{equation*}
  for $u\in\calD$ and the functions $a(u)$ and $b(u)$ given in
  \eqref{eq:a_u_b_u_definition}.

  Taking the limit in \eqref{eq:general-functional-equation}, we get
  \begin{equation}\label{eq:functional-equation}
    f(u)=a(u)+b(u)f(1)
  \end{equation}
  for $u\in\calD$. Setting $u=1$ in \eqref{eq:functional-equation} yields~\eqref{eq:general-functional-equation-solution}. 
\end{proof}

\begin{proof}[Proof of Theorem~\ref{theorem:generating-function-height-and-others}]
  The proof of Theorem~\ref{theorem:generating-function-height-and-others}
  follows ideas of Flajolet and Prodinger~\cite{Flajolet-Prodinger:1987:level},
  see also \cite{Elsholtz-Heuberger-Prodinger:2013:huffm}. We first consider
  \ifproc
  \begin{align*}
    H_h(q,u,v) \colonequals& [w^h]H(q,u,v,w) \\
    =& \sum_{\substack{T\in\calF_r\\h(T)=h}} q^{n(T)}u^{m(T)}v^{d(T)}
  \end{align*}
  \else
  \begin{equation*}
    H_h(q,u,v) \colonequals [w^h]H(q,u,v,w)
    = \sum_{\substack{T\in\calF_r\\h(T)=h}} q^{n(T)}u^{m(T)}v^{d(T)}
  \end{equation*}
  \fi
  for some $h\ge 0$.

  A forest $T'$ of height $h+1$ arises from a forest $T$ of height $h$ by replacing
  $j$ of its $m(T)$ leaves on the last level (for some $j$ with $1\le j\le m(T)$) by internal vertices, each with $t$ leaves as its children.  If $j = m(T)$, then
  all old leaves become internal vertices, so that $d(T')=d(T)$. Otherwise,
  i.e., if $j < m(T)$, at least one of them becomes a new leaf, meaning that we
  have a new level that contains one or more leaves, hence $d(T')=d(T)+1$.

  For the generating function~$H_h$, this translates to the recursion
  \begin{equation}\label{eq:H-recursion}
    \begin{aligned}
      H_{h+1}(q,u,v)&=\sum_{\substack{T\in\calF_r\\h(T)=h}}
      \biggl(\sum_{j=1}^{m(T)-1} q^{n(T)+j} u^{jt}v^{d(T)+1}+
      q^{n(T)+m(T)}u^{m(T)t}v^{d(T)}\biggr)\\
      &=\sum_{\substack{T\in\calF_r\\h(T)=h}}q^{n(T)}v^{d(T)}
      \left(qu^tv\frac{1-(qu^t)^{m(T)}}{1-qu^t}+(1-v)(qu^t)^{m(T)}\right)\\
      &= R(q,qu^t,v) H_h(q,1,v)+S(q,qu^t,v)
      H_h(q,qu^t,v),
    \end{aligned}
  \end{equation}
  where we set
  \begin{equation*}
    R(q,u,v)=\frac{uv}{1-u}, \qquad
    S(q,u,v)=\frac{1-v-u}{1-u}.
  \end{equation*}
  Note that the initial value is given by $H_0(q,u,v)=u^rv$.

  Now set
  \begin{equation*}
    \calD_0 \colonequals \{ (q,u,v,w)\in \C^4 \mid 
    \abs{q} < 1/5, \abs{u} \le 1, \abs{v-1} < 1/5, \abs{w} \le 1\}.
  \end{equation*}
  We note that if $(q,u,v,w)\in\calD_0$, we have
  \begin{equation*}
    \abs[normal]{R(q,qu^t,v)} \le   \frac{3}{10},\qquad
    \abs[normal]{S(q,qu^t,v)} \le \frac12.
  \end{equation*}
  This and \eqref{eq:H-recursion} imply that $\abs{H_h(q,u,v)}\le (6/5)(4/5)^h$
  holds for $h\ge 0$ and $(q,u,v,w)\in\calD_0$. Thus
  $H(q,u,v,w)=\sum_{h\ge 0}H_h(q,u,v)w^h$ converges uniformly for
  $(q,u,v,w)\in\calD_0$.

  Multiplying \eqref{eq:H-recursion} by $w^{h+1}$ and summing over all $h\ge 0$
  yields the functional equation
  \begin{equation*}
    H(q,u,v,w)=u^rv+wR(q,qu^t,v)H(q,1,v,w)+wS(q,qu^t,v)H(q,qu^t,v,w).
  \end{equation*}
  Lemma~\ref{lemma:functional-equation} immediately
  yields~\eqref{eq:generating-function-height-and-others}.

  Let now
  \begin{equation*}
    \calD_1=\{(q,u,v,w)\in \C^4 \mid \abs[normal]{qu^{t-1}}<1\}.
  \end{equation*}
  We clearly have $\calD_0\subseteq \calD_1$.
  For $(q,u,v,w)\in\calD_1$, we have
  \begin{equation*}
    \lim_{k\to\infty }q^{\hp{k}}u^{t^k}=\lim_{k\to\infty}q^{-1/(t-1)} \bigl(q^{1/(t-1)}u\bigr)^{t^k}=0. 
  \end{equation*}
  Therefore, $a(q,u,v,w)$ and $b(q,u,v,w)$ are analytic in $\calD_1$.
\end{proof}

In the following lemma, we also state a simplified expression and a functional
equation for $b(q,u,v,w)$ in the case $v=1$, $w=1$.

\begin{lemma}\label{lemma:b-q-u-1-1-formula}
  We have
  \begin{equation*}
    b(q,u,1,1)=\sum_{j=1}^\infty (-1)^{j-1}\prod_{i=1}^j  \frac{q^{\hp{i}}u^{t^i}}{1-q^{\hp{i}}u^{t^i}}=\frac{qu^t}{1-qu^t}(1-b(q,qu^t,1,1)).
  \end{equation*}
  In particular, the coefficient $[u^j]b(q,u,1,1)$ vanishes if $j$ is not a
  multiple of $t$.
\end{lemma}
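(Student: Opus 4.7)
The plan is to derive both statements by direct computation from the definition~\eqref{eq:b-q-u-v-w-general-formula}. The first equality is routine: substituting $v=1$ and $w=1$ makes each factor $(1-v-q^{\hp{i}}u^{t^i})/(1-q^{\hp{i}}u^{t^i})$ in the product collapse to $-q^{\hp{i}}u^{t^i}/(1-q^{\hp{i}}u^{t^i})$, producing a sign $(-1)^{j-1}$ and a product that combines with the $j$th term in front to give $\prod_{i=1}^{j}\frac{q^{\hp{i}}u^{t^i}}{1-q^{\hp{i}}u^{t^i}}$. This yields the alternating-series expression claimed.

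For the functional equation, the key identity is the recursion $\hp{i+1}=\hp{i}+t^{i}$ for the Kraft exponents, which gives
\begin{equation*}
  q^{\hp{i}}(qu^{t})^{t^{i}} = q^{\hp{i}+t^{i}}u^{t^{i+1}} = q^{\hp{i+1}}u^{t^{i+1}}.
\end{equation*}
Hence substituting $u\mapsto qu^{t}$ in the alternating-series expression for $b(q,u,1,1)$ shifts each index $i$ to $i+1$, and then multiplying by the $(i=1)$-factor $qu^{t}/(1-qu^{t}) = q^{\hp{1}}u^{t^{1}}/(1-q^{\hp{1}}u^{t^{1}})$ produces, with a sign flip, exactly the tail $\sum_{j\ge 2}(-1)^{j-1}\prod_{i=1}^{j}\frac{q^{\hp{i}}u^{t^i}}{1-q^{\hp{i}}u^{t^i}}$ of the series for $b(q,u,1,1)$. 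Adding back the isolated $j=1$ term, which equals $qu^{t}/(1-qu^{t})$, completes the identity.

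For the coefficient vanishing, I would simply observe from the alternating series form that every summand is a product of factors $\frac{q^{\hp{i}}u^{t^{i}}}{1-q^{\hp{i}}u^{t^{i}}}$ for $i\ge 1$, each of which, expanded as a geometric series, contains only powers of $u$ that are multiples of $t^{i}$, hence multiples of $t$.

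The main obstacle is nothing more than careful bookkeeping of the index shift $i\mapsto i+1$ and the change of summation variable, together with verifying that the $j=1$ term of $b(q,u,1,1)$ is accounted for outside the shifted sum; once the identity $\hp{i+1}=\hp{i}+t^{i}$ is invoked, everything else is bookkeeping. Convergence of the rearranged series is guaranteed in the region $|qu^{t-1}|<1$ already established in Theorem~\ref{theorem:generating-function-height-and-others}, so no further analytic justification is needed.
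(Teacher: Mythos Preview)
Your proof is correct and follows exactly the approach the paper has in mind: the paper's own proof is the single sentence ``This is an immediate consequence of~\eqref{eq:b-q-u-v-w-general-formula},'' and you have simply written out the details of that immediate consequence (the collapse of the product at $v=w=1$, the index shift via $\hp{i+1}=\hp{i}+t^{i}$, and the geometric-series observation for the coefficient vanishing).
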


\begin{proof}
  This is an immediate consequence of \eqref{eq:b-q-u-v-w-general-formula}.
\end{proof}

Next we recall results on the singularities of $H(q,1,1,1)$, see Proposition~10
of~\cite{Elsholtz-Heuberger-Prodinger:2013:huffm}.
We use functions $\varepsilon_j$ for modeling explicit $O$\nobreakdash-constants as
it was mentioned at the end of the introduction.

\begin{lemma}\label{lemma:dominant-singularity}
  The generating function $H(q,1,1,1)$ has exactly one singularity $q=q_0$ with
  $\abs{q} < 1-\frac{0.72}t$. This singularity $q_0$ is a simple pole and is positive. For
  $t\ge 4$, we have \ifproc
  \begin{align*}
    q_0=\frac{1}{2}&+ \frac{1}{2^{t+3}}+\frac{t + 4}{2^{2t+5}} \\
    &+ \frac{3t^{2} + 23  t + 38}{2^{3t+8}}
    + \frac{7 t^{3}}{100 \cdot 2^{4t}}\neweps(t).
  \end{align*}
  \else
  \begin{equation*}
    q_0=\frac{1}{2}+ \frac{1}{2^{t+3}}+\frac{t + 4}{2^{2t+5}}   
    + \frac{3t^{2} + 23  t + 38}{2^{3t+8}} 
    + \frac{7 t^{3}}{100 \cdot 2^{4t}}\neweps(t).
  \end{equation*}
  \fi
  For $t\in\{2,3\}$, the values are given in Table~\ref{tab:constant-Q}.
  Furthermore, let
  \begin{equation*}
    Q=\frac12+\frac{\log 2}{2t}+\frac{0.06}{t^2}
  \end{equation*}
  for $t\ge 6$ and $Q$ be given by Table~\ref{tab:constant-Q} for $2\le t\le
  5$. Then $q_0$ is the only singularity $q$ of $H(q,1,1,1)$ with $\abs{q}\le
  q_0/Q$.

  Setting $U=1-\frac{\log 2}{t^2}$ for $t>2$ and $U=1-\frac{19\log
      2}{80}$ for $t=2$, we have the estimate
  \begin{equation}\label{eq:U-estimates}
    U^{1-t}\max\Big(\frac{q_0}{Q}, \frac{5}{6}\Big)<1.
  \end{equation}
  These results do not depend on the choice of the number of roots~$r$.
\end{lemma}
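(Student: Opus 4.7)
The plan is to recover most of the structural claims by invoking Proposition~10 of \cite{Elsholtz-Heuberger-Prodinger:2013:huffm}, and then to add the work needed for the explicit asymptotic expansion of $q_0$ and for the inequality \eqref{eq:U-estimates}. By \eqref{eq:generating-function-simplified}, the singularities of $H(q,1,1,1)$ inside the region where $a(q,1,1,1)$ and $b(q,1,1,1)$ are analytic (guaranteed by Theorem~\ref{theorem:generating-function-height-and-others}) coincide with the zeros of $1-b(q,1,1,1)$. The cited proposition provides the existence and simplicity of the unique dominant zero $q_0$ inside $|q|<1-0.72/t$, its positivity, and the fact that no further singularity lies in $|q|\le q_0/Q$.

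For the asymptotic expansion, I would use Lemma~\ref{lemma:b-q-u-1-1-formula}. The leading term of $b(q,1,1,1)$ is $q/(1-q)$, which equals $1$ precisely at $q=1/2$; the subsequent terms are of order $q^{1+t}$ and smaller. Writing $q_0=1/2+\delta$ and expanding the identity $b(q_0,1,1,1)=1$ as a power series in $\delta$ and $2^{-t}$, I would bootstrap: substitute the current approximation of $\delta$ back into the higher-order terms to extract the next contribution. Iterating through four stages produces the displayed terms up to order $2^{-3t}$, with a tail of order $2^{-4t}$ absorbed into the explicit error $7t^3/(100\cdot 2^{4t})\,\varepsilon_j(t)$. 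The remainder bound is obtained by estimating the tail of the alternating series in Lemma~\ref{lemma:b-q-u-1-1-formula} together with the residual Taylor remainders, uniformly in $t\ge 4$. Reality and positivity of $q_0$ follow since $b(q,1,1,1)$ is real, continuous, and monotone in a neighbourhood of $1/2$ on the real axis, so the implicit function theorem yields a unique real root there.

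The inequality \eqref{eq:U-estimates} is then a direct numerical verification. For $t\ge 6$, the factor $U^{1-t}=(1-(\log 2)/t^2)^{1-t}=\exp((t-1)(\log 2)/t^2+O(1/t^3))$ is bounded close to $1$, while the asymptotic expansion of $q_0$ together with the definition of $Q$ gives $q_0/Q<1$ with a quantitative margin of order $1/t$; multiplying these bounds produces a value strictly less than $1$. For $t\in\{2,3,4,5\}$, the inequality is checked directly from the tabulated values in Table~\ref{tab:constant-Q}, which (following the convention stated in the introduction) are certified via interval arithmetic.

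The main obstacle is the rigorous control of the constants across the full range $t\ge 4$: ensuring that each $\varepsilon_j$ function really satisfies $|\varepsilon_j|\le 1$ requires keeping track of explicit $O$-constants throughout the bootstrapping, and verifying \eqref{eq:U-estimates} for moderate $t$ where the asymptotic expansion of $q_0$ is not yet tight enough requires a careful interpolation with the tabulated data. Both are handled by the SageMath computations referenced in the introduction.
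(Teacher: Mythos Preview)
Your proposal has the right overall structure, but it contains a genuine gap: you never verify that the numerator $a(q_0,1,1,1)$ is nonzero. You claim that the singularities of $H(q,1,1,1)=a(q,1,1,1)/(1-b(q,1,1,1))$ coincide with the zeros of $1-b(q,1,1,1)$, but this is only true if $a$ does not vanish at those zeros; otherwise the putative pole could be removable. The paper's proof explicitly closes this gap by bounding $\abs{a(q_0,1,1,1)-1}$ away from $1$ (using $q_0\le 3/5$ and an estimate from \cite[Lemma~8]{Elsholtz-Heuberger-Prodinger:2013:huffm}), obtaining $a(q_0,1,1,1)=\Theta(1)$ uniformly in the number of roots $r$. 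This uniformity matters later (Lemma~\ref{lemma:number-canonical-forests}), so it is not a cosmetic point.

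A secondary remark: the paper does not redo the bootstrapping for the asymptotic expansion of $q_0$; it simply cites the results of \cite{Elsholtz-Heuberger-Prodinger:2013:huffm}. Your proposed bootstrap is a correct way to recover those formulae, but it is extra work relative to what the lemma actually requires here. Also, for \eqref{eq:U-estimates} the paper splits at $t=30$ rather than $t=6$: the asymptotic expressions are only used for $t\ge 30$, and all $t\le 30$ are verified individually. Your claim that the asymptotics already give a quantitative margin for $t\ge 6$ would need justification, since the error terms in the expansion of $q_0$ and the bound on $U^{1-t}$ are not obviously small enough that early.
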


\begin{table}
  \centering{\small
  \begin{equation*}
    \begin{array}{r|l|l}
      \multicolumn{1}{c|}{t}&
      \multicolumn{1}{c|}{q_0}&
      \multicolumn{1}{c}{Q}\\
      \hline
    2 & 0.5573678720139932 & 0.7131795784312742\\
    3 & 0.5206401166257250 & 0.6307447647757403\\
    4 & 0.5090030531391631 & 0.5930691701039086\\
    5 & 0.5042116835293617 & 0.5720078345052473\\
    6 & 0.5020339464245723 & 0.559428931713329\\
    7 & 0.5009982119507272 & 0.550735002693058\\
    8 & 0.5004941016343997 & 0.544259198784997\\
    9 & 0.500245704703080 & 0.539248917438516\\
    10 & 0.5001224896234884 & 0.535257359027998\\
    \end{array}
  \end{equation*}}
  \caption{Constants $q_0$ and $Q$ for $2\le t\le 10$.
  For the accuracy of these numerical results see the note at the end of
  the introduction.}
  \label{tab:constant-Q}
\end{table}
\begin{proof}
  By \cite[Proposition~10]{Elsholtz-Heuberger-Prodinger:2013:huffm},
  the function $1-b(q,1,1,1)$ has a unique simple zero $q=q_0$ with $\abs{q}\le 1-0.72/t$
  and no further zero for $\abs{q}\le q_0/Q$; the asymptotic estimates for
  $q_0$ and $Q$ follow from the results given in
  \cite{Elsholtz-Heuberger-Prodinger:2013:huffm}.

  At this point, we still have to show that the numerator does not vanish in
  $q_0$. We note that $q_0\le 3/5$. Using
  \cite[Lemma~8]{Elsholtz-Heuberger-Prodinger:2013:huffm}, we obtain
  \begin{equation*}
    \abs{a(q_0,1,1,1)-1}\le q_0^r \frac{q_0}{1-q_0}+\sum_{j=2}^\infty
    q_0^{r\hp j}\prod_{i=1}^j \frac{q_0^{\hp{i}}}{1-q_0^{\hp{i}}}\le
    \frac{9}{10}+\frac{83038203}{903449750}<1.
  \end{equation*}
  Therefore,
  \begin{equation}\label{eq:numerator-bound}
    a(q_0,1,1,1)=\Theta(1)
  \end{equation}
  holds uniformly in $r$.

  For $t\ge 30$, the estimate \eqref{eq:U-estimates} follows from the
  asymptotic expressions.
  For $t\le 30$, it is verified individually.
\end{proof}

Using this result, we will be able to apply singularity analysis to all our
generating functions in the coming sections.
At this point, we restate
Theorem~\ref{theorem:asymptotics} on the number of trees taking the notations
of Theorem~\ref{theorem:generating-function-height-and-others} into account and
extend it to the number of canonical forests with $r$ roots.

\begin{lemma}\label{lemma:number-canonical-forests}
  For $r\ge 1$, let
  \begin{equation}\label{eq:nu-definition}
    \nu(r)=\frac{a(q_0,1,1,1)}{q_0\frac{\partial}{\partial q}b(q,1,1,1) \Bigr\rvert_{q=q_0}}
  \end{equation}
  where $a(q_0,1,1,1)$ is taken in the version with $r$ roots.

  Then
  \begin{equation}\label{eq:nu_r_bounded}
    \nu(r)=\Theta(1)
  \end{equation}
  uniformly in $r\ge 1$ and the number of canonical forests with $r$ roots of size $n$ is
  \begin{equation}\label{eq:number-of-forests}
    \frac{\nu(r)}{q_0^{n}}\bigl(1+O(Q^n)\bigr),
  \end{equation}
  also uniformly in $r\ge 1$.
\end{lemma}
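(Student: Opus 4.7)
The plan is to apply singularity analysis to the simplified form \eqref{eq:generating-function-simplified}, namely $H(q,1,1,1)=a(q,1,1,1)/(1-b(q,1,1,1))$. By Lemma~\ref{lemma:dominant-singularity}, the denominator has a unique simple zero at $q=q_0$ in the disc $\{|q|\le q_0/Q\}$, so $H(q,1,1,1)$ is meromorphic there with only a simple pole at $q_0$. Computing this pole (residue of a quotient with simple vanishing of the denominator) yields the decomposition
\begin{equation*}
  H(q,1,1,1)=\frac{q_0\,\nu(r)}{q_0-q}+\tilde H(q),
\end{equation*}
where $\tilde H$ is analytic on $\{|q|\le q_0/Q\}$ (but depends on $r$). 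Coefficient extraction gives $[q^n]\,q_0\,\nu(r)/(q_0-q)=\nu(r)\,q_0^{-n}$, the main term in \eqref{eq:number-of-forests}.

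For the error term I would apply Cauchy's coefficient formula on the circle $|q|=q_0/Q$ to obtain $|[q^n]\tilde H(q)|\le\max_{|q|=q_0/Q}|\tilde H(q)|\cdot(Q/q_0)^n$, so the ratio of this bound to the main term is $O(Q^n\cdot\max|\tilde H|/\nu(r))$. It therefore suffices to show that both $\max_{|q|=q_0/Q}|\tilde H(q)|$ and $1/\nu(r)$ are $O(1)$ uniformly in $r\ge 1$. Since $b(q,1,1,1)$ does not depend on $r$, the denominator $1-b(q,1,1,1)$ is a fixed function, bounded away from zero on the contour (because $q_0$ is its only zero inside $\{|q|\le q_0/Q\}$). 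The numerator $a(q,1,1,1)$ carries $r$-dependence only through the factors $q^{r\hp{j}}$ in each summand of its defining series; since $|q|=q_0/Q<1$, these factors are uniformly bounded by $1$, so the geometric-series estimate used in the proof of Lemma~\ref{lemma:dominant-singularity} to derive \eqref{eq:numerator-bound} carries over from $q=q_0$ to the entire contour, giving a uniform bound on $|a(q,1,1,1)|$ and in turn on $|H(q,1,1,1)|$ and $|\tilde H(q)|$.

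Finally, $\nu(r)=\Theta(1)$ follows directly from \eqref{eq:nu-definition}: the denominator $q_0\,\partial_q b(q,1,1,1)|_{q=q_0}$ is a nonzero constant independent of $r$ (nonzero because $q_0$ is a \emph{simple} zero of $1-b$), while the numerator $a(q_0,1,1,1)=\Theta(1)$ uniformly in $r$ by \eqref{eq:numerator-bound}. Putting the pieces together gives the uniform asymptotic \eqref{eq:number-of-forests}. The only nontrivial (though essentially routine) obstacle is extending the uniform bound on $a(q,1,1,1)$ from $q=q_0$ to the full contour $|q|=q_0/Q$; this extension is immediate from absolute convergence of the defining series once one uses $q_0/Q<1$ to dominate the geometric factors.
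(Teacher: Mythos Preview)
Your proof is correct and follows essentially the same approach as the paper: singularity analysis of $H(q,1,1,1)=a(q,1,1,1)/(1-b(q,1,1,1))$ at the simple pole $q_0$, with uniformity in $r$ coming from the observation that the $r$-dependence of $a(q,1,1,1)$ sits only in the factors $q^{r\hp{j}}$ (bounded by $1$ for $|q|<1$), and $\nu(r)=\Theta(1)$ from \eqref{eq:numerator-bound}. The paper phrases the coefficient extraction via the residue of $H(q,1,1,1)/q^{n+1}$ directly rather than subtracting off the principal part, which slightly streamlines the uniform bound (you only need to bound $|H|$ on the contour, not $|\tilde H|$), but the content is the same.
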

\begin{proof}
  By singularity analysis~\cite{Flajolet-Odlyzko:1990:singul,
    Flajolet-Sedgewick:ta:analy}, Lemma~\ref{lemma:dominant-singularity} and Theorem~\ref{theorem:generating-function-height-and-others}, the number of canonical forests with $r$
  roots of size $n$ is
  \begin{equation}\label{eq:residue-forest}
    -\Res\left(\frac{H(q,1,1,1)}{q^{n+1}},
      q=q_0\right)+O\biggl(\biggl(\frac{Q}{q_0}\biggr)^n\biggr)=
    \frac{\nu(r)}{q_0^{n}}+O\biggl(\biggl(\frac{Q}{q_0}\biggr)^n\biggr).
  \end{equation}
  The $O$-constant can be chosen independently of $r$ as $a(q,1,1,1)$ can be
  bounded independently of $r$ for $\abs{q}=q_0/Q$.

  The estimate \eqref{eq:numerator-bound} immediately yields~\eqref{eq:nu_r_bounded}.
  Combining this with \eqref{eq:residue-forest} yields \eqref{eq:number-of-forests}.
\end{proof}


When analyzing the asymptotic behavior of the height
(Section~\ref{sec:height}), the number of leaves on the last level
(Section~\ref{sec:number-leaves-maximum-depth}) and the path length
(Section~\ref{sec:path-length}), the corresponding formul\ae{} contain the
infinite sum~$\f{b}{q,u,1,w}$ and its derivatives. In order to perform the
calculations to get the asymptotic expressions in~$t$ as well as certifiable
numerical values for particular~$t$, we will work with a truncated sum and
bound the error we make. We define
\begin{equation*}
  \f{b_J}{q,u,1,w} = - \sum_{1\leq j < J} (-1)^j w^j
    \prod_{i=1}^j\frac{q^{\hp{i}}u^{t^{i}}}{1-q^{\hp{i}}u^{t^{i}}}.
\end{equation*}
Note that the variable~$v$ encoding the distinct depths of leaves is
handled separately in Lemmata~\ref{lem:d:den-bounds-pre} and \ref{lem:d:den-bounds}.

The following lemmata provide the estimates we need.


\begin{lemma}\label{lem:den-bounds-pre}
  Let $J\in\N$ and $q$, $u$, $w\in\C$ with $\abs{qu^{t-1}}<1$. Set
  \begin{equation*}
    Q=\abs{w} \frac{\abs{q}^{\hp{J+1}} \abs{u}^{t^{J+1}}}{
      1-\abs{q}^{\hp{J+1}} \abs{u}^{t^{J+1}}},
  \end{equation*}
  and suppose that $Q<1$ holds. Then
  \begin{equation*}
    \abs{\f{b}{q,u,1,w} - \f{b_J}{q,u,1,w}}
    \leq \abs{w}^J \left(
      \prod_{i=1}^J \frac{\abs{q}^{\hp{i}} \abs{u}^{t^i}}{
        \abs{1-q^{\hp{i}} u^{t^i}}} \right)
    \frac{1}{1-Q}.
  \end{equation*}
\end{lemma}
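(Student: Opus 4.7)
The plan is to start from the closed form for $b(q,u,1,w)$. Specializing \eqref{eq:b-q-u-v-w-general-formula} to $v=1$ makes the factor $1-v-q^{\hp{i}}u^{t^i}$ equal to $-q^{\hp{i}}u^{t^i}$, so after pulling out the signs one obtains
\begin{equation*}
  b(q,u,1,w) = \sum_{j\geq 1} (-1)^{j-1} w^j \prod_{i=1}^j \frac{q^{\hp{i}}u^{t^i}}{1-q^{\hp{i}}u^{t^i}},
\end{equation*}
which is the same series (in $w$) whose partial sum up to index $J-1$ is precisely $b_J(q,u,1,w)$. Hence $b - b_J$ is the tail of this series starting at $j=J$, and by the triangle inequality
\begin{equation*}
  \abs{b(q,u,1,w) - b_J(q,u,1,w)} \leq \sum_{j\geq J} \abs{w}^j \prod_{i=1}^j \frac{\abs{q}^{\hp{i}}\abs{u}^{t^i}}{\abs{1-q^{\hp{i}}u^{t^i}}}.
\end{equation*}

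Next, I would factor out the first $J$ terms of each product, so the bound becomes
\begin{equation*}
  \abs{w}^J \prod_{i=1}^J \frac{\abs{q}^{\hp{i}}\abs{u}^{t^i}}{\abs{1-q^{\hp{i}}u^{t^i}}} \sum_{j\geq J} \abs{w}^{j-J} \prod_{i=J+1}^{j} \frac{\abs{q}^{\hp{i}}\abs{u}^{t^i}}{\abs{1-q^{\hp{i}}u^{t^i}}},
\end{equation*}
where the $j=J$ summand is an empty product equal to $1$. The remaining task is to show that each factor in the inner product is bounded by $Q/\abs{w}$, so that the inner sum is bounded by $\sum_{k\geq 0}Q^k = 1/(1-Q)$, which then gives the claimed estimate.

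The key monotonicity that makes this work is that $\abs{q}^{\hp{i}}\abs{u}^{t^i}$ is a decreasing function of $i$ under the hypothesis $\abs{qu^{t-1}}<1$: indeed the ratio of consecutive terms is $(\abs{q}\abs{u}^{t-1})^{t^i}<1$, using $\hp{i+1}-\hp{i}=t^i$ and $t^{i+1}-t^i=t^i(t-1)$. Consequently, for $i\geq J+1$, we have $\abs{q}^{\hp{i}}\abs{u}^{t^i}\leq \abs{q}^{\hp{J+1}}\abs{u}^{t^{J+1}}$, and since $x\mapsto x/(1-x)$ is increasing on $[0,1)$,
\begin{equation*}
  \frac{\abs{q}^{\hp{i}}\abs{u}^{t^i}}{\abs{1-q^{\hp{i}}u^{t^i}}}
  \leq \frac{\abs{q}^{\hp{i}}\abs{u}^{t^i}}{1-\abs{q}^{\hp{i}}\abs{u}^{t^i}}
  \leq \frac{\abs{q}^{\hp{J+1}}\abs{u}^{t^{J+1}}}{1-\abs{q}^{\hp{J+1}}\abs{u}^{t^{J+1}}} = \frac{Q}{\abs{w}}.
\end{equation*}
This is essentially the only nontrivial step; everything else is algebraic bookkeeping and summing a geometric series, and no step is a serious obstacle.
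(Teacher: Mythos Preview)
Your proposal is correct and follows essentially the same approach as the paper's proof: both write the tail as a sum from $j\ge J$, factor out the first $J$ terms of the product, use the monotonicity of $\abs{q}^{\hp{i}}\abs{u}^{t^i}$ (which follows from $\abs{qu^{t-1}}<1$) to bound each remaining factor by $Q/\abs{w}$, and then sum the resulting geometric series. The only cosmetic difference is that you explicitly re-derive the alternating-sign formula for $b(q,u,1,w)$ from \eqref{eq:b-q-u-v-w-general-formula}, whereas the paper already has this recorded in Lemma~\ref{lemma:b-q-u-1-1-formula}.
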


Note that as $\abs{qu^{t-1}}<1$, the error bound stated in the lemma
is decreasing in $J$.


\begin{proof}[Proof of Lemma~\ref{lem:den-bounds-pre}]
  Set
  \begin{equation*}
    R = \f{b}{q,u,1,w} - \f{b_J}{q,u,1,w} =
    - \sum_{j\geq J} (-1)^j w^j
    \prod_{i=1}^j\frac{q^{\hp{i}}u^{t^{i}}}{1-q^{\hp{i}}u^{t^{i}}}.
  \end{equation*}
  As  $\abs[normal]{q^{\hp{i}}u^{t^i}}$ is decreasing in $i$, we have
  \begin{align*}
    \abs{w^j \prod_{i=1}^j \frac{q^{\hp{i}}u^{t^{i}}}{1-q^{\hp{i}}u^{t^{i}}}}
    &\leq \abs{w}^j \left(\frac{\abs{q}^{\hp{J+1}}\abs{u}^{t^{J+1}}}{
        1-\abs{q}^{\hp{J+1}}\abs{u}^{t^{J+1}}}\right)^{j-J} 
    \prod_{i=1}^J \frac{\abs{q}^{\hp{i}}\abs{u}^{t^{i}}}{
      \abs{1-q^{\hp{i}}u^{t^{i}}}} \\
    &= \abs{w}^J Q^{j-J} \prod_{i=1}^J
    \frac{\abs{q}^{\hp{i}}\abs{u}^{t^{i}}}{\abs{1-q^{\hp{i}}u^{t^{i}}}}
  \end{align*}
  for $j\geq J$. 
  This leads to the bound
  \begin{equation*}
    \abs{R} \leq \abs{w}^J \left( \prod_{i=1}^J 
    \frac{\abs{q}^{\hp{i}}\abs{u}^{t^{i}}}{\abs{1-q^{\hp{i}}u^{t^{i}}}} \right)
    \sum_{j\geq J} Q^{j-J}
    = \abs{w}^J \left( \prod_{i=1}^J 
      \frac{\abs{q}^{\hp{i}}\abs{u}^{t^{i}}}{\abs{1-q^{\hp{i}}u^{t^{i}}}} \right)
    \frac{1}{1-Q},
  \end{equation*}
  which we wanted to show.
\end{proof}


We also need to truncate the infinite sums of derivatives of
$\f{b}{q,u,1,w}$. This is done by means of the following lemma.


\begin{lemma}\label{lem:den-bounds}
  Let $J\in\N$ and $\alpha$, $\beta$, $\gamma\in\N_0$, and let $q\in\C$ with
  $\abs{q}\leq\frac23$. Suppose
  \begin{itemize}
  \item either $u=1$, $U=1$ and $\beta=0$,
  \item or $u\in\C$ with $\abs{u} < 1/U - \frac{\log\sqrt{2}}{t^2}$ for $U$
    defined in Lemma~\ref{lemma:dominant-singularity}
  \end{itemize}
  holds. Further, let $w\in\C$ with
  $\abs{w}\leq\frac32$. Set
  \begin{equation*}
    Q=\frac53 \frac{1}{\bigl(\frac65\bigr)^{\hp{J+1}} U^{t^{J+1}} - 1},
  \end{equation*}
  and suppose $J$ was chosen such that $Q<1$ holds. Then
  \begin{multline*}
    \abs{\frac{\partial^{\alpha+\beta+\gamma}}{\partial q^\alpha 
        \partial u^\beta \partial w^\gamma} 
      \bigl(\f{b}{q,u,1,w} - \f{b_J}{q,u,1,w}\bigr)} \\
    \leq \alpha!\, \beta!\, \gamma!\, (t^2/\log\sqrt{2})^\beta 
    6^{\alpha+\gamma} \left(\frac53\right)^J
    \left( \prod_{i=1}^J \frac{1}{\bigl(\frac65\bigr)^{\hp{i}} 
        U^{t^i} - 1} \right)
    \frac{1}{1-Q}.
  \end{multline*}
\end{lemma}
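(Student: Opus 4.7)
The plan is to combine Cauchy's integral formula on a suitable polydisc with the pointwise bound from Lemma~\ref{lem:den-bounds-pre}. I would centre a polydisc at $(q,u,w)$ with radii $r_q=1/6$, $r_u=\log\sqrt{2}/t^2$ (taking $r_u=0$ in the first hypothesis case $u=1$, $U=1$, $\beta=0$), and $r_w=1/6$; these choices reproduce the factorial/reciprocal-radius prefactors $\alpha!\,6^\alpha$, $\beta!\,(t^2/\log\sqrt{2})^\beta$, and $\gamma!\,6^\gamma$ appearing in the claim. Any $(q',u',w')$ in the closed polydisc then satisfies $|q'|\le 5/6$, $|u'|\le 1/U$, $|w'|\le 5/3$, so by \eqref{eq:U-estimates} we have $|q'(u')^{t-1}|\le (5/6)\,U^{-(t-1)}<1$ throughout. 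Theorem~\ref{theorem:generating-function-height-and-others} therefore guarantees that $b(q',u',1,w')$, and hence $b-b_J$, is analytic on the polydisc, and Cauchy's estimate reduces the problem to a supremum bound on $|b-b_J|$ over the polydisc.

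For this supremum I would apply Lemma~\ref{lem:den-bounds-pre} pointwise. Its hypothesis $|q'(u')^{t-1}|<1$ was just verified. Its local $Q$-parameter is bounded by the outer $Q=(5/3)\bigl((6/5)^{\hp{J+1}}U^{t^{J+1}}-1\bigr)^{-1}$, using $|w'|\le 5/3$, $|q'|^{\hp{J+1}}|u'|^{t^{J+1}}\le\bigl((6/5)^{\hp{J+1}}U^{t^{J+1}}\bigr)^{-1}$, and monotonicity of $x\mapsto x/(1-x)$ on $[0,1)$; the hypothesis $Q<1$ forces $(6/5)^{\hp{J+1}}U^{t^{J+1}}>1$, so this is well defined. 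Each factor of the product is controlled via $|1-z|\ge\bigl||z|-1\bigr|$, yielding
\begin{equation*}
    \frac{|q'|^{\hp{i}}|u'|^{t^i}}{\bigl|1-(q')^{\hp{i}}(u')^{t^i}\bigr|}\le \frac{(5/6)^{\hp{i}}U^{-t^i}}{\bigl|(5/6)^{\hp{i}}U^{-t^i}-1\bigr|}=\frac{1}{\bigl|(6/5)^{\hp{i}}U^{t^i}-1\bigr|}.
\end{equation*}
Multiplying these estimates with $|w'|^J\le (5/3)^J$ and combining with the Cauchy prefactor reproduces the inequality claimed.

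The hardest point is justifying that $b-b_J$ is genuinely analytic throughout the polydisc rather than merely at its centre, since each individual series term $q^{\hp{i}}u^{t^i}/(1-q^{\hp{i}}u^{t^i})$ can develop a pole inside the polydisc whenever $(5/6)^{\hp{i}}U^{-t^i}\ge 1$, which does happen for small $i$ when $t$ is small; analyticity of the full sum on $\{|qu^{t-1}|<1\}$ is precisely what Theorem~\ref{theorem:generating-function-height-and-others} (via the functional-equation machinery of Lemma~\ref{lemma:functional-equation}) supplies. A minor bookkeeping issue is that $(6/5)^{\hp{i}}U^{t^i}-1$ may itself be negative for small $i$, so the factors in the conclusion should be read with implicit absolute values; this does not change the magnitude of the bound.
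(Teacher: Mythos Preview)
Your approach is the paper's: Cauchy estimates with radii $1/6$, $\log\sqrt{2}/t^2$, $1/6$ combined with Lemma~\ref{lem:den-bounds-pre}; the only cosmetic difference is that the paper iterates Cauchy one variable at a time (first $q$, then $w$, then $u$) rather than working on a single polydisc, and the resulting constants are identical. Your two flagged points (possible poles of individual factors inside the contour and the sign of $(6/5)^{\hp{i}}U^{t^i}-1$) are genuine for small $t$ in the second bullet case and are not addressed in the paper's proof either; note in particular that Theorem~\ref{theorem:generating-function-height-and-others} gives analyticity of $b$ but not of $b-b_J$, which would still inherit the poles of the finite sum $b_J$.
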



\begin{proof}
  Let $\vartheta\in\C$ with $\abs{\vartheta} < 1/U$ and $\eta\in\C$ with
  $\abs{\eta}\leq\frac53$. Cauchy's integral formula gives
  \begin{equation*}
    \frac{\partial^\alpha}{\partial q^\alpha} 
      \bigl(\f{b}{q,\vartheta,1,\eta} - \f{b_J}{q,\vartheta,1,\eta}\bigr)
    = \frac{\alpha!}{2\pi i} \oint_{\abs{\xi-q}=\frac16}
    \frac{\f{b}{\xi,\vartheta,1,\eta} 
      - \f{b_J}{\xi,\vartheta,1,\eta}}{(\xi-q)^{\alpha+1}} \dd \xi.
  \end{equation*}
  The bound on $q$ implies $\abs{\xi}\leq\frac56$. Using the standard estimate for complex integrals, \eqref{eq:U-estimates} and Lemma~\ref{lem:den-bounds-pre} yield
  \begin{equation*}
    \abs{\frac{\partial^\alpha}{\partial q^\alpha} 
      \bigl(\f{b}{q,\vartheta,1,\eta} - \f{b_J}{q,\vartheta,1,\eta}\bigr)}
    \leq \alpha!\, 6^\alpha \left(\frac53\right)^J
    \left( \prod_{i=1}^J \frac{1}{\bigl(\frac65\bigr)^{\hp{i}}U^{t^i}-1} \right)
    \frac{1}{1-Q}.
  \end{equation*}
  Note that the right hand side is independent of $q$, $\vartheta$ and $\eta$,
  and, as $J$ tends to infinity, this bound is going to zero. Therefore, for
  fixed $\vartheta$ and $\eta$, the series $\frac{\partial^\alpha}{\partial
    q^\alpha} \f{b}{q,\vartheta,1,\eta}$ converges uniformly on the compact set
  $\{q \mid \abs{q}\leq\frac23\}$. Thus, for $\vartheta$ with $\abs{\vartheta} < 1/U$
  and $\eta$ with $\abs{\eta}\leq\frac53$, this function is analytic. Note that
  this result stays true if $\vartheta=1$ and $U=1$.

  We use Cauchy's integral formula again and obtain
  \begin{align*}
    \abs{\frac{\partial^\gamma}{\partial w^\gamma}
      \frac{\partial^\alpha}{\partial q^\alpha} 
      \bigl(\f{b}{q,\vartheta,1,w} - \f{b_J}{q,\vartheta,1,w}\bigr)}
    &= \abs{\frac{\gamma!}{2\pi i} \oint_{\abs{\eta-w}=\frac16}
      \frac{\frac{\partial^\alpha}{\partial q^\alpha} 
        \bigl(\f{b}{q,\vartheta,1,\eta} 
        - \f{b_J}{q,\vartheta,1,\eta}\bigr)}{(\eta-w)^{\gamma+1}} \dd \eta} \\
    &\leq \gamma!\, 6^\gamma \abs{\frac{\partial^\alpha}{\partial q^\alpha} 
        \bigl(\f{b}{q,\vartheta,1,w} - \f{b_J}{q,\vartheta,1,w}\bigr)} \\
    &\leq \alpha!\, \gamma!\, 6^{\alpha+\gamma} \left(\frac53\right)^J
    \left( \prod_{i=1}^J \frac{1}{\bigl(\frac65\bigr)^{\hp{i}}
        U^{t^i} - 1} \right)
    \frac{1}{1-Q}.
  \end{align*}
  Note that $\abs{w}\leq\frac32$ implies $\abs{\eta}\leq\frac53$. Moreover,
  $\frac{\partial^{\alpha+\gamma}}{\partial q^\alpha \partial w^\gamma}
  \f{b}{q,\vartheta,1,w}$ is analytic in $\vartheta$ with $\abs{\vartheta} <
  1/U$. Again, this result stays true if $\vartheta=1$ and $U=1$.

  Using Cauchy's integral formula once more yields
  \begin{multline*}
    \abs{\frac{\partial^\alpha}{\partial q^\alpha}
      \frac{\partial^\beta}{\partial u^\beta}
      \frac{\partial^\gamma}{\partial w^\gamma} 
      \bigl(\f{b}{q,u,1,w} - \f{b_J}{q,u,1,w}\bigr)}\\
    \begin{aligned}
    &= \abs{\frac{\beta!}{2\pi i} \oint_{\abs{\vartheta-u}=\frac{\log\sqrt{2}}{t^2}}
      \frac{\frac{\partial^\alpha}{\partial q^\alpha}
        \frac{\partial^\gamma}{\partial w^\gamma}
        \bigl(\f{b}{q,\vartheta,1,w} - \f{b_J}{q,\vartheta,1,w}\bigr)}{
        (\vartheta-u)^{\beta+1}} \dd \vartheta} \\
    &\leq \beta!\, (t^2 / \log\sqrt{2})^\beta 
    \abs{\frac{\partial^\alpha}{\partial q^\alpha}
      \frac{\partial^\gamma}{\partial w^\gamma}
      \bigl(\f{b}{q,u,1,w} - \f{b_J}{q,u,1,w}\bigr)},
    \end{aligned}
  \end{multline*}
  which is the desired result after inserting the bound from above.
\end{proof}


In Section~\ref{sec:distinct-depths} we analyze the distinct depths of
leaves. Again, we work with infinite sums by replacing them with finite sums
and bounding the error we make. Similar to the estimates above, we define 
\begin{equation*}
  b_J(q,1,v,1) =
  \sum_{1\leq j<J} \frac{v q^{\hp{j}}}{1-q^{\hp{j}}} 
  \prod_{i=1}^{j-1} \frac{1-v-q^{\hp{i}}}{1-q^{\hp{i}}}
\end{equation*}
and have the following two lemmata.


\begin{lemma}\label{lem:d:den-bounds-pre}
  Let $J\in\N$, $q\in\C$ with $\abs{q}<1$ and $v\in\C$. Set
  \begin{equation*}
    Q = \abs{q}^{t^J} \left(1+\frac{\abs{v}}{1-\abs{q}^{\hp{J}}}\right)
  \end{equation*}
  and suppose $Q<1$ holds. Then
  \begin{equation*}
    \abs{\f{b}{q,1,v,1} - \f{b_J}{q,1,v,1}}
    \leq \frac{\abs{v} \abs{q}^{\hp{J}}}{\abs{1-q^{\hp{J}}}}
    \left(\prod_{i=1}^{J-1} 
      \left(1+\frac{\abs{v}}{\abs{1-q^{\hp{i}}}} \right)\right)
    \frac{1}{1-Q}.
  \end{equation*}
\end{lemma}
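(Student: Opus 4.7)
The plan is to bound the tail sum
\begin{equation*}
  R \colonequals \f{b}{q,1,v,1} - \f{b_J}{q,1,v,1} = \sum_{j\ge J} T_j, \qquad
  T_j \colonequals \frac{v q^{\hp{j}}}{1-q^{\hp{j}}}\prod_{i=1}^{j-1}\frac{1-v-q^{\hp{i}}}{1-q^{\hp{i}}},
\end{equation*}
termwise and then sum a geometric series, in direct analogy with the proof of Lemma~\ref{lem:den-bounds-pre}. The first step is the triangle-inequality estimate $\abs{1-v-q^{\hp{i}}}\le \abs{1-q^{\hp{i}}}+\abs{v}$, which gives $\abs{(1-v-q^{\hp{i}})/(1-q^{\hp{i}})}\le 1+\abs{v}/\abs{1-q^{\hp{i}}}$ for every factor in the product defining $T_j$. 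Splitting this product at index $J-1$ extracts the prefactor $\prod_{i=1}^{J-1}(1+\abs{v}/\abs{1-q^{\hp{i}}})$ appearing in the statement as a $j$-independent constant.

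Next, I bound the remaining factors of the product (those with $i\ge J$) uniformly in $j$. Since $\abs{q}<1$, the quantity $\abs{q}^{\hp{i}}$ is decreasing in $i$, hence $\abs{1-q^{\hp{i}}}\ge 1-\abs{q}^{\hp{i}}\ge 1-\abs{q}^{\hp{J}}$ for every $i\ge J$. This gives a per-factor bound of $1+\abs{v}/(1-\abs{q}^{\hp{J}})$, i.e., $(1+\abs{v}/(1-\abs{q}^{\hp{J}}))^{j-J}$ over the tail product with $i=J,\ldots,j-1$.

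For the leading factor $\abs{v}\abs{q}^{\hp{j}}/\abs{1-q^{\hp{j}}}$, I would factor out $\abs{v}\abs{q}^{\hp{J}}/\abs{1-q^{\hp{J}}}$ (the second piece of the prefactor in the statement) and estimate the residue via $\hp{j}-\hp{J}=\sum_{i=J}^{j-1}t^i\ge (j-J)t^J$, which yields $\abs{q}^{\hp{j}-\hp{J}}\le(\abs{q}^{t^J})^{j-J}$. This geometric factor $\abs{q}^{t^J}$ pairs with the uniform bound $1+\abs{v}/(1-\abs{q}^{\hp{J}})$ from the previous paragraph into exactly $Q$ per unit increment of $j$, so that $\abs{T_j}$ is bounded above by the stated prefactor times $Q^{j-J}$. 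Summing over $j\ge J$ produces the geometric series $\sum_{k\ge 0}Q^k = 1/(1-Q)$ and yields the claimed estimate.

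The main technical obstacle is retaining the sharper denominator $\abs{1-q^{\hp{J}}}$ (rather than $1-\abs{q}^{\hp{J}}$) in the leading factor: for $j>J$ the denominator $\abs{1-q^{\hp{j}}}$ must be compared against $\abs{1-q^{\hp{J}}}$, whereas the crude lower bound only gives $1-\abs{q}^{\hp{J}}$. This is immediate when $q$ is real and positive—the relevant regime in the intended applications, where $q$ lies near the positive real singularity $q_0$ from Lemma~\ref{lemma:dominant-singularity}—while in general one can absorb the resulting ratio into $Q$ at the cost of a slightly larger geometric rate, without altering the structure of the bound.
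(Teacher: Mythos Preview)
Your approach is exactly the paper's: bound each factor of the product by $1+\abs{v}/\abs{1-q^{\hp{i}}}$, split the product at $i=J-1$, use $\hp{j}\ge\hp{J}+t^J(j-J)$ to produce the geometric factor $\abs{q}^{t^J}$, and sum. The paper writes the same intermediate estimate and the same final bound.

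You are right to flag the denominator issue, and in fact you have spotted a point the paper glosses over. The paper's proof passes from the termwise bound (which has $\abs{1-q^{\hp{j}}}$ in the denominator) to the stated conclusion (which has $\abs{1-q^{\hp{J}}}$) without comment; for general complex $q$ the inequality $\abs{1-q^{\hp{j}}}\ge\abs{1-q^{\hp{J}}}$ need not hold (take $q$ real negative). Your remark that the inequality is immediate for real positive $q$ is correct and covers the intended use. Your other suggestion---``absorb the ratio into $Q$''---would alter the statement, so it does not rescue the lemma as written; but note that the only consumer of this lemma, Lemma~\ref{lem:d:den-bounds}, applies it on a circle $\abs{q}=5/6$ and its stated bound already features $1-\abs{q}^{\hp{J}}$ rather than $\abs{1-q^{\hp{J}}}$. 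So replacing $\abs{1-q^{\hp{J}}}$ by $1-\abs{q}^{\hp{J}}$ in the leading factor gives a version that is cleanly provable for all complex $q$ with $\abs{q}<1$ and is exactly what is used downstream.
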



\begin{proof}
  Set
  \begin{equation*}
    R = \f{b}{q,1,v,1} - \f{b_J}{q,1,v,1} =
    \sum_{j\geq J} \frac{v q^{\hp{j}}}{1-q^{\hp{j}}} 
    \prod_{i=1}^{j-1} \frac{1-v-q^{\hp{i}}}{1-q^{\hp{i}}}.
  \end{equation*}
  Let $j\geq J$. We have
  \begin{equation*}
    \hp{j} = \hp{J} + t^J \hp{j-J} \geq \hp{J} + t^J(j-J).
  \end{equation*}
  Therefore, for $j\geq J$ we obtain
  \begin{equation*}
    \abs{q^{\hp{j}} \prod_{i=1}^{j-1} \frac{1-v-q^{\hp{i}}}{1-q^{\hp{i}}}}
    \leq \abs{q}^{\hp{J}} \abs{q}^{t^J(j-J)}
    \left(1+\frac{\abs{v}}{1-\abs{q}^{\hp{J}}}\right)^{j-J} 
    \prod_{i=1}^{J-1} \left(1+\frac{\abs{v}}{\abs{1-q^{\hp{i}}}}\right).
  \end{equation*}
  This leads to the bound
  \begin{equation*}
    \abs{R} \leq \frac{\abs{v} \abs{q}^{\hp{J}}}{\abs{1-q^{\hp{J}}}}
    \left(\prod_{i=1}^{J-1} 
      \left(1+\frac{\abs{v}}{\abs{1-q^{\hp{i}}}} \right)\right)
    \sum_{j\geq J} Q^{j-J},
  \end{equation*}
  which we wanted to show.
\end{proof}


The result of the previous lemma can be extended to derivatives, see below. The
proof is skipped as it is very similar to the proof of Lemma~\ref{lem:den-bounds}.


\begin{lemma}\label{lem:d:den-bounds}
  Let $J\in\N$, $\alpha\in\N_0$ and $\gamma\in\N_0$. Further, let $q\in\C$ with
  $\abs{q}\leq\frac23$ and $v\in\C$ with $\abs{v}\leq\frac32$. Set
  \begin{equation*}
    Q = \left(\frac56\right)^{t^J} 
    \Biggl(1+\frac{\frac53}{1-\bigl(\frac56\bigr)^{\hp{J}}}\Biggr)
  \end{equation*}
  and suppose $J$ was chosen such that $Q<1$ holds. Then
 \begin{equation*}
    \abs{\frac{\partial^{\alpha+\gamma}}{\partial q^\alpha \partial v^\gamma} 
      \bigl(\f{b}{q,1,v,1} - \f{b_J}{q,1,v,1}\bigr)}
    \leq \alpha!\, \gamma!\, 6^{\alpha+\gamma} 
    \frac{\frac53}{\bigl(\frac65\bigr)^{\hp{J}}-1}
    \left(\prod_{i=1}^{J-1} 
      \left(1+\frac{\frac53}{1-\bigl(\frac56\bigr)^{\hp{i}}} \right)\right)
    \frac{1}{1-Q}.    
  \end{equation*} 
\end{lemma}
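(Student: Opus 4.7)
The plan is to mirror the proof of Lemma~\ref{lem:den-bounds} and upgrade the pointwise bound of Lemma~\ref{lem:d:den-bounds-pre} to a bound on mixed partial derivatives by two successive applications of Cauchy's integral formula. Since there is no $u$-variable in the picture, the argument is strictly simpler than for Lemma~\ref{lem:den-bounds}: only two contour integrations are needed, not three, and no contour of tiny radius $\log\sqrt{2}/t^2$ is required.

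First, for fixed $v\in\C$ with $\abs{v}\leq\frac53$, I would view $\f{b}{q,1,v,1}-\f{b_J}{q,1,v,1}$ as a function of $q$ and apply Cauchy's integral formula on the circle $\abs{\xi-q}=\frac16$. The hypothesis $\abs{q}\leq\frac23$ guarantees that every $\xi$ on this contour satisfies $\abs{\xi}\leq\frac56<1$, so Lemma~\ref{lem:d:den-bounds-pre} applies. Substituting the worst-case values $\abs{\xi}=\frac56$ and $\abs{v}=\frac53$ into the bound of that lemma, the auxiliary quantity $Q$ produced there coincides with the $Q$ in the statement of the present lemma, so the assumption $Q<1$ here is exactly what is needed to run Lemma~\ref{lem:d:den-bounds-pre} on the enlarged contour. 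Monotonicity of each factor, in particular the estimate $\abs{1-\xi^{\hp i}}\geq 1-(5/6)^{\hp i}$, matches the product $\prod_{i=1}^{J-1}(1+\tfrac{5/3}{1-(5/6)^{\hp{i}}})$ and the prefactor $\tfrac{5/3}{(6/5)^{\hp{J}}-1}$ term by term. The standard ML-estimate for contour integrals then produces the factor $\alpha!\,6^\alpha$ for the $\alpha$-th $q$-derivative, and since the right-hand side is independent of $q$ and $v$ and tends to $0$ as $J\to\infty$, the series is shown to define an analytic function of $q$ on $\abs{q}\leq\frac23$, justifying the differentiation under the sum.

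Second, for fixed $q$ with $\abs{q}\leq\frac23$, I would view the result as a function of $v$ and apply Cauchy's integral formula on the circle $\abs{\omega-v}=\frac16$. Since $\abs{v}\leq\frac32$ implies that every $\omega$ on this contour satisfies $\abs{\omega}\leq\frac53$, this is precisely the range already handled in the first step, and the bound from the first step applies uniformly along the new contour. The ML-estimate now contributes the additional factor $\gamma!\,6^\gamma$. Composing the two integrations gives the claimed bound.

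The only genuine work is the bookkeeping to verify that the worst-case substitution $\abs{\xi}=\frac56$, $\abs{\omega}=\frac53$ in Lemma~\ref{lem:d:den-bounds-pre} really yields the precise expression stated here, via monotonicity in each factor of the product; this is a routine check rather than a substantive obstacle, which is presumably why the authors announce that the proof is skipped. If anything looks delicate, it is merely confirming that the hypothesis $Q<1$ imposed here is self-consistent with the one needed for Lemma~\ref{lem:d:den-bounds-pre} on the enlarged contours, and this follows immediately from the definitions.
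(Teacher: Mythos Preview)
Your proposal is correct and is precisely the argument the paper has in mind: the authors explicitly skip the proof, stating it is ``very similar to the proof of Lemma~\ref{lem:den-bounds}'', and your two successive Cauchy estimates with radii $\tfrac16$ in $q$ and in $v$, feeding the worst-case values $\abs{\xi}=\tfrac56$, $\abs{\omega}=\tfrac53$ into Lemma~\ref{lem:d:den-bounds-pre}, reproduce exactly that structure and the stated constants.
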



\section{The Height}
\label{sec:height}


We start our analysis with the height $h(T)$ of a canonical tree
$T\in\calT$. It turns out that the height is asymptotically (for large sizes $n=n(T)$) normally
distributed, and we will even prove a local limit theorem for it. Moreover, we obtain asymptotic expressions for its
mean and variance. This will be achieved by means of the generating function
$H(q,u,v,w)$ derived in Section~\ref{sec:generating-function}.
 

So let us have a look at the bivariate generating function
\begin{equation*}
  H(q,1,1,w) = \sum_{T\in\calT} q^{n(T)}w^{h(T)}
  = \frac{a(q,1,1,w)}{1-b(q,1,1,w)}
\end{equation*}
for the height. We consider its denominator
\begin{equation*}
  D(q,w) \colonequals 1-b(q,1,1,w) 
  = \sum_{j\geq0} (-1)^j w^j \prod_{i=1}^j \frac{q^{\hp{i}}}{1-q^{\hp{i}}}.
\end{equation*}
From Lemma~\ref{lemma:dominant-singularity} we know that $D(q,1)$ has a simple
dominant zero~$q_0$. We can see the expansion of $D(q,w)$ around $(q_0,1)$ as
perturbation of a meromorphic singularity, cf.\ the book of Flajolet and
Sedgewick~\cite[Section IX.6]{Flajolet-Sedgewick:ta:analy}. This yields a central limit
theorem (normal distribution) for the height without much effort. But we can do
better: we can show a local limit theorem for the height. The precise results
are stated in the following theorem.


\begin{theorem}\label{thm:height}
  For a randomly chosen tree $T\in\calT$ of size $n$ the height $\f{h}{T}$ is
  asymptotically (for $n\to\infty$) normally distributed, and a local limit theorem holds.
Its mean is $\mu_h n + O(1)$ and its variance is $\sigma_h^2 n + O(1)$ with
  \refstepcounter{epsilon}\label{eps:h:m}
  \refstepcounter{epsilon}\label{eps:h:v}
  \begin{align}
    \mu_h &=\frac{\f{\frac{\partial}{\partial w} b}{q_0,1,1,w}\rvert_{w=1}}{
   q_0 \f{\frac{\partial}{\partial q} b}{q,1,1,1}\rvert_{q=q_0}}\label{eq:mu_h_explicit}\\&=\frac{1}{2} + \frac{t-2}{2^{t+3}} + \frac{2t^2+3t-8}{2^{2t+5}}
    + \frac{9t^3+45t^2+2t-88}{2^{3t+8}}
    + \frac{0.55t^4}{2^{4t}} \varepsilon_{\ref{eps:h:m}}(t)\notag
  \intertext{and}
    \sigma_h^2 &= \frac{1}{4} + \frac{-t^2+5t-2}{2^{t+4}}
    + \frac{-4t^3+4t^2+27t-14}{2^{2t+6}}
    + \frac{0.26t^4}{2^{3t}} \varepsilon_{\ref{eps:h:v}}(t)\notag
  \end{align}
  for $t\geq2$.
\end{theorem}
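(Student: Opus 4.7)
The strategy is to analyze the bivariate generating function
$H(q,1,1,w)=a(q,1,1,w)/D(q,w)$ with $D(q,w)=1-b(q,1,1,w)$ as a meromorphic perturbation of the univariate case at $w=1$, following the framework of Section~IX.6 of Flajolet--Sedgewick. Since by Lemma~\ref{lemma:dominant-singularity} the function $D(q,1)$ has $q_0$ as a simple zero with $\partial_q D(q_0,1)\neq 0$, the implicit function theorem produces an analytic map $w\mapsto q_0(w)$ on a neighborhood of $w=1$, with $q_0(1)=q_0$ and $D(q_0(w),w)=0$. First, I would extend this to a neighborhood of the full closed unit disc of $w$ and check that $q_0(w)$ remains the unique dominant simple zero of $D(\cdot,w)$, with a uniform zero-free annulus $q_0(w)<|q|\le q_0/Q'$ for some $Q'<1$; this is done by continuity and compactness, reusing the estimates in the proof of Lemma~\ref{lemma:dominant-singularity} together with the uniform convergence of $b(q,1,1,w)$ guaranteed by Theorem~\ref{theorem:generating-function-height-and-others}.

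Next, singularity analysis (or direct residue extraction) gives uniformly for $w$ near the unit circle
\begin{equation*}
[q^n]H(q,1,1,w) \;=\; \nu(w)\,q_0(w)^{-n}\bigl(1+O(R^n)\bigr)
\end{equation*}
with $\nu(w)=a(q_0(w),1,1,w)/\bigl(q_0(w)\,\partial_q b(q_0(w),1,1,w)\bigr)$ and $R<1$ independent of $w$. Dividing by the value at $w=1$ puts the probability generating function of $h(T_n)$ into Hwang's quasi-power form, and the standard quasi-power central limit theorem then yields asymptotic normality together with mean $\mu_h n+O(1)$ and variance $\sigma_h^2 n+O(1)$, where
\begin{equation*}
\mu_h \;=\; -\frac{q_0'(1)}{q_0(1)}, \qquad \sigma_h^2 \;=\; -\frac{q_0''(1)}{q_0(1)} - \mu_h + \mu_h^2.
\end{equation*}
Implicit differentiation $q_0'(1)=-\partial_w D(q_0,1)/\partial_q D(q_0,1)$ immediately turns the first identity into the announced closed form \eqref{eq:mu_h_explicit}; an analogous but heavier computation from $D(q_0(w),w)=0$ gives a closed form for $\sigma_h^2$ in terms of first and second partial derivatives of $b$ at $(q_0,1)$.

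For the local limit theorem, since $h(T_n)$ is integer-valued I would use Hwang's local version of the quasi-power theorem. This requires, beyond the uniform quasi-power estimate above, the aperiodicity condition $|q_0(e^{i\theta})|>q_0$ for every $\theta\in(0,2\pi)$; equivalently, no other zero of $D(\cdot,e^{i\theta})$ meets the circle $|q|=q_0$. I expect this to be the main obstacle, and I would verify it by exploiting that $b(q_0,1,1,w)=\sum_{j\ge 1}(-1)^{j-1}w^j\prod_{i=1}^j q_0^{\hp{i}}/(1-q_0^{\hp{i}})$ has, after absorbing the sign into the coefficients of $w^j$, a rigidity property that forces a strict inequality at $w\neq 1$; combined with the simple-pole nature of $q_0(w)$ near $w=1$ and a compactness argument on the rest of the circle, this gives the required strict separation.

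Finally, to extract the explicit expansions in $t$, I would combine (i)~the expansion of $q_0$ from Lemma~\ref{lemma:dominant-singularity}, (ii)~the truncation Lemma~\ref{lem:den-bounds} applied with an appropriate finite cut-off $J$ to evaluate $\partial_q^\alpha \partial_w^\gamma b(q,1,1,w)$ at $(q_0,1)$ with certifiable error bounds, and (iii)~routine series manipulation of the implicit differentiation formulas for $q_0'(1)$ and $q_0''(1)$. The error functions $\varepsilon_{\ref{eps:h:m}}$ and $\varepsilon_{\ref{eps:h:v}}$ then encode the aggregated explicit bounds coming out of this computation. Positivity of $\sigma_h^2$ (needed to obtain a genuine Gaussian limit) is ensured for large $t$ by the leading constant $1/4$ in its expansion and for small $t$ by the certified numerical values of Table~\ref{tab:special-values-height}.
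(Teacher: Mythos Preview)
Your treatment of the central limit part---meromorphic perturbation via Theorem~IX.9, implicit differentiation for $\mu_h$ and $\sigma_h^2$, truncation of $b$ via Lemma~\ref{lem:den-bounds} to obtain the explicit $t$-expansions---matches the paper's approach and is fine. (A minor slip: the quasi-power formula for the variance reads $\sigma_h^2=-q_0''(1)/q_0+\mu_h+\mu_h^2$, not $-\mu_h$.)

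The genuine gap is in the local limit theorem. The aperiodicity condition $|q_0(e^{i\theta})|>q_0$ for $\theta\neq 0$ is indeed the crux, but your proposed verification does not work. The series $b(q_0,1,1,w)=\sum_{j\ge 1}(-1)^{j-1}w^j\prod_{i\le j} q_0^{\hp{i}}/(1-q_0^{\hp{i}})$ has \emph{alternating} coefficients in $w$; ``absorbing the sign'' amounts to substituting $-w$, which tells you nothing about $|b(q_0,1,1,e^{i\theta})|$ on the unit circle. There is no soft rigidity or triangle-inequality argument here, and compactness alone cannot rule out equality $|q_0(e^{i\theta})|=q_0$ at some isolated $\theta$. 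Nor can you expect to extend $q_0(w)$ analytically to a full neighborhood of the closed unit disc: the implicit function theorem only gives a small disc around $w=1$.

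What the paper actually does is considerably more laborious. It splits $\theta\in[-\pi,\pi]$ into an ``outer'' region $|\theta|>\sqrt{97/96}\,\pi\,2^{-t/2}$ and a ``central'' region. In the outer region (Lemma~\ref{lem:h:outer-region}) one shows directly, via the truncation $D_2$ and an explicit bound on $|1+e^{i\theta}|$, that every zero of $D(\cdot,e^{i\theta})$ has modulus exceeding $1/(2-2^{-t})>q_0$. In the central region one first uses Rouch\'e (Lemma~\ref{lem:h:one-root-rouche}) to isolate a single root $q_0(w)$ for $|w-1|\le\tfrac12-5(2/3)^t$, then proves analyticity (Lemma~\ref{lem:h:qw-analytic}), then bounds the first and second derivatives of $q_0$ uniformly on the central arc (Lemma~\ref{lem:h:qpp-bound-near-1}), and finally shows by an explicit computation that $\frac{d^2}{d\varphi^2}|q_0(e^{i\varphi})|^2>0$ there (Lemma~\ref{lem:h:qpp-positive}), forcing a unique minimum at $\varphi=0$. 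All of this is carried out only for $t\ge 30$; for each $2\le t\le 29$ the same scheme is checked individually by interval arithmetic (Remark~\ref{rem:h:small-t}). You would need to supply an argument of comparable strength.
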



Recall that ``randomly chosen'' here and everywhere else in this article means
``uniformly chosen at random'' and that the error functions
$\f{\varepsilon_j}{\ldots}$ are functions with absolute value bounded by
$1$, see also the last paragraph of the introduction.


We calculated the values of the constants $\mu_h$ and $\sigma_h^2$ numerically
for $2\leq t\leq 30$. Those values can be found in
Table~\ref{tab:special-values-height}.
Figure~\ref{fig:height} shows the result of Theorem~\ref{thm:height}. It
compares the obtained normality with the distribution of the height calculated
for particular values in SageMath.


\tableheight{}


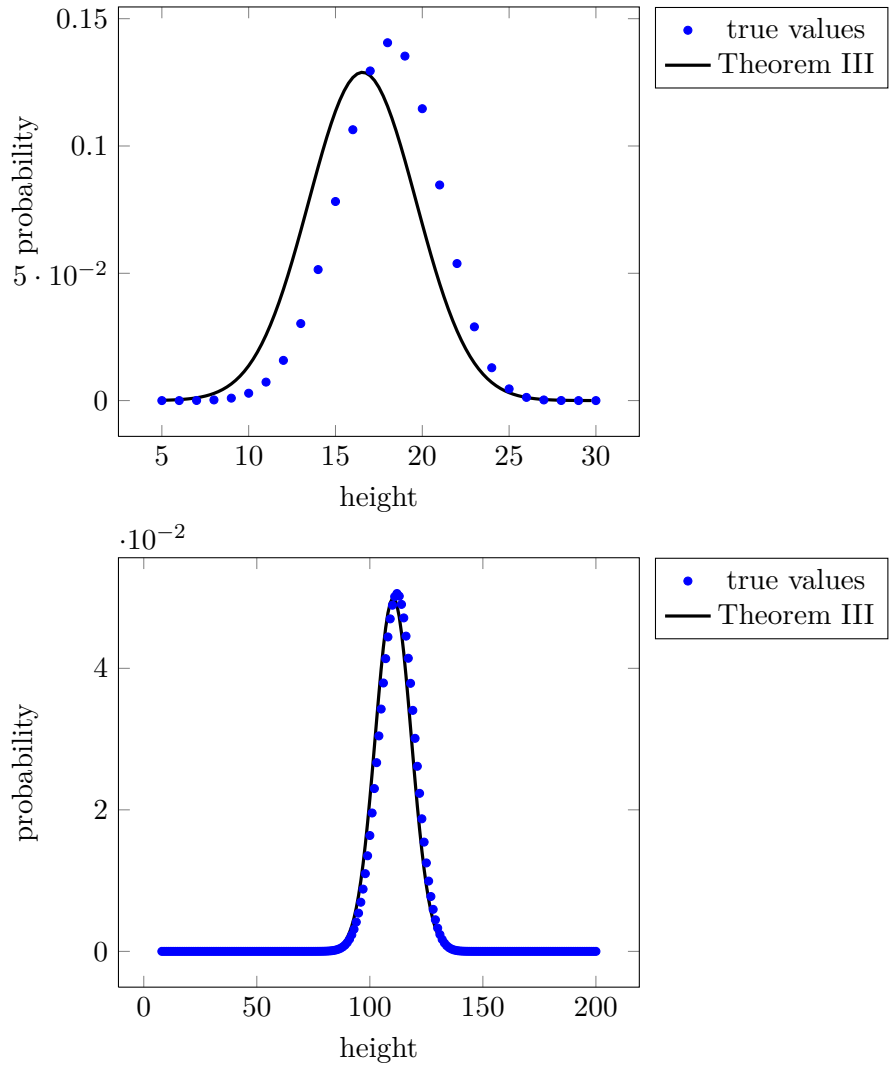
\begin{figure}
  \centering
   \begin{tikzpicture}
    \begin{axis}[
      xlabel=height,
      ylabel=\hspace*{2.5em}probability,
      legend pos=outer north east]
    \addplot[color=blue, mark=*, only marks, mark size=1.5]
         table[x=height30, y=count] {plot_height30_true.txt};
    \addplot[color=black, no marks, very thick]
         table[x=height30, y=count] {plot_height30_asy.txt};
    \legend{true values, Theorem~\ref{thm:height}}
    \end{axis}
  \end{tikzpicture}
   \begin{tikzpicture}
    \begin{axis}[
      xlabel=height,
      ylabel=probability,
      legend pos=outer north east]
    \addplot[color=blue, mark=*, only marks, mark size=1.5]
         table[x=height200, y=count] {plot_height200_true.txt};
    \addplot[color=black, no marks, very thick]
         table[x=height200, y=count] {plot_height200_asy.txt};
    \legend{true values, Theorem~\ref{thm:height}}
    \end{axis}
  \end{tikzpicture}
 
  \caption{Distribution of the height for $t=2$, and $n=30$
    (top figure) and $n=200$ (bottom figure) inner vertices. On
    the one hand, this figure shows the true distribution of all trees of the
    given size and on the other hand the result on the asymptotic normal
    distribution (Theorem~\ref{thm:height} with only main terms of mean and
    variance taken into account).}
  \label{fig:height}
\end{figure}


\begin{remark}\label{rem:numerical-calc}
For the (central and local) limit theorem to hold, it is essential that $\sigma_h^2 \neq 0$,
which is why we need reliable numerical values and estimates for large $t$. As mentioned earlier,
we used interval arithmetic in  SageMath~\cite{Stein-others:2015:sage-mathem-6.5} in all our numerical
  calculations to achieve such results. We used a precision of $53$
  bits (machine precision) for the bounds of the intervals. All values are calculated to such a precision that the
error is at most the magnitude of the last digit that occurs. The reason for the varying
  number of digits after the decimal point (in, for example, Table~\ref{tab:special-values-height})
  are numerical artifacts. In these cases, we could have given an additional digit at the cost of a slightly greater
error (twice the magnitude of the last digit).
\end{remark}


The proof of Theorem~\ref{thm:height} is split up into several parts. At first,
we get asymptotic normality (central limit theorem) and the constants for mean
and variance by using Theorem~IX.9 (meromorphic singularity perturbation) from
the book of Flajolet and Sedgewick~\cite{Flajolet-Sedgewick:ta:analy}. For the
local limit theorem we need to analyze the absolute value of the dominant zero
$\f{q_0}{w}$ of the denominator $\f{D}{q,w}$ of the generating function
$\f{H}{q,1,1,w}$. Going along the unit circle, i.e., taking $w=e^{i\varphi}$,
this value has to have a unique minimum at $\varphi=0$.

From the combinatorial background of the problem (non-negativity of
coefficients) it is clear that $\abs{\f{q_0}{e^{i\varphi}}} \geq
\abs{\f{q_0}{1}}$. The task showing the uniqueness of this minimum at
$\varphi=0$ is again split up: We show that the function
$\abs{\f{q_0}{e^{i\varphi}}}$ is convex in a region around $\varphi=0$ (central
region), see Lemmata~\ref{lem:h:one-root-rouche}
to~\ref{lem:h:qpp-bound-near-1}. For the outer region, where $\varphi$ is not
near $0$, we show that zeros of the denominator are larger there. This is done
in Lemma~\ref{lem:h:outer-region}.

Those lemmata mentioned above showing that the minimum is unique work for all
general $t\geq 30$. For the remaining $t$, precisely, for each $t$ with $2\leq
t\leq 30$, the same ideas are used, but the checking is done algorithmically
using interval arithmetic and the mathematics software system
SageMath~\cite{Stein-others:2015:sage-mathem-6.5}. Details are given in
Remark~\ref{rem:h:small-t}.


So much for the idea of the proof. We start the actual proof by analyzing the
denominator $\f{D}{q,w}$. For our calculations we will truncate this infinite
sum and use the finite sum
\begin{equation*}
  \f{D_J}{q,w} \colonequals 
  \sum_{0 \leq j < J} (-1)^j w^j \prod_{i=1}^j \frac{q^{\hp{i}}}{1-q^{\hp{i}}}
\end{equation*}
instead. Bounds for the tails (difference between the infinite and the finite
sum) are given by Lemma~\ref{lem:den-bounds-pre}. In particular, 
we write down the special case $J=2$ of this lemma, which will be needed a
couple of times in this section. Substituting $1/z$ for $q$, we get
\begin{equation}\label{eq:h:den-bound-J2}
  \abs{\f{D}{1/z,w} - \f{D_2}{1/z,w}}
  \leq \abs{w}^2 \frac{1}{\abs{z-1}} \frac{1}{\abs{z}^{1+t}-1} 
  \frac{1}{1-\abs{w}/(\abs{z}^{1+t+t^2}-1)},
\end{equation}
under the assumption $\abs{w} < \abs{z}^{1+t+t^2}-1$. Derivatives of $\f{D}{q,w}$ are handled by Lemma~\ref{lem:den-bounds}.

As mentioned earlier, the proof of the local limit theorem for the height for general
$t$ consists of two parts: one for $w$ in the central region (around $w=1$)
and one for $w$ in the outer region. The following lemma shows that
everything is fine in the outer region. After that, a couple of lemmata are
needed to prove our result for the central region.


\begin{lemma}\label{lem:h:outer-region}
  Let $w = e^{i\varphi}$, where $\varphi$ is real with $\sqrt{97/96}\,\pi\,
  2^{-t/2} < \abs{\varphi} \le \pi$. Then each zero of $z\mapsto\f{D}{1/z,w}$
  has absolute value smaller than $2-1/2^t$.
\end{lemma}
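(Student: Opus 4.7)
The plan is to show the stronger statement that $D(1/z, w) = 1 - \f{b}{1/z, 1, 1, w}$ is uniformly bounded away from zero on the exterior region $\{z : |z| \geq 2 - 1/2^t\}$. Since $|D(1/z, w)| \geq |D_2(1/z, w)| - |D(1/z, w) - D_2(1/z, w)|$, where $D_2(1/z, w) = (z - 1 - w)/(z - 1)$ is the truncation at $J = 2$, it suffices to verify the pointwise estimate $|D - D_2| < |D_2|$ throughout this region. Since the only zero of $D_2(1/z, w)$ sits at $z = 1 + w$, I expect the hypothesis on $\varphi$ to force $|1 + w|$ comfortably below $2 - 1/2^t$, making both sides quantitatively controllable.

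For the lower bound on $|D_2|$, write $|1 + w| = 2\cos(\varphi/2)$ and apply the Taylor estimate $\cos x \leq 1 - x^2/2 + x^4/24$ at $x = \varphi/2$ together with $\varphi^4 \leq \pi^2 \varphi^2$ (valid since $|\varphi| \leq \pi$) and the hypothesis $\varphi^2 > (97/96)\pi^2/2^t$. This yields $|1 + w| \leq 2 - \delta/2^t$ for a constant $\delta$ slightly less than $2$ (numerically $\delta \approx 1.98$). Hence for $|z| = R \geq 2 - 1/2^t$ the triangle inequality gives $|z - 1 - w| \geq R - |1 + w| \geq (\delta - 1)/2^t > 0$, and $|D_2(1/z, w)| = |z - 1 - w|/|z - 1|$ is explicitly positive.

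For the upper bound on the tail, Lemma~\ref{lem:den-bounds-pre} at $J = 2$, which is precisely the specialization~\eqref{eq:h:den-bound-J2} at $|w| = 1$, gives $|D - D_2| \leq (|z - 1|(|z|^{1+t} - 1))^{-1}(1 - 1/(|z|^{\hp{3}} - 1))^{-1}$. Dividing, the ratio $|D - D_2|/|D_2|$ is bounded by $(1 - 1/(|z|^{\hp{3}} - 1))^{-1}$ over $(|z|^{1+t} - 1) \cdot |z - 1 - w|$. Bernoulli's inequality yields $(2 - 1/2^t)^{1+t} - 1 \geq 2^{1+t} - t - 2$, so combined with the margin $(\delta - 1)/2^t$, the denominator is of order $2(\delta - 1) > 1$; since $|z|^{\hp{3}}$ is enormous, the remaining factor is essentially $1$, and the ratio is strictly less than~$1$.

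The main obstacle is that the estimates are close to tight: the tail is of order $1/2^{t+1}$ while the worst-case lower bound on $|D_2|$ is only of order $(\delta - 1)/2^t$, so just a factor of roughly $2(\delta - 1)$ separates them, and the precise constant $\sqrt{97/96}$ in the hypothesis is tuned exactly to produce this slack. For small values of $t$ (namely $2 \leq t \leq 29$), the same chain of inequalities must additionally be checked numerically via interval arithmetic in SageMath, as signaled in Remark~\ref{rem:h:small-t}.
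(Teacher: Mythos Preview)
Your approach is the same as the paper's---truncate at $D_2$, bound the tail by~\eqref{eq:h:den-bound-J2}, and compare---only phrased as a direct lower bound on $|D|$ rather than as a contradiction. These are equivalent framings.

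The gap is quantitative, and it occurs precisely at the step where you bound $|1+w|$. Your Taylor estimate $\cos x \le 1 - x^2/2 + x^4/24$ together with the crude replacement $\varphi^4 \le \pi^2\varphi^2$ gives
\[
|1+w| \le 2 - \Bigl(\tfrac14 - \tfrac{\pi^2}{192}\Bigr)\varphi^2,
\]
with coefficient $1/4 - \pi^2/192 \approx 0.1986$. Plugging in the hypothesis $\varphi^2 > (97/96)\pi^2/2^t$ yields $\delta \approx 1.98$. But the constant $\sqrt{97/96}$ in the statement is tuned to the sharper inequality
\[
|1+w| = 2\bigl(1 - 2\sin^2(\varphi/4)\bigr) \le 2 - \tfrac{2}{\pi^2}\varphi^2,
\]
which the paper obtains from the concavity bound $|\sin(\varphi/4)| \ge |\varphi|/(\sqrt{2}\pi)$ on $[0,\pi/4]$. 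The coefficient $2/\pi^2 \approx 0.2026$ gives $\delta = 97/48 \approx 2.02$ exactly, and this is the threshold: one needs $\delta - 1 \ge 49/48$ to beat the tail bound $(49/48)\cdot 2^{-t}$ uniformly in $t$. Your $\delta \approx 1.98$ falls just short, and indeed for $t=2$ your chain of inequalities (with the Bernoulli step $R^{t+1}-1 \ge 2^{t+1}-t-2 = 4$) gives a ratio $|D - D_2|/|D_2|$ bounded only by $\approx 1.04 > 1$.

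Your deferral to Remark~\ref{rem:h:small-t} for small $t$ is also misplaced: that remark handles the \emph{central} region (the convexity of $|q_0(e^{i\varphi})|$, Lemmata~\ref{lem:h:qpp-bound-near-1}--\ref{lem:h:qpp-positive}), which only applies for $t \ge 30$. The paper's proof of the present lemma works uniformly for all $t \ge 2$ with no separate numerical case. The fix is simply to replace your Taylor bound by the concavity bound above; with $\delta = 97/48$ the rest of your argument goes through as written.
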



\begin{proof}
  Suppose that we have a zero $z_0$ of the denominator $\f{D}{1/z,w}$ for a
  given $w$ and that this zero fulfils $\abs{z_0} \geq 2-1/2^t$. We can
  extend the equation $\f{D}{1/z_0,w}=0$ to
  \begin{equation*}
    0 = 1 - \frac{w}{z_0-1} + \f{D}{1/z_0,w} - \f{D_2}{1/z_0,w},
  \end{equation*}
  which can be rewritten as
  \begin{equation*}
    z_0 = 1 + w - (z_0-1) \left(\f{D}{1/z_0,w} - \f{D_2}{1/z_0,w}\right).
  \end{equation*}
  Taking absolute values and using bound~\eqref{eq:h:den-bound-J2} obtained
  from Lemma~\ref{lem:den-bounds} yields
  \begin{equation*}
    \abs{z_0}
    \leq \abs{1+w} + \frac{1}{\abs{z_0^{\hp{2}}-1}} 
    \frac{1}{1-1/\bigl(\abs{z_0}^{\hp{3}}-1\bigr)}.
  \end{equation*}
  We have the lower bounds
  \begin{equation*}
    \abs{z_0^{\hp{2}}-1} \geq \abs{z_0}^{t+1} - 1
    \geq \left(2-\frac{1}{2^t}\right)^{t+1}-1 
    = 2^{t+1} \left(1-\frac{1}{2^{t+1}}\right)^{t+1}-1 \geq 2^t
  \end{equation*}
  and
  \begin{equation*}
    \abs{z_0^{\hp{3}}-1} \geq \abs{z_0}^{t^2+t+1} - 1
    \geq 2^{t^2+t+1}\left(1-\frac{1}{2^{t+1}}\right)^{t^2+t+1}-1
    \geq \frac{807159}{16384} \geq 49,
  \end{equation*}
  which can be found by using monotonicity and the value at $t=2$.  Therefore,
  we obtain
  \begin{equation}\label{eq:h:outer-bound-z0}
    \abs{z_0} \leq \abs{1+w} + \frac{49}{48} \frac{1}{2^t}.
  \end{equation}
  Since we have assumed $\abs{z_0} \geq 2-1/2^t$, we deduce
  \begin{equation*}
    \abs{1+w} \geq 2 - \frac{97}{48}\frac{1}{2^t}.
  \end{equation*}
  On the other hand, using $\abs{\varphi} > \sqrt{97/96}\,\pi\, 2^{-t/2}$
  and the inequality $\abs{\sin(\varphi/4)} \geq \abs{\varphi} / (\sqrt{2}\pi)$
  for $\abs{\varphi}\leq\pi$ (which follows by concavity of the sine on the
  interval $[0, \frac{\pi}{4}]$), we have
  \begin{equation*}
    \abs{1+w} = \sqrt{2+2\cos\varphi} 
    = 2\left(1 - 2 \sin^2 \frac\varphi4\right)
    \leq 2-\frac{2}{\pi^2}\varphi^2
    < 2 - \frac{97}{48}\frac{1}{2^t},
  \end{equation*}
  which yields a contradiction.
\end{proof}


Next, we come to the central region. Looking at the assumptions used in
Lemma~\ref{lem:h:outer-region}, this is when $\abs{\varphi} \leq
\sqrt{97/96}\,\pi\, 2^{-t/2}$. As mentioned in the sketch of the proof, we show
that the function $\abs{\f{q_0}{e^{i\varphi}}}$ is convex. 


We know the location of the dominant and second dominant zero of the
denominator~$\f{D}{q,1}$. As we need those roots for general~$w$ (along the
unit circle), we analyze the difference of $\f{D}{q,w}$ from
$\f{D}{q,1}$. Using Rouch\'e's theorem then yields a bound for the dominant
zero, which is stated precisely in the following lemma.


\begin{lemma}\label{lem:h:one-root-rouche}
  Suppose $t\geq5$ and $\abs{w-1} \leq \frac12 -
  5\bigl(\frac23\bigr)^t$. Then $q\mapsto\f{D}{q,w}$ has exactly one
  root with $\abs{q}<\frac23$ and no root with $\abs{q}=\frac23$.
\end{lemma}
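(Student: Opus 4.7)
The natural approach is Rouch\'e's theorem on the circle $\abs{q}=2/3$, comparing $D(q,w)$ with the ``base'' function $D(q,1)$. By Lemma~\ref{lemma:dominant-singularity}, $D(q,1)$ has a unique simple zero $q_0$ in $\abs{q}\le q_0/Q$, and a direct check (from the asymptotic expansions of $q_0$ and $Q$ in that lemma, supplemented by Table~\ref{tab:constant-Q}) shows that for $t\ge 5$ one has $q_0 < 2/3 < q_0/Q$. So $D(q,1)$ has exactly one zero inside $\abs{q}<2/3$ and none on the boundary. It thus suffices to establish the Rouch\'e inequality
\begin{equation*}
  \abs{D(q,w) - D(q,1)} \;<\; \abs{D(q,1)} \qquad\text{on } \abs{q}=2/3.
\end{equation*}

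For the lower bound on $\abs{D(q,1)}$, I would use the truncation $D_2(q,1) = 1 - q/(1-q) = (1-2q)/(1-q)$. Parametrizing $q = \tfrac23 e^{i\theta}$, an elementary computation gives $\abs{D_2(q,1)}^2 = (25-24\cos\theta)/(13-12\cos\theta)$, a decreasing function of $\cos\theta$ whose minimum on $[-1,1]$ is attained at $\cos\theta=1$ with value $1$. Hence $\abs{D_2(q,1)}\ge 1$ on the circle. The tail $\abs{D(q,1)-D_2(q,1)}$ is estimated by \eqref{eq:h:den-bound-J2} (with $w=1$), whose dominant factor on $\abs{q}=2/3$ is $(2/3)^{1+t}/(1-(2/3)^{1+t})$ times a bounded prefactor; for $t\ge 5$ this is comfortably less than $1$ and takes the explicit form $C_1(2/3)^t$. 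This yields $\abs{D(q,1)}\ge 1-C_1(2/3)^t$.

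For the upper bound on the difference, expand
\begin{equation*}
  D(q,w)-D(q,1) \;=\; \sum_{j\ge 1}(-1)^j\bigl(w^j-1\bigr)\prod_{i=1}^{j}\frac{q^{\hp i}}{1-q^{\hp i}}.
\end{equation*}
Since the hypothesis gives $\abs{w-1}\le\tfrac12$ and hence $\abs{w}\le\tfrac32$, one has $\abs{w^j-1}\le j\abs{w-1}(3/2)^{j-1}$. The $j=1$ contribution equals $-(w-1)\,q/(1-q)$, and the same parametrization as above shows $\max_{\abs{q}=2/3}\abs{q/(1-q)} = 2$, attained at $q=2/3$. The remaining terms ($j\ge 2$) carry an extra factor $q^{\hp 2}=q^{1+t}$ and sum geometrically (using the $j=1$ factor bounds and the control on $(2/3)^{1+t}$ for $t\ge 5$), giving a contribution of the shape $C_2(2/3)^t\abs{w-1}$. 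Altogether, $\abs{D(q,w)-D(q,1)}\le 2\abs{w-1}+C_2(2/3)^t\abs{w-1}$.

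Combining the two estimates, the Rouch\'e inequality reduces to
\begin{equation*}
  \bigl(2+C_2(2/3)^t\bigr)\abs{w-1} \;<\; 1-C_1(2/3)^t,
\end{equation*}
which is equivalent to $\abs{w-1} < 1/2 - O((2/3)^t)$. The assumption $\abs{w-1}\le\tfrac12 - 5(2/3)^t$ is precisely designed to absorb the explicit constants $C_1,C_2$ for $t\ge 5$. The main obstacle of the proof is therefore not the overall strategy, which is a textbook Rouch\'e argument, but the bookkeeping: the worst case in both bounds occurs simultaneously at $q=2/3$ (where $\abs{D_2(q,1)}$ is minimized and $\abs{q/(1-q)}$ is maximized), so the main-order inequality is \emph{tight} and one must verify numerically that the specific constant $5$ indeed suffices to cover the combined tail contributions coming from Lemma~\ref{lem:den-bounds-pre}.
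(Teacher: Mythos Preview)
Your approach is essentially the paper's: Rouch\'e on $\abs{q}=2/3$ comparing $D(q,w)$ with $D(q,1)$, using the same two key facts that $\abs{D_2(q,1)}\ge 1$ and $\abs{q/(1-q)}\le 2$ on that circle together with the $(2/3)^t$ tail bound from~\eqref{eq:h:den-bound-J2}. The only minor difference is that the paper bounds $\abs{D(q,w)-D(q,1)}$ by triangulating through $D_2$ (getting $2b+2\abs{w-1}$ with $b=3.29(2/3)^t$) rather than expanding $\sum (-1)^j(w^j-1)\prod(\cdots)$ directly, which lets it complete the numerical check $3b<10(2/3)^t$ that you correctly flag but leave open.
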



\begin{proof}
  We use Rouch\'e's theorem on the circle $\abs{q}=\frac23$. With
  $\abs{w}\leq\frac32$, $\abs{q}=\frac23$ and the
  bound~\eqref{eq:h:den-bound-J2} (obtained from Lemma~\ref{lem:den-bounds})
  we get
  \begin{equation*}
    \abs{\f{D}{q,w} - \f{D_2}{q,w}}
    \leq \frac{9}{2} \frac{1}{(3/2)^{1+t}-1} 
    \frac{1}{1-(3/2)/((3/2)^{1+t+t^2}-1)} \leq 3.29 \left(\frac23\right)^t = b,
  \end{equation*}
  where we took out the factor $(2/3)^t$ and used monotonicity together with the value for
  $t=5$.
  
  With $\f{D_2}{q,w} = 1 - wq/(1-q)$ we obtain
  \begin{align*}
    \abs{\f{D}{q,w} - \f{D}{q,1}}
    &\leq \abs{\f{D}{q,w} - \f{D_2}{q,w}}
    + \abs{\f{D_2}{q,w} - \f{D_2}{q,1}}
    + \abs{\f{D_2}{q,1} - \f{D}{q,1}} \\
    &\leq 2b + \abs{w-1}\abs{\frac{q}{1-q}} 
    \leq 2b + 2\abs{w-1}
    \leq 1 + 2b - 10\left(\frac23\right)^t
    < 1 - b.
  \end{align*}
  On the other hand, the M\"obius transform $q\mapsto 1-q/(1-q)$ maps the
  circle $\abs{q}=2/3$ to the circle $\abs{z-1/5}=6/5$. Therefore
  $\abs{1-q/(1-q)} \geq 1$, and so we have
  \begin{equation*}
     \abs{\f{D}{q,1}} \geq
     \abs{1-\frac{q}{1-q}} - \abs{\f{D}{q,1} - \f{D_2}{q,1}}
     \geq 1 - b. 
  \end{equation*}
  This proves the lemma by Rouch\'e's theorem and
  Lemma~\ref{lemma:dominant-singularity}.
\end{proof}


The previous lemma gives us exactly one value $\f{q_0}{w}$ for each $w$ in a
region around $1$. We continue by showing that this function $q_0$ is analytic.


\begin{lemma}\label{lem:h:qw-analytic}
  For $t\geq5$ and $\abs{w-1} \leq \frac12 - 5\bigl(\frac23\bigr)^t$, the
  function $\f{q_0}{w}$ given implicitly by $\f{D}{\f{q_0}{w}, w} = 0$,
  $\abs{\f{q_0}{w}}<\frac23$, is analytic.
\end{lemma}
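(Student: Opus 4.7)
My plan is to derive the analyticity of $q_0(w)$ from the previous lemma by means of the argument principle, which will express $q_0(w)$ as a contour integral that is manifestly holomorphic in $w$.

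First, I would verify that $D(q,w) = 1 - b(q,1,1,w)$ is jointly analytic in $(q,w)$ on an open neighborhood of the compact set $\{|q| \leq \tfrac23\} \times \{|w-1| \leq \tfrac12 - 5(2/3)^t\}$. Each truncation $D_J(q,w)$ is rational in $q$ and polynomial in $w$, with singularities only at $q^{\hp{i}}=1$ and hence outside the disk $|q| \leq 2/3$. Lemma~\ref{lem:den-bounds} (applied with $u=1$) shows that $D - D_J \to 0$ uniformly on this set as $J \to \infty$, so $D$ is a uniform limit of analytic functions and is therefore analytic there.

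Next, I would invoke Lemma~\ref{lem:h:one-root-rouche}: for every $w$ in the stated disk, $q \mapsto D(q,w)$ has no zero on $|q|=2/3$ and exactly one zero (counted with multiplicity, since Rouché's theorem counts that way) inside $|q|<2/3$. In particular, this unique zero is simple, and the argument principle yields
\begin{equation*}
  q_0(w) = \frac{1}{2\pi i} \oint_{|q|=2/3}
  q \cdot \frac{\partial_q D(q,w)}{D(q,w)} \, dq.
\end{equation*}

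Finally, since $D(q,w)$ is continuous and nonvanishing on the compact product of the contour $|q|=2/3$ with the closed parameter disk, it is bounded away from $0$ there. Hence the integrand is jointly continuous and, for each fixed $q$ on the contour, analytic in $w$; the standard theorem on holomorphic dependence of parameter integrals then gives analyticity of $w \mapsto q_0(w)$ throughout the disk. There is no real obstacle here: the only nontrivial input—the non-vanishing of $D$ on $|q|=2/3$ and the count of a single zero inside—has already been established in Lemma~\ref{lem:h:one-root-rouche}; one could alternatively invoke the implicit function theorem, but using the integral representation directly avoids having to verify simplicity of the zero pointwise in $w$.
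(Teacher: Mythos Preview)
Your proof is correct and follows essentially the same approach as the paper: both express $q_0(w)$ as the contour integral $\frac{1}{2\pi i}\oint_{|q|=2/3} q\,\frac{\partial_q D(q,w)}{D(q,w)}\,dq$ via the argument principle (the paper phrases this as following the Analytic Inversion Lemma of Flajolet--Sedgewick), and then deduce analyticity in $w$ from standard results on parameter integrals (the paper cites Morera, Fubini, and Cauchy explicitly). Your added verification that $D$ is jointly analytic as a uniform limit of the $D_J$ is a welcome detail that the paper leaves implicit.
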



\begin{proof}
  We follow the lines of the proof of the Analytic Inversion Lemma, cf.\
  Flajolet and Sedgewick~\cite{Flajolet-Sedgewick:ta:analy},
  Chapter~IV.7. Consider the function
  \begin{equation*}
    \f{\sigma_1}{w} = \frac{1}{2\pi i} \oint_{\abs{q}=\frac23}
    \frac{\frac{\partial}{\partial q} \f{D}{q,w}}{\f{D}{q,w}}
    q \dd q.
  \end{equation*}
  Since $\f{D}{q,w}\neq 0$ for all $q$ and $w$ allowed by the assumptions, this
  function is continuous. Moreover, using the theorems of Morera and Fubini,
  and Cauchy's integral theorem, the function $\sigma_1$ is analytic. By
  Lemma~\ref{lem:h:one-root-rouche} and by using the residue theorem we get
  that $\f{\sigma_1}{w}$ equals $q$ fulfilling $\f{D}{q,w}=0$ and
  $\abs{q}<\frac23$, i.e., we obtain $\f{\sigma_1}{w} = \f{q_0}{w}$.
\end{proof}


Since we have analyticity of $q_0$ in a region around $1$ by
Lemma~\ref{lem:h:qw-analytic}, we can show that small changes in $w$ do not
matter much, see the following lemma for details. Later, this is used to
estimate the derivative at some point~$w$ by the derivative at~$1$.


\begin{lemma}\label{lem:h:qpp-bound-near-1}
  Let $t\geq 30$ and $w=e^{i\varphi}$, where $\varphi\in\R$ with $\abs{\varphi}
  \leq \sqrt{97/96}\,\pi\, 2^{-t/2}$. We have the inequalities
  \begin{equation*}
    \abs{\f{q_0}{w} - \f{q_0}{1}} \leq \frac{5}{2^{t/2}},
    \quad
    \abs{\f{q_0'}{w} - \f{q_0'}{1}} \leq \frac{17}{2^{t/2}}
    \quad\text{and}\quad
    \abs{\f{q_0''}{w} - \f{q_0''}{1}} \leq \frac{102}{2^{t/2}}.
  \end{equation*}
\end{lemma}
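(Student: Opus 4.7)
The plan is to treat $q_0$ as an analytic function on a disk around $w=1$, bound its derivatives via Cauchy's integral formula, and then convert these into bounds on the differences $q_0^{(k)}(w)-q_0^{(k)}(1)$ via the fundamental theorem of calculus applied on the chord from $1$ to $w$.

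By Lemma~\ref{lem:h:qw-analytic}, $q_0$ is analytic on the disk $\abs{w-1}\le \tfrac12 - 5(\tfrac23)^t$, and by Lemma~\ref{lem:h:one-root-rouche} we have $\abs{q_0(w)}<\tfrac23$ throughout. For $t\ge 30$ both the ``defect'' $5(\tfrac23)^t$ and the target range $\abs{\varphi}\le \sqrt{97/96}\,\pi\, 2^{-t/2}$ are negligible compared to $\tfrac12$, so $q_0$ is analytic and bounded by $\tfrac23$ on a disk of radius just under $\tfrac12$ around every point $w_0$ in our small neighborhood of $1$. Fixing a radius $r$ slightly below $\tfrac12$ (concretely, any $r<\tfrac12 - \sqrt{97/96}\,\pi\,2^{-t/2} - 5(\tfrac23)^t$, which is $\approx \tfrac12$), Cauchy's integral formula gives
\begin{equation*}
  \abs{q_0^{(k)}(w_0)} \;\le\; \frac{k!}{r^k}\sup_{\abs{\zeta-w_0}=r}\abs{q_0(\zeta)} \;\le\; \frac{k!\cdot (2/3)}{r^k}
\end{equation*}
for any $w_0$ with $\abs{w_0-1}\le \sqrt{97/96}\,\pi\,2^{-t/2}$ and $k\in\{1,2,3\}$.

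Next, for $w=e^{i\varphi}$ with $\abs{\varphi}\le \sqrt{97/96}\,\pi\, 2^{-t/2}$, the line segment $[1,w]$ lies inside the analyticity disk, so the fundamental theorem of calculus yields
\begin{equation*}
  \abs{q_0^{(k)}(w) - q_0^{(k)}(1)} \;\le\; \abs{w-1}\cdot \sup_{\zeta\in[1,w]}\abs{q_0^{(k+1)}(\zeta)}
\end{equation*}
for $k\in\{0,1,2\}$. Using $\abs{w-1}=2\abs{\sin(\varphi/2)}\le\abs{\varphi}\le \sqrt{97/96}\,\pi\, 2^{-t/2}$ together with the Cauchy bound above (at order $k+1$), this gives
\begin{equation*}
  \abs{q_0^{(k)}(w) - q_0^{(k)}(1)} \;\le\; \frac{\sqrt{97/96}\,\pi}{2^{t/2}}\cdot \frac{(k+1)!\cdot(2/3)}{r^{k+1}}.
\end{equation*}

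Finally, I would choose $r$ close enough to $\tfrac12$ to certify the three claimed constants $5$, $17$, $102$. The tightest case is $k=2$: the right-hand side is approximately $\tfrac{32\sqrt{97/96}\,\pi}{2^{t/2}}\approx \tfrac{101.05}{2^{t/2}}$, comfortably below $\tfrac{102}{2^{t/2}}$; for $k=1$ one gets about $\tfrac{16.9}{2^{t/2}}\le \tfrac{17}{2^{t/2}}$, and for $k=0$ about $\tfrac{4.3}{2^{t/2}}\le \tfrac{5}{2^{t/2}}$. The main ``work'' is therefore only the bookkeeping needed to pick $r$ precisely, together with confirming that the threshold $t\ge 30$ suffices to make $5(\tfrac23)^t$ and $\sqrt{97/96}\,\pi\,2^{-t/2}$ so small that $r$ can indeed be taken arbitrarily close to $\tfrac12$; there is no real analytic obstacle beyond this.
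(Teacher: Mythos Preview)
Your proof is correct and essentially the same as the paper's: both use analyticity on the disk of radius $d=\tfrac12-5(\tfrac23)^t$ from Lemma~\ref{lem:h:qw-analytic}, the bound $\abs{q_0}<\tfrac23$ from Lemma~\ref{lem:h:one-root-rouche}, Cauchy's integral formula, and the fundamental theorem of calculus on the segment $[1,w]$, arriving at the identical leading constant $\tfrac{2}{3}(k+1)!\,2^{k+1}$. The only cosmetic difference is the order of operations---the paper writes $q_0^{(k)}(w)-q_0^{(k)}(1)$ as a single Cauchy integral over $\abs{\zeta-1}=d$ and bounds the integrand via FTC, while you apply FTC first to reduce to $\sup\abs{q_0^{(k+1)}}$ and then invoke the Cauchy estimate.
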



\begin{proof}Set $d = \frac12 - 5\left(\frac23\right)^t$. By
  Lemma~\ref{lem:h:qw-analytic} the function $\f{q_0}{w}$ is analytic for
  $\abs{w-1}\leq d$. Therefore, by Cauchy's integral formula, we get
  \begin{equation*} 
    \f{q_0^{(k)}}{w} - \f{q_0^{(k)}}{1} = 
    \frac{k!}{2\pi i} \oint_{\abs{\zeta-1}=d} 
    \left( \frac{\f{q_0}{\zeta}}{(\zeta-w)^{k+1}}
      -  \frac{\f{q_0}{\zeta}}{(\zeta-1)^{k+1}} \right) \dd \zeta
  \end{equation*}
  for $k\in\N_0$, where $q_0^{(k)}$ denotes the $k$\nobreakdash-th derivative
  of $q_0$. For its absolute value we obtain
  \begin{equation*}
    \abs{\f{q_0^{(k)}}{w} - \f{q_0^{(k)}}{1}}
    \leq k!\, d \max_{\abs{\zeta-1}=d} \abs{\f{q_0}{\zeta}}
    \max_{\abs{\zeta-1}=d} \abs{(\zeta-w)^{-(k+1)} - (\zeta-1)^{-(k+1)}}.
  \end{equation*}
  We have $\abs{\f{q_0}{\zeta}}<\frac23$ by
  Lemma~\ref{lem:h:one-root-rouche}. Further, we get
  \begin{align*}
    \abs{(\zeta-w)^{-(k+1)} - (\zeta-1)^{-(k+1)}}
    &= \abs{\int_1^w \frac{\partial}{\partial \xi} 
      (\zeta-\xi)^{-(k+1)} \dd \xi} \\
    &\leq \abs{w-1} (k+1) \max_{\xi\in[1,w]} \abs{\zeta-\xi}^{-(k+2)}.
  \end{align*}
Since
  \begin{equation*}
    \abs{\xi-1} \leq \abs{w-1} = \big\lvert e^{i\varphi}-1\big\rvert
    \leq \abs{i\int_0^{\varphi} e^{it} \dd t}
    \leq \abs{\varphi},
  \end{equation*}
  we have $\abs{\zeta-\xi} \geq d - \abs{\varphi}$.  Collecting all those
  results, and using $d\leq\frac12$ and the bound given for $\abs{\varphi}$
  results in
  \begin{equation*}
    \abs{\f{q_0^{(k)}}{w} - \f{q_0^{(k)}}{1}}
    \leq \frac{(k+1)!}{3}\abs{w-1} 
    \left(\frac12 - 5\left(\frac23\right)^t 
      - \sqrt{\frac{97}{96}}\,\pi\, 2^{-t/2}\right)^{-(k+2)}.
  \end{equation*}
  Inserting all bounds gives the estimates stated for $k\in\{0,1,2\}$.
\end{proof}


Now we are ready to show that the second derivative of
$\abs{\f{q_0}{e^{i\varphi}}}$ is positive. To do so, we show that this second
derivative is around $\frac18$ for $\varphi=0$ and use the bounds of
Lemma~\ref{lem:h:qpp-bound-near-1} to conclude positivity for $w$ in some
region around~$1$.


\begin{lemma}\label{lem:h:qpp-positive}
  If $t\geq 30$ and $\varphi\in\R$ with $\abs{\varphi} \leq
  \sqrt{97/96}\,\pi\, 2^{-t/2}$, then
  \begin{equation*}
    \frac{\dd^2}{\dd \varphi^2} \abs{\f{q_0}{e^{i\varphi}}}^2 > 0.
  \end{equation*}
\end{lemma}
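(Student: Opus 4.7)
The plan is to combine an exact formula for $f(\varphi)\colonequals\abs{\f{q_0}{e^{i\varphi}}}^2$ at $\varphi=0$ with the continuity estimates from Lemma~\ref{lem:h:qpp-bound-near-1}. Since $\f{D}{q,w}$ has real power series coefficients in both variables and $\f{q_0}{1}$ is a real simple root by Lemma~\ref{lemma:dominant-singularity}, the implicitly defined root $\f{q_0}{w}$ of Lemma~\ref{lem:h:qw-analytic} satisfies $\overline{\f{q_0}{w}}=\f{q_0}{\bar w}$ throughout the symmetric disc $\abs{w-1}\leq\tfrac12-5(\tfrac23)^t$: otherwise $\overline{\f{q_0}{w}}$ would be a second root of $\f{D}{\cdot,\bar w}$ of absolute value less than $2/3$, contradicting Lemma~\ref{lem:h:one-root-rouche}. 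Writing $f(\varphi)=\f{q_0}{e^{i\varphi}}\f{q_0}{e^{-i\varphi}}$ and applying the chain rule twice yields
\begin{equation*}
  f''(0)=2\,\f{q_0'}{1}^2-2\,\f{q_0}{1}\bigl(\f{q_0'}{1}+\f{q_0''}{1}\bigr).
\end{equation*}

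To bound $f''(0)$ from below, I would evaluate $\f{q_0}{1}$, $\f{q_0'}{1}$, $\f{q_0''}{1}$ with explicit error terms. Implicit differentiation of $\f{D}{\f{q_0}{w},w}=0$ at $w=1$ gives $\f{q_0'}{1}=-\partial_w D/\partial_q D$ and a similar quadratic expression for $\f{q_0''}{1}$, both evaluated at $(\f{q_0}{1},1)$. The partial derivatives of $D$ are infinite series in $q$, which I would replace by $\f{D_J}{q,w}$ for small $J$, bounding the remainders with Lemmata~\ref{lem:den-bounds-pre} and~\ref{lem:den-bounds}, and using $\f{q_0}{1}=\tfrac12+O(2^{-t})$ from Lemma~\ref{lemma:dominant-singularity}. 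The leading orders work out to $\f{q_0}{1}\sim\tfrac12$, $\f{q_0'}{1}\sim-\tfrac14$, $\f{q_0''}{1}\sim\tfrac14$, which gives $f''(0)=\tfrac18+O(t^2\cdot 2^{-t})$ and, for $t\geq 30$, a comfortable lower bound $f''(0)\geq 1/10$.

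For the variation of $f''(\varphi)$ over the central region I would expand the chain-rule output for $f''(\varphi)$ as a polynomial in $w$, $\bar w$ and the values $\f{q_0^{(k)}}{w}$, $\f{q_0^{(k)}}{\bar w}$ for $k\in\{0,1,2\}$. The triangle inequality combined with the three estimates of Lemma~\ref{lem:h:qpp-bound-near-1} (which apply at $\bar w=e^{-i\varphi}$ as well by replacing $\varphi$ with $-\varphi$) and the crude bounds on $\abs{\f{q_0^{(k)}}{1}}$ produced in the previous step yields $\abs{f''(\varphi)-f''(0)}\leq C\cdot 2^{-t/2}$ with an explicit absolute constant $C$. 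For $t\geq 30$ this is smaller than $1/10$, so $f''(\varphi)\geq f''(0)-\abs{f''(\varphi)-f''(0)}>0$ throughout the stated range.

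The main difficulty is the second step: carrying the implicit-differentiation evaluations of $\f{q_0'}{1}$ and $\f{q_0''}{1}$ through the truncation bounds of Lemmata~\ref{lem:den-bounds-pre} and~\ref{lem:den-bounds} with errors small enough that the positive leading value $f''(0)\approx 1/8$ is not swamped once compared with the $2^{-t/2}$ variation; once these numerical certificates are in place, the remaining steps are routine algebra and appeals to the preceding three lemmata.
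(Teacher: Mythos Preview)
Your proposal is correct and follows essentially the same route as the paper. The only cosmetic difference is in the packaging: the paper decomposes $\f{q_0}{e^{i\varphi}}$ into real and imaginary parts $x(\varphi)+iy(\varphi)$ and estimates $x,y,x',y',x'',y''$ directly for all $\varphi$ in the range (obtaining $f''(\varphi)=\tfrac18+O(2^{-t/2})$ uniformly), whereas you use the conjugate-symmetry factorization $f(\varphi)=\f{q_0}{e^{i\varphi}}\f{q_0}{e^{-i\varphi}}$ and split into $f''(0)$ plus a variation term; both rely on the same ingredients (implicit differentiation at $w=1$, the truncation Lemmata~\ref{lem:den-bounds-pre}/\ref{lem:den-bounds}, and Lemma~\ref{lem:h:qpp-bound-near-1}).
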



\begin{proof}
  Write
  \begin{equation*}
    \Delta_w = \left. \frac{\partial}{\partial w} \f{D}{q, w} 
    \right\vert_{q=\f{q_0}{w}}
    \qquad\text{and}\qquad
    \Delta_q = \left. \frac{\partial}{\partial q} \f{D}{q, w} 
    \right\vert_{q=\f{q_0}{w}},
  \end{equation*}
  and analogously $\Delta_{qq}$, $\Delta_{qw}$ and $\Delta_{ww}$ for the
  function $\f{D}{q, w}$ derived twice and then evaluated at $q=\f{q_0}{w}$.
  By inserting the asymptotic expansion of $q_0$, see
  Lemma~\ref{lemma:dominant-singularity}, into the expressions
  \begin{equation}\label{eq:h:der-q0-implicit}
    \f{q_0'}{w} = - \frac{\Delta_w}{\Delta_q}
    \qquad\text{and}\qquad
    \f{q_0''}{w} = \frac{2 \Delta_{qw} \Delta_w \Delta_q 
      - \Delta_{qq} \Delta_w^2 - \Delta_{ww} \Delta_q^2}{\Delta_q^3}
  \end{equation}
  obtained by implicit differentiation, we find
  \begin{equation*}
    \f{q_0'}{1} = -\frac14 + \frac{0.07t}{2^t} \f{\neweps}{t}
    \qquad\text{and}\qquad
    \f{q_0''}{1} = \frac14 + \frac{0.04t^2}{2^t} \f{\neweps}{t}.
  \end{equation*}
  For the calculations themselves, we used the approximation $\f{D_3}{q,w}$ of
  the denominator $\f{D}{q,w}$ together with the bound for the tail given in
  Lemma~\ref{lem:den-bounds}.
  
  Set $w=e^{i\varphi}$. Using the bounds of Lemma~\ref{lem:h:qpp-bound-near-1}
  yields
  \begin{align*}
    \f{q_0}{e^{i\varphi}}&=\frac12+\frac{6}{2^{t/2}}\f{\neweps}{t},\\
    \f{q'_0}{e^{i\varphi}}&=-\frac14+\frac{18}{2^{t/2}}\f{\neweps}{t},\\
    \f{q''_0}{e^{i\varphi}}&=\frac14+\frac{103}{2^{t/2}}\f{\neweps}{t}.    
  \end{align*}

  We define $\f{x}{\varphi}$ and $\f{y}{\varphi}$ to be the real and imaginary
  parts of $\f{q_0}{e^{i\varphi}}$, respectively. Thus 
  \begin{align*}
    \f{x}{\varphi} + i\f{y}{\varphi} &= \f{q_0}{e^{i\varphi}}, \\
    \f{x'}{\varphi} + i\f{y'}{\varphi} &= i e^{i\varphi} \f{q_0'}{e^{i\varphi}}
    \intertext{and}
    \f{x''}{\varphi} + i\f{y''}{\varphi} &= 
    - e^{i\varphi} \f{q_0'}{e^{i\varphi}} - e^{2i\varphi} \f{q_0''}{e^{i\varphi}}.
  \end{align*}
  Then, the estimates above lead to
  \begin{align*}
    \f{x}{\varphi} &= \frac12 + \frac{6}{2^{t/2}} \f{\neweps}{t}, &
    \f{y}{\varphi} &= \frac{6}{2^{t/2}} \f{\neweps}{t}, \\
    \f{x'}{\varphi} &= \frac{19}{2^{t/2}} \f{\neweps}{t}, &
    \f{y'}{\varphi} &= -\frac14 + \frac{19}{2^{t/2}} \f{\neweps}{t}, \\
    \f{x''}{\varphi} &= \frac{124}{2^{t/2}} \f{\neweps}{t}, &
    \f{y''}{\varphi} &= \frac{124}{2^{t/2}} \f{\neweps}{t}.
  \end{align*}
  These in turn together with
  \begin{equation}\label{eq:h:der2-abs-q0}
     \frac{\dd^2}{\dd \varphi^2} \abs{\f{q_0}{e^{i\varphi}}}^2
    = 2(\f{x'}{\varphi}^2 + \f{y'}{\varphi}^2
    + \f{x}{\varphi} \f{x''}{\varphi}
    + \f{y}{\varphi} \f{y''}{\varphi})
  \end{equation}
  give us the second derivative
  \begin{equation*}
    \left. \frac{\dd^2}{\dd \varphi^2} \abs{\f{q_0}{e^{i\varphi}}}^2
      \right\vert_{\varphi=0}
    = \frac18 + \frac{144}{2^{t/2}} \f{\neweps}{t} > 0.1206,
  \end{equation*}
  which is what we wanted to show.
\end{proof}


\begin{remark}\label{rem:h:small-t}
  The ideas in this section presented so far can also be used to show the
  uniqueness of the minimum of $\abs{\f{q_0}{e^{i\varphi}}}$ at $\varphi=0$ for
  a fixed $t$. In particular, this works for $t<30$, where some of the results
  above do not apply. 

  For the calculations the mathematics software system
  SageMath~\cite{Stein-others:2015:sage-mathem-6.5} is used. Further, we use
  interval arithmetic for all operations. The checking for fixed $t$ is done in
  the following way. We start with the interval $[-4,4]$ for $\varphi$. In each
  step, we check if the second derivative (using
  Equations~\eqref{eq:h:der-q0-implicit} and \eqref{eq:h:der2-abs-q0}) is
  positive. If not, then we half each of the bounds of the interval and repeat
  the step above. When this stops, we end up with a region around $0$ that is
  convex. For its complementary, we now use a bisection method to show that
  $\abs{\f{q_0}{e^{i\varphi}}} > \abs{\f{q_0}{1}}$. Note that we can use an
  approximation $\f{D_J}{q,w}$ instead of the denominator $\f{D}{q,w}$, which
  can be compensated taking the bounds obtained in Lemma~\ref{lem:den-bounds}
  into account.

  For $2\leq t\leq 30$, those calculations were done with a positive result,
  i.e., the minimum at $\varphi=0$ is unique.
\end{remark}


Now we have all results together to prove the main theorem of this section.


\begin{proof}[Proof of Theorem~\ref{thm:height}]
  We use Theorem~IX.9 of Flajolet and
  Sedgewick~\cite{Flajolet-Sedgewick:ta:analy} and apply that theorem to the
  function $H(q,1,1,w)$. This gives us the mean and the variance and as a central limit
  asymptotic normality. In particular, we obtain 
  \begin{equation*}
    \E(h(T)) =
    \frac{[q^n]\frac{\partial}{\partial w}H(q, 1, 1, w)\rvert_{w=1}}{
      [q^n]H(q, 1, 1, 1)}.
  \end{equation*}
  By \eqref{eq:generating-function-simplified}, we have
  \begin{equation*}
    \left.\frac{\partial}{\partial w}H(q, 1, 1, w)\right\rvert_{w=1}=
    \frac{\f{a}{q,1,1,1}
      \f{\frac{\partial}{\partial w} b}{q,1,1,w}\rvert_{w=1}}{
      (1-b(q,1,1,1))^2}
    + \frac{\f{\frac{\partial}{\partial w} a}{q,1,1,w}\rvert_{w=1}}{
      1-b(q,1,1,1)}.
  \end{equation*}
  By singularity analysis, we can extract the asymptotics to get the linear
  behavior of this mean and in particular the constant
  \eqref{eq:mu_h_explicit}.
 
  For the local limit, we need a more refined analysis.
  Recall the notation $\f{D}{q,w}$ as the denominator of $\f{H}{q,1,1,w}$ and
  let $\f{q_0}{w}$ be given implicitly by $\f{D}{\f{q_0}{w},w} = 0$,
  $\abs{\f{q_0}{w}}<\frac23$, according to
  Lemma~\ref{lemma:dominant-singularity} and to
  Lemma~\ref{lem:h:one-root-rouche}. Set $q_0 = \f{q_0}{1}$ and
  \begin{equation*}
    c_{\alpha\gamma} = \left. \frac{\partial^{\alpha+\gamma}}{\partial q^\alpha \partial w^\gamma} 
      D(q,w) \right\rvert_{\text{$q=q_0$, $w=1$}}.
  \end{equation*}
  Then we obtain the asymptotic formula $\mu_h n + O(1)$ for the mean, with
  \begin{equation*}
    \mu_h = \frac{c_{01}}{c_{10}q_0},
  \end{equation*}
  and the variance is $\sigma_h^2 n + O(1)$ with
  \begin{equation*}
    \sigma_h^2 = 
    \frac{c_{01}^{2} c_{20} q_0 + c_{01} c_{10}^{2} q_0 - 2 \, c_{01} c_{10}
      c_{11} q_0 + c_{02} c_{10}^{2} q_0 
      + c_{01}^{2} c_{10}}{c_{10}^{3} q_0^{2}}.
  \end{equation*}
  
  To calculate the coefficients $c_{\alpha\gamma}$ we need derivatives of
  $D(q,w)$. In order to avoid working with infinite sums, we use the
  approximations $D_J(q,w)$.
  Lemma~\ref{lem:den-bounds} shows that the error made by using those
  approximations is small. For the calculations themselves,
  SageMath~\cite{Stein-others:2015:sage-mathem-6.5} was used. 

  To show the local limit theorem, we have to show
  \begin{equation*}
    \abs{\f{q_0}{e^{i\varphi}}} > \abs{\f{q_0}{1}}
  \end{equation*}
  for all non-zero $\varphi\in[-\pi,\pi]$, cf.\ Chapter~IX.9
  of~\cite{Flajolet-Sedgewick:ta:analy}.

  Let $t\geq30$. Lemma~\ref{lem:h:qpp-positive} states that
  $\abs{\f{q_0}{e^{i\varphi}}}$ is convex for $\abs{\varphi}\leq
  \sqrt{97/96}\,\pi\, 2^{-t/2}$, therefore the minimum at $\varphi=0$ is unique
  for these $\varphi$.

  For all other $\varphi$, the value of $\abs{\f{q_0}{e^{i\varphi}}}$ is greater
  than $1/(2-1/2^t) > 1/2+1/2^{t+2}$ by Lemma~\ref{lem:h:outer-region}. This
  value itself is greater than $\frac12 + 0.1251/2^t \geq
  \abs{\f{q_0}{1}}$. Therefore the minimum at $\varphi=0$ is unique and the
  local limit thorem follows for $t \geq 30$.

  When $t<30$, we use an algorithmic approach to check that the minimum at
  $\varphi=0$ is unique. The details can be found in
  Remark~\ref{rem:h:small-t}. 
\end{proof}


\section{The Number of Distinct Depths of Leaves}
\label{sec:distinct-depths}


In this section we study the number of distinct depths of leaves $d(T)$ of a
canonical tree $T\in\calT$, motivated by the interpretation as the number of
distinct code word lengths in Huffman codes. This parameter is also
asymptotically normally distributed, and we show a local limit theorem. The
approach is essentially the same as for the height. It is based on the
generating function $H(q,u,v,w)$ from Section~\ref{sec:generating-function}. To
analyse the parameter $d(T)$, we look at the bivariate generating function
\begin{equation*}
  H(q,1,v,1) = \sum_{T\in\calT} q^{n(T)}v^{d(T)}
  = \frac{a(q,1,v,1)}{1-b(q,1,v,1)}
\end{equation*}
for the number of distinct depths of leaves. Again, we consider its denominator
\ifproc
\begin{align*}
  D(q,v) \colonequals& 1-b(q,1,v,1) \\
  &= 1 - \sum_{1\le j} \frac{v q^{\hp{j}}}{1-q^{\hp{j}}} 
  \prod_{i=1}^{j-1} \frac{1-v-q^{\hp{i}}}{1-q^{\hp{i}}}
\end{align*}
\else
\begin{equation*}
  D(q,v) \colonequals 1-b(q,1,v,1) 
  = 1 - \sum_{1\le j} \frac{v q^{\hp{j}}}{1-q^{\hp{j}}} 
  \prod_{i=1}^{j-1} \frac{1-v-q^{\hp{i}}}{1-q^{\hp{i}}}
\end{equation*}
\fi and proceed as in the previous
section. Lemma~\ref{lemma:dominant-singularity} tells us the existence of a
simple dominant zero~$q_0$ of $D(q,1)$. Again, we expand the denominator
$D(q,v)$ around $(q_0,1)$ and use Theorem IX.9 from the book of Flajolet and
Sedgewick~\cite{Flajolet-Sedgewick:ta:analy} to get asymptotic normality. The
local limit theorem follows from considerations of the dominant zero of
$D(q,v)$ with $v$ on the unit circle. This results in the following theorem.


\begin{theorem}\label{thm:distinct-depths}
  For a randomly chosen tree $T\in\calT$ of size $n$ the number of distinct
  depths of leaves $d(T)$ is asymptotically (for $n\to\infty$) normally
  distributed, and a local limit theorem holds. Its mean is $\mu_d n + O(1)$ and its
  variance is $\sigma_d^2 n + O(1)$ with \ifproc
  \begin{multline*}
    \mu_d = \frac{1}{2} + \frac{t-4}{2^{t+3}} + \frac{2t^2-t-14}{2^{2t+5}} \\
    + \frac{9t^3+27t^2-76t-144}{2^{3t+8}} 
    + \frac{0.06t^4}{2^{4t}} \neweps(t)\label{eps:dh:m}
  \end{multline*}
  \else
  \begin{equation*}
    \mu_d = \frac{1}{2} + \frac{t-4}{2^{t+3}} + \frac{2t^2-t-14}{2^{2t+5}}
    + \frac{9t^3+27t^2-76t-144}{2^{3t+8}} 
    + \frac{0.06t^4}{2^{4t}} \neweps(t)\label{eps:dh:m}
  \end{equation*}
  \fi
  and
  \ifproc
  \refstepcounter{epsilon}\label{eps:dh:v}
  \begin{multline*}
    \sigma_d^2 = \frac{1}{4} + \frac{-t^2+9t-14}{2^{t+4}}
    \\+ \frac{-4t^3+20t^2+3t-54}{2^{2t+6}}
    + \frac{0.056t^4}{2^{3t}} \varepsilon_{{\ref{eps:dh:v}}}(t)
  \end{multline*}
  \else
  \begin{equation*}
    \sigma_d^2 = \frac{1}{4} + \frac{-t^2+9t-14}{2^{t+4}}
    + \frac{-4t^3+20t^2+3t-54}{2^{2t+6}}
    + \frac{0.056t^4}{2^{3t}} \neweps(t)\label{eps:dh:v}
  \end{equation*}
  \fi
  for $t\geq2$. 
\end{theorem}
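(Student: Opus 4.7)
The plan is to mirror the proof of Theorem~\ref{thm:height}, with the role of the marking variable $w$ now played by $v$ and the relevant bivariate generating function being
\[
  H(q,1,v,1) = \frac{a(q,1,v,1)}{1-b(q,1,v,1)} = \frac{a(q,1,v,1)}{D(q,v)}.
\]
Since Lemma~\ref{lemma:dominant-singularity} tells us that $D(q,1)$ has a simple dominant zero at $q_0$, we are in a meromorphic‑perturbation situation: applying Theorem~IX.9 of Flajolet and Sedgewick yields a central limit theorem and explicit formulas for $\mu_d$ and $\sigma_d^2$ in terms of the partial derivatives of $D$ at $(q_0,1)$. To turn these closed‑form constants into the asymptotic expansions in $t$ stated in the theorem, I would replace $b(q,1,v,1)$ by the truncated sum $b_J(q,1,v,1)$ and control all tail contributions with Lemma~\ref{lem:d:den-bounds}; the numerical values for small $t$ then drop out of a SageMath computation using interval arithmetic, exactly as for the height.

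The local limit theorem requires showing that the dominant implicit root $q_0(v)$ of $D(q,v)=0$ satisfies $|q_0(e^{i\varphi})|>q_0(1)$ for every nonzero $\varphi\in[-\pi,\pi]$, and I would split the circle into an outer region and a central region around $\varphi=0$, matching Section~\ref{sec:height}. In the outer region I would truncate to $D_2(q,v)=1-vq/(1-q)$ (bounding the remainder via Lemma~\ref{lem:d:den-bounds-pre}) so that the equation $D(1/z,v)=0$ becomes $z\approx 1+v$; combined with the lower bound $|1+v|\le 2-2\varphi^2/\pi^2$ from the concavity of $\cos$, this forces any zero in the outer region to satisfy $|z|<2-1/2^t$, and hence any corresponding $|q|$ is strictly larger than $q_0(1)$.

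In the central region I would use Rouché's theorem on $|q|=\tfrac23$, together with the bound from Lemma~\ref{lem:d:den-bounds}, to isolate a unique analytic branch $v\mapsto q_0(v)$ on a disc of radius $\tfrac12-O((2/3)^t)$ around $1$, giving the analogues of Lemmata~\ref{lem:h:one-root-rouche} and~\ref{lem:h:qw-analytic}. Cauchy's integral formula on shrinking contours transfers the bound on $q_0$ to bounds on $q_0'$ and $q_0''$, yielding the analogue of Lemma~\ref{lem:h:qpp-bound-near-1}. Implicit differentiation of $D(q_0(v),v)=0$ then expresses $q_0'(1)$ and $q_0''(1)$ as rational functions of the mixed partials of $D$; substituting the asymptotic expansion of $q_0$ should give $q_0'(1)=-\tfrac14+O(t/2^t)$ and $q_0''(1)=\tfrac14+O(t^2/2^t)$, from which the standard identity for the second derivative of $|q_0(e^{i\varphi})|^2$ delivers a value near $\tfrac18$ at $\varphi=0$. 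The perturbation bounds then make this second derivative positive throughout the central interval $|\varphi|\le \sqrt{97/96}\,\pi\,2^{-t/2}$, establishing convexity and hence uniqueness of the minimum.

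The main obstacle is, as for the height, the quantitative bookkeeping: the boundary between central and outer regions, the radius of the Rouché circle, and the magnitude of the perturbation bounds must all be chosen consistently so that a clean threshold $t\ge T_0$ emerges on which the analytic argument goes through. For the finitely many remaining values $2\le t<T_0$ the analytic estimates are too crude, and the uniqueness of the minimum of $|q_0(e^{i\varphi})|$ at $\varphi=0$ must be checked case by case by the bisection procedure described in Remark~\ref{rem:h:small-t}, again with SageMath using interval arithmetic applied to the truncated denominators $D_J(q,v)$.
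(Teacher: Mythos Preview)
Your proposal is correct and follows essentially the same approach as the paper: the paper proves Theorem~\ref{thm:distinct-depths} by replaying the height argument with $v$ in place of $w$, stating direct analogues of Lemmata~\ref{lem:h:outer-region}--\ref{lem:h:qpp-positive} (with slightly different constants, e.g.\ the central/outer boundary becomes $2\pi\,2^{-t/2}$ and the threshold is $t\ge 30$), invoking Theorem~IX.9 of Flajolet--Sedgewick for the CLT and the constants, and falling back on the interval-arithmetic SageMath check of Remark~\ref{rem:h:small-t} for $2\le t<30$. Your acknowledgement that the quantitative bookkeeping (region boundary, Rouch\'e radius, perturbation constants) must be redone is exactly right; those are the only places where the details differ from Section~\ref{sec:height}.
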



Again, as in the previous section, we calculated the values of the constants
$\mu_d$ and $\sigma_d^2$ numerically for $2\leq t\leq 30$, and they are given
in Table~\ref{tab:special-values-distinct-depths}. 
Figure~\ref{fig:depths} visualizes the result of
Theorem~\ref{thm:distinct-depths} as in the previous section.

\tabledepths{}


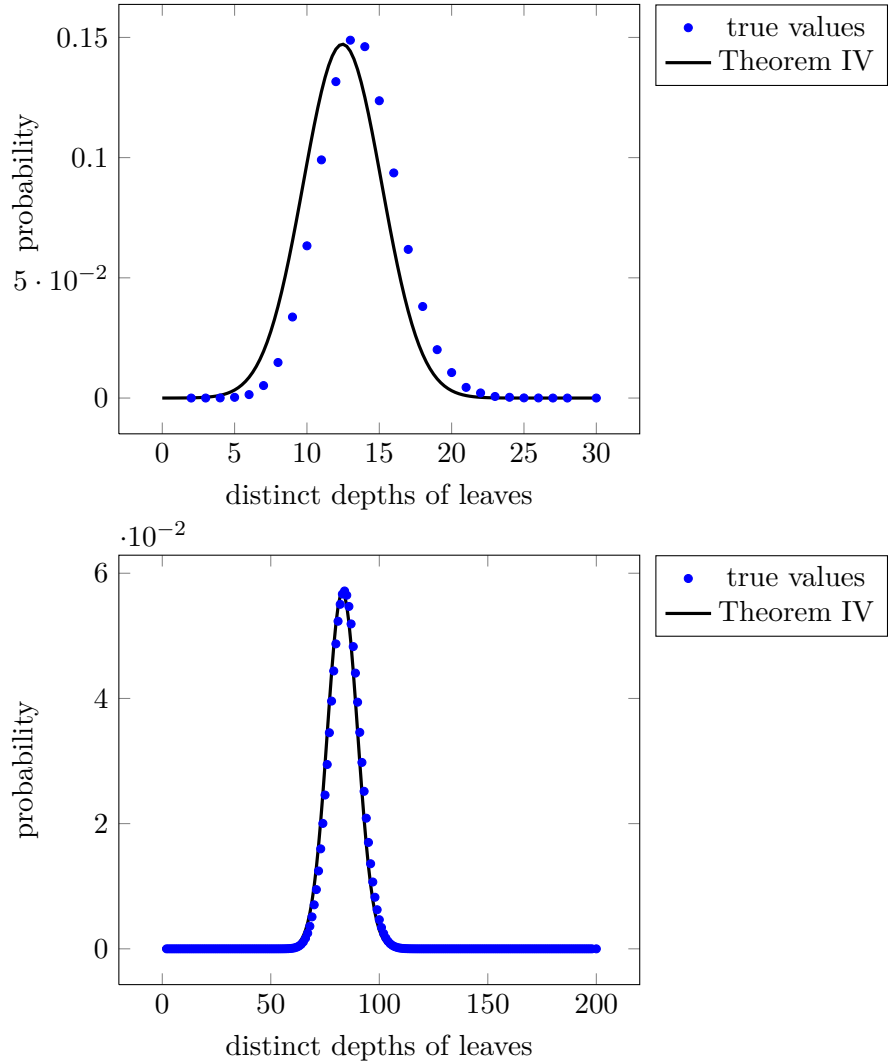
\begin{figure}
  \centering
   \begin{tikzpicture}
    \begin{axis}[
      xlabel=distinct depths of leaves,
      ylabel=\hspace*{2.5em}probability,
      legend pos=outer north east]
    \addplot[color=blue, mark=*, only marks, mark size=1.5]
         table[x=depths30, y=count] {plot_depths30_true.txt};
    \addplot[color=black, no marks, very thick]
         table[x=depths30, y=count] {plot_depths30_asy.txt};
    \legend{true values, Theorem~\ref{thm:distinct-depths}}
    \end{axis}
  \end{tikzpicture}
   \begin{tikzpicture}
    \begin{axis}[
      xlabel=distinct depths of leaves,
      ylabel=probability,
      legend pos=outer north east]
    \addplot[color=blue, mark=*, only marks, mark size=1.5]
         table[x=depths200, y=count] {plot_depths200_true.txt};
    \addplot[color=black, no marks, very thick]
         table[x=depths200, y=count] {plot_depths200_asy.txt};
    \legend{true values, Theorem~\ref{thm:distinct-depths}}
    \end{axis}
  \end{tikzpicture}
 
  \caption{Distribution of the distinct depths of leaves for $t=2$, and $n=30$
    (top figure) and $n=200$ (bottom figure)
    inner vertices. On the one hand, this figure shows the true distribution of
    all trees of the given size and on the other hand the result on the
    asymptotic normal distribution (Theorem~\ref{thm:distinct-depths} with only
    main terms of mean and variance taken into account).}
  \label{fig:depths}
\end{figure}


As mentioned above, the proof of Theorem~\ref{thm:distinct-depths} works
analogously to the proof of Theorem~\ref{thm:height}. It is again spread over
several lemmata. There is a one-to-one correspondence of
Lemmata~\ref{lem:d:outer-region} to~\ref{lem:d:qpp-positive} to
Lemmata~\ref{lem:h:outer-region} to~\ref{lem:h:qpp-positive} in
the section for the height parameter. Due to their similarities, the proofs are
skipped a couple of times and only some differences (for example, the different
constants) are mentioned. The idea of the proof itself is described in the
previous section below Theorem~\ref{thm:height}.


To show Theorem~\ref{thm:distinct-depths}, it is convenient to work with the
finite sum
\begin{equation*}
  D_J(q,v) \colonequals 
  1 - \sum_{1\leq j<J} \frac{v q^{\hp{j}}}{1-q^{\hp{j}}} 
  \prod_{i=1}^{j-1} \frac{1-v-q^{\hp{i}}}{1-q^{\hp{i}}}
\end{equation*}
instead of the denominator $D(q,v) = 1 - \f{b}{q,1,u,1}$. The error made by this approximation
was analyzed at the end of Section~\ref{sec:generating-function}, namely in the
two lemmata~\ref{lem:d:den-bounds-pre} and~\ref{lem:d:den-bounds}.


For the local limit theorem, we split up into the central region around $v=1$
and an outer region. The following lemma covers the latter one.


\begin{lemma}\label{lem:d:outer-region}
  Let $v = e^{i\varphi}$, where $\varphi$ is real with $2\pi\, 2^{-t/2} <
  \abs{\varphi} \le \pi$. Then each zero of $z\mapsto\f{D}{1/z,v}$ has absolute
  value smaller than $2-1/2^t$.
\end{lemma}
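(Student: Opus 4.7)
The plan is to mirror Lemma~\ref{lem:h:outer-region} in the $v$-variable. Suppose for contradiction that $z_0$ is a zero of $z\mapsto\f{D}{1/z,v}$ with $\abs{z_0}\geq 2-1/2^t$. Since $\f{D_2}{q,v}=1-vq/(1-q)$, the equation $\f{D}{1/z_0,v}=0$ rewrites as
\[
  z_0=1+v-(z_0-1)\bigl(\f{D}{1/z_0,v}-\f{D_2}{1/z_0,v}\bigr),
\]
whence $\abs{z_0}\leq\abs{1+v}+\abs{z_0-1}\cdot\abs{\f{D}{1/z_0,v}-\f{D_2}{1/z_0,v}}$. The strategy is to bound the second summand by $C/2^t$ for an explicit $C\leq 7$, so that $\abs{1+v}\geq\abs{z_0}-C/2^t\geq 2-(C+1)/2^t\geq 2-8/2^t$, and then to contradict this with the hypothesis on $\varphi$.

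For the tail I would apply Lemma~\ref{lem:d:den-bounds-pre} with $J=2$, $q=1/z_0$ and $\abs{v}=1$ to obtain
\[
  \abs{\f{D}{1/z_0,v}-\f{D_2}{1/z_0,v}}\leq\frac{1}{\abs{z_0^{\hp{2}}-1}}\left(1+\frac{\abs{z_0}}{\abs{z_0-1}}\right)\frac{1}{1-Q},
\]
with $Q=\abs{z_0}^{-t^2}\bigl(1+1/(1-\abs{z_0}^{-\hp{2}})\bigr)$ small for $t\geq 3$. Unlike the bound~\eqref{eq:h:den-bound-J2} in the height case, the prefactor $\abs{z_0-1}$ no longer cancels cleanly after multiplication, so an a priori bound on $\abs{z_0}$ is needed. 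For this, the identity $(z_0-1)\bigl(1+\f{D}{1/z_0,v}-\f{D_2}{1/z_0,v}\bigr)=v$ (which is just $\f{D}{1/z_0,v}=0$) gives $\abs{z_0-1}=1/\abs{1+(D-D_2)}$; combined with the lower bounds $\abs{z_0^{\hp{2}}-1}\geq(2-1/2^t)^{t+1}-1\geq 2^{t+1}-t-2$ (via Bernoulli) and $\abs{z_0-1}\geq 1-1/2^t$, one verifies that the tail is at most $1/2$ for $t\geq 3$, yielding $\abs{z_0-1}\leq 2$ and $\abs{z_0}\leq 3$. Feeding this back into the tail estimate produces $\abs{z_0-1}\cdot\abs{(D-D_2)(1/z_0,v)}\leq C/2^t$ with $C\leq 7$.

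The final contradiction uses $\abs{1+v}^2=2+2\cos\varphi$, i.e., $\abs{1+v}=2-4\sin^2(\varphi/4)\leq 2-2\varphi^2/\pi^2$ via the concavity estimate $\abs{\sin(\varphi/4)}\geq\abs{\varphi}/(\sqrt{2}\pi)$ on $\abs{\varphi}\leq\pi$; the hypothesis $\abs{\varphi}>2\pi\cdot 2^{-t/2}$ then gives $\abs{1+v}<2-8/2^t$, which is the desired contradiction. The main obstacle will be the bookkeeping in the bootstrap step: the extra factor $1+\abs{z_0}/\abs{z_0-1}$ in the $v$-tail (not present in the height case) forces one to confine $\abs{z_0}$ first. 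Note that for $t=2$ the hypothesis $\abs{\varphi}>2\pi\cdot 2^{-1}=\pi$ is vacuous, so nothing needs to be checked; for small $t$ where the constants are tight, one may fall back to a higher truncation $\f{D_J}{q,v}$ or to interval arithmetic in the spirit of Remark~\ref{rem:h:small-t}.
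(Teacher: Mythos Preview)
Your argument is correct and follows the same route as the paper, which simply states that the proof of Lemma~\ref{lem:h:outer-region} goes through with the constant $49/48$ replaced by $7$ in the bound~\eqref{eq:h:outer-bound-z0}. One remark: the bootstrap you flag as the ``main obstacle'' is actually dispensable. From $\abs{z_0-1}\ge\abs{z_0}-1$ you get $1+\abs{z_0}/\abs{z_0-1}\le 1+\abs{z_0}/(\abs{z_0}-1)$, and hence
\[
  \abs{z_0-1}\cdot\abs{(D-D_2)(1/z_0,v)}\le \frac{\abs{z_0-1}+\abs{z_0}}{\abs{z_0}^{t+1}-1}\cdot\frac{1}{1-Q}\le \frac{2\abs{z_0}+1}{\abs{z_0}^{t+1}-1}\cdot\frac{1}{1-Q},
\]
which is a decreasing function of $\abs{z_0}$ on $[2-2^{-t},\infty)$; evaluating at $\abs{z_0}=2-2^{-t}$ already gives a bound below $7/2^t$ for $t\ge 3$, with no need to first confine $\abs{z_0}\le 3$.
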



The proof goes along the same lines as the proof of
Lemma~\ref{lem:h:outer-region}, but we get the bound
\begin{equation*}
  \abs{z_0} \leq \abs{1+w} + \frac{7}{2^t}
\end{equation*}
instead of \eqref{eq:h:outer-bound-z0}.


Next, we go on to the central region. As a first step, we bound the location of
the dominant zero.


\begin{lemma}\label{lem:d:one-root-rouche}
  Suppose $t\geq4$ and $\abs{v-1} \leq \frac12 -
  5\bigl(\frac23\bigr)^t$, then $q\mapsto\f{D}{q,v}$ has exactly one
  root with $\abs{q}<\frac23$ and no root with $\abs{q}=\frac23$.
\end{lemma}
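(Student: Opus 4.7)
The plan is to follow the Rouché-theorem approach used in the proof of Lemma~\ref{lem:h:one-root-rouche}: compare $q \mapsto D(q,v)$ with $q \mapsto D(q,1)$ on the circle $|q|=2/3$, and invoke Lemma~\ref{lemma:dominant-singularity} for the zero count of $D(q,1)$ inside.

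First, I would truncate the denominator by setting $D_2(q,v) = 1 - vq/(1-q)$, the $J=2$ truncation. On the circle $|q|=2/3$ and for $|v|\le 3/2$ (which follows from the hypothesis $|v-1|\le \frac12 - 5(2/3)^t$), Lemma~\ref{lem:d:den-bounds-pre} provides explicit tail bounds $|D(q,v)-D_2(q,v)|\le b_v$ and $|D(q,1)-D_2(q,1)|\le b_1$, each of the form $c\cdot (2/3)^{t+1}$ for a numerical constant depending on $|v|$. Next, by the triangle inequality,
\begin{equation*}
|D(q,v)-D(q,1)| \le b_v + |D_2(q,v)-D_2(q,1)| + b_1 = b_v+b_1+\left|\frac{(v-1)q}{1-q}\right|.
\end{equation*}
On $|q|=2/3$ one has $|q/(1-q)|\le 2$, so the last term is at most $2|v-1|\le 1-10(2/3)^t$. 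For the lower bound on $|D(q,1)|$, I would reuse the Möbius-transform argument from Lemma~\ref{lem:h:one-root-rouche}: the map $q\mapsto 1-q/(1-q)$ sends $|q|=2/3$ onto the circle $|z-1/5|=6/5$, so $|D_2(q,1)|\ge 1$, whence $|D(q,1)|\ge 1-b_1$. Combining everything, Rouché's hypothesis reduces to the numerical inequality $b_v+2b_1<10(2/3)^t$.

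The main obstacle is verifying this last numerical inequality in the hypothesized regime. The factor $(1+|v|/|1-q^{\hp{i}}|)$ appearing inside the bound of Lemma~\ref{lem:d:den-bounds-pre} is substantially larger than the analogous factor in the height case, so $b_v$ with $J=2$ may be too loose. If so, one option is to sharpen by writing $b_v$ in terms of $b_1$ plus a correction proportional to $|v-1|$; a cleaner alternative is to truncate at $J=3$, where the explicit telescoping
\begin{equation*}
D_3(q,v)-D_3(q,1) = \frac{(v-1)q}{1-q}\left[-1+\frac{q^t(v+q)}{1-q^{1+t}}\right]
\end{equation*}
naturally absorbs an extra factor of $|v-1|$, while the $J=3$ tail is negligible on $|q|=2/3$. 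Observe also that the hypothesis $|v-1|\le \frac12-5(2/3)^t$ is vacuous for the smallest values of $t$ in the range, so only moderately large $t$ have to be checked numerically. Once the inequality is established, Rouché's theorem gives that $D(q,v)$ has the same number of zeros in $|q|<2/3$ as $D(q,1)$, which is exactly one by Lemma~\ref{lemma:dominant-singularity}, and strictness of the inequality simultaneously excludes zeros on the boundary $|q|=2/3$.
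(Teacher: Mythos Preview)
Your approach is correct and essentially identical to the paper's: both compare $D(q,v)$ with $D(q,1)$ via Rouch\'e on $|q|=2/3$, use the $J=2$ truncation together with the M\"obius-transform lower bound $|D_2(q,1)|\ge 1$, and reduce to the numerical tail inequality $3b<10(2/3)^t$. The paper simply asserts the uniform bound $|D(q,v)-D_2(q,v)|\le 3.09\,(2/3)^t$ for $t\ge 4$ and stops there; your observation that the hypothesis is vacuous for $t\in\{4,5\}$ and your $J=3$ fallback are reasonable ways to make that numerical step explicit, but they are refinements of the same argument rather than a different one.
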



This lemma is proven analogously to Lemma~\ref{lem:h:one-root-rouche}. The only
difference is the bound
\begin{equation*}
  \abs{\f{D}{q,v} - \f{D_2}{q,v}} \leq 3.09 \left(\frac23\right)^t = b, 
\end{equation*}
which is valid for $t\geq4$.


\begin{lemma}\label{lem:d:qw-analytic}
  For $t\geq4$ and $\abs{v-1} \leq \frac12 - 5\bigl(\frac23\bigr)^t$, the
  function $\f{q_0}{v}$ given implicitly by $\f{D}{\f{q_0}{v}, v} = 0$,
  $\abs{\f{q_0}{v}}<\frac23$, is analytic.
\end{lemma}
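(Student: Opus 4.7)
The plan is to follow exactly the pattern of Lemma~\ref{lem:h:qw-analytic}, essentially invoking the Analytic Inversion Lemma (cf.\ Flajolet and Sedgewick~\cite{Flajolet-Sedgewick:ta:analy}, Chapter~IV.7). By Lemma~\ref{lem:d:one-root-rouche}, for every $v$ with $\abs{v-1}\le\frac12-5\bigl(\frac23\bigr)^t$, the function $q\mapsto\f{D}{q,v}$ has no zero on the circle $\abs{q}=\frac23$ and exactly one (simple) zero in the disc $\abs{q}<\frac23$. This is precisely the setting in which the contour-integral representation of a single root is available.

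Concretely, I would define
\begin{equation*}
  \f{\sigma_1}{v} = \frac{1}{2\pi i} \oint_{\abs{q}=\frac23}
  \frac{\frac{\partial}{\partial q}\f{D}{q,v}}{\f{D}{q,v}} q \dd q,
\end{equation*}
which by the residue theorem agrees with the unique zero of $\f{D}{\cdot,v}$ inside the contour, so $\f{\sigma_1}{v}=\f{q_0}{v}$. For analyticity of $\sigma_1$, the key observation is that the integrand is jointly continuous in $(q,v)$ and analytic in $v$ for each fixed $q$ on the contour: analyticity of $\f{D}{q,v}$ and of $\frac{\partial}{\partial q}\f{D}{q,v}$ in $v$ on the prescribed region follows from the uniform convergence of the defining series (which is controlled by the tail estimates of Lemma~\ref{lem:d:den-bounds-pre} and Lemma~\ref{lem:d:den-bounds}), and the denominator is nowhere zero on the contour by Lemma~\ref{lem:d:one-root-rouche}. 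A standard combination of Morera's theorem, Fubini's theorem, and Cauchy's integral theorem, applied to the double integral that arises when one tests analyticity on a small closed curve in the $v$\nobreakdash-plane, then yields that $\sigma_1$ is analytic.

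There is no real obstacle beyond bookkeeping: the argument is identical to the one already given for Lemma~\ref{lem:h:qw-analytic}, only with the variable $v$ (and the perturbation parameter range inherited from Lemma~\ref{lem:d:one-root-rouche}, valid for $t\ge 4$ rather than $t\ge 5$) in place of $w$, and with the appropriate replacement of the tail estimate (Lemma~\ref{lem:d:den-bounds-pre} in place of Lemma~\ref{lem:den-bounds-pre}). Since the section has already stated that the proofs of Lemmata~\ref{lem:d:outer-region}--\ref{lem:d:qpp-positive} are analogous to their counterparts, I would phrase the proof of Lemma~\ref{lem:d:qw-analytic} very briefly by invoking the proof of Lemma~\ref{lem:h:qw-analytic} verbatim, pointing out only that the required non-vanishing on $\abs{q}=\frac23$ and uniqueness of the inner zero are supplied by Lemma~\ref{lem:d:one-root-rouche}.
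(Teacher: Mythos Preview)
Your proposal is correct and matches the paper's approach exactly: the paper simply states that the proof is the same as that of Lemma~\ref{lem:h:qw-analytic} and skips it, and you have faithfully reproduced that argument with $v$ in place of $w$ and Lemma~\ref{lem:d:one-root-rouche} in place of Lemma~\ref{lem:h:one-root-rouche}. The additional reference to the tail estimates for analyticity of $D$ in $v$ is a harmless elaboration.
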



The proof of this analyticity result is the same as the one from
Lemma~\ref{lem:h:qw-analytic}, therefore skipped here. 

In the central region around $v=1$, small changes in $v$ do not change the
location of the dominant zero much, which is made explicit in the lemma below.


\begin{lemma}\label{lem:d:qpp-bound-near-1}
  Let $t\geq 30$ and $v=e^{i\varphi}$, where $\varphi\in\R$ with $\abs{\varphi}
  \leq 2\pi\, 2^{-t/2}$, then
  \begin{equation*}
    \abs{\f{q_0}{v} - \f{q_0}{1}} \leq \frac{9}{2^{t/2}},
    \quad
    \abs{\f{q_0'}{v} - \f{q_0'}{1}} \leq \frac{34}{2^{t/2}}
    \quad\text{and}\quad
    \abs{\f{q_0''}{v} - \f{q_0''}{1}} \leq \frac{202}{2^{t/2}}.
  \end{equation*}
\end{lemma}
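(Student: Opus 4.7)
The plan is to mirror the proof of Lemma~\ref{lem:h:qpp-bound-near-1} essentially verbatim, with only the numerical constants adjusted for the slightly larger radius of the $\varphi$-interval ($2\pi\,2^{-t/2}$ instead of $\sqrt{97/96}\,\pi\,2^{-t/2}$) and for the fact that the perturbing variable is now $v$ rather than $w$. Set $d = \frac12 - 5\bigl(\frac23\bigr)^t$. By Lemma~\ref{lem:d:qw-analytic}, the function $q_0(v)$ is analytic on the closed disc $\abs{v-1}\le d$, and Lemma~\ref{lem:d:one-root-rouche} guarantees $\abs{q_0(\zeta)}<\frac23$ throughout this disc.

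First, I would apply Cauchy's integral formula for derivatives on the circle $\abs{\zeta-1}=d$ to obtain
\begin{equation*}
  q_0^{(k)}(v) - q_0^{(k)}(1)
  = \frac{k!}{2\pi i}\oint_{\abs{\zeta-1}=d}
     q_0(\zeta)\Bigl((\zeta-v)^{-(k+1)}-(\zeta-1)^{-(k+1)}\Bigr)\dd\zeta
\end{equation*}
for $k\in\{0,1,2\}$. Then I would bound the kernel difference exactly as in the height case: writing $(\zeta-v)^{-(k+1)}-(\zeta-1)^{-(k+1)}$ as the integral of its $\xi$-derivative along the straight segment from $1$ to $v$, and using $\abs{v-1}\le\abs{\varphi}$, one gets the pointwise estimate
\begin{equation*}
  \abs{(\zeta-v)^{-(k+1)}-(\zeta-1)^{-(k+1)}}
  \le (k+1)\abs{\varphi}\max_{\xi\in[1,v]}\abs{\zeta-\xi}^{-(k+2)}
  \le (k+1)\abs{\varphi}\bigl(d-\abs{\varphi}\bigr)^{-(k+2)}.
\end{equation*}
Combining with $\abs{q_0(\zeta)}<\frac23$ and the length $2\pi d$ of the contour yields the master inequality
\begin{equation*}
  \abs{q_0^{(k)}(v)-q_0^{(k)}(1)}
  \le \frac{(k+1)!}{3}\abs{\varphi}\bigl(d-\abs{\varphi}\bigr)^{-(k+2)}.
\end{equation*}

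Finally, I would plug in the hypothesis $\abs{\varphi}\le 2\pi\,2^{-t/2}$ together with the assumption $t\ge 30$ (which ensures $d-\abs{\varphi}$ stays safely close to $\frac12$, since both $5\bigl(\frac23\bigr)^t$ and $2\pi\,2^{-t/2}$ are tiny) and verify the three claimed numerical bounds $9$, $34$, $202$ by direct estimation for $k=0,1,2$ respectively. The only step requiring genuine care is confirming that the worst case $t=30$ really produces constants no larger than those stated; this is a one-line numerical check using the monotonicity of the bound in $t$. Since the expected constants are roughly $2\pi/\sqrt{97/96}\pi\approx 2$ times the corresponding constants in Lemma~\ref{lem:h:qpp-bound-near-1}, which are $5$, $17$, $102$, the target values $9$, $34$, $202$ are comfortably within reach and no further structural argument is required.
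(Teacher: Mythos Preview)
Your proposal is correct and follows exactly the approach the paper takes: the paper's own proof simply states that it works analogously to Lemma~\ref{lem:h:qpp-bound-near-1}, and you have carried out that analogy with the appropriate radius $2\pi\,2^{-t/2}$ and verified the resulting constants. The master inequality $\abs{q_0^{(k)}(v)-q_0^{(k)}(1)}\le \tfrac{(k+1)!}{3}\abs{\varphi}(d-\abs{\varphi})^{-(k+2)}$ and the numerical check at $t=30$ are precisely what is needed.
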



Again, the proof works analogously to the proof of the corresponding lemma for
the height parameter. 

In order to prove the local limit theorem we show that the second derivative of
$\abs{\f{q_0}{e^{i\varphi}}}$ is positive. This is stated in the following
lemma.


\begin{lemma}\label{lem:d:qpp-positive}
  If $t\geq 30$ and $\varphi\in\R$ with $\abs{\varphi} \leq 2\pi\, 2^{-t/2}$,
  then
  \begin{equation*}
    \frac{\dd^2}{\dd \varphi^2} \abs{\f{q_0}{e^{i\varphi}}}^2 > 0.
  \end{equation*}
\end{lemma}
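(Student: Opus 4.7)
The plan is to adapt the argument from Lemma~\ref{lem:h:qpp-positive} line-by-line, replacing the role of $w$ by $v$. Since by Lemma~\ref{lem:d:qw-analytic} the function $q_0(v)$ is analytic in a neighborhood of $v=1$ of radius $\tfrac12 - 5(\tfrac23)^t$, implicit differentiation of $D(q_0(v),v)=0$ gives
\begin{equation*}
  q_0'(v) = -\frac{\Delta_v}{\Delta_q},\qquad
  q_0''(v) = \frac{2\Delta_{qv}\Delta_v\Delta_q - \Delta_{qq}\Delta_v^2 - \Delta_{vv}\Delta_q^2}{\Delta_q^3},
\end{equation*}
where $\Delta_q,\Delta_v,\Delta_{qq},\Delta_{qv},\Delta_{vv}$ denote the corresponding partial derivatives of $D(q,v)$ evaluated at $(q_0(v),v)$. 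First I would compute these partial derivatives at $(q_0,1)$ by working with the truncation $D_J(q,v)$ for a suitable $J$ (say $J=3$) and controlling the tail with Lemma~\ref{lem:d:den-bounds}; combined with the asymptotic expansion of $q_0$ from Lemma~\ref{lemma:dominant-singularity}, this yields expansions of the form
\begin{equation*}
  q_0'(1) = -\tfrac14 + O(t/2^t),\qquad
  q_0''(1) = \tfrac14 + O(t^2/2^t),
\end{equation*}
with explicit numerical $\varepsilon$-constants analogous to those in the height case.

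Next, I would lift these estimates to $v=e^{i\varphi}$ with $|\varphi|\leq 2\pi\cdot 2^{-t/2}$ using the bounds of Lemma~\ref{lem:d:qpp-bound-near-1}. This gives
\begin{align*}
  q_0(e^{i\varphi}) &= \tfrac12 + O(2^{-t/2}),\\
  q_0'(e^{i\varphi}) &= -\tfrac14 + O(2^{-t/2}),\\
  q_0''(e^{i\varphi}) &= \tfrac14 + O(2^{-t/2}),
\end{align*}
with explicit constants. Writing $q_0(e^{i\varphi})=x(\varphi)+iy(\varphi)$ and applying the chain rule
\begin{align*}
  x'+iy' &= ie^{i\varphi}q_0'(e^{i\varphi}),\\
  x''+iy'' &= -e^{i\varphi}q_0'(e^{i\varphi})-e^{2i\varphi}q_0''(e^{i\varphi}),
\end{align*}
I obtain explicit estimates for $x,y,x',y',x'',y''$ entirely analogous to those displayed in the height proof.

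Finally, substituting into the identity
\begin{equation*}
  \frac{d^2}{d\varphi^2}|q_0(e^{i\varphi})|^2
  = 2\bigl(x'(\varphi)^2+y'(\varphi)^2+x(\varphi)x''(\varphi)+y(\varphi)y''(\varphi)\bigr),
\end{equation*}
the main term contributed by $x'^2+y'^2+x\cdot x''$ works out to $2(0+\tfrac1{16}+\tfrac12\cdot\tfrac14)=\tfrac14+\tfrac18$-type leading behavior, producing a value of the form $\tfrac18+O(2^{-t/2})$ with an absolute constant. The main obstacle is purely numerical: one has to track the $\varepsilon$-constants carefully enough through the chain-rule expansion so that, for $t\geq 30$, the accumulated error (which scales as $2^{-t/2}$, not $2^{-t}$, because $\varphi$ ranges up to $2\pi\cdot 2^{-t/2}$) remains strictly smaller than the leading $\tfrac18$ contribution. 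This is verified by the interval-arithmetic calculation in SageMath already used throughout the paper, in the same style as Lemma~\ref{lem:h:qpp-positive}; the threshold $t\geq 30$ is precisely what is needed to make the explicit bound go through, and the remaining cases $t\leq 29$ are handled by the algorithmic approach described in Remark~\ref{rem:h:small-t} applied to $D(q,v)$.
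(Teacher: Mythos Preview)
Your proposal is correct and follows exactly the approach the paper takes: the paper's own proof of this lemma is literally the single sentence ``We use the proof of Lemma~\ref{lem:h:qpp-positive} and update the constants,'' and you have spelled out precisely what that means. One small slip: in your computation of the leading term you write $x\cdot x''\approx \tfrac12\cdot\tfrac14$, but in fact $x''+iy'' = -e^{i\varphi}q_0'-e^{2i\varphi}q_0''\approx \tfrac14-\tfrac14=0$, so the $\tfrac18$ main term comes entirely from $2y'^2\approx 2\cdot\tfrac1{16}$; your final answer $\tfrac18+O(2^{-t/2})$ is nonetheless correct.
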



We use the proof of Lemma~\ref{lem:h:qpp-positive} and update the constants.


For a fixed $t$ we can use the mathematics software system
SageMath~\cite{Stein-others:2015:sage-mathem-6.5} and perform calculations with
interval arithmetic. The details, which are stated for the height in
Remark~\ref{rem:h:small-t}, remain valid. For integers~$t$ fulfilling $2\leq
t\leq 30$ we showed that $\abs{\f{q_0}{e^{i\varphi}}}$ has a unique minimum at
$\varphi=0$.

The proof of Theorem~\ref{thm:distinct-depths} follows by the same
arguments as the proof of Theorem~\ref{thm:height}: We use Theorem IX.9
of Flajolet and Sedgewick~\cite{Flajolet-Sedgewick:ta:analy} applied to the
function $H(q,1,v,1)$ to get mean and variance (and asymptotic normality as a
central limit, too). For the local limit theorem the uniqueness of the minimum
of $\abs{\f{q_0}{e^{i\varphi}}}$ is shown by a two-fold strategy. The central
region with $\abs{\varphi} \leq \sqrt{3}\,\pi\, 2^{-t/2}$ is covered by
Lemma~\ref{lem:d:qpp-positive} (using previous lemmata as
prerequisites). Lemma~\ref{lem:d:outer-region} discusses the outer region. For
$t<30$ the algorithmic approach above is used.


\section{The Width}\label{section:width}


In this section, we consider the width, i.e., the maximum number of
leaves on the same level, for which we have the following theorem.


\begin{theorem}\label{theorem:width}
  For a randomly chosen tree $T\in\calT$ of size $n$, we have
  \begin{equation*}
    \E(w(T))=\mu_w\log n+O(\log\log n)
  \end{equation*}
  for the expectation of the width $w(T)$, where $\mu_w$ is given by
  \begin{equation*}
    \mu_w=\frac1{t-1}\left(\frac1{\log 2}+\frac1{4\cdot 2^t\log^2 2}+\frac{0.2t}{4^t}\neweps(t)\right)
  \end{equation*}
  for $t\ge 10$. For $2\le t\le 9$, the values of $\mu_w$ are given in
  Table~\ref{tab:mu_w_values}.

  Furthermore, we have the concentration property
  \begin{equation}\label{eq:concentration-property-width}
    \P(\abs{w(T)-\mu_w \log n}\ge \sigma \mu_w\log\log n)=O\!\left(\frac1{\log^{\sigma-2} n}\right)
  \end{equation}
  for $\sigma>2$.
\end{theorem}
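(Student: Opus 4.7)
The plan is to estimate $\P(w(T) \ge m)$ for all $m$ and use this to deduce both the expectation and the concentration property. Since $w(T) = \max_{j \ge 1} L_j(T)$ with $L_j(T) = tn_{j-1}(T) - n_j(T)$ (convention $n_{h(T)} = 0$), the union bound gives $\P(w(T) \ge m) \le \sum_{j \ge 1} \P(L_j(T) \ge m)$. For fixed $j$, a tree $T$ with $L_j(T) \ge m$ decomposes uniquely into a canonical $j$-level prefix (the subgraph of vertices of depth $\le j-1$) ending with $n_{j-1} = r \ge \lceil m/t \rceil$ internal nodes at level $j-1$, together with a canonical sub-forest of $n_j \in \{0,1,\ldots,tr-m\}$ roots below. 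Applying Lemma~\ref{lemma:number-canonical-forests} for the uniform bound $\nu(n_j) = \Theta(1)$ and the singularity analysis of $H(q,1,1,1)$ reduces $\P(L_j(T) \ge m)$ to a sum over $r$ involving $P_{j,r}(q_0)$ (the prefix generating function evaluated at the dominant singularity) weighted by a constant amount of residue mass from the continuing sub-forest.

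The main technical step, and main obstacle, is to obtain the decay estimate $\sum_{j \ge 1}\P(L_j(T) \ge m) = O\bigl(\mathrm{poly}(\log n)\cdot \lambda^m\bigr)$ with $\log(1/\lambda) = 1/\mu_w$, and to track the asymptotic expansion of $\mu_w$ in powers of $2^{-t}$. I would introduce the bivariate generating function $P(u,q) = \sum_{j,r} u^r P_{j,r}(q)$ and derive for it a functional equation of the same type as those analyzed in Section~\ref{sec:generating-function}, with $u$ marking the terminal value at level $j-1$ and a coupling $u \mapsto (uq)^t$ reflecting the growth constraint $n_{j-1} \le t n_{j-2}$. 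The decay rate $\lambda$ then arises from the location of the singularity of $P(u,q_0)$ in the $u$-plane; the leading constant $1/\mu_w = (t-1)\log 2 + O(2^{-t})$ matches the asymptotic expansion of $q_0$ from Lemma~\ref{lemma:dominant-singularity}, and pinning down the subleading terms requires careful tracking of the expansion around both $q = q_0$ and the relevant $u$-singularity, very much in the spirit of the careful constant-extraction done in Sections~\ref{sec:height} and \ref{sec:distinct-depths}.

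With the upper tail bound $\P(w(T) \ge m) = O(\mathrm{poly}(\log n)\cdot \lambda^m)$ in hand, substituting $m = \mu_w\log n + \sigma \mu_w\log\log n$ yields the upper tail portion of \eqref{eq:concentration-property-width}, the factor $\log^{\sigma-2}$ absorbing the polynomial $\log n$ prefactor from the union bound. For the matching lower tail and the lower bound $\E(w(T)) \ge \mu_w\log n - O(\log\log n)$, I would complement this with a second-moment argument on the count $N = \#\{j : L_j(T) \ge \mu_w\log n - C\log\log n\}$, using a two-level variant of the same bivariate generating function $P(u,q)$ to compute $\E(N)$ and its variance and then applying Chebyshev to conclude that $N \ge 1$ with high probability. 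Combining the two tail bounds via $\E(w(T)) = \sum_{m \ge 1}\P(w(T) \ge m)$ yields $\E(w(T)) = \mu_w\log n + O(\log\log n)$.
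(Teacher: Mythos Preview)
Your approach is genuinely different from the paper's, and the central step has a real gap that would cause it to fail for $t\ge 3$.

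The paper does not use a union bound over levels at all. Instead it studies $\P(w(T)\le K)$ directly via the generating function $W_K(q)$ for trees whose leaf-width is bounded by~$K$. This $W_K$ is expressed through a finite transfer matrix $M_K(q)$, and the heart of the proof is a two-sided estimate on its dominant singularity,
\[
q_0+\frac{c}{K}\,q_0^{K/(t-1)}\ \le\ q_K\ \le\ q_0+c'K^2\,q_0^{K/(t-1)},
\]
obtained by truncating the explicit Perron eigenvector $(p_r)_{r\ge1}$ of the infinite matrix $M_\infty(q_0)$ (the same $p_r$ that appear in Section~\ref{sec:number-leaves-maximum-depth}) and using Perron--Frobenius bounds. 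Then $\P(w(T)\le K)\sim(q_K/q_0)^{-n}$, and summing $1-\P(w(T)\le K)$ over $K$ gives $\E(w(T))=\log_S n+O(\log\log n)$ with $S=q_0^{-(t-1)}$, hence $\mu_w=-1/((t-1)\log q_0)$.

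Your union-bound route breaks at the decay estimate. A tree with $L_j(T)\ge m$ at an interior level must have $n_{j-1}\ge m/t$; conversely, once $n_{j-1}=r>m/t$, the conditional probability that $n_j\le tr-m$ is bounded away from zero (indeed, given $n_{j-1}=r$, the continuation count is $\Theta(q_0^{s})$ for each $n_j=s\le tr$, so small $n_j$ is the \emph{likely} outcome). Hence
\[
\P(L_j\ge m)\ \asymp\ \P\bigl(n_{j-1}>m/t\bigr)\ \asymp\ q_*^{\,m/t}\ =\ q_0^{\,m/(t-1)},
\]
uniformly for bulk levels $j$, because the stationary distribution of $n_{j-1}$ is $\propto \ell_r p_r\sim q_*^{\,r}$ (the left eigenvector $\ell$ of $M_\infty(q_0)$ is bounded). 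Since the height is $\Theta(n)$, the union bound gives
\[
\sum_{j\ge1}\P(L_j\ge m)\ \asymp\ n\,q_0^{\,m/(t-1)},
\]
which only becomes $O(1)$ at $m\approx (t-1)^2\mu_w\log n$, off from the true threshold $\mu_w\log n$ by a factor $(t-1)^2$. So your claimed estimate $\sum_j\P(L_j\ge m)=O(\mathrm{poly}(\log n)\cdot\lambda^m)$ with $\log(1/\lambda)=1/\mu_w$ cannot hold; the prefactor is $\Theta(n)$, not polylogarithmic, and the per-level rate is $q_0^{1/(t-1)}$, not $q_0^{t-1}$. (For $t=2$ the two rates coincide, which may be why the heuristic looked plausible.)

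Relatedly, the bivariate function you propose, $P(u,q)=\sum_{j,r}u^rP_{j,r}(q)$, is exactly $\sum_T q^{n(T)}u^{m(T)/t}$, i.e.\ $H(q,u^{1/t},1,1)$; by Theorem~\ref{theorem:generating-function-height-and-others} it has a simple pole at $q=q_0$ for every $\abs{u}\le1$, so ``the singularity of $P(u,q_0)$ in the $u$-plane'' is not defined --- $P(u,q_0)\equiv\infty$. The $u$-analysis you envision would have to be done \emph{after} removing this pole, which brings you back to the residue and hence to the eigenvector $(p_r)$ and the paper's transfer-matrix viewpoint.
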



\begin{table}
  \centering{\small
  \begin{equation*}
  \begin{array}{r|l}
\multicolumn{1}{c|}{t} &
\multicolumn{1}{c}{\mu_w} \\
    \hline
    2 & 1.710776751014961\\
    3 & 0.7660531443158307\\
    4 & 0.4936068552417457\\
    5 & 0.3650919029615249\\
    6 & 0.2902388863790219\\
    7 & 0.2411430286905858\\
    8 & 0.2063933963643483\\
    9 & 0.1804647899046739\\
    10 & 0.1603561167643597
  \end{array}
  \end{equation*}}
  \caption{Numerical values of the constants $\mu_w$ for $2\le t\le 10$, cf.\
    Theorem~\ref{theorem:width}.
  See also Remark~\ref{rem:numerical-calc}.
  For the accuracy of these numerical results see the note at the end of
  the introduction.}
  \label{tab:mu_w_values}
\end{table}


In Figure~\ref{fig:width} one can find the distribution of the leaf-width for a
given parameter set together with the mean found in
Theorem~\ref{theorem:width}. 


\begin{figure}
  \centering
   \begin{tikzpicture}
    \begin{axis}[
      xlabel=leaf-width,
      ylabel=probability,
      legend pos=outer north east]
    \addplot[color=blue, mark=*, only marks, mark size=1.5]
         table[x=width, y=count] {plot_width_true.txt};
    \addplot[color=black, no marks, very thick, dashed]
         table[x=width, y=count] {plot_width_mean.txt};
    \legend{true values, expectation}
    \end{axis}
  \end{tikzpicture}
 
  \caption{Distribution of the leaf-width for $t=2$ and $n=100$ inner
    vertices. On the one hand, this figure shows the true distribution of all
    trees of the given size and on the other hand the result on the expectation
    of this distribution (Theorem~\ref{theorem:width} with only main term of
    mean taken into account).}
  \label{fig:width}
\end{figure}
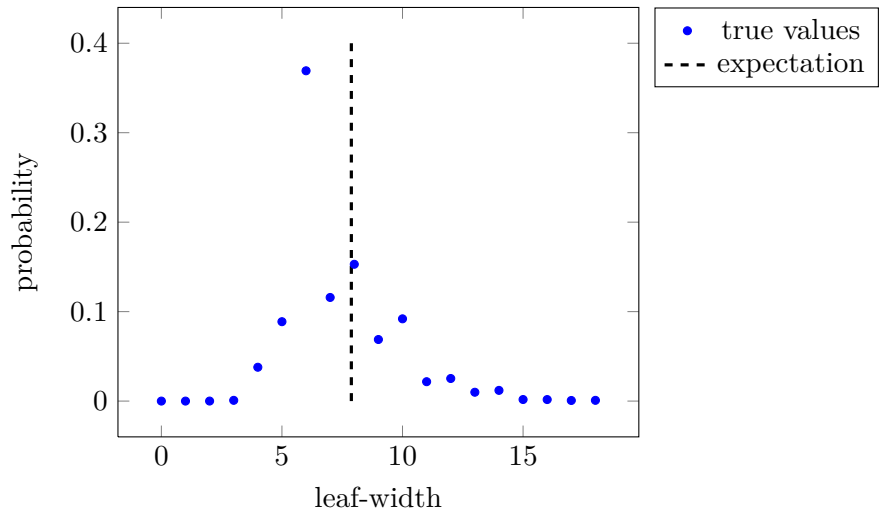


First, we sketch the idea of the proof. We consider trees whose width is
bounded by $K$. The corresponding generating function $W_K(q)$ can be
constructed by a suitable transfer matrix, and we quantify the obvious
convergence of $W_K(q)$ to $H(q,1,1,1)$. The dominant singularity $q_K$ of
$W_K(q)$ is estimated by truncating the infinite positive eigenvector of an
infinite transfer matrix corresponding to $H(q,1,1,1)$ and applying methods
from Perron--Frobenius theory. Then the probability $\P(w(T)\le K)$ can be
extracted from $W_K(q)$ using singularity analysis. Our key estimate states
that the singularity $q_K$ converges exponentially to $q_0$, from which the
main term of the expectation as well as the concentration property are obtained
quite easily. A more precise result on the distribution of the width would
depend on a better understanding of the behaviour of $q_K$ as $K \to \infty$,
which seems to be quite complicated.


The proof of the
theorem depends on the following definitions. Apart from the width $w(T)$,
we also need the ``inner width'' $w^*(T)$ defined to be
\begin{equation*}
  w^*(T) \colonequals \max_{0\le k < h(T)} L_T(k)
\end{equation*}
for a recursive construction. Here, $L_T(k)$ denotes the number of leaves at
level $k$. By definition, the inner width $w^*(T)$ does not take the leaves on
the last level into account.

For $K >  0$, we are interested in the generating function
\begin{equation*}
  W_K(q) \colonequals \sum_{\substack{T\in\calT\\ w(T)\le K}} q^{n(T)}.
\end{equation*}
We represent $W_K(q)$ in terms of the generating functions
\begin{equation*}
  W_{K,r}(q) \colonequals \sum_{\substack{T\in\calT\\ w^*(T)\le K\\m(T)=tr}}q^{n(T)}
\end{equation*}
for $r\ge 0$ so that
\begin{equation*}
  W_K(q)=1+\sum_{r=1}^{\lfloor K/t\rfloor} W_{K,r}(q).
\end{equation*}
Here, the summand $1$ corresponds to the tree of order $1$. For all other
trees, the number $m(T)$ of leaves on the last level is clearly a multiple of
$t$.

Next we set up a recursion for $W_{K,r}$, $1\le r\le N(K)$, where $N(K) \colonequals \lceil K/(t-1)\rceil-1$.
Let us define the column
vector
\begin{equation*}
  \bfW_K(q) \colonequals (W_{K,1}(q),\ldots, W_{K,N(K)})^T,
\end{equation*}
and the ``transfer matrix'' 
\begin{equation*}
  M_K(q) \colonequals \left(q^r\left[  \frac rt \le s \le
       \frac{r+K}{t} \right] \right)_{\substack{1\le
      r\le N(K)\\1\le s\le N(K)}},
\end{equation*}
where the Iversonian notation\footnote{Keep in mind that we also use square
  brackets for extracting coefficients: $[q^n]Q(q)$ gives the $n$-th
  coefficient of the power series $Q$.}
\begin{equation*}
  [\mathit{expr}]=
  \begin{cases}
    1&\text{ if $\mathit{expr}$ is true},\\
    0&\text{ if $\mathit{expr}$ is false}
  \end{cases}
\end{equation*}
popularised by Graham, Knuth and Patashnik~\cite{Graham-Knuth-Patashnik:1994} has been used.

We now express $\bfW_K(q)$ in terms of $M_K(q)$:

\begin{lemma}
  For $K\ge t$, we have
  \begin{equation}\label{eq:width-functional-equation-K}
    \bfW_K(q)=(I-M_K(q))^{-1}\begin{pmatrix}q\\0\\\vdots\\0\end{pmatrix}.
  \end{equation}
\end{lemma}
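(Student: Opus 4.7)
The plan is to encode each canonical tree $T$ counted in $W_{K,r}(q)$ by its profile $(I_0,I_1,\ldots,I_{h(T)-1})$, where $I_k$ denotes the number of internal vertices at depth $k$. By canonicity this encoding is a bijection onto the sequences satisfying $I_0=1$, $I_k\geq 1$ for $0\leq k<h(T)$, $I_{h(T)}=0$, and $I_{k+1}\leq tI_k$. Under the bijection, $n(T)=\sum_{k=0}^{h(T)-1}I_k$, $m(T)=tI_{h(T)-1}$, the number of leaves at depth $k\geq 1$ equals $tI_{k-1}-I_k$, and $w^*(T)\leq K$ translates into $tI_{k-1}-I_k\leq K$ for $1\leq k\leq h(T)-1$. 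Reading the profile from the bottom level upwards suggests the transfer-matrix formalism of the statement.

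The step I expect to be the main obstacle is to show that the finite matrix $M_K(q)$ of dimension $N(K)$ really captures every admissible profile, i.e., that $I_k\leq N(K)$ for all intermediate~$k$. For this, suppose $I_k\geq K/(t-1)$ at some $k<h(T)$. Then $(t-1)I_k\geq K$, which together with $I_{k+1}\geq tI_k-K$ yields $I_{k+1}\geq I_k\geq K/(t-1)>0$. Iterating, $I_j\geq K/(t-1)>0$ for every $j\geq k$, contradicting $I_{h(T)}=0$. Hence $I_k<K/(t-1)$, equivalently $I_k\leq\lceil K/(t-1)\rceil-1=N(K)$.

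With the bound in hand, I would derive~\eqref{eq:width-functional-equation-K} by peeling off the last (entirely leaf) level. For $T$ of height $h(T)\geq 2$ contributing to $W_{K,r}(q)$, removing the $tr$ bottom leaves turns the $r$ formerly internal vertices at depth $h(T)-1$ into leaves and leaves a canonical tree $T''$ of height $h(T)-1$ with $I_{h(T)-2}(T'')=s$ for some $s$. Conversely, one reconstructs $T$ from $(T'',s,r)$ by promoting the $r$ rightmost of the $ts$ leaves at the last level of $T''$ back to internal vertices (with $t$ new leaves each); this is well-defined precisely when $r\leq ts$, and $w^*(T)\leq K$ together with $w^*(T'')\leq K$ amount to $r/t\leq s\leq (r+K)/t$. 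Accounting for the weight $q^r$ of the newly added internal vertices and treating the base case $h(T)=1$ separately (which contributes $q\cdot[r=1]$) gives the scalar recursion
\[
  W_{K,r}(q) = q\cdot[r=1] + q^{r}\sum_{s=1}^{N(K)}
  \bigl[r/t\leq s\leq (r+K)/t\bigr]\,W_{K,s}(q),
\]
which in vector form reads $\bfW_K(q)=(q,0,\ldots,0)^T+M_K(q)\bfW_K(q)$. Since each nonzero entry of $M_K(q)$ is divisible by $q$, the matrix $I-M_K(q)$ is invertible as a formal power series in~$q$, and \eqref{eq:width-functional-equation-K} follows. The hypothesis $K\geq t$ ensures $N(K)\geq 1$, so that the matrix is non-degenerate.
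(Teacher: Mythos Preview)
Your approach is essentially the paper's: peel off the bottom level to obtain the recursion, and check that the resulting index stays in $\{1,\ldots,N(K)\}$. There is one slip in your boundedness argument, though. The constraint $I_{k+1}\ge tI_k-K$ you iterate comes from $tI_k-I_{k+1}\le K$, which (as you yourself noted) is the $w^*$-constraint and holds only for $k+1\le h(T)-1$; it does \emph{not} apply at $k+1=h(T)$, since the inner width excludes the last level. Hence your iteration only yields $I_{h(T)-1}\ge K/(t-1)$, not $I_{h(T)}\ge K/(t-1)$, and the contradiction with $I_{h(T)}=0$ is unfounded (indeed, the complete $t$-ary tree of height $h$ has $w^*(T)=0$ but $I_{h-1}=t^{h-1}$ arbitrarily large). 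The correct contradiction is instead with $I_{h(T)-1}=r\le N(K)<K/(t-1)$, which you know because you are computing $W_{K,r}$ for $1\le r\le N(K)$. This is precisely the paper's one-step argument: from $s\le (r+K)/t$ and $r<K/(t-1)$ one gets $s<K/(t-1)$, hence $s\le N(K)$. With that fix, your proof goes through.
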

\begin{proof}
  As in the proof of
  Theorem~\ref{theorem:generating-function-height-and-others}, a tree $T'$ of
  height $h+1\ge 2$, inner width at most $K$ and $m(T')=rt$ arises from a tree
  $T$ of height $h$, inner width at most $K$ and $m(T)=st$ by replacing $r$ of
  the $st$ leaves of $T$ on the last level by internal vertices with $t$
  succeeding leaves each. We obviously have $r\le st$. In order to ensure that
  $w^*(T')\le K$, we have to ensure that $st-r\le K$. We rewrite these two
  inequalities as
  \begin{equation}\label{eq:width-recursion-inequality}
     \frac rt \le s \le \frac{r+K}{t}.
  \end{equation}
  If $r\le N(K)$, we have $r < K/(t-1)$ and therefore $s < K/(t-1)$ by
  \eqref{eq:width-recursion-inequality}, i.e., $s\le N(K)$. This justifies our
  choice of $N(K)$. The construction above yields $s$ new internal vertices
  in~$T'$. There is only one tree $T'$ of height $< 2$, namely the star of order $t+1$, which has one internal vertex
  (the root). In this case, $r=1$.

  Translating these considerations into the language of generating functions
  yields
  \begin{equation*}
    W_{K,r}(q)= q [r=1]+\sum_{s=1}^{N(K)}q^r\left[  \frac rt \le s \le
       \frac{r+K}{t} \right] W_{K,s}(q).
  \end{equation*}
  Rewriting this in vector form yields \eqref{eq:width-functional-equation-K}.
\end{proof}

We will obtain asymptotic expressions for the coefficients of $\bfW_K$ by
singularity analysis. To this end, we have to find the singularities of
$(I-M_K(q))^{-1}$ as a meromorphic function in $q$.  In order to do so, we have
to consider the zeros of the determinant $\det(I-M_K(q))$. Note that $q_K$ is a zero of
$\det(I-M_K(q))$ if and only if $1$ is an eigenvalue of $M_K(q_K)$. In the next
lemma, we collect a few results connecting $M_K(q)$ with Perron--Frobenius
theory.

\begin{lemma}\label{lemma:Perron-Frobenius-implications}
  Let $K\ge t$ and $q > 0$. Then
  \begin{enumerate}
  \item the matrix $M_K(q)$ is a non-negative, irreducible,
    primitive matrix;
  \item the function $q\mapsto \lambdamax(M_K(q))$ mapping $q$ to
    the spectral radius of $M_K(q)$ is a strictly increasing function from
    $(0,\infty)$ to $(0,\infty)$;
  \item if $M_K(q)x\le x$ or $M_K(q)x\ge x$ holds componentwise for some positive vector $x$,
    then $\lambdamax(M_K(q))\le 1$ or $\lambdamax(M_K(q))\ge 1$, respectively.
  \end{enumerate}
\end{lemma}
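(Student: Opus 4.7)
The three assertions are treated separately. Part (1) is the combinatorial content; parts (2) and (3) then follow from standard Perron--Frobenius theory.

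For part (1), non-negativity is immediate from $q>0$. For irreducibility and primitivity I would analyse the directed graph $G_K$ on $\{1,\ldots,N(K)\}$ with an edge $r\to s$ whenever $\lceil r/t\rceil\le s\le\lfloor(r+K)/t\rfloor$. Three easy arithmetic facts are the heart of the argument: every vertex carries a self-loop (since $r\le N(K)$ gives $(t-1)r\le K$, hence $r\le(r+K)/t$); every vertex $r\ge 2$ admits the backward edge $r\to r-1$ (the left inequality uses $t\ge 2$, the right one uses $(t-1)r\le K$ again); and iterating the forward map $f(r)=\lfloor(r+K)/t\rfloor$ starting at $r_0=1$ produces a non-decreasing sequence with $f(r)\ge r+1$ whenever $(t-1)r\le K-t$. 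Using the second fact, every vertex reaches $1$; using the third, together with the observation that the one-step forward neighbourhood $[\lceil r/t\rceil,f(r)]$ is an interval, the vertex $1$ reaches every vertex of $G_K$. Hence $G_K$ is strongly connected, so $M_K(q)$ is irreducible, and the self-loops additionally yield aperiodicity, so $M_K(q)$ is primitive. The only slightly fiddly point is to verify that the iteration of the truncated map $\min(f(\cdot),N(K))$ actually stabilises at $N(K)$ itself; this boils down to a short case distinction according to whether $t-1$ divides $K$ (which decides whether the relevant fixed point of $f$ lies at $N(K)$ or just above), and I expect this bookkeeping with floors and ceilings to be the main obstacle of the proof.

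For part (2), observe that for $0<q_1<q_2$ one has $M_K(q_1)\le M_K(q_2)$ componentwise, strictly so at every position in the support of $M_K(1)$. By part (1) and the classical monotonicity of the Perron root for irreducible non-negative matrices under a strict entrywise increase on the support, $q\mapsto\lambdamax(M_K(q))$ is strictly increasing. Continuity in $q$ is clear from the continuous dependence of simple eigenvalues; as $q\to 0^+$ all entries of $M_K(q)$ tend to $0$ so $\lambdamax\to 0$, while as $q\to\infty$ any single diagonal entry $q^r$ alone already forces $\lambdamax\to\infty$. Combined with strict monotonicity, this gives a bijection of $(0,\infty)$ onto $(0,\infty)$.

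For part (3) I would appeal to the positive left Perron eigenvector. By part (1), $M_K(q)$ admits a strictly positive row vector $v^T$ with $v^T M_K(q)=\lambdamax(M_K(q))\,v^T$. Given a positive column vector $x$ with $M_K(q)x\le x$ componentwise, multiplying on the left by $v^T$ yields $\lambdamax(M_K(q))\,v^Tx\le v^Tx$, and since $v^Tx>0$ this forces $\lambdamax(M_K(q))\le 1$. The case $M_K(q)x\ge x$ is identical with the inequalities reversed.
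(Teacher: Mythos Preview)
Your argument is correct, and for parts~(2) and~(3) it matches the paper's proof almost verbatim (the paper simply cites Godsil--Royle for~(2) and uses exactly your left-eigenvector trick for~(3)).

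For part~(1), however, you are working harder than necessary. The paper's proof does not iterate the forward map at all: it simply observes that, in addition to the self-loops and the subdiagonal edges $r\to r-1$ you already identified, the \emph{superdiagonal} edges $r\to r+1$ are present for every $r<N(K)$. Indeed, $r/t\le r+1$ is trivial, and $r+1\le (r+K)/t$ amounts to $(t-1)r\le K-t$, which follows from $r\le N(K)-1=\lceil K/(t-1)\rceil-2$. With all three diagonals positive, irreducibility is immediate (you can walk from any vertex to any other one step at a time), and primitivity follows from the loops. This completely bypasses the iteration of $f(r)=\lfloor(r+K)/t\rfloor$ and the ``fiddly'' case distinction on whether $t-1$ divides $K$ that you anticipate as the main obstacle. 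Your forward-iteration argument does go through (one can check there is no fixed point of $f$ strictly below $N(K)$), but it is an unnecessary detour.
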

\begin{proof}
  We prove each statement separately.
  \begin{enumerate}
  \item The matrix $M_K(q)$ is non-negative by definition. We note that
    $\frac rt \le r-1$ holds for all $r\ge 2$ and $r+1\le \frac{r+K}t$ holds
    for all $r < N(K)$. This implies that all subdiagonal, diagonal and
    superdiagonal elements of $M_K(q)$ are positive. Thus $M_K(q)$ is
    irreducible. As all diagonal elements are positive, it is also primitive.
  \item This is an immediate consequence of \cite[Theorem~8.8.1(b)]{Godsil-Royle:2001:alggraphtheory}.
  \item Assume that $M_K(q)x\le x$ for some positive $x$. Let $y^T>0$ be a left
    eigenvector of $M_K(q)$ to the eigenvalue $\rho(M_K(q))$. Then
    \begin{equation*}
      \rho(M_K(q))y^Tx=y^T M_K(q)x \le y^Tx.
    \end{equation*}
    The result follows upon division by $y^Tx>0$. The case $M_K(q)x\ge x$ is
    analogous.  \qedhere
  \end{enumerate}
\end{proof}

We consider the infinite matrix
\begin{equation*}
  M_{\infty}(q) \colonequals \left(q^r\left[  \frac rt \le s \right] \right)_{\substack{1\le
      r\\1\le s}}
\end{equation*}
and the infinite determinant $\det (I-M_\infty(q))$ which is defined to be the
limit of the principal minors $\det ([r=s]-q^r\left[  \frac rt \le s
\right])_{\substack{1\le r\le N\\1\le s\le N}}$ when $N$ tends to $\infty$,
cf.\ Eaves~\cite{Eaves:1970}. For $\abs{q} < 1$, this infinite determinant
converges by Eaves' sufficient condition.

We now show that the infinite determinant is indeed the denominator of the
generating function $H(q,1,1,1)$.

\begin{lemma}\label{lemma:infinite-determinant}
   We have 
   \begin{equation*}
     \det(I-M_{\infty}(q))=1-b(q,1,1,1)
   \end{equation*}
   where $b(q,1,1,1)$ is given in Lemma~\ref{lemma:b-q-u-1-1-formula}.
\end{lemma}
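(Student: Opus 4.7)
My plan is to expand the infinite determinant as an alternating sum of principal minors, reduce each such minor to a $0$-$1$ ``comparability'' determinant, and then identify the resulting sum with the series for $1-b(q,1,1,1)$ via a bijection between ``admissible'' subsets and tuples of positive integers.

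First I would work with the truncated matrices $M_N(q)$, for which the identity
\begin{equation*}
  \det(I_N-M_N(q))=\sum_{S\subseteq\{1,\ldots,N\}} (-1)^{\abs{S}}\det\bigl((M_N)_{S,S}\bigr)
\end{equation*}
is a finite sum. For $S=\{s_1<s_2<\cdots<s_k\}$ the entry of $(M_N)_{S,S}$ in position $(i,j)$ is $q^{s_i}\hp{s_j\ge\lceil s_i/t\rceil}$ (using Iverson brackets). Factoring $q^{s_i}$ out of row $i$ gives
\begin{equation*}
  \det\bigl((M_N)_{S,S}\bigr)=q^{s_1+\cdots+s_k}\det A_S,\qquad A_S=\bigl(\hp{s_j\ge\lceil s_i/t\rceil}\bigr)_{i,j=1}^{k}.
\end{equation*}

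The core of the proof is a combinatorial lemma: $\det A_S=1$ when $s_{i+1}\ge ts_i+1$ for every $i$, and $\det A_S=0$ otherwise. The observation driving this is that row $i$ of $A_S$ is a step vector consisting of zeros followed by ones, with the transition occurring at column $j_i\colonequals\min\{j\colon s_j\ge\lceil s_i/t\rceil\}$; the index $j_i$ is weakly increasing in $i$. When all the inequalities $s_{i+1}>ts_i$ hold, an easy induction yields $j_i=i$, so $A_S$ is upper unitriangular. If some inequality fails, let $i^{*}$ be the smallest violating index. The previous argument still gives $j_l=l$ for $l\le i^{*}$, while $s_{i^{*}+1}\le ts_{i^{*}}$ forces $\lceil s_{i^{*}+1}/t\rceil\le s_{i^{*}}$ and hence $j_{i^{*}+1}\le i^{*}=j_{i^{*}}$; combined with monotonicity we get $j_{i^{*}}=j_{i^{*}+1}$, so rows $i^{*}$ and $i^{*}+1$ of $A_S$ coincide and the determinant vanishes.

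Finally I would set up the bijection between admissible $k$-subsets of $\N$ and tuples $(m_1,\ldots,m_k)\in\N^{k}$: given a tuple, define $s_l=m_{k-l+1}+ts_{l-1}$ recursively with $s_0=0$, so that $s_l-ts_{l-1}=m_{k-l+1}\ge 1$, which makes $S$ admissible. A standard swap of summation order produces
\begin{equation*}
  \sum_{l=1}^{k}s_l=\sum_{p=1}^{k}m_p\hp{p},
\end{equation*}
whence
\begin{equation*}
  \det(I_N-M_N(q))=\sum_{\substack{\text{admissible}\\S\subseteq\{1,\ldots,N\}}}(-1)^{\abs{S}}q^{s_1+\cdots+s_k}
\end{equation*}
converges, as $N\to\infty$, to
\begin{equation*}
  \sum_{k\ge 0}(-1)^{k}\sum_{m_1,\ldots,m_k\ge 1}q^{\sum_p m_p\hp{p}}
  =\sum_{k\ge 0}(-1)^{k}\prod_{i=1}^{k}\frac{q^{\hp{i}}}{1-q^{\hp{i}}}=1-b(q,1,1,1),
\end{equation*}
the last equality being Lemma~\ref{lemma:b-q-u-1-1-formula}. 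Absolute summability for $\abs{q}<1$ (since $\hp{i}$ grows geometrically in $i$) justifies the term-by-term passage to the limit and matches the convergence of principal minors guaranteed by Eaves.

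The main obstacle is the combinatorial lemma identifying when $\det A_S$ vanishes: although the conclusion is clean, locating the identical pair of rows requires the slightly subtle analysis of the minimising index $j_i$ rather than a direct reading-off from the failing inequality. Once that lemma is settled, the bijection and the identification with the $b$-series are routine.
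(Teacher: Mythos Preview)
Your proposal is correct and follows essentially the same route as the paper's own proof: both expand $\det(I-M_\infty(q))$ as an alternating sum over subsets $S=\{s_1<\cdots<s_k\}$ weighted by $q^{s_1+\cdots+s_k}$ times a $0$-$1$ determinant, identify that determinant as $1$ precisely when $s_{i+1}>ts_i$ for all $i$ (via identical rows otherwise), and then use the change of variables $m_{k-l+1}=s_l-ts_{l-1}$ to reach the product form $\prod_{i=1}^k q^{\hp{i}}/(1-q^{\hp{i}})$. Your step-vector/threshold-index language for the vanishing lemma is a slightly cleaner packaging of the same argument the paper gives.
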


\begin{proof}
  When expanding the infinite determinant, we take the $1$ on the diagonal in
  almost all rows and some other entry in rows $a_1 < a_2 < \cdots < a_k$ for
  some $k$. These other entries have to come from $-M_\infty(q)$. Extracting
  the sign for these rows yields
  \begin{align*}
    \det(I-M_{\infty}(q))
    &=\sum_{k\ge 0}(-1)^k\sum_{1\le a_1 < a_2 < \cdots < a_k} \det( q^{a_i}[a_i\le ta_j])_{1\le i,j\le k}\\
    &=\sum_{k\ge 0}(-1)^k\sum_{1\le a_1 < a_2 < \cdots < a_k} q^{a_1+\cdots +a_k}\det([a_i\le ta_j])_{1\le
    i,j\le k} .
  \end{align*}
  We trivially have $a_i\le ta_j$ for $j\ge i$, so all entries on the diagonal
  of $([a_i\le ta_j])_{1\le i,j\le k}$ and above this diagonal are $1$. If
  $a_2\le ta_1$, the first and the second row of $([a_i\le ta_j])_{1\le i,j\le
    k}$ are identical, so the determinant vanishes. Therefore, we only have to
  consider summands with $a_2 > ta_1$. In this case, we clearly have $a_i >
  ta_1$ for all $i\ge 2$, i.e., the first column of $([a_i\le ta_j])_{1\le
    i,j\le k}$ is $(1,0,\ldots,0)^T$. Repeating this argument, we see that only
  summands with $a_{j+1} > t a_j$ for $1\le j < k$ contribute to the
  determinant. For those summands, the matrix $([a_i\le ta_j])_{1\le i,j\le k}$
  equals $([j\ge i])_{1\le i,j\le k}$ and thus has determinant $1$.

  Therefore, we obtain the representation
  \begin{equation*}
    \det(I-M_{\infty}(q))=\sum_{k\ge 0}(-1)^k \sum_{\substack{a_1,\ldots, a_k\\
      \forall j\colon a_{j+1} > ta_j}}q^{a_1+\cdots +a_k}.
  \end{equation*}
  With the change of variables $a_1 \equalscolon b_k$ and $a_{j+1}-ta_{j}
  \equalscolon b_{k-j}$ for $1\le j < k$, we obtain
  \begin{align*}
    \det(I-M_{\infty}(q))&=\sum_{k\ge 0}(-1)^k \sum_{b_1,\ldots,
        b_k\ge 1}q^{b_1\hp{1}+\cdots +b_k\hp{k}}\\
      &=\sum_{k\ge 0}(-1)^k\prod_{j=1}^k
      \Bigl(\sum_{b_j\ge 1} (q^{\hp{j}})^{b_j}\Bigr)=1-b(q,1,1,1).
  \end{align*}
\end{proof}

If $K$ tends to infinity, $W_K(q)$ tends to $H(q,1,1,1)$, as the restriction on
the width becomes meaningless. For our purposes, we will need a slightly
stronger result: we also need convergence of the numerator and the denominator
of $W_K(q)$ given by \eqref{eq:width-functional-equation-K} and Cramer's rule
to the numerator $a(q,1,1,1)$ and the denominator $1-b(q,1,1,1)$ of
$H(q,1,1,1)$, respectively.  We prove this in two steps. The first one is to
prove that the numerator and the denominator of $W_K(q)$ tend to the
corresponding infinite determinants. This is stated in the following lemma.

\begin{lemma}\label{lemma:convergence-determinant}
  For $\abs{q} \le 0.6$, we have
  \begin{equation*}
    \det(I-M_K(q))=\det(I-M_{\infty}(q))+O(q^{K/(2t)}).
  \end{equation*}
  The same conclusion holds when the $s$-th column of both $I-M_K(q)$ and
  $I-M_{\infty}(q)$ are replaced by the vector $(q,0,\ldots)^T$ with $K-1$ and
  infinitely many zeroes, respectively. The estimate still holds for the first derivatives with respect to $q$.
\end{lemma}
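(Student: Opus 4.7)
The strategy is to expand both determinants using the principal-minor identity $\det(I-X)=\sum_{A}(-1)^{|A|}\det(X[A])$ (sum over index subsets $A$) and compare the resulting series term by term. For $X=M_\infty(q)$, Lemma~\ref{lemma:infinite-determinant} already carried out this expansion: only sparse tuples $A=\{a_1<\cdots<a_k\}$ (meaning $a_{j+1}>ta_j$) contribute, each giving $(-1)^kq^{\sum a_i}$. For $X=M_K(q)$ the same expansion applies, except that the admissible subsets are confined to $\{1,\ldots,N(K)\}$ and the submatrix entries take the stricter form $[a_i\le ta_\ell\le a_i+K]$. The crucial structural point is that for sparse $A$ with $a_k\le N(K)$, the corresponding constraint submatrix $C_K(A)$ is still upper triangular with all-ones diagonal: the diagonal survives because $(t-1)a_i\le(t-1)N(K)\le K$, while the strictly lower triangle vanishes by sparseness. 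Hence these tuples contribute identically in both expansions, and the difference splits as $E_1+E_2$, with $E_1$ collecting the missing sparse tuples with $a_k>N(K)$ and $E_2$ collecting the non-sparse tuples with $a_k\le N(K)$ whose $M_K$-determinant is nonzero.

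For $E_1$, a sparse tuple with $a_k=m>N(K)$ contributes $|q|^m\cdot|q|^{\sum_{j<k}a_j}$; summing the prefix part over all sparse tuples gives the convergent series $S=\sum_{k\ge 0}\prod_{j=1}^k|q|^{\hp{j}}/(1-|q|^{\hp{j}})$, uniformly bounded for $|q|\le 0.6$. Hence $|E_1|\le S\sum_{m>N(K)}|q|^m=O(|q|^{N(K)+1})$, and since $N(K)+1\ge K/(t-1)\ge K/(2t)$ for $t\ge 2$, this gives $|E_1|=O(|q|^{K/(2t)})$. For $E_2$, if a non-sparse $A$ satisfies $\det C_K(A)\ne 0$ while $\det C_\infty(A)=0$, the two submatrices must differ in at least one entry, forcing $ta_\ell>a_i+K$ for some $i,\ell$; since $a_i\ge 1$, this forces $a_\ell>K/t$ and thus $\sum a_j\ge a_\ell>K/t$. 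A convergent count, exploiting the banded/staircase structure of $C_K(A)$ (row $i$ has $1$'s only in the window $\{\ell:a_i/t\le a_\ell\le(a_i+K)/t\}$) to keep $|\det C_K(A)|$ under control, then yields $|E_2|=O(|q|^{K/t})=O(|q|^{K/(2t)})$.

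For the column-replaced determinant, expanding along column $s$ gives $q(-1)^{1+s}\det\bigl((I-M_K(q))_{\hat 1,\hat s}\bigr)$, and the analogous subset/permutation expansion applied to this minor produces the same truncation-vs.-anomaly dichotomy and the same exponent. For the first derivative in $q$, the argument above is valid with $0.6$ replaced by any fixed $r<1$ (the constant grows like $1/(1-r)$ but the proof is otherwise unchanged); applying Cauchy's integral formula on a circle of radius $r-0.6$ around each $q$ with $|q|\le 0.6$ then transfers the $O(q^{K/(2t)})$ estimate to $\partial_q$. The principal obstacle is the $E_2$ bound: the naive Hadamard estimate $|\det C_K(A)|\le k^{k/2}$ is too weak to absorb the combinatorial growth of non-sparse subsets, so one must exploit the banded structure of $C_K(A)$ to obtain a usable bound. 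This is the one place where the analysis genuinely departs from the purely combinatorial pattern of Lemma~\ref{lemma:infinite-determinant}.
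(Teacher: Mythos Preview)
Your overall plan---expand both determinants and compare term by term, isolating the terms that differ---is the same as the paper's. The $E_1$ bound is fine, and your observation that a nonzero $E_2$-term forces some $a_\ell>K/t$ (hence exponent $>K/t$) is also correct. But the proof is genuinely incomplete at the point you yourself flag as the ``principal obstacle'': you never actually carry out the ``convergent count'' for $E_2$. Saying that one must exploit the banded structure of $C_K(A)$, and that Hadamard is too weak, is a description of the difficulty, not a resolution of it. As written there is no bound on $\sum_{A}|q|^{\sum a_i}\,|\det C_K(A)|$, so the estimate $|E_2|=O(|q|^{K/t})$ is asserted, not proved.

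The paper avoids this obstacle entirely by \emph{not} grouping terms by the subset $A$. Instead it keeps the full Leibniz-type expansion: every summand of $\det(I-M_\infty(q))$ (and likewise of the truncated determinant) is of the form $\pm q^{k}$ with $k=\sum_{s\in S}s$ for some pair $(\pi,S)$, where $\pi$ is a permutation with finite support contained in $S$. The key step is then a pure counting argument: such pairs with $\sum_{s\in S}s=k$ are in bijection with compositions of $k$ into \emph{distinct} parts, and by a result of Richmond and Knopfmacher there are at most $\exp(2\sqrt{k}\log k)$ of these. Since every term in the difference has exponent $k\ge K/t$ (by exactly the argument you give for $E_2$, together with the trivial case of using a row beyond $N(K)$), one gets
\[
\bigl|\det(I-M_\infty(q))-\det(I-M_K(q))\bigr|\le \sum_{k\ge K/t}\exp(2\sqrt{k}\log k)\,|q|^{k}=O\bigl(|q|^{K/(2t)}\bigr).
\]
Note that this subsumes your $E_1$/$E_2$ split: expanding each $\det C_K(A)$ permutation by permutation would recover exactly the same family of $\pm q^k$ terms, so your missing bound on $|\det C_K(A)|$ summed over $A$ is in effect the Richmond--Knopfmacher estimate in disguise. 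That is the ingredient your argument needs and does not supply.

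Your treatment of the column-replaced determinant and of the derivative (via Cauchy's formula on a slightly larger disc) is fine; the paper handles the derivative more briefly by differentiating term by term, which the subexponential coefficient bound above justifies directly.
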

\begin{proof}
  The infinite determinant $\det(I-M_\infty(q))$ consists of summands
  \begin{equation*}
    \pm \prod_{s\in S}q^{\pi(s)}=\pm q^{\sum_{s\in S}\pi(s)} = \pm q^{\sum_{s\in S} s}
  \end{equation*}
  where $\pi\colon\N\to\N$ is a bijection such that there are only finitely many
  non-fixed points of $\pi$ and $S$ is a finite subset of $\N$ containing
  all non-fixed points of $\pi$. Note that the complement of $S$ corresponds to
  those columns where $1$ has been chosen on the diagonal in the expansion of
  the determinant. Not all $(\pi,S)$ will actually occur due to the Iversonian
  expression in the definition of $M_\infty(q)$.

  For every $k\in\N$, there is a bijection from the set
  \begin{multline*}
    \Bigl\{(\pi,S) \Bigm| \pi\colon\N\to\N\text{ bijective, } S\subseteq\N \text{ finite
      such that }\{s\in\N\mid \pi(s)\neq s\}\subseteq S \\\text{ and }\sum_{s\in S}\pi(s)=k \Bigr\} 
  \end{multline*}
  to the set 
  \begin{equation*}
    \Bigl\{ (x_1,\ldots, x_j)\in\N^j \Bigm| j\in\N,\ \sum_{i=1}^j x_i=k \text{ with
      pairwise distinct }x_i \Bigr\}
  \end{equation*}
  of compositions of $k$ with distinct parts: the set $S$ can be recovered as
  the set of summands in the composition, the permutation $\pi$ can be
  recovered from the order of the summands.

  As there are at most $\exp(2\sqrt{k}\log k)$ compositions of $k$ with
  distinct parts by a result of Richmond and
  Knopfmacher~\cite{Richmond-Knopfmacher:1995:compos}, there are at most that
  many summands $\pm q^k$ in the infinite determinant $\det(I-M_\infty(q))$.

  The difference between $\det(I-M_\infty(q))$ and $\det(I-M_K(q))$ consists of
  those summands which do not choose the $1$ on the diagonal in some row $>
  N(K)$ or which choose an entry in some column $s$ and in some row $r$ with $s
  > (r+K)/t$. In the latter case, the $1$ on the diagonal cannot be chosen in
  row $s$, so that the exponent of $q$ in this summand is at least $r+s >
  K/t$. So all summands in the difference are of the form $\pm q^k$ for some $k
  \ge K/t$. By the triangle inequality and the above estimates, we obtain
  \begin{equation*}
    \abs{\det(I-M_\infty(q)) - \det(I-M_K(q))} \le \sum_{k\ge K/t}\exp(2\sqrt k
    \log k)q^k=O(q^{K/(2t)}).
  \end{equation*}
  The argument does not change if the $s$-th column of both matrices is replaced
  by the column vector $(q,0,\ldots,0)^T$.

  Differentiating the determinant can be done term by term. The error term does
  not change as the bound $O(q^{K/(2t)})$ is weak enough.
\end{proof}

The second step in the proof of the convergence of the numerator and the denominator of
$W_K(q)$ consists of the following simple lemma.

\begin{lemma}\label{lemma:denominator-numerator-convergence}
  Let $\abs{q}\le 0.6$. Then the denominator $\det(I-M_K(q))$ of $W_K(q)$ converges to $1-b(q,1,1,1)$ with
  error $O(q^{K/(2t)})$. The numerator $\det(I-M_K(q))W_K(q)$ of $W_K(q)$
  converges to $a(q,1,1,1)$ with the same error. The same is true for
  the first derivatives with respect to $q$.
\end{lemma}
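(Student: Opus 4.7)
The denominator assertion follows immediately by combining Lemma~\ref{lemma:infinite-determinant} with Lemma~\ref{lemma:convergence-determinant}: substituting the latter into the former gives
\[\det(I-M_K(q)) = 1 - b(q,1,1,1) + O(q^{K/(2t)}),\]
and the analogous estimate for the first derivative transfers in the same way, using the derivative bound already contained in Lemma~\ref{lemma:convergence-determinant}.

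For the numerator, my plan is to apply Cramer's rule to the system~\eqref{eq:width-functional-equation-K}. Writing $A_s^K(q)$ for the matrix obtained from $I-M_K(q)$ by replacing its $s$-th column by $(q,0,\ldots,0)^T$, Cramer's rule yields $W_{K,s}(q)\det(I-M_K(q))=\det(A_s^K(q))$. Since $W_K(q) = 1+\sum_{s=1}^{N(K)}W_{K,s}(q)$, the numerator $\det(I-M_K(q))W_K(q)$ equals $\det(I-M_K(q)) + \sum_{s=1}^{N(K)}\det(A_s^K(q))$. The second part of Lemma~\ref{lemma:convergence-determinant} tells me that each of these $N(K)+1$ summands converges to its infinite analog with error $O(q^{K/(2t)})$, and that the implicit constant can be chosen independently of $s$, since the Richmond--Knopfmacher count on which the estimate rests does not see which column has been replaced.

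To pass from termwise convergence to convergence of the sum, I would verify that the infinite series
\[N_\infty(q) \colonequals \det(I-M_\infty(q)) + \sum_{s\ge 1}\det(A_s^\infty(q))\]
converges absolutely on $\abs{q}\le 0.6$: a non-vanishing contribution to $\det(A_s^\infty(q))$ is forced to pair row $1$ with column $s$, producing an explicit factor of $q$ and leaving a sub-determinant of the same type as $\det(I-M_\infty(q))$, so that the tail in $s$ is controlled by the same Richmond--Knopfmacher bound. That bound has enough slack that the polynomial factor $N(K)\le K/(t-1)$ coming from the number of summands can be absorbed without degrading the exponent, yielding
\[\det(I-M_K(q))W_K(q) = N_\infty(q) + O(q^{K/(2t)}).\]

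It remains to identify $N_\infty(q)$ with $a(q,1,1,1)$. Both are analytic on $\abs{q}\le 0.6$: $a(q,1,1,1)$ by Theorem~\ref{theorem:generating-function-height-and-others} and $N_\infty(q)$ by the absolute convergence just established. Because the restriction $w(T)\le K$ affects no coefficient $[q^n]$ once $K$ is large compared to $n$, we have $W_K(q)\to H(q,1,1,1)=a(q,1,1,1)/(1-b(q,1,1,1))$ coefficient-wise and hence uniformly on compact subsets of $\abs{q}<q_0$. Combined with the denominator convergence, this forces $N_\infty(q)=a(q,1,1,1)$ on $\abs{q}<q_0$, and analytic continuation extends the identification to $\abs{q}\le 0.6$. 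The derivative statement is then obtained by term-by-term differentiation of the Cramer representation, invoking the derivative part of Lemma~\ref{lemma:convergence-determinant} in place of its value part. The main technical point I expect to work through carefully is the uniformity in $s$ of the error and the absolute summability of $N_\infty(q)$, but both should be straightforward once the proof of Lemma~\ref{lemma:convergence-determinant} is revisited with the replaced column in view.
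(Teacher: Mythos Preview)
Your proposal is correct and matches the paper's two-step strategy: establish that the numerator converges with error $O(q^{K/(2t)})$ to \emph{some} analytic limit (the paper compresses this into the parenthetical ``already known to converge to some infinite determinant'', while you spell the step out via Cramer's rule and the summation of the column-replaced determinants), then identify that limit with $a(q,1,1,1)$ using the coefficient-wise convergence $W_K\to H$ together with the denominator result and analytic continuation. One small slip to correct: the sum defining $W_K(q)$ runs over $1\le s\le\lfloor K/t\rfloor$, not $1\le s\le N(K)$.
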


\begin{proof}
  The first statement is simply the combination of
  Lemmata~\ref{lemma:convergence-determinant} and
  \ref{lemma:infinite-determinant}. 

  As a formal power series, $W_K(q)$ converges to $H(q,1,1,1)$ as
  $[q^n]W_K(q)=[q^n]H(q,1,1,1)$ holds for $n\le (K-1)/(t-1)$, because a
  canonical tree with $n$ internal vertices has $1+n(t-1)$ leaves and therefore
  width at most $1+n(t-1)$.

  As $1-b(q,1,1,1)$ has no root with $\abs{q}< 1/2$ by Lemma~\ref{lemma:dominant-singularity}, $W_K(q)$ converges to
  $H(q,1,1,1)$ for $\abs{q} < 1/2$. As the denominator is already known to
  converge to the denominator $1-b(q,1,1,1)$ of $H(q,1,1,1)$, we conclude that
  the numerators (which are already known to converge to some infinite
  determinant) actually have to converge to $a(q,1,1,1)$.

  Taking derivatives with respect to $q$ does not change the argument by
  Lemma~\ref{lemma:convergence-determinant}.
\end{proof}

In order to obtain information on the roots of $\det(I-M_K(q))$ and therefore
the singularities of $\bfW_K(q)$, we approximate the Perron--Frobenius
eigenvector of $M_K(q)$ by the one of the infinite matrix $M_\infty(q)$. The following lemma gives this eigenvector explicitly---as we will see in the next section, it has a natural combinatorial interpretation.

\begin{lemma}
  For $r\ge 1$, we have
  \begin{equation}\label{eq:infinite-eigenvector-preparation}
    q^r\Bigl(1-\sum_{j=1}^{\lceil r/t\rceil-1} [u^{jt}]b(q,u,1,1)\Bigr)=[u^{rt}]b(q,u,1,1).
  \end{equation}
  In particular, if we set $p_r = [u^{rt}]b(q_0,u,1,1)$, then $(p_r)_{r\ge 1}$ is a right eigenvector of $M_\infty(q_0)$ to the eigenvalue $1$, i.e.,
  \begin{equation}\label{eq:eigenvector-infinite-equation}
    M_\infty(q_0)\cdot (p_r)_{r\ge 1}=(p_r)_{r\ge 1}.
  \end{equation}
\end{lemma}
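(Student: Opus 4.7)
The plan is to derive \eqref{eq:infinite-eigenvector-preparation} by extracting $[u^{rt}]$ from the functional equation
\[
b(q,u,1,1) = \frac{qu^t}{1-qu^t}\bigl(1 - b(q,qu^t,1,1)\bigr)
\]
of Lemma~\ref{lemma:b-q-u-1-1-formula}, and then to deduce the eigenvector assertion by specializing to $q = q_0$, using that $b(q_0, 1, 1, 1) = 1$ holds by Lemma~\ref{lemma:dominant-singularity}.

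Concretely, I would first expand the prefactor as the geometric series $\frac{qu^t}{1-qu^t} = \sum_{k\geq 1} q^k u^{kt}$. By Lemma~\ref{lemma:b-q-u-1-1-formula} the series $b(q,u,1,1)$ has nonzero coefficients only at monomials of the form $u^{jt}$, so writing $\beta_j := [u^{jt}]b(q,u,1,1)$ one obtains $b(q,qu^t,1,1) = \sum_{j \geq 1} \beta_j q^{jt} u^{jt^2}$. Substituting both expansions into the functional equation yields
\[
\sum_{j \geq 1} \beta_j u^{jt} = \sum_{k \geq 1} q^k u^{kt} - \sum_{k, j \geq 1} \beta_j\, q^{k + jt} u^{(k + jt)t}.
\]
Reading off the coefficient of $u^{rt}$, the left-hand side gives $\beta_r$, the first sum on the right contributes $q^r$ (from $k = r$), and the second contributes $q^r \sum_{j \geq 1,\, jt \leq r-1} \beta_j$ (from $k = r - jt \geq 1$).

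The only bookkeeping point is the elementary identity $\lfloor (r-1)/t \rfloor = \lceil r/t \rceil - 1$, which is immediate from the case distinction $t \mid r$ versus $t \nmid r$: writing $r = qt + s$ with $0 \leq s < t$, both sides equal $q$ for $s \geq 1$ and $q-1$ for $s = 0$. This establishes \eqref{eq:infinite-eigenvector-preparation}.

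For the eigenvector claim, observe that the $r$-th component of $M_\infty(q_0)(p_s)_{s \geq 1}$ is $q_0^r \sum_{s \geq \lceil r/t \rceil} p_s$. Since $q_0$ is a zero of $1 - b(q,1,1,1)$ by Lemma~\ref{lemma:dominant-singularity}, the total mass satisfies $\sum_{s \geq 1} p_s = b(q_0, 1, 1, 1) = 1$, so $\sum_{s \geq \lceil r/t \rceil} p_s = 1 - \sum_{j=1}^{\lceil r/t \rceil - 1} p_j$, and evaluating \eqref{eq:infinite-eigenvector-preparation} at $q = q_0$ yields exactly \eqref{eq:eigenvector-infinite-equation}. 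I do not anticipate any real obstacle beyond the coefficient extraction together with the floor/ceiling identification; once the functional equation is put to work, the argument is essentially formal bookkeeping.
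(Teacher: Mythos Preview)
Your proof is correct and takes essentially the same approach as the paper: both rely on the functional equation of Lemma~\ref{lemma:b-q-u-1-1-formula}, with the paper packaging the identity as a generating-function equality (multiply by $u^{rt}$ and sum over $r$) and you going in the reverse direction by extracting $[u^{rt}]$ directly. One small cosmetic point: in your floor/ceiling verification you reuse the symbol $q$ for the quotient in $r=qt+s$, which clashes with the formal variable $q$; renaming it would avoid confusion.
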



  \newlength{\wwwoolen}
  \newcommand{\wwwool}[2]{\settowidth{\wwwoolen}{$#1$}{#1}\hspace*{-\wwwoolen}\hphantom{#2}}
  \newcommand{\wwwoor}[2]{\settowidth{\wwwoolen}{$#1$}\hphantom{#2}\hspace*{-\wwwoolen}{#1}}


\begin{proof}
  Multiplying the left hand side of \eqref{eq:infinite-eigenvector-preparation}
  with $u^{rt}$ and summing over $r\ge 1$ yields
  \begin{align*}
    \frac{qu^t}{1-qu^t}-\sum_{\substack{r\ge 1\\\wwwoor{j}{r}\ge 1\\ \wwwoor{jt}{r} < \wwwool{r}{1}}}(qu^t)^r[u^{jt}]b(q,u,1,1)
    &=\frac{qu^t}{1-qu^t}-\sum_{j=1}^{\infty}[u^{jt}]b(q,u,1,1)\sum_{r=jt+1}^{\infty}(qu^t)^r\\
    &=\frac{qu^t}{1-qu^t}-\frac{qu^t}{1-qu^t}\sum_{j=1}^{\infty}(qu^t)^{jt}[u^{jt}]b(q,u,1,1)\\
    &=\frac{qu^t}{1-qu^t}(1-b(q,qu^t,1,1))=b(q,u,1,1),
  \end{align*}
  where the last equality comes from Lemma~\ref{lemma:b-q-u-1-1-formula}.
  This concludes the proof of \eqref{eq:infinite-eigenvector-preparation}.

  Setting $q=q_0$ in \eqref{eq:infinite-eigenvector-preparation} and noting that $1=b(q_0,1,1,1)=\sum_{r\ge 1}p_r$ yields 
  \eqref{eq:eigenvector-infinite-equation}.
\end{proof}

We now use the fact that $(p_r)_{r\ge 1}$ is an eigenvector of $M_\infty(q)$ to derive bounds for its entries.

\begin{proposition}\label{proposition:p_r-bounds}
  All constants $p_r$, $r\ge 1$, are positive, and we have $p_r = \Omega(q_*^r/r)$ and $p_r = O(r^2 q_*^r)$, where
  \begin{equation*}
    q_*=q_0^{1+\frac{1}{t-1}}.
  \end{equation*}
\end{proposition}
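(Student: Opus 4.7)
The starting point is the eigenvector equation~\eqref{eq:eigenvector-infinite-equation} combined with the normalization $\sum_{r\ge 1} p_r = b(q_0,1,1,1)=1$, which together give the closed-form recurrence
\begin{equation*}
  p_r = q_0^r\, \tilde U_{\lceil r/t\rceil - 1}, \qquad
  \tilde U_K \colonequals 1-\sum_{s=1}^K p_s = \sum_{s>K}p_s.
\end{equation*}
Positivity of every $p_r$ is thus equivalent to the positivity of every tail $\tilde U_K$. I plan to establish this by a Perron--Frobenius argument: each finite truncation $M_N(q_0)$ is irreducible and primitive by Lemma~\ref{lemma:Perron-Frobenius-implications}, so admits a unique strictly positive Perron eigenvector, and these eigenvectors converge, after suitable normalization, to $(p_r)_{r\ge 1}$.

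The two quantitative bounds rest on controlling the decay rate of $\tilde U_K$. I rescale by setting $\phi_K\colonequals \tilde U_K/q_*^K$. Using the relation $q_0\,q_*^{1/t}=q_*$, which is an immediate consequence of $q_*=q_0^{1+1/(t-1)}$, the recurrence $\tilde U_K=\tilde U_{K-1}-q_0^K\tilde U_{\lceil K/t\rceil-1}$ turns, at the block endpoints $K=Mt$, into the clean identity
\begin{equation*}
  q_*\,\phi_{Mt}=\phi_{Mt-1}-\phi_{M-1},
\end{equation*}
and, within a block at $K=(M-1)t+\ell$ with $1\le\ell<t$, into $\phi_K=\phi_{K-1}/q_* - q_0^{-\ell/(t-1)}\phi_{M-1}$, with the same coarser-scale ``feedback'' $\phi_{M-1}$ throughout the block.

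The core of the proof is a simultaneous induction establishing the matched bounds $\phi_K\le A(K+1)$ and $\phi_K\ge c/(K+1)$ for suitable positive constants $A$ and $c$. For the upper bound the identity gives $\phi_{Mt}=(\phi_{Mt-1}-\phi_{M-1})/q_*$; substituting the inductive upper bound for $\phi_{Mt-1}$ and the inductive \emph{lower} bound for $\phi_{M-1}$ (at the much smaller index $M-1\sim K/t$) shows that the correction $-\phi_{M-1}/q_*$ is exactly strong enough to keep $\phi_K=O(K)$. The lower-bound induction is dual: it uses the upper bound on $\phi_{M-1}$ to control the subtraction. The in-block identities then propagate both bounds from $\phi_{(M-1)t}$ to $\phi_{Mt}$ with only bounded multiplicative loss.

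To translate back, $p_r=q_0^r\,q_*^{\lceil r/t\rceil-1}\,\phi_{\lceil r/t\rceil-1}$, and the identity $q_0^r\,q_*^{r/t}=q_*^r$ combined with $\phi_K=O(K)$, respectively $\phi_K=\Omega(1/K)$, yields $p_r=O(r\,q_*^r)\subseteq O(r^2 q_*^r)$ and $p_r=\Omega(q_*^r/r)$. The main obstacle is the two-sided character of the induction: an upper bound on $\phi_K$ intrinsically requires a lower bound on $\phi_{K/t}$ and vice versa, so the four constants appearing in the two inductive hypotheses have to be chosen compatibly to survive every block transition. Once the base cases for small $K$ are checked explicitly and the constants are tuned to match, the induction then goes through without further difficulty, and the translation between $\phi_K$ and $p_r$ is purely algebraic.
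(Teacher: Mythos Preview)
Your rescaling and the block recurrences are correct, but the ``simultaneous induction'' does not close. Consider the block identity you derived, which iterates to
\[
  \phi_{Mt}=q_*^{-t}\Bigl[\phi_{(M-1)t}-c_0\,\phi_{M-1}\Bigr],
  \qquad c_0=\frac{q_0(1-q_0^t)}{1-q_0}.
\]
Feeding in the inductive hypotheses $\phi_{(M-1)t}\le A((M-1)t+1)$ and $\phi_{M-1}\ge c/M$ gives
\[
  \phi_{Mt}\le q_*^{-t}\bigl[A((M-1)t+1)-c_0c/M\bigr],
\]
and requiring this to be $\le A(Mt+1)$ forces
\[
  A\,Mt\,(1-q_*^{t}) \;\le\; \frac{c_0c}{M}+O(1),
\]
which fails for large $M$ since $1-q_*^t>0$. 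The correction $\phi_{M-1}=\Omega(1/M)$ is far too small to tame the geometric blow-up factor $q_*^{-t}>1$. The lower-bound direction breaks even more dramatically: inserting $\phi_{(M-1)t}\ge c/((M-1)t+1)$ and $\phi_{M-1}\le AM$ makes the bracket negative for large $M$. So neither half of your induction can be completed with the proposed pair of hypotheses, no matter how the constants are tuned.

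The paper sidesteps this by working directly with $p_r$ rather than the tails $\tilde U_K$. For the lower bound it keeps only a single term of the eigenvalue equation, $p_r\ge q_0^r p_{\lceil r/t\rceil}$, and iterates down the $t$-adic scale; the accumulated exponent $\sum_j\lceil r/t^j\rceil\le r\,t/(t-1)+\log_t r$ gives $p_r\ge p_{\min}\,q_*^r\,r^{\log_t q_0}=\Omega(q_*^r/r)$ in one stroke, with no need for a matching upper bound. For the upper bound it first records the crude estimate $p_r\le q_0^r$, then runs an ordinary (one-sided) induction on $p_s\le c s^2 q_*^s$: the eigenvalue equation gives $p_r\le q_0^r\sum_{s\ge\lceil r/t\rceil}p_s$, the tail for $s\ge r$ is controlled by the crude bound, and the remaining sum $\sum_{\lceil r/t\rceil\le s<r}cs^2q_*^s$ contributes at most $c\,(r+t)^2/(t^2(1-q_*))\,q_*^{r/t}$; after multiplying by $q_0^r$ and using $q_0^r q_*^{r/t}=q_*^r$ together with $t^2(1-q_*)>1$, the induction closes for large $r$. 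The key difference is that the paper's two bounds are proved independently, whereas your scheme couples them in a way that the recursion cannot sustain.
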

\begin{proof}
  As we will see later in the proof of Theorem~\ref{theorem:distribution-m}, equation~\eqref{eq:probability_convergence}, the $p_r$ are limits of
  probabilities and therefore a priori non-negative. In fact, this is a consequence of Lemma~\ref{lemma:number-canonical-forests}. Moreover, they sum to $1$ as mentioned earlier, and in view of the eigenvalue equation and the fact that $M_{\infty}(q)$ is an irreducible matrix, we even know that they must be strictly positive.

  By the eigenvalue equation \eqref{eq:eigenvector-infinite-equation}, we have
  \begin{equation*}
    p_r\ge q_0^r p_{\lceil r/t \rceil}
  \end{equation*}
  for all $r\ge 1$. Iterating this yields, with $p_{\mathrm{min}} = \min_{s < t} p_s$,
  \begin{align*}
    p_r&\ge q_0^{\sum_{j=0}^{\lfloor\log_t r\rfloor-1} \lceil r/t^j \rceil} p_{\lceil r/t^{\lfloor \log_t r\rfloor}\rceil}
    \geq p_{\mathrm{min}} q_0^{\sum_{j=0}^{\lfloor\log_t r\rfloor-1}  (1+r/t^j) }\\&\ge p_{\mathrm{min}}q_0^{\log_t r+\sum_{j=0}^{\infty}  r/t^j }=p_{\mathrm{min}} r^{\log_t q_0}q_0^{r(1+1/(t-1))}.
  \end{align*}
  As $q_0\ge 1/t$ by Lemma~\ref{lemma:dominant-singularity}, we have $\log_t q_0\ge -1$ and the lower bound follows.

  To prove the upper bound, we proceed in two steps. In a first step, we note that the eigenvalue equation \eqref{eq:eigenvector-infinite-equation} together with the fact that $\sum_{r\ge 1}p_r=1$ yields the weaker upper bound
  \begin{equation*}
    p_r= q_0^r\sum_{s\ge \lceil r/t\rceil }p_s\le q_0^r \sum_{s\ge 1}p_s=q_0^r.
  \end{equation*}
  In a second step, we use induction on $r$ and assume that $p_s \le  c s^2 q_*^{s}$ for $s <  r$ for some constant $c$ depending on $t$.
  Then the eigenvalue equation \eqref{eq:eigenvector-infinite-equation} yields
  \begin{align*}
    p_r&\le q_0^r \sum_{s\ge \lceil r/t\rceil} p_s\le 
    cq_0^r \sum_{\lceil r/t\rceil\le s < r} s^2 q_*^{s}+q_0^r \sum_{r\le s}q_0^s
    \le cq_0^r \sum_{\lceil r/t\rceil\le s} s^2 q_*^{s}+\frac1{1-q_0}q_0^{2r}\\
    &=
    c q_0^r \left( \frac{\lceil r/t \rceil^2}{1-q_*}
      +\frac{2q_*\lceil r/t \rceil}{(1-q_*)^2}
      +\frac{q_*(1+q_*)}{(1-q_*)^3}
    \right)q_*^{\lceil r/t\rceil}+\frac{1}{1-q_0}q_0^{2r}\\
    &\le c q_0^r \left( \frac{(r+t)^2}{t^2(1-q_*)}
      +\frac{2q_*(r+t)}{t(1-q_*)^2} +\frac{q_*(1+q_*)}{(1-q_*)^3} \right)q_*^{
      r/t}+\frac{1}{1-q_0}q_0^{2r}.
  \end{align*}
  As $t^2(1-q_*)> 1$ for $t\ge 2$ (cf.\ Lemma~\ref{lemma:dominant-singularity}), we obtain
  \begin{equation*}
    p_r \le c r^2 q_0^{r}q_*^{r/t}=
    cr^2 q_0^{r\left(1+\frac 1 t \left(1+\frac1{t-1}\right)\right)}
    =c r^2 q_*^r
  \end{equation*}
  for sufficiently large $r$.
\end{proof}

\begin{lemma}\label{lemma:q_K-bounds}
  The generating function $W_K(q)$ has a unique singularity $q_K$ with
  $\abs{q_K}\le 0.6$ for $K\ge \newc\label{c_7}$, where $c_{\ref{c_7}}$ is a suitable positive
  constant depending on $t$. It is a simple pole and a zero of
  $\det(I-M_K(q))$. Furthermore
  \newinvisiblec\label{c_5}\newinvisiblec\label{c_6}
  \begin{equation*}
    q_0+c_{\ref{c_5}} \frac1K q_0^{K/(t-1)}\le q_K \le q_0+c_{\ref{c_6}} K^2 q_0^{K/(t-1)}
  \end{equation*}
  for suitable positive constants $c_{\ref{c_5}}$, $c_{\ref{c_6}}$ depending on $t$.
\end{lemma}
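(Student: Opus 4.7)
My plan is to prove three successive assertions about $q_K$: its existence and uniqueness in $\{|q|\le 0.6\}$ as a simple pole of $W_K(q)$, the upper bound, and the matching lower bound. For the first I would use Rouché's theorem. By Lemma~\ref{lemma:denominator-numerator-convergence}, both $\det(I-M_K(q))$ and its derivative with respect to $q$ converge uniformly on $\{|q|\le 0.6\}$ to $1-b(q,1,1,1)$ and its derivative, with error $O(q_0^{K/(2t)})$. Since Lemma~\ref{lemma:dominant-singularity} tells us that $1-b(q,1,1,1)$ has $q_0$ as its unique zero in $\{|q|\le 0.6\}$ and that this zero is simple, Rouché on a small circle around $q_0$ and on $\{|q|=0.6\}$ yields a unique simple zero $q_K$ of $\det(I-M_K(q))$ in the disc for $K\ge c_{\ref{c_7}}$. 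Convergence of the numerator of $W_K(q)$ to $a(q_0,1,1,1)\neq 0$ by~\eqref{eq:numerator-bound} ensures $q_K$ is a genuine simple pole of $W_K(q)$, and Perron--Frobenius (Lemma~\ref{lemma:Perron-Frobenius-implications}(1)--(2)) forces $q_K\in\R_{>0}$ because $M_K(q_K)$ must have $1$ as its Perron eigenvalue.

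For the upper and lower bounds I would invoke Lemma~\ref{lemma:Perron-Frobenius-implications}(3) together with the monotonicity of $q\mapsto\lambda_{\mathrm{max}}(M_K(q))$: if a positive vector $v$ satisfies $M_K(q)v\ge v$ (resp.\ $M_K(q)v\le v$) componentwise, then $\lambda_{\mathrm{max}}(M_K(q))\ge 1$ (resp.\ $\le 1$), so $q\ge q_K$ (resp.\ $q\le q_K$). The natural test vector is the truncated Perron eigenvector $v=(p_1,\ldots,p_{N(K)})^T$ coming from~\eqref{eq:eigenvector-infinite-equation}. Using the identity $\sum_{s\ge \lceil r/t\rceil}p_s=p_r/q_0^r$, the row-wise discrepancy $(M_K(q_0)v)_r-p_r=-q_0^r T_r$ is non-positive, where $T_r$ is the tail $\sum_s p_s$ restricted to those indices $s$ that appear in row $r$ of $M_\infty(q_0)$ but not in $M_K(q_0)$. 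Proposition~\ref{proposition:p_r-bounds} together with $N(K)\sim K/(t-1)$ shows that the worst case of $T_r$ (attained for $r\asymp N(K)$) is of order $q_0^{K/(t-1)}$---the source of the common exponent in both bounds---while the gain from perturbing $q_0$ to $q_0+\eta$ scales row $r$ of $M_Kv$ by the factor $(1+\eta/q_0)^r$.

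For the upper bound, balancing $(1+\eta/q_0)^r-1\approx r\eta/q_0$ against $q_0^rT_r/p_r$ in every row and accumulating the polynomial factors from $p_r=O(r^2 q_*^r)$ in Proposition~\ref{proposition:p_r-bounds} together with the summation over the $O(K)$ rows produces the value $\eta=O(K^2 q_0^{K/(t-1)})$, giving $M_K(q_0+\eta)v\ge v$ and hence $q_K\le q_0+\eta$. For the lower bound, the estimate $p_r=\Omega(q_*^r/r)$ from the same proposition shows that for $\eta<c_{\ref{c_5}}K^{-1}q_0^{K/(t-1)}$ the truncation deficit in the dominant row still exceeds the perturbation gain, so $M_K(q_0+\eta)v\le v$ remains valid and $q_K\ge q_0+\eta$. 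The main obstacle is precisely this asymmetric bookkeeping of polynomial powers of $K$: the gap between $K^{-1}$ and $K^2$ reflects the gap between the lower and upper estimates for $p_r$ in Proposition~\ref{proposition:p_r-bounds} (the factors $1/r$ versus $r^2$) together with the need to sum row contributions in one direction but take a worst-row estimate in the other, and closing it to a sharp polynomial rate would demand a considerably finer analysis of the sequence $(p_r)$ than is available here.
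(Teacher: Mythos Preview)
Your proposal is correct and follows essentially the same approach as the paper: convergence of numerator and denominator (implicitly Rouché/Hurwitz) for the unique simple pole, then the truncated Perron eigenvector $(p_1,\ldots,p_{N(K)})$ as a test vector in Lemma~\ref{lemma:Perron-Frobenius-implications}(3) combined with Proposition~\ref{proposition:p_r-bounds} for the two-sided bounds on $q_K$. The only slip is in your lower-bound sketch: to bound the ratio $q_0^rT_r/p_r$ from below in \emph{every} row (not just a ``dominant'' one) you need both estimates of Proposition~\ref{proposition:p_r-bounds}---a lower bound on $p_{r+K}$ and an upper bound on $p_r$---not merely the lower bound on $p_r$ that you cite; the paper covers this symmetrically by the remark that the lower bound ``runs along the same lines''.
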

\begin{proof}
  In the following, \addtocounter{constant}{1}$c_{\arabic{constant}}$, \addtocounter{constant}{1}$c_{\arabic{constant}}$, \addtocounter{constant}{-2}\ldots{}
  denote suitable positive constants depending on $t$.

  As $H(q,1,1,1)$ has a unique pole $q$ with $\abs{q}\le 0.6$ by
  Lemma~\ref{lemma:dominant-singularity} and numerator and denominator of
  $W_K(q)$ tend to the numerator and denominator of $H(q,1,1,1)$ respectively
  by Lemma~\ref{lemma:denominator-numerator-convergence}, $W_K(q)$ also has a
  unique pole with $\abs{q}\le 0.6$ for sufficiently large $K$.

  We set $x_K=(p_1,\ldots,p_{N(K)})^T$. If we find a $q>0$ such that
  $M_K(q)x_K\ge x_K$, then Lemma~\ref{lemma:Perron-Frobenius-implications}
  implies that $\lambdamax(M_K(q))\ge 1$ and $q_K < q$.

  We therefore consider the $r$-th row of $M_K(q)x_K$ for some $1\le r\le N(K)$. We have
  \begin{align*}
    (M_K(q)x_K)_r&=q^{r}\sum_{\frac rt\le s \le \frac{r+K}t} p_s\ge
    q^{r}\sum_{\frac rt\le s <  \frac{r+K}t}
    p_s=q^r\left(\frac{p_r}{q_0^r}-\frac{p_{r+K}}{q_0^{r+K}}\right)\\
    &=
    p_r\left(\frac{q}{q_0}\right)^r  \left(1-\frac{p_{r+K}}{p_rq_0^K}\right)
  \end{align*}
  by the eigenvalue equation~\eqref{eq:eigenvector-infinite-equation}.
  By Proposition~\ref{proposition:p_r-bounds}, we have\newinvisiblec\label{c_1}
  \begin{equation*}
    \frac{p_{r+K}}{p_rq_0^K}\le c_{\ref{c_1}} r(r+K)^2
    \frac{q_*^{r+K}}{q_*^{r}q_0^K}=
    c_{\ref{c_1}}r(r+K)^2q_0^{K/(t-1)}\le \newc K^3 q_0^{K/(t-1)}.
  \end{equation*}
  Therefore, we have
  \begin{equation*}
    \sqrt[r]{1-\frac{p_{r+K}}{p_rq_0^K}}=\frac1{\left(1-\frac{p_{r+K}}{p_rq_0^K}\right)^{-1/r}}
    \ge \frac1{1+\frac{2p_{r+K}}{rp_rq_0^K}}\ge \frac1{1+\newc K^2 q_0^{K/(t-1)}}.
  \end{equation*}
  This means that for $q=q_0+\newc K^2 q_0^{K/(t-1)}$, we have $M_K(q)x_K\ge
  x_K$, as desired.

  The proof of the lower bound runs along the same lines.
\end{proof}

\begin{proof}[Proof of Theorem~\ref{theorem:width}]
  We choose $K$ large enough so that $W_K(q)$ has a unique
  singularity $q_K$ with $\abs{q_K}\le 0.6$ and such that
  $q_K/0.6<0.99$.
  By singularity analysis and Lemma~\ref{lemma:denominator-numerator-convergence} we have
  \begin{equation*}
    \P(w(T)\le K)=\frac{[q^n]W_K(q)}{[q^n]H(q,1,1,1)}=(1+O(0.6^{K/2t}))\left(\frac{q_K}{q_0}\right)^{-n-1}(1+O(0.99^n))
  \end{equation*}
  for $K\ge \newc$.

  We now estimate
  \begin{equation}\label{eq:mean-width-reformulation}
    \E(w(T))=\sum_{K\ge 0}(1-\P(w(T)\le K)).
  \end{equation}
  We use the abbreviation $S \colonequals 1/q_0^{t-1}>1$.

  First, we consider the summands of \eqref{eq:mean-width-reformulation} with
  $S^K\le n/\log^2n$. By Lemma~\ref{lemma:q_K-bounds}, we have \newinvisiblec\label{c_9}\newinvisiblec\label{c_10}
  \begin{equation*}
    \left(\frac{q_K}{q_0}\right)^n\ge\left( 1+c_{\ref{c_9}}\frac{1}{S^K\log_S
        n}\right)^n\ge \left(1+c_{\ref{c_10}}\frac{\log n}{n}\right)^n\ge
    c_{\ref{c_10}}\log n.
  \end{equation*}
  We conclude that these summands of \eqref{eq:mean-width-reformulation}
  contribute $\log_S n+O(\log\log n)$. Similar estimates imply that
  \begin{equation}\label{eq:concentration-property-lower-bound}
    \P(w(T)-\log_S n\le -\sigma \log_S\log n)=O\left(\frac{1}{\log^{\sigma-1} n}\right)
  \end{equation}
  for $\sigma>1$.

  Now, we consider the summands of \eqref{eq:mean-width-reformulation} with
  $n/\log^2n< S^K< n\log^3 n$. These are $O(\log\log n)$ summands with each
  trivially contributing at most $1$, so the total contribution is $O(\log\log
  n)$.

  Next, we consider the summands of \eqref{eq:mean-width-reformulation} with
  $n\log^3 n\leq S^K\le n^{4t\log S}$. We now have \newinvisiblec\label{c_11}
  \begin{equation*}
    \frac{q_K}{q_0}\le 1+c_{\ref{c_11}} \frac{\log^2 n}{S^K}\le
    1+c_{\ref{c_11}}\frac1{n\log n}
  \end{equation*}
  and therefore
  \begin{equation*}
    \P(w(T)\le K)\ge(1+O(n^{-\abs{\log_S 0.6}/(2t)}))\exp\left(-(n+1)\log\left(\frac{q_K}{q_0}\right)\right)\ge
    1-\newc\frac1{\log n}.
  \end{equation*}
  The total contribution of these summands is therefore $O(1)$. Similar estimates imply that
  \begin{equation}\label{eq:concentration-property-upper-bound}
    \P(w(T)-\log_S n\ge \sigma \log_S\log n)=O\left(\frac{1}{\log^{\sigma-2} n}\right)
  \end{equation}
  for $\sigma>2$.

  Next, we consider the summands of \eqref{eq:mean-width-reformulation} with
  $n^{4t\log S}< S^K\le S^{tn}$. This time, we have
  \begin{equation*}
    \frac{q_K}{q_0}\le 1+\newc \frac{n^2}{n^4}
  \end{equation*}
  and therefore
  \begin{equation*}
    \P(w(T)\le K)=(1+O(n^{-2\abs{\log 0.6}}))\exp\left(-(n+1)\log\left(\frac{q_K}{q_0}\right)\right)\ge
    1-\newc\frac1{n}.
  \end{equation*}
  The total contribution of these summands is therefore $O(1)$.

  Finally, we note that all summands with $K>tn$ vanish: any tree with $n$
  internal nodes has at most width $tn$.

  Collecting all terms, we obtain
  \begin{equation*}
    \E(w(T))=\log_S n+O(\log\log n)=\frac{\log n}{-(t-1)\log q_0}+O(\log\log n).
  \end{equation*}

  Combining \eqref{eq:concentration-property-lower-bound} and
  \eqref{eq:concentration-property-upper-bound} immediately yields the
  concentration property \eqref{eq:concentration-property-width}.
\end{proof}


\section{The Number of Leaves on the Last Level}
\label{sec:number-leaves-maximum-depth}


Analysing the parameter $m(T)$ counting the number of leaves of maximum depth
(labelled by the variable $u$ in the generating function $H(q,u,v,w)$) is the
topic of this section. Here, $T$ is a canonical forest in $\calF_r$ for
some number of roots~$r$. We note that for fixed $\abs{u}\le 1$, the dominant
simple pole $q_0$ of $H(q,1,1,1)$ is also the dominant singularity of
$H(q,u,1,1)$ and is still a simple pole. Therefore, $m(T)$ tends to a discrete
limiting distribution; we refer to Section~IX.2 of the book of Flajolet and
Sedgewick~\cite{Flajolet-Sedgewick:ta:analy}. Note that the number $m(T)$ is
divisible by $t$ unless $T$ has height $0$. The result presented in this
section is a very useful tool in proving the central limit theorem for the path
length in the following section.


\begin{theorem}\label{theorem:distribution-m}
  Let $q_0$, $Q$ and $U$ be as described in Lemma~\ref{lemma:dominant-singularity}
  and $q_*$ as defined in Proposition~\ref{proposition:p_r-bounds}.
  For $m\ge 1$ such that $mt\in\Z$, we set $p_m=[u^{mt}]b(q_0,u,1,1)$ as in Lemma~\ref{eq:infinite-eigenvector-preparation}.
  Then, for a randomly chosen forest $T\in\calF_r$ of size $n$, we have
  \begin{equation}\label{eq:probability-distribution-m}
    \P(m(T)=mt)=p_m+O(Q^nU^{mt})
    =p_m\biggl(1+O\biggl(Q^n m \biggl(\frac{U^t}{q_*}\biggr)^m\biggr)\biggr)
    =O(U^{mt})
  \end{equation}
  uniformly in $r$.

  Furthermore, we have $\E(m(T))=\mu_m+O(Q^n)$ and
  $\Var(m(T))=\sigma_m^2+O(Q^n)$ uniformly in~$r$. Here,
  \begin{align*}
    \mu_m&=2  t - \frac{t^{2} - t}{2^{t+1}} - \frac{t^{3} + 6  t^{2} - 5
      t}{2^{2t+3}} - \frac{3  t^{4} + 32  t^{3} + 61  t^{2} - 56  t}{2^{3t+8}}
    -\frac{t^5}{3\cdot 2^{4t+4}}
    +\frac{1.3t^4}{2^{4t}}\neweps(t)
    \intertext{and}
    \sigma_m^2&=
    2 t^{2}
    -\frac{t^{4}-3t^{2}}{2^{t+1}}
    -\frac{t^{5}+13t^{4}-3t^{3}-17t^{2}}{4^{2t+3}}\\
    &\alignpheqop{-}\frac{3t^{6}+59t^{5}+215t^{4}-89t^{3}-208t^{2}}{2^{3t+6}}
    +\frac{2t^{7}}{2^{4t}}\neweps(t)
  \end{align*}
  for $t\ge 4$. For $t\in\{2,3\}$, the values of $\mu_m$ and $\sigma_m^2$ are
  given in Table \ref{tab:mu_t-sigma_t-values}.
\end{theorem}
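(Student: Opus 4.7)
The plan is to read the distribution of $m(T)$ off the bivariate generating function $H(q,u,1,1)$ by combining coefficient extraction in $u$ with singularity analysis in $q$. By \eqref{eq:generating-function-height-and-others} and \eqref{eq:generating-function-simplified},
\begin{equation*}
H(q,u,1,1) = a(q,u,1,1) + b(q,u,1,1)\cdot H(q,1,1,1),
\end{equation*}
so for each fixed $m\ge 1$ the function $q\mapsto [u^{mt}]H(q,u,1,1)$ is meromorphic on $|q|<1/U^{t-1}$ by Theorem~\ref{theorem:generating-function-height-and-others} and has a single simple pole inside $|q|\le q_0/Q$, located at $q_0$ and inherited from $1-b(q,1,1,1)$. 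Cauchy's formula on $|u|=1/U$ yields the uniform bounds $\abs{[u^{mt}]a(q,u,1,1)}=O(U^{mt})$ and $\abs{[u^{mt}]b(q,u,1,1)}=O(U^{mt})$ on $|q|=q_0/Q$; the uniformity of the first bound in $r$ follows because \eqref{eq:U-estimates} implies $(q_0/Q)^{1/(t-1)}/U<1$, which forces every summand in the defining series of $a$ to decay geometrically in $r$.

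A standard residue computation at $q_0$, based on $\lim_{q\to q_0}(q-q_0)/(1-b(q,1,1,1))=-1/(\partial b/\partial q)(q_0,1,1,1)$ and the definition \eqref{eq:nu-definition} of $\nu(r)$, then gives
\begin{equation*}
[q^n u^{mt}]H(q,u,1,1)=\frac{\nu(r)\,p_m}{q_0^n}+O\!\left(\left(\frac{Q}{q_0}\right)^n U^{mt}\right),
\end{equation*}
uniformly in $r\ge 1$; the summand $a(q,u,1,1)$ is analytic at $q_0$ and is absorbed into the error. Dividing by the asymptotic count \eqref{eq:number-of-forests} of forests of size $n$ yields the first equality of \eqref{eq:probability-distribution-m}. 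The second equality follows from the lower bound $p_m=\Omega(q_*^m/m)$ in Proposition~\ref{proposition:p_r-bounds}, and the third from the upper bound $p_m=O(q_*^m)$ combined with $q_*=q_0^{t/(t-1)}\le U^t$, which is a direct consequence of \eqref{eq:U-estimates}. Note that $p_m$ itself does not depend on $r$, so the dependence of the error on $r$ is entirely contained in the uniform bounds $\nu(r)=\Theta(1)$ and $a(q_0,1,1,1)=\Theta(1)$ from Lemma~\ref{lemma:number-canonical-forests}.

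The moments are recovered by summation. Because $b(q_0,u,1,1)=\sum_{m\ge 1}p_m u^{mt}$ and the $p_m$ decay geometrically, the identities
\begin{equation*}
\mu_m=\left.\frac{\partial}{\partial u}b(q_0,u,1,1)\right|_{u=1},
\qquad
\sigma_m^2+\mu_m^2=\mu_m+\left.\frac{\partial^2}{\partial u^2}b(q_0,u,1,1)\right|_{u=1}
\end{equation*}
hold. Inserting the estimate of the previous step into $\E(m(T))=\sum_{m\ge 1}mt\,\P(m(T)=mt)$ and bounding the error contribution by $Q^n\sum_{m\ge 1}mt\,U^{mt}=O(Q^n)$ yields $\E(m(T))=\mu_m+O(Q^n)$; the same argument on the second moment, together with $\E(m(T))^2=\mu_m^2+O(Q^n)$, gives $\Var(m(T))=\sigma_m^2+O(Q^n)$, again uniformly in $r$.

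The remaining task is the asymptotic expansion of $\mu_m$ and $\sigma_m^2$ in powers of $t/2^t$. This is done by substituting the expansion of $q_0$ from Lemma~\ref{lemma:dominant-singularity} into the truncation $b_J$ together with its first two partial derivatives in $u$, choosing $J$ so that the tail estimate in Lemma~\ref{lem:den-bounds} lies safely below the required precision, and expanding each finite product in $t$. I expect this final step---the bookkeeping of the correct power of $t/2^t$ through successive derivatives and products---to be the only genuinely laborious part, best handled with interval arithmetic as explained in Remark~\ref{rem:numerical-calc}; the conceptual content of the proof, by contrast, lies in the first two paragraphs, whose only subtle point is the uniformity of all error estimates in the number of roots $r$.
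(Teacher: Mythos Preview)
Your argument is correct and follows essentially the same route as the paper: decompose $H(q,u,1,1)=a(q,u,1,1)+b(q,u,1,1)H(q,1,1,1)$, extract coefficients via Cauchy on the contours $|q|=q_0/Q$ and $|u|=1/U$, pick up the residue $p_m\nu(r)q_0^{-n}$ at $q_0$, and differentiate $b(q_0,u,1,1)$ for the moments (with Lemma~\ref{lem:den-bounds} controlling the truncation). The only cosmetic difference is that the paper extracts $[q^n]$ first and then $[u^{mt}]$, whereas you do it in the opposite order; this is immaterial. One small slip: Proposition~\ref{proposition:p_r-bounds} gives $p_m=O(m^2 q_*^m)$, not $O(q_*^m)$, but since \eqref{eq:U-estimates} forces $q_0<5/6<U^{t-1}$ and hence $q_*<U^t$ strictly, the polynomial factor is absorbed and your conclusion $p_m=O(U^{mt})$ still stands.
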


\begin{table}[htbp]
  \centering{\small
  \begin{equation*}
    \begin{array}{r|r|r}
      t&\multicolumn{1}{c|}{\mu_m}&\multicolumn{1}{c}{\sigma_m^2}\\
      \hline
2 & 3.3008907135661046 & 3.4340283494347781 \\
3 & 5.4223250580971105 & 10.9926467981432752 \\
4 & 7.5391743055684431 & 23.0048877906448059 \\
5 & 9.6531072700455410 & 39.9382006717564049 \\
6 & 11.7525465927985450 & 61.9509728363450114 \\
7 & 13.8311837210749625 & 88.8290211521323761 \\
8 & 15.8889617566427750 & 120.2125697911546141 \\
9 & 17.9291240142580452 & 155.7621950801096596 \\
10 & 19.9558689242933884 & 195.2366537978909468 \\
    \end{array}
  \end{equation*}}
  \caption{Numerical values of the constants in mean and variance of the number of
    leaves on the last level for $t\in\{2,3\}$, cf.\
    Theorem~\ref{theorem:distribution-m}.
  See also Remark~\ref{rem:numerical-calc}.
  For the accuracy of these numerical results see the note at the end of
  the introduction.}
  \label{tab:mu_t-sigma_t-values}
\end{table}
Note that by Lemma~\ref{lemma:b-q-u-1-1-formula}, $p_m=0$ for non-integer $m$.


Again, we visualize the distribution of the leaves on the last level for a
given parameter set, see Figure~\ref{fig:leaves}. This is compared with the
mean of Theorem~\ref{theorem:distribution-m}.


\begin{figure}
  \centering
   \begin{tikzpicture}
    \begin{axis}[
      xlabel=leaves on the last level,
      ylabel=probability,
      legend pos=outer north east]
    \addplot[color=blue, mark=*, only marks, mark size=1.5]
         table[x=leaves, y=count] {plot_leaves_true.txt};
    \addplot[color=black, mark=x, only marks, mark size=4.5]
         table[x=leaves, y=count] {plot_leaves_pm.txt};
    \addplot[color=black, no marks, very thick, dashed]
         table[x=leaves, y=count] {plot_leaves_mean.txt};
    \legend{true values, $p_m$, expectation}
    \end{axis}
  \end{tikzpicture}
 
  \caption{Distribution of the leaves on the last level for $t=2$ and $n=30$
    inner vertices. On the one hand, this figure shows the true distribution of
    all trees of the given size and on the other hand the result on the
    expectation of this distribution (Theorem~\ref{theorem:distribution-m} with
    only main term of mean taken into account).}
  \label{fig:leaves}
\end{figure}
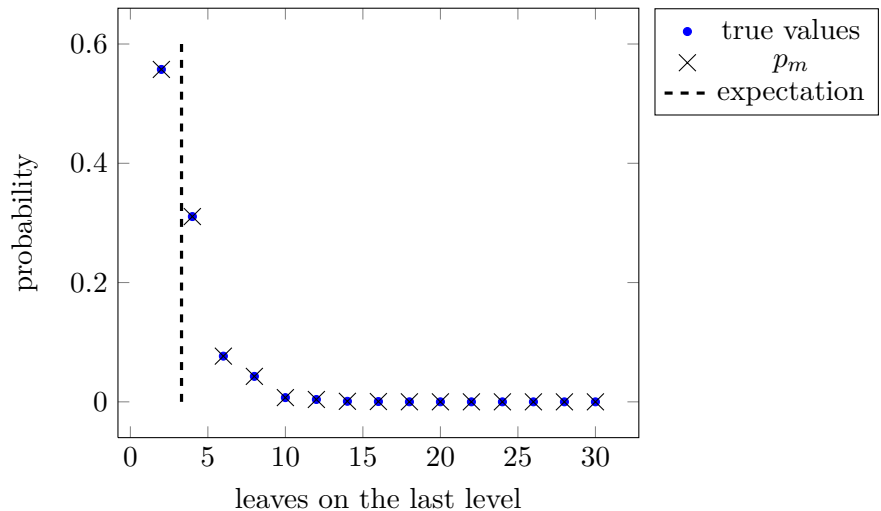


\begin{proof} 
  As the variables $v$ and $w$ do not play any role,
  we write $H(q,u)$, $a(q,u)$ and $b(q,u)$ instead of
  $H(q,u,1,1)$, $a(q,u,1,1)$ and $b(q,u,1,1)$, respectively.

  By \eqref{eq:U-estimates}, we have $U^{1-t} q_0/Q <1$, i.e., $a(q,u)$ and $b(q,u)$ are analytic for
  $\abs{q}\le q_0/Q$ and $\abs{u}\le 1/U$ by Theorem~\ref{theorem:generating-function-height-and-others}. By
  \eqref{eq:generating-function-height-and-others} and
  Lemma~\ref{lemma:dominant-singularity}, the meromorphic function $q\mapsto
  H(q,u)$ for fixed $u$ with $\abs{u}\le 1/U$ has a unique singularity in $\{ q
  \mid \abs{q}\le q_0/Q \}$, namely $q_0$, independently of $u$.

  We use Cauchy's formula, the residue theorem and
  the fact that $a(q,u)$ does not contribute to the residue at $q=q_0$ to obtain
  \begin{align*}
    [q^n]H(q,u)&=\frac1{2\pi
      i}\oint_{\abs{q}=1/2}\frac{H(q,u)}{q^{n+1}}\dd q\\
    &= -\Res\left(\frac{H(q, u)}{q^{n+1}}, q=q_0\right)+\frac1{2\pi
      i}\oint_{\abs{q}=q_0/Q}\frac{H(q,u)}{q^{n+1}}\dd q\\
    &=\frac{b(q_0,u)\nu(r)}{q_0^{n}}+\frac1{2\pi
      i}\oint_{\abs{q}=q_0/Q}\frac{H(q,u)}{q^{n+1}}\dd q
  \end{align*}
  where $\nu(r)$ has been defined in \eqref{eq:nu-definition}.

  By Lemma~\ref{lemma:number-canonical-forests}, the probability generating function
  $P_n(u)$ of $m(T)$ is given by
  \begin{equation}\label{eq:probability_convergence}
    P_n(u)=b(q_0,u)+O(Q^n),
  \end{equation}
  uniformly for $\abs{u}\le 1/U$ and uniformly in the number of roots $r$ (it
  suffices to bound the numerator and the denominator of $H(q,u)$ separately in
  order to get a uniform bound in $r$). We remark that this proves non-negativity of the constants $p_m$,
which we required in the proof of Proposition~\ref{proposition:p_r-bounds}.

  Expectation and variance follow upon differentiating $b(q_0,u)$ with respect
  to $u$ and inserting the asymptotic expression for $q_0$. Here, we use the
  bounds derived in Lemma~\ref{lem:den-bounds}.

  In order to compute $\P(m(T)=mt)$, we consider
  \begin{equation*}
    [u^{mt}][q^n]H(q,u)=p_m \frac{\nu(r)}{q_0^{n}}
    +\frac1{(2\pi i)^2}\oint_{\abs{u}=1/U}\oint_{\abs{q}=q_0/Q}
    \frac{H(q,u)}{q^{n+1}u^{mt+1}} \dd q \dd u.
  \end{equation*}
  Bounding $H(q,u)$ uniformly in $r$ and using
  Lemma~\ref{lemma:number-canonical-forests} and
  Proposition~\ref{proposition:p_r-bounds} yields
  \eqref{eq:probability-distribution-m},
  taking into account that
  \begin{equation*}
    \frac{U^t}{q_*}=2+\frac{2}{t^2}\neweps(t)
  \end{equation*}
  for $t\ge 30$ and that $U^t/q_*>1$ remains true for all $t\ge 2$.
\end{proof}


\section{The Path Length}\label{sec:path-length}


This section is devoted to the analysis of the path length. While the external
path length is most natural in the setting of Huffman codes, it is more
convenient to work with the total and the internal path length,
respectively. As it was pointed out in the introduction, all three are
essentially equivalent, since they are (deterministically) related by simple
linear equations.


\begin{theorem}\label{thm:pathlength}
  For a randomly chosen tree $T\in\calT$ of size $n$ the total path length (as
  well as the internal and the external path length) is asymptotically (for
  $n\to\infty$) normally distributed. Its mean is asymptotically
  $\mu_{\mathit{tpl}} n^2 + O(n)$ and its variance is asymptotically
  $\sigma_{\mathit{tpl}}^2 n^3 + O(n^2)$ with
  \begin{equation*}
    \mu_{\mathit{tpl}} = \frac{t}{2} \mu_h = 
    \frac{t}{4} + \frac{t^2-2t}{2^{t+4}} + \frac{2t^3+3t^2-8t}{2^{2t+6}}
    + \frac{9t^4+45t^3+2t^2-88t}{2^{3t+9}} 
    + \frac{0.048 t^5}{2^{4t}} \neweps(t)\label{eps:tpl:m}
  \end{equation*}
  and
  \begin{multline*}
    \sigma_{\mathit{tpl}}^2 = \frac{t^2}{12}
    + \frac{-t^4+5t^3-2t^2}{3\cdot 2^{t+4}}
    + \frac{-6t^5+6t^4+27t^3-14t^2}{3\cdot 2^{2t+6}} \\
    + \frac{-27t^6-72t^5+237t^4+302t^3-232t^2 }{3\cdot 2^{3t+9}}
    + \frac{0.078 t^7}{2^{4t}} \neweps(t)\label{eps:tpl:v}
  \end{multline*}
  for $t\geq30$. 
\end{theorem}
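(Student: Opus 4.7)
The plan is to derive a generating function for the path length by adapting the level-by-level construction of Section~\ref{sec:generating-function}, extract its first and second moments by singularity analysis, and then establish asymptotic normality via a probabilistic argument.

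First I would refine the recursion underlying Theorem~\ref{theorem:generating-function-height-and-others}. When a canonical forest of height $h$ with $m$ leaves on the last level is grown into one of height $h+1$ by promoting $j$ of those leaves to internal vertices, the size increases by $j$ while the total path length increases by $jt(h+1)$ (the $jt$ new leaves at level $h+1$ each add $h+1$, and the promoted leaves remain at depth $h$). Introducing a variable $y$ marking $\ell(T)$ and absorbing the level-indexed factor $y^{jt(h+1)}$ into the substitution $u \mapsto u\,y^{h+1}$ at each expansion step, one obtains a functional equation of precisely the form covered by Lemma~\ref{lemma:functional-equation}, hence a closed form in the spirit of~\eqref{eq:generating-function-height-and-others}. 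Rather than handle $y^{\ell(T)}$ globally, I would differentiate once and twice with respect to $y$ at $y=1$ to produce generating functions for $\E(\ell(T))$ and $\E(\ell(T)^2)$. Each differentiation raises the order of the pole at $q=q_0$ by one, so the first- and second-moment generating functions acquire a double and a triple pole at $q_0$, respectively. Singularity analysis then yields $\E(\ell(T)) = \mu_{\mathit{tpl}} n^2 + O(n)$ and $\E(\ell(T)^2) = \mu_{\mathit{tpl}}^2 n^4 + \sigma_{\mathit{tpl}}^2 n^3 + O(n^2)$; substituting the expansion of $q_0$ from Lemma~\ref{lemma:dominant-singularity} and truncating the infinite sums through Lemmata~\ref{lem:den-bounds-pre} and~\ref{lem:den-bounds} delivers the asymptotic expressions stated. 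The identity $\mu_{\mathit{tpl}}=\frac{t}{2}\mu_h$ emerges naturally, reflecting the heuristic $\ell(T)\approx \frac{t}{2}\,n\,h(T)$ obtained by Abel summation on $\ell(T)=\sum_k k v_k(T)$ together with $\sum_{k\ge1} v_k(T)=tn$.

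For the central limit theorem, a direct meromorphic perturbation is unavailable: the variable $y$ does not produce a clean singularity perturbation of the type used for the height and the number of distinct depths. Instead, I would condition on the height. By Theorem~\ref{thm:height} and its local limit refinement, $h(T)$ is asymptotically normal with mean $\mu_h n$ and variance $\sigma_h^2 n$; conditional on $h(T)=H$ the level profile $(v_0,\ldots,v_H)$ is governed by canonical-forest statistics $\calF_r$ which by Lemma~\ref{lemma:number-canonical-forests} and Theorem~\ref{theorem:distribution-m} are uniformly in $r$ close to a stationary distribution with exponential mixing. Consequently $\ell(T) - \mu_{\mathit{tpl}} n^2$ decomposes asymptotically as the sum of two Gaussian contributions of order $n^{3/2}$: one arising from the height fluctuation and one from the aggregated profile fluctuations around their conditional mean. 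These two contributions are asymptotically independent, and their variances sum to $\sigma_{\mathit{tpl}}^2 n^3$.

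The hard part is this last step. While the mean and variance follow mechanically from the generating-function derivatives once the functional equation is set up, decoupling the profile fluctuations from the height fluctuation and proving a joint bivariate normal limit law requires combining the uniform-in-$r$ estimates of Lemma~\ref{lemma:number-canonical-forests} with a martingale or direct mixing argument for the width sequence. A viable alternative is a pure method-of-moments approach in which all cumulants of $\ell(T)$ are extracted from the generating function by repeated differentiation and shown to satisfy the Gaussian scaling $\E\bigl((\ell(T)-\mu_{\mathit{tpl}} n^2)^k\bigr) = O(n^{3k/2})$ with the correct leading coefficient; this avoids probabilistic modelling at the price of intricate asymptotic bookkeeping, and it is precisely the kind of mixed generating-function/probabilistic argument flagged in the introduction.
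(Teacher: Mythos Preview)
Your moment computation is on the right track and matches the paper's approach (functional equations for the moment generating functions $L_1, L_2$, then singularity analysis), but two details are off. First, the master generating function with a variable $y$ marking $\ell(T)$ does \emph{not} fall under Lemma~\ref{lemma:functional-equation} directly: the substitution at each level is $u\mapsto qu^t y^{t(h+1)}$, and the $h$-dependence obstructs iteration. The paper sidesteps this by deriving recursions directly for $L_r(q,u,w) = \sum_T \ell(T)^r q^{n(T)} u^{m(T)} w^{h(T)}$, which do fit Lemma~\ref{lemma:functional-equation} with inhomogeneous term built from $\Phi_u\Phi_w L_{r-1}$ (Lemmata~\ref{lemma:L_1-functional-equation} and~\ref{lem:tpl:L2}). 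Second, your pole-order claim is wrong: because each $y$-differentiation effectively introduces a $\Phi_u\Phi_w$ rather than a bare $\Phi_u$, the pole order jumps by two, not one. The correct orders are $3$ for $L_1$ and $5$ for $L_2$, not $2$ and $3$; your stated asymptotics $\E(\ell(T))\sim n^2$ and $\E(\ell(T)^2)\sim n^4$ are right, so the slip is only internal, but a double pole would have given linear mean.

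For the central limit theorem the paper takes a route different from both of your suggestions, and more direct. Rather than condition on the height or compute all moments, it writes the internal path length as a deterministic weighted sum of indicators: numbering internal vertices $v_1,\ldots,v_n$ top-to-bottom and left-to-right and setting $Y_{j,n} = [\mathrm{depth}(v_{j+1}) > \mathrm{depth}(v_j)] \in \{0,1\}$, one has
\[
\ell_{\mathit{internal}}(T) = \sum_{j=1}^{n-1}(n-j)\,Y_{j,n}.
\]
The key step is a strong mixing inequality $|\P(A\cap B) - \P(A)\P(B)| \le \kappa e^{-\lambda(s_2-s_1)}$ for $A$ in the $\sigma$-algebra of $Y_{1,n},\ldots,Y_{s_1,n}$ and $B$ in that of $Y_{s_2,n},\ldots,Y_{n-1,n}$, proved by cutting the tree at the levels of $v_{s_1}$ and $v_{s_2}$ into three canonical forests and invoking the uniform-in-$r$ estimates of Lemma~\ref{lemma:number-canonical-forests} and Theorem~\ref{theorem:distribution-m}. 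With mixing in hand, a Berry--Esseen-type CLT of Sunklodas for strongly mixing sequences (Lemma~\ref{lemma:sunklodas}) applies directly, the required variance lower bound being exactly the cubic variance already extracted from $L_2$. This avoids both the bivariate limit law and the full moment bookkeeping you flag as hard; your height-conditioning heuristic is morally related (the forest decomposition underlies both), but packaging the path length as a single weighted sum of bounded, strongly mixing summands over a non-random index set is what makes the argument close.
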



We determined numerical values of these constants as in the previous
sections. They are given in Table~\ref{tab:special-values-pathlength}.
Figure~\ref{fig:total} shows the result of Theorem~\ref{thm:pathlength} for
particular values. It compares the obtained normality with the distribution of
the total path length found by a simulation in SageMath.


\tablepathlength{}


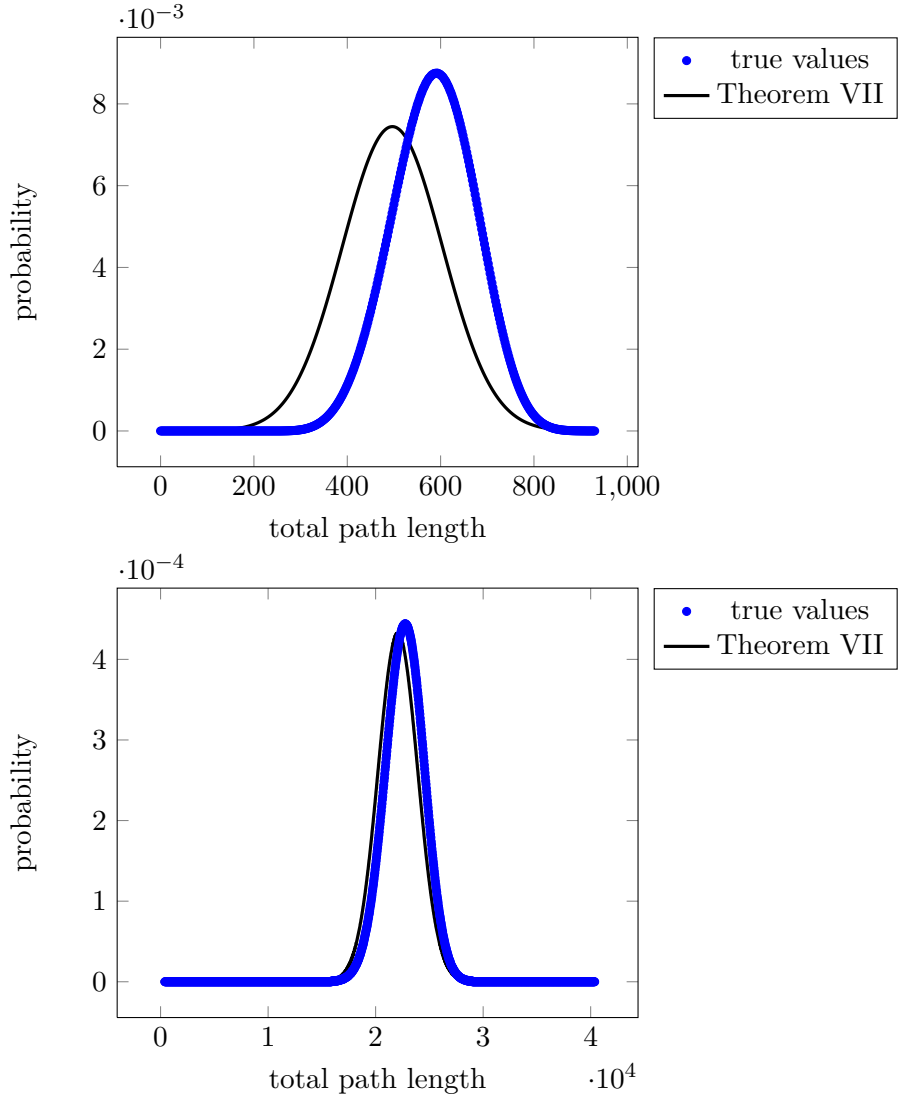
\begin{figure}
  \centering
   \begin{tikzpicture}
    \begin{axis}[
      xlabel=total path length,
      ylabel=probability,
      legend pos=outer north east]
    \addplot[color=blue, mark=*, only marks, mark size=1.5]
         table[x=total30, y=count] {plot_total30_true.txt};
    \addplot[color=black, no marks, very thick]
         table[x=total30, y=count] {plot_total30_asy.txt};
    \legend{true values, Theorem~\ref{thm:pathlength}}
    \end{axis}
  \end{tikzpicture}
   \begin{tikzpicture}
    \begin{axis}[
      xlabel=total path length,
      ylabel=probability,
      legend pos=outer north east]
    \addplot[color=blue, mark=*, only marks, mark size=1.5]
         table[x=total200, y=count] {plot_total200_true.txt};
    \addplot[color=black, no marks, very thick]
         table[x=total200, y=count] {plot_total200_asy.txt};
    \legend{true values, Theorem~\ref{thm:pathlength}}
    \end{axis}
  \end{tikzpicture}
 
  \caption{Distribution of the total path length for $t=2$, and $n=30$
    (top figure) and $n=200$ (bottom figure) inner
    vertices. On the one hand, this figure shows the true distribution of all
    trees of the given size and on the other hand the result on the asymptotic
    normal distribution (Theorem~\ref{thm:pathlength} with only main terms of
    mean and variance taken into account). In order to take into account that
    the total path length is always even, we rescale the limit distribution.}
  \label{fig:total}
\end{figure}


We first use a generating functions approach to determine the asymptotic
behaviour of the mean and variance. Let us define
\begin{equation*}
L_r(q,u,w) \colonequals \sum_{T \in \calT} \ell(T)^r q^{n(T)}u^{m(T)}w^{h(T)}
\end{equation*}
for the $r$-th moment of the total path length. Note that
\begin{equation*}
 L_0(q,u,w) = H(q,u,1,w) =  a_0(q,u,w) + b(q,u,w)\frac{a_0(q,1,w)}{1-b(q,1,w)}
\end{equation*}
in the notation of
Theorem~\ref{theorem:generating-function-height-and-others}, but writing $a_0$
instead of $a$ and leaving out the parameter $v$.

We are
specifically interested in $L_1$ and $L_2$. In analogy to the approach we used
to determine a formula for $H(q,u,v,w)$ in the proof of Theorem~\ref{theorem:generating-function-height-and-others}, we obtain a functional equation for $L_r(q,u,w)$ by first introducing
\begin{equation*}
L_{r,h} (q,u) = [w^h] L_r(q,u,w) = \sum_{\substack{T \in \calT \\ h(T) = h}} \ell(T)^r q^{n(T)}u^{m(T)}.
\end{equation*}


Define, for the sake of convenience, the linear operators $\Phi_u = u
\frac{\partial}{\partial u}$, $\Phi_w = w \frac{\partial}{\partial w}$ and
$\Phi_q = q \frac{\partial}{\partial q}$ acting on our generating functions. We
get the following result for the generating function of the first moment.


\begin{lemma}\label{lemma:L_1-functional-equation}
  We have
  \begin{equation*}
    L_1(q,u,w) = a_1(q,u,w) + b(q,u,w) \frac{a_1(q,1,w)}{1-b(q,1,w)},
  \end{equation*}
  with
  \begin{equation*}
    a_1(q,u,w) = \sum_{j=0}^{\infty} (-1)^j w^j 
    (\Phi_u\Phi_w L_0)(q,q^{\hp{j}}u^{t^j},w) 
    \prod_{i=1}^j \frac{q^{\hp{i}}u^{t^i}}{1-q^{\hp{i}}u^{t^i}}.
  \end{equation*}
\end{lemma}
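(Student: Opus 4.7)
The plan is to derive a functional equation for $L_1(q,u,w)$ of exactly the shape treated in Lemma~\ref{lemma:functional-equation} and then read off the stated formula. The whole argument will run parallel to the proof of Theorem~\ref{theorem:generating-function-height-and-others}; the only new ingredient is an inhomogeneous term that encodes the path-length increment between successive levels.

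First I will set up a level-by-level recursion. Writing $L_{r,h}(q,u) \colonequals [w^h] L_r(q,u,w)$, I use the construction from the proof of Theorem~\ref{theorem:generating-function-height-and-others}: a tree $T'$ of height $h+1$ is obtained from a tree $T$ of height $h$ by choosing $j\in\{1,\dots,m(T)\}$ of the last-level leaves and expanding each into an internal vertex with $t$ leaf children. The $jt$ new vertices all lie at depth $h+1$, hence $\ell(T') = \ell(T) + jt(h+1)$. Splitting this sum yields
\begin{equation*}
L_{1,h+1}(q,u) = \frac{qu^t}{1-qu^t}\bigl(L_{1,h}(q,1)-L_{1,h}(q,qu^t)\bigr) + t(h+1)\tilde f_h(q,u),
\end{equation*}
where $\tilde f_h(q,u) \colonequals \sum_{T\colon h(T)=h} q^{n(T)} \sum_{j=1}^{m(T)} j(qu^t)^j$. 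The same construction applied without the path-length weight gives $L_{0,h+1}(q,u) = \sum_{T\colon h(T)=h} q^{n(T)} \sum_{j=1}^{m(T)} (qu^t)^j$, so applying $\Phi_u$ term by term and using $\Phi_u(qu^t)^j=jt(qu^t)^j$ produces the key identity $\Phi_u L_{0,h+1}(q,u) = t\tilde f_h(q,u)$.

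Next I multiply by $w^{h+1}$ and sum over $h\ge 0$. Because a single-vertex tree has path length $0$ we have $L_{1,0}\equiv 0$, so the left-hand side sums to $L_1(q,u,w)$. The first right-hand term assembles into $\frac{wqu^t}{1-qu^t}\bigl(L_1(q,1,w)-L_1(q,qu^t,w)\bigr)$, and via the identity above the inhomogeneous term becomes
\begin{equation*}
t\sum_{h\ge 0}(h+1)w^{h+1}\tilde f_h(q,u)
= \sum_{h'\ge 1}h'w^{h'}\Phi_u L_{0,h'}(q,u)
= \Phi_w\Phi_u L_0(q,u,w)
\end{equation*}
after an index shift. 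Consequently $L_1$ satisfies a functional equation of exactly the form~\eqref{eq:general-functional-equation}, with $P(u)=(\Phi_u\Phi_w L_0)(q,u,w)$, $R(y) = wy/(1-y)$, and $S(y) = -wy/(1-y)$; these are the same $R$ and $S$ appearing in the $v=1$ specialisation of the functional equation for $H$ in the proof of Theorem~\ref{theorem:generating-function-height-and-others}.

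The final step is to apply Lemma~\ref{lemma:functional-equation}. Since $R$ and $S$ are unchanged, the resulting $b$-function coincides with $b(q,u,1,w)$ from Theorem~\ref{theorem:generating-function-height-and-others}. Plugging $S(y)=-wy/(1-y)$ into the formula for $a$ in~\eqref{eq:a_u_b_u_definition} turns $\prod_{i=1}^{j}S(q^{\hp{i}}u^{t^i})$ into $(-w)^{j}\prod_{i=1}^{j} q^{\hp{i}}u^{t^i}/(1-q^{\hp{i}}u^{t^i})$, which combines with $P$ to give precisely the expression claimed for $a_1$. The only point requiring independent verification is the boundedness hypothesis of Lemma~\ref{lemma:functional-equation}: I must check that $P$, $R$, $S$ are uniformly bounded and $\abs{S}\le s<1$ on the relevant polydisc. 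This will be done exactly as in the proof of Theorem~\ref{theorem:generating-function-height-and-others} by restricting to a small polydisc in $(q,u,w)$; the new ingredient, boundedness of $\Phi_u\Phi_w L_0$, follows from the analyticity of $L_0=H(q,u,1,w)$ by Cauchy's estimates. I expect this technical check to be the only routine obstacle, the substantive combinatorial input being the identity $t\tilde f_h=\Phi_u L_{0,h+1}$.
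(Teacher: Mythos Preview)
Your proposal is correct and follows essentially the same approach as the paper: set up the level recursion via the growth construction, identify the inhomogeneous term as $(h+1)\Phi_u L_{0,h+1}(q,u)$, sum over $h$ to obtain the functional equation $L_1(q,u,w)=\frac{qu^tw}{1-qu^t}(L_1(q,1,w)-L_1(q,qu^t,w))+\Phi_u\Phi_w L_0(q,u,w)$, and invoke Lemma~\ref{lemma:functional-equation}. The only cosmetic difference is that the paper recognises the inhomogeneous contribution directly as $\sum_{h(T')=h+1}(h+1)m(T')q^{n(T')}u^{m(T')}=(h+1)\Phi_u L_{0,h+1}(q,u)$ by summing over the new trees $T'$, whereas you reach the same identity through your auxiliary $\tilde f_h$; both are the same computation.
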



\begin{proof}
Replacing $j$ leaves of depth $h$ by internal vertices, thus creating $jt$ new leaves of depth $h+1$, increases the total path length by $jt(h+1)$.
Thus we get
\ifproc
\begin{align*}
  L_{1,h+1}(q,u) &= \sum_{\substack{T \in \calT \\ h(T) = h+1}} (h+1)m(T)
  q^{n(T)}u^{m(T)} \\ 
  &\phantom{=}+ \sum_{\substack{T \in \calT \\ h(T) = h}} 
  \sum_{j=1}^{m(T)} \ell(T) q^{n(T)+j}u^{jt} \\
  &= (h+1)u \frac{\partial}{\partial u} L_{0,h+1}(q,u) \\ 
  &\phantom{=}+ \frac{qu^t}{1-qu^t} \left( L_{1,h}(q,1) -  L_{1,h}(q,qu^t) \right).
\end{align*}
\else
\begin{align*}
  L_{1,h+1}(q,u) &=  \sum_{\substack{T \in \calT \\ h(T) = h}} 
  \sum_{j=1}^{m(T)} \ell(T) q^{n(T)+j}u^{jt} 
  + \sum_{\substack{T \in \calT \\ h(T) = h+1}} (h+1)m(T)
  q^{n(T)}u^{m(T)} \\
  &= \frac{qu^t}{1-qu^t} \big( L_{1,h}(q,1) -  L_{1,h}(q,qu^t) \big)
  + (h+1)u \frac{\partial}{\partial u} L_{0,h+1}(q,u) 
\end{align*}
\fi and $L_{1,0}(q,u) = 0$.  Then, by multiplying by $w^{h+1}$ and
summing over all $h$, we obtain \ifproc
\begin{align*}
L_1(q,u,w) &= \Phi_u\Phi_w L_0(q,u,w) \\ 
&\phantom{=}+ \frac{qu^tw}{1-qu^t} \left( L_1(q,1,w) - L_1(q,qu^t,w) \right).
\end{align*}
\else
\begin{equation*}
  L_1(q,u,w) = \frac{qu^tw}{1-qu^t} \left( L_1(q,1,w) - L_1(q,qu^t,w) \right)
  + \Phi_u\Phi_w L_0(q,u,w).
\end{equation*}
\fi
Lemma~\ref{lemma:functional-equation} yields the desired formula 
for $L_1(q,u,w)$.
\end{proof}


Next, we derive a formula for the generating function of the second
moment.


\begin{lemma}\label{lem:tpl:L2}
  We have
  \begin{equation*}
    L_2(q,u,w) = a_2(q,u,w) + b(q,u,w) \frac{a_2(q,1,w)}{1-b(q,1,w)}
  \end{equation*}
  with
  \begin{equation*}
    a_2(q,u,w) = \sum_{j=0}^{\infty} (-1)^j w^j 
    \left( 2(\Phi_u\Phi_w L_1)(q,q^{\hp{j}}u^{t^j},w) 
      - (\Phi_u^2\Phi_w^2 L_0)(q,q^{\hp{j}}u^{t^j},w) \right) 
    \prod_{i=1}^j \frac{q^{\hp{i}}u^{t^i}}{1-q^{\hp{i}}u^{t^i}}.
  \end{equation*}
\end{lemma}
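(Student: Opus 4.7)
The plan is to mimic the derivation of Lemma~\ref{lemma:L_1-functional-equation}. Write $L_{2,h}(q,u) \colonequals [w^h] L_2(q,u,w)$ and reuse the bijective replacement construction: every tree $T'$ of height $h+1$ arises from a tree $T$ of height $h$ by converting $j$ of its $m(T)$ bottom-level leaves (with $1\le j\le m(T)$) into internal vertices, each carrying $t$ new children. The resulting tree has $n(T')=n(T)+j$, $m(T')=jt$ and, crucially, $\ell(T')=\ell(T)+jt(h+1)$, since only the $jt$ newly created vertices at depth $h+1$ contribute to the change of total path length. Squaring yields
\begin{equation*}
\ell(T')^2 = \ell(T)^2 + 2jt(h+1)\ell(T) + j^2 t^2 (h+1)^2,
\end{equation*}
which splits the recursion $L_{2,h+1}(q,u)=\sum_T\sum_{j=1}^{m(T)}\ell(T')^2 q^{n(T)+j}u^{jt}$ into three pieces (the inner $T$-sum ranging over trees of height $h$).

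The $\ell(T)^2$-piece telescopes exactly as in the proof of Lemma~\ref{lemma:L_1-functional-equation} into $\frac{qu^t}{1-qu^t}\bigl(L_{2,h}(q,1)-L_{2,h}(q,qu^t)\bigr)$. For the remaining two pieces I reinterpret $jt=m(T')$ to obtain the identities
\begin{align*}
(\Phi_u L_1)_{h+1}(q,u) &= t\sum_T \ell(T) q^{n(T)}\sum_{j=1}^{m(T)} j(qu^t)^j + t^2(h+1)\sum_T q^{n(T)}\sum_{j=1}^{m(T)} j^2(qu^t)^j, \\
(\Phi_u^2 L_0)_{h+1}(q,u) &= t^2\sum_T q^{n(T)}\sum_{j=1}^{m(T)} j^2(qu^t)^j,
\end{align*}
where again the sums over $T$ range over height-$h$ trees. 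Solving this $2\times 2$ linear system for the two auxiliary sums and substituting back condenses the cross and quadratic contributions into precisely $2(h+1)(\Phi_u L_1)_{h+1}(q,u)-(h+1)^2(\Phi_u^2 L_0)_{h+1}(q,u)$.

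Multiplying the resulting recursion for $L_{2,h+1}$ by $w^{h+1}$ and summing over $h\ge 0$, the factors $(h+1)w^{h+1}$ and $(h+1)^2 w^{h+1}$ turn into $\Phi_w$ and $\Phi_w^2$ applied to the $L_1$- and $L_0$-generating functions (and $\Phi_u$, $\Phi_w$ commute since they act on different variables), yielding
\begin{equation*}
L_2(q,u,w)=2\Phi_u\Phi_w L_1(q,u,w)-\Phi_u^2\Phi_w^2 L_0(q,u,w)+\frac{qu^t w}{1-qu^t}\bigl(L_2(q,1,w)-L_2(q,qu^t,w)\bigr).
\end{equation*}
This is of the form required by Lemma~\ref{lemma:functional-equation} with $P(u)=2\Phi_u\Phi_w L_1(q,u,w)-\Phi_u^2\Phi_w^2 L_0(q,u,w)$, $R(x)=xw/(1-x)$, and $S(x)=-xw/(1-x)$. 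Since $\prod_{i=1}^{j}S(q^{\hp{i}}u^{t^i})=(-1)^j w^j\prod_{i=1}^{j}q^{\hp{i}}u^{t^i}/(1-q^{\hp{i}}u^{t^i})$ and the $R$-contribution reassembles into $b(q,u,w)$ exactly as in the proof of Lemma~\ref{lemma:L_1-functional-equation}, the claimed formula for $L_2$ and $a_2$ drops out.

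The main obstacle is the algebraic step in the middle paragraph: the finite sums $\sum_{j=1}^{m(T)}j(qu^t)^j$ and $\sum_{j=1}^{m(T)}j^2(qu^t)^j$ have no closed form convenient enough to fit directly into Lemma~\ref{lemma:functional-equation}, so one must recognize them as coefficients of the already-known quantities $\Phi_u L_1$ and $\Phi_u^2 L_0$ at height $h+1$ and invert a $2\times 2$ linear system to express them. Once this reinterpretation is made, the passage from the recursion to the functional equation and then to the explicit form of $a_2$ is purely mechanical, in complete parallel to the $L_1$ case.
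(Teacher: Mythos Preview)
Your proof is correct and follows essentially the same route as the paper's own proof: both expand $(\ell(T)+jt(h+1))^2$, isolate the $\ell(T)^2$ part to obtain the $\frac{qu^t}{1-qu^t}(L_{2,h}(q,1)-L_{2,h}(q,qu^t))$ term, recognise the remaining two pieces as $2(h+1)\Phi_u L_{1,h+1}-(h+1)^2\Phi_u^2 L_{0,h+1}$, sum over $h$, and invoke Lemma~\ref{lemma:functional-equation}. The only difference is cosmetic---you phrase the middle step as inverting a $2\times 2$ linear system, whereas the paper regroups directly in terms of sums over trees $T'$ of height $h+1$ using $\ell(T)=\ell(T')-m(T')(h+1)$---but the content is identical.
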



\begin{proof}
As in Lemma~\ref{lemma:L_1-functional-equation}, we derive a functional equation for $L_2(q,u,w)$. Starting with a tree
$T$ of height~$h$ and creating $jt$ new leaves of depth $h+1$ changes the
square of the total
path length from $\ell(T)^2$ to $\left(\ell(T) + jt(h+1)\right)^2$. This
translates to
\begin{align*}
  L_{2,h+1}(q,u) &= \sum_{\substack{T \in \calT \\ h(T) = h}} 
  \sum_{j=1}^{m(T)} \ell(T)^2 q^{n(T)+j} u^{jt}
  + \sum_{\substack{T \in \calT \\ h(T) = h+1}} (h+1)^2 m(T)^2
  q^{n(T)} u^{m(T)} \\
  &\alignpheqop{+} 2 \sum_{\substack{T \in \calT \\ h(T) = h+1}} (h+1)
  \big(\ell(T) - m(T)(h+1)\big)m(T) q^{n(T)} u^{m(T)}  \\
  &= \frac{qu^t}{1-qu^t} \left( L_{2,h}(q,1) -  L_{2,h}(q,qu^t) \right) \\
  &\alignpheqop{+} 2(h+1) \Phi_u L_{1,h+1}(q,u)
  - (h+1)^2 \Phi_u^2 L_{0,h+1}(q,u).
\end{align*}
Note that we have $L_{2,0}(q,u) = 0$. Encoding the height by $w^h$ leads to the
functional equation for the generating function
\begin{equation*}
  L_2(q,u,w) = \frac{qu^tw}{1-qu^t} \big( L_2(q,1,w) - L_2(q,qu^t,w) \big)
  + 2\Phi_u\Phi_w L_1(q,u,w) - \Phi_u^2\Phi_w^2 L_0(q,u,w).
\end{equation*}
Again, Lemma~\ref{lemma:functional-equation} finishes this proof.
\end{proof}


In order to determine the asymptotic behaviour of mean and variance, one only
needs to find the expansion around the dominating singularity $q_0$ and apply
singularity analysis. The main term of the mean is easy to guess: assuming that
the vertices are essentially uniformly distributed along the entire height, it
is natural to conjecture that $\ell(T)$ is typically around $tn(T)h(T)/2$ and
thus of quadratic order. This is indeed true, and the variance turns out to be
of cubic order (terms of degree 4 cancel, as one would expect). The following
lemma substantiates these claims for the mean.


\begin{proposition}\label{pro:mu_tpl:mu_h}
  The mean of the total path length is $\mu_{\mathit{tpl}} n^2 + O(n)$ with
  \begin{equation*}
    \mu_{\mathit{tpl}} = \frac{t}{2} \mu_h.
  \end{equation*}
\end{proposition}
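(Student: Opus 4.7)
The plan is to apply singularity analysis to $L_1(q, 1, 1) = a_1(q, 1, 1)/(1 - b(q, 1, 1, 1))$ at its dominant singularity $q = q_0$. Since the denominator has a simple zero there by Lemma~\ref{lemma:dominant-singularity}, the claim $\E(\ell(T)) = \mu_{\mathit{tpl}} n^2 + O(n)$ is equivalent to showing that $a_1(q, 1, 1)$ has a pole of order exactly two at $q_0$ and identifying its leading coefficient.

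First I would write the meromorphic expansion $L_0(q, u, w) = \rho(u, w)/(q_0(w) - q) + A(q, u, w)$ with $A$ analytic in $q$ on a slightly larger disk, where $q_0(w)$ is defined by $b(q_0(w), 1, 1, w) = 1$ (independent of $u$) and $\rho(u, w) = b(q_0(w), u, 1, w)\, a_0(q_0(w), 1, w)/b_q(q_0(w), 1, 1, w)$. Implicit differentiation of the defining relation, combined with \eqref{eq:mu_h_explicit}, gives $q_0'(1) = -\mu_h q_0$. Consequently, $\Phi_u \Phi_w L_0$ has at most a double pole at $q = q_0$ with leading singular term $\mu_h q_0\, u w\, \partial_u \rho(u, 1)/(q_0 - q)^2$. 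Substituting $u = q^{\hp{j}}$ (analytic in $q$ and bounded away from $1$ for $j \geq 1$) into each summand of $a_1(q, 1, 1)$ and noting that $\beta_j(q) := \prod_{i=1}^j q^{\hp{i}}/(1-q^{\hp{i}})$ is analytic near $q_0$, the leading singular part evaluates to
\begin{equation*}
  \frac{A_2}{(q_0 - q)^2}, \qquad A_2 = \mu_h q_0 \cdot \frac{a_0(q_0, 1, 1)}{b_q(q_0, 1, 1, 1)} \cdot T(q_0),
\end{equation*}
where $T(q) := \sum_{j \geq 0} (-1)^j \beta_j(q)\, q^{\hp{j}}\, \partial_u b(q, u, 1, 1)\bigr|_{u = q^{\hp{j}}}$.

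The heart of the argument is the identity $T(q) = t q\, b_q(q, 1, 1, 1)$. Its proof relies on the telescoping relation $\hp{j} t^i + \hp{i} = \hp{i+j}$, immediate from the definition of $\hp{\cdot}$. Using this identity on Lemma~\ref{lemma:b-q-u-1-1-formula}, one finds $q^{\hp{j}}\, \partial_u b(q, u, 1, 1)\bigr|_{u = q^{\hp{j}}} = \beta_j(q)^{-1} \sum_{k \geq 1} (-1)^{k-1} \beta_{j+k}(q) \sum_{i=1}^k t^i/(1 - q^{\hp{i+j}})$. Substituting into $T(q)$ and reorganizing by $l = j + k$ and $m = l - k + i$, the inner sums collapse via $\sum_{i=1}^m t^i = t\, \hp{m}$ (which follows from $\hp{m} = (t^m - 1)/(t - 1)$), giving $T(q) = t \sum_{l \geq 1} (-1)^{l-1} \beta_l(q) \sum_{m=1}^l \hp{m}/(1 - q^{\hp{m}})$. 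The right-hand side is exactly $t$ times the logarithmic-derivative expansion of $q\, b_q(q, 1, 1, 1) = q \partial_q \sum_{l\ge 1} (-1)^{l-1} \beta_l(q)$, completing the identity.

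Combining these pieces, $A_2 = t \mu_h q_0^2 a_0(q_0, 1, 1) \neq 0$, so $L_1(q, 1, 1)$ has a pole of order exactly three at $q_0$ with leading coefficient $t \mu_h q_0^2 a_0(q_0, 1, 1)/b_q(q_0, 1, 1, 1)$. Standard singularity analysis (Flajolet--Odlyzko), applied in parallel to $L_1(q, 1, 1)$ and to the simple-pole function $H(q, 1, 1, 1)$, and using Lemma~\ref{lemma:dominant-singularity} to control the exponentially smaller contributions from $|q| \geq q_0/Q$, then delivers $\E(\ell(T)) = (t \mu_h / 2) n^2 + O(n)$. The error term absorbs the subdominant singular coefficients of $a_1(q, 1, 1)$ at $q_0$. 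The principal obstacle throughout is the combinatorial identity $T(q) = t q\, b_q(q, 1, 1, 1)$; once it is in place, the remainder is mechanical.
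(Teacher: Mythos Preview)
Your proof is correct and follows essentially the same route as the paper's own argument: both isolate the triple pole of $L_1(q,1,1)$ at $q_0$, and both reduce the leading coefficient to the identity $\sum_{j\ge 0}(-1)^j\beta_j(q)(\Phi_u b)(q,q^{\hp{j}},1)=t(\Phi_q b)(q,1,1)$, proved via the telescoping relation $\hp{i}+t^i\hp{j}=\hp{i+j}$, the reindexing $l=j+k$, and the collapse $\sum_{i=1}^m t^i=t\hp{m}$. The only cosmetic difference is that you extract the factor $\mu_h$ early through $q_0'(1)=-\mu_h q_0$, whereas the paper carries $(\Phi_w b)(q_0,1,1)$ through and identifies it with $\mu_h(\Phi_q b)(q_0,1,1)$ at the end via~\eqref{eq:mu_h_explicit}; these are the same computation in different packaging.
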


\begin{proof}
  By substituting $L_0$ into the functional equation of
  Lemma~\ref{lemma:L_1-functional-equation}, we get an explicit expression for
  $L_1(q,1,w)$, namely
  \begin{align*}
    L_1(q,1,w) &= \frac{a_0(q,1,w)(\Phi_w b)(q,1,w)}{(1-b(q,1,w))^3}
    \sum_{j=0}^{\infty}  (-1)^{j}w^j  (\Phi_u b)(q,q^{\hp{j}},w)
    \prod_{i=1}^j \frac{q^{\hp{i}}}{1-q^{\hp{i}}} \\
    &\alignpheqop{+} \frac{a_0(q,1,w)}{(1-b(q,1,w))^2} \sum_{j=0}^{\infty}
    (-1)^{j}w^j  (\Phi_u\Phi_w b)(q,q^{\hp{j}},w) 
    \prod_{i=1}^j \frac{q^{\hp{i}}}{1-q^{\hp{i}}}\\
    &\alignpheqop{+} \frac{(\Phi_w a_0)(q,1,w)}{(1-b(q,1,w))^2}
    \sum_{j=0}^{\infty}  (-1)^{j}w^j  (\Phi_u b)(q,q^{\hp{j}},w)
    \prod_{i=1}^j \frac{q^{\hp{i}}}{1-q^{\hp{i}}}\\
    &\alignpheqop{+} \frac{1}{1-b(q,1,w)} \sum_{j=0}^{\infty}  (-1)^{j}w^j
    (\Phi_u \Phi_w a_0)(q,q^{\hp{j}},w)
    \prod_{i=1}^j \frac{q^{\hp{i}}}{1-q^{\hp{i}}}.
  \end{align*}
  The dominant term in this sum is the first one, with a triple pole at the
  dominant singularity $q_0$. The second and third term, however, are also
  relevant in the calculation of the variance, where one further term in the
  asymptotic expansion is needed in view of the inevitable cancellation in the
  main term.  Singularity analysis immediately yields the asymptotic behaviour
  of the mean: since the pole is of cubic order, the order of the mean is
  quadratic, i.e., it is asymptotically equal to $\mu_{\mathit{tpl}} n^2$,
  where the constant $\mu_{\mathit{tpl}}$ is given by
  \begin{equation}\label{eq:mu_tpl_prelim}
    \mu_{\mathit{tpl}} = \frac{(\Phi_w b)(q_0,1,1)}{2(\Phi_q b)(q_0,1,1)^2}
    \sum_{j=0}^{\infty}  (-1)^{j} (\Phi_u b)(q_0,q_0^{\hp{j}},1)
    \prod_{i=1}^j \frac{q_0^{\hp{i}}}{1-q_0^{\hp{i}}} .
  \end{equation}
  Plugging in the definition of $b$ as a sum, it is possible to simplify this
  further. One has
  \begin{equation*}
    (\Phi_u b)(q,u,1) = \sum_{k=1}^{\infty} (-1)^{k-1} 
    \biggl(\prod_{h=1}^k \frac{q^{\hp{h}}u^{t^h}}{1-q^{\hp{h}}u^{t^h}}\biggr)
    \sum_{h=1}^k \frac{t^h}{1-q^{\hp{h}}u^{t^h}}
  \end{equation*}
  by logarithmic differentiation and thus
  \begin{align*}
    (\Phi_u b)(q,q^{\hp{j}},1) &= \sum_{k=1}^{\infty} (-1)^{k-1}
    \biggl(\prod_{h=1}^k
    \frac{q^{\hp{h}+t^h\hp{j}}}{1-q^{\hp{h}+t^h\hp{j}}}\biggr)
    \sum_{h=1}^k \frac{t^h}{1-q^{\hp{h}+t^h\hp{j}}} \\
    &= \sum_{k=1}^{\infty} (-1)^{k-1}
    \biggl(\prod_{i=j+1}^{j+k} \frac{q^{\hp{i}}}{1-q^{\hp{i}}}\biggr)
    \sum_{h=1}^k \frac{t^h}{1-q^{\hp{h+j}}}
  \end{align*}
  since $\hp{h}+t^h\hp{j} = \hp{h+j}$ by definition. Plugging this into~\eqref{eq:mu_tpl_prelim}, we find
  \begin{equation*}
    \mu_{\mathit{tpl}} = \frac{(\Phi_w b)(q_0,1,1)}{2(\Phi_q b)(q_0,1,1)^2}
    \sum_{j=0}^{\infty} \sum_{k=1}^{\infty} (-1)^{j+k-1}
    \biggl(\prod_{i=1}^{j+k} \frac{q_0^{\hp{i}}}{1-q_0^{\hp{i}}}\biggr)
    \sum_{h=1}^k \frac{t^h}{1-q_0^{\hp{h+j}}}.
  \end{equation*}
  Substituting $\ell = j+k$ and interchanging the order of summation, we arrive
  at
  \begin{align*}
    \mu_{\mathit{tpl}} &= \frac{(\Phi_w b)(q_0,1,1)}{2(\Phi_q b)(q_0,1,1)^2}
    \sum_{\ell=1}^{\infty}  (-1)^{\ell-1}
    \biggl(\prod_{i=1}^{\ell} \frac{q_0^{\hp{i}}}{1-q_0^{\hp{i}}}\biggr)
    \sum_{k=1}^{\ell} \sum_{h=1}^k \frac{t^h}{1-q_0^{\hp{h+\ell-k}}} \\
    &= \frac{(\Phi_w b)(q_0,1,1)}{2(\Phi_q b)(q_0,1,1)^2} 
    \sum_{\ell=1}^{\infty}  (-1)^{\ell-1}
    \biggl(\prod_{i=1}^{\ell} \frac{q_0^{\hp{i}}}{1-q_0^{\hp{i}}}\biggr)
    \sum_{r=1}^{\ell} \sum_{h=1}^r \frac{t^h}{1-q_0^{\hp{r}}} \\
    &= \frac{(\Phi_w b)(q_0,1,1)}{2(\Phi_q b)(q_0,1,1)^2}
    \sum_{\ell=1}^{\infty}  (-1)^{\ell-1}
    \biggl(\prod_{i=1}^{\ell} \frac{q_0^{\hp{i}}}{1-q_0^{\hp{i}}}\biggr)
    \sum_{r=1}^{\ell} \frac{t \hp{r}}{1-q_0^{\hp{r}}}.
  \end{align*}
  Noting now that
  \begin{equation*}
    (\Phi_qb)(q,1,1) = \sum_{\ell=1}^{\infty}  (-1)^{\ell-1}
    \biggl(\prod_{i=1}^{\ell} \frac{q^{\hp{i}}}{1-q^{\hp{i}}}\biggr)
    \sum_{r=1}^{\ell} \frac{\hp{r}}{1-q^{\hp{r}}},
  \end{equation*}
  which can be seen by another logarithmic differentiation, we can replace the
  sum in the expression for $\mu_{\mathit{tpl}}$ above by $t \cdot
  (\Phi_qb)(q_0,1,1)$, which finally yields
  \begin{equation*}
    \mu_{\mathit{tpl}} = \frac{t}{2} \cdot 
    \frac{(\Phi_w b)(q_0,1,1)}{(\Phi_q b)(q_0,1,1)},
  \end{equation*}
  and the second fraction is precisely $\mu_h$, cf.\
  Equation~\eqref{eq:mu_h_explicit}.
\end{proof}


Our next goal is to obtain the asymptotics of the variance, which will again
follow by applying the tools from singularity analysis together with the result
for the mean shown above.

Let us use the abbreviation
\begin{equation*}
  \f{\Sigma}{q, M, \Phi}
  = \sum_{j=0}^\infty (-1)^j \f{M}{j}
  \f{(\Phi b)}{q,q^{\hp{j}},1}
  \biggl( \prod_{i=1}^j \frac{q^{\hp{i}}}{1-q^{\hp{i}}} \biggr),
\end{equation*}
where $M$ is a function in the variable~$j$ and $\Phi$ an
operator, to simplify the expressions in the following lemma.


\begin{lemma}\label{lem:sigma_tpl}
  The variance of the total path length is $\sigma_{\mathit{tpl}}^2 n^3 +
  O(n^2)$, where
  \begin{align*}
    \sigma^2_{\mathit{tpl}}
    &=\frac{(\Phi_q^2 b)\left(q_{0}, 1, 1\right)\,
      (\Phi_w b)\left(q_{0}, 1, 1\right)^{2}}%
    {(\Phi_q b)\left(q_{0}, 1, 1\right)^{5}}
    \f{\Sigma}{q_{0}, j\mapsto 1, \Phi_u}^{2} \\
    &\alignpheqop{-}\frac{(\Phi_q \Phi_w b)\left(q_{0}, 1, 1\right)\,
      (\Phi_w b)\left(q_{0}, 1, 1\right)}%
    {(\Phi_q b)\left(q_{0}, 1, 1\right)^{4}}
    \f{\Sigma}{q_{0}, j\mapsto 1, \Phi_u}^{2} \\
    &\alignpheqop{-}\frac{(\Phi_w b)\left(q_{0}, 1, 1\right)^{2}}%
    {(\Phi_q b)\left(q_{0}, 1, 1\right)^{4}}
    \f{\Sigma}{q_{0}, j\mapsto 1, \Phi_u}\bigg(
    \f{\Sigma}{q_{0}, j\mapsto 1, \Phi_q \Phi_u}
    +\f[big]{\Sigma}{q_{0}, j\mapsto \hp{j}, \Phi_u^2} \\
    &\qquad\qquad+\f[bigg]{\Sigma}{q_{0},
      j\mapsto \sum_{i=1}^j \frac{\hp{i}}{1-q_0^{\hp{i}}}, \Phi_u}\bigg) \\
    &\alignpheqop{+} \frac{(\Phi_w b)(q_0,1,1)^2}{3(\Phi_qb)(q_0,1,1)^3}
    \bigg( \f[big]{\Sigma}{q_0, j\mapsto 2\hp{j+1}-1, \Phi_u^2}
    + \f[bigg]{\Sigma}{q_0,
      j\mapsto 2t\sum_{i=1}^j \frac{\hp{i}}{1-q_0^{\hp{i}}}, \Phi_u}\bigg) \\
    &\alignpheqop{+} \frac{(\Phi_w^2 b)(q_0,1,1)}{3(\Phi_qb)(q_0,1,1)^3}
    \f[big]{\Sigma}{q_0, j\mapsto 1, \Phi_u}^2 \\
    &\alignpheqop{+} \frac{(\Phi_w b)(q_0,1,1)}{3(\Phi_qb)(q_0,1,1)^3}
    \f[big]{\Sigma}{q_0, j\mapsto 1, \Phi_u}\Bigl(
    \f[big]{\Sigma}{q_0, j\mapsto 1, \Phi_u\Phi_w}+
    \f[big]{\Sigma}{q_0, j\mapsto j, \Phi_u}\Bigr).
  \end{align*}
\end{lemma}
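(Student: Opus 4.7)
The plan is to extract the second moment $\E(\ell(T)^2)$ from $L_2(q,1,1)$ by singularity analysis, combine it with the first moment from Proposition~\ref{pro:mu_tpl:mu_h} (but computed to one order higher), and then read off the variance coefficient from the cancellation. First I would substitute the explicit expression for $L_1$ from Lemma~\ref{lemma:L_1-functional-equation} into the formula of Lemma~\ref{lem:tpl:L2}, using $L_2(q,1,1)=a_2(q,1,1)/(1-b(q,1,1))$. Since the sum defining $a_2$ already contains $\Phi_u\Phi_w L_1$, and $L_1(q,1,w)$ itself has a cubic pole at $w=1,q=q_0$ coming from $(1-b(q,1,w))^{-3}$, the resulting expression for $L_2(q,1,1)$ exhibits a pole of order five at $q=q_0$.

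Next I would perform the singularity analysis. Using the simple zero of $1-b(q,1,1)$ at $q_0$ (Lemma~\ref{lemma:dominant-singularity}) together with the Taylor expansion $b(q,1,1)=1+(\Phi_qb)(q_0,1,1)(q-q_0)/q_0+\tfrac12(\Phi_q^2b)(q_0,1,1)(q-q_0)^2/q_0^2+\cdots$, I would expand $1/(1-b(q,1,w))^k$ for $k\in\{1,2,3,4,5\}$ around $(q_0,1)$, retaining enough terms to determine the principal parts of order $5$ and $4$ at $q_0$. Standard transfer theorems (cf.\ \cite{Flajolet-Sedgewick:ta:analy}) then give
\begin{equation*}
[q^n]L_2(q,1,1)=\frac{C_5}{24}\,q_0^{-n}n^4+\Bigl(\tfrac{5C_5}{12}+\tfrac{C_4}{6}\Bigr)q_0^{-n}n^3+O(n^2q_0^{-n})
\end{equation*}
for explicit coefficients $C_5,C_4$; similarly the asymptotics of $[q^n]L_1(q,1,1)$ needs to be pushed to order $n$, and $[q^n]L_0(q,1,1)=\nu(1)q_0^{-n}+O(Q^n)$ by Lemma~\ref{lemma:number-canonical-forests}.

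The variance is then $\Var(\ell)=\E(\ell^2)-\E(\ell)^2$, where both expectations are ratios against $[q^n]L_0(q,1,1)$. A consistency check that must drop out of the calculation is that the coefficient $C_5/(24\nu(1))$ of $n^4$ in $\E(\ell^2)$ must equal $\mu_{\mathit{tpl}}^2$, matching the square of the leading $n^2$ term in $\E(\ell)$; this cancellation is enforced by the relation $C_5=6(\Phi_qb)(q_0,1,1)^{-1}$ times $\Sigma(q_0,j\mapsto1,\Phi_u)^2\cdot(\Phi_wb)(q_0,1,1)^2/(\Phi_qb)(q_0,1,1)^2$, and I would verify it directly from the explicit expression before simplification. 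The contribution of order $n^3$ then assembles into the seven displayed summands of the statement: the first three come from the subleading correction in $\E(\ell)^2$ (one order beyond $\mu_{\mathit{tpl}}^2 n^4$), and the last four come from the $n^3$-term of $\E(\ell^2)$ itself, the ones with prefactor $1/3$ coming from expanding the fifth-order pole one step further.

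The main obstacle is neither conceptual nor analytic but rather bookkeeping: a very large number of terms arise from $\Phi_u\Phi_w L_1$ and $\Phi_u^2\Phi_w^2 L_0$, each of which has to be evaluated along the orbit $u\mapsto q^{\hp{j}}u^{t^j}$, producing nested sums. The simplification that collapses these into the compact $\Sigma$-notation relies on the same identity $\hp{h}+t^h\hp{j}=\hp{h+j}$ that was used in the proof of Proposition~\ref{pro:mu_tpl:mu_h}, together with repeated logarithmic differentiation of the product defining $b$. Once these identities are applied uniformly, the seven summands of the claim correspond transparently to the origins: two from $(\Phi_w b)^2\cdot\Sigma(\cdot,1,\Phi_u)^2$ perturbations coming from expanding the denominator $(1-b)^3$ one order further, one mixed term, and the remainder from $a_2$. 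The variance being of order $n^3$ (rather than $n^4$) is then automatic from the cancellation, and the stated asymptotic expansion in $t$ follows by inserting the expansion of $q_0$ from Lemma~\ref{lemma:dominant-singularity} and truncating the $\Sigma$-sums with the error bounds of Lemma~\ref{lem:den-bounds}.
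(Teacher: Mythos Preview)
Your outline is essentially the paper's own approach: expand $L_2(q,1,1)$ near $q_0$ keeping the pole orders $5$ and $4$, do the same for $L_1$ at orders $3$ and $2$, apply singularity analysis, and verify the $n^4$ cancellation so that $\sigma_{\mathit{tpl}}^2 n^3$ remains; the paper likewise relegates the heavy bookkeeping (including the nested-sum collapse via $\hp{h}+t^h\hp{j}=\hp{h+j}$) to an appendix and computer algebra. Your attribution of which of the seven summands come from $\E(\ell)^2$ versus $\E(\ell^2)$ is a bit loose, and your displayed value of $C_5$ is not quite right (it should carry the factor $a_0(q_0,1,1)$ and a different power of $(\Phi_q b)$), but these are exactly the details you flag as bookkeeping and do not affect the method.
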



\begin{proof}
  In order to calculate the variance, one needs, besides the result of
  Proposition~\ref{pro:mu_tpl:mu_h}, the asymptotic behaviour of $L_2(q,1,1)$ at the
  dominant singularity. Only the terms of pole order $4$ and $5$ (i.e., highest
  and second-highest) are needed. More details on the computation can be found in
  the appendix. By Lemma~\ref{lem:tpl:L2} we obtain
  \begin{align*}
    L_2(q,1,1) &= \frac{6a_0(q,1,1)(\Phi_w b)(q,1,1)^2}{(1-b(q,1,1))^5}
    \f[big]{\Sigma}{q, j\mapsto 1, \Phi_u}^2 \\
    &\alignpheqop{+} \frac{4a_0(q,1,1)(\Phi_w b)(q,1,1)^2}{(1-b(q,1,1))^4}
    \left( \f[big]{\Sigma}{q, j\mapsto \hp{j+1}, \Phi_u^2}
      + \f[bigg]{\Sigma}{q,
        j\mapsto \sum_{i=1}^j \frac{t\hp{i}}{1-q^{\hp{i}}}, \Phi_u}\right) \\
    &\alignpheqop{+} \frac{8a_0(q,1,1)(\Phi_w b)(q,1,1)}{(1-b(q,1,1))^4}
    \f[big]{\Sigma}{q, j\mapsto 1, \Phi_u}
    \f[big]{\Sigma}{q, j\mapsto 1, \Phi_u\Phi_w} \\
    &\alignpheqop{+} \frac{6(\Phi_w a_0)(q,1,1)(\Phi_w b)(q,1,1)}{(1-b(q,1,1))^4}
    \f[big]{\Sigma}{q, j\mapsto 1, \Phi_u}^2 \\
    &\alignpheqop{+} \frac{2a_0(q,1,1)(\Phi_w^2 b)(q,1,1)}{(1-b(q,1,1))^4}
    \f[big]{\Sigma}{q, j\mapsto 1, \Phi_u}^2 \\
    &\alignpheqop{+} \frac{2a_0(q,1,1)(\Phi_w b)(q,1,1)}{(1-b(q,1,1))^4}
    \f[big]{\Sigma}{q, j\mapsto 1, \Phi_u}
    \f[big]{\Sigma}{q, j\mapsto j, \Phi_u} \\
    &\alignpheqop{-} \frac{2a_0(q,1,1)(\Phi_w b)(q,1,1)^2}{(1-b(q,1,1))^4}
    \f[big]{\Sigma}{q, j\mapsto 1, \Phi_u^2} \\
    &\alignpheqop{+} O\!\left(\frac{1}{(1-b(q,1,1))^3}\right)
  \end{align*}
  as $q$ tends to $q_0$.

  Applying singularity analysis to the highest- and second-highest order terms
  of both $L_1$ and $L_2$ yields the variance. The terms of order $n^4$ cancel
  (as one would expect), and one finds that the main term of the variance is
  asymptotically $\sigma^2_{\mathit{tpl}} n^3$.
\end{proof}


In order to obtain expressions (either the asymptotics in $t$ or the values for
particular given~$t$) of $\mu_{\mathit{tpl}}$ and $\sigma_{\mathit{tpl}}^2$ we
insert the dominant singularity~$q_0$ (see
Lemma~\ref{lemma:dominant-singularity}) into the formul\ae{} obtained in
Proposition~\ref{pro:mu_tpl:mu_h} and Lemma~\ref{lem:sigma_tpl}. We remind the
reader again that it is important to establish that $\sigma_{\mathit{tpl}}^2 \neq 0$, so
numerical values and estimates for large $t$ are needed again. A couple of
technical difficulties arise due to the infinite sums. These are discussed in the following remark.


\begin{remark}\label{rem:get-asy-tpl}
  We use the SageMath~\cite{Stein-others:2015:sage-mathem-6.5} mathematics
  software system for our calculations. In order to get the asymptotic
  expression and values for $\sigma_{\mathit{tpl}}^2$ in
  Theorem~\ref{thm:pathlength} (note that we have $\mu_{\mathit{tpl}}$ already
  due to Proposition~\ref{pro:mu_tpl:mu_h} and the results of
  Section~\ref{sec:height}), we have to evaluate infinite sums and 
  insert the dominant singularity~$q_0$.

  We will explain step by step how this is done.

  \begin{enumerate}[(a)]

  \item We start with the expression for $\sigma_{\mathit{tpl}}^2$ found
    in Lemma~\ref{lem:sigma_tpl}.

  \item First, let us consider the infinite sums $\f{\Sigma}{q_0, M, \Phi}$.
    For a suitable $J_\Sigma$ depending on $t$, we
    calculate the first $J_\Sigma$ summands directly and use a bound for the
    tails. More precisely, we use
    \begin{multline*}
      \sum_{j=J_\Sigma}^\infty (-1)^j \f{M}{j}
        \f{(\Phi b)}{q_0,q_0^{\hp{j}},1}
        \biggl( \prod_{i=1}^j \frac{q_0^{\hp{i}}}{1-q_0^{\hp{i}}} \biggr) \\
      \in I \f{(\Phi b)}{q_0, I q_0^{\hp{J_\Sigma}},1}
      \biggl( \prod_{i=1}^{J_\Sigma} \frac{q_0^{\hp{i}}}{1-q_0^{\hp{i}}} \biggr)
      \sum_{j=J_\Sigma}^\infty M_j Q^{j-J_\Sigma}
    \end{multline*}
    with the interval $I=[-1,1]$, $\f{M}{j} \leq M_j$ for $j\geq J_\Sigma$ and
    $Q=q_0^{\hp{J+1}} / (1-q_0^{\hp{J+1}})$.

    Let us consider the bound $M_j$. If $\f{M}{j} = \hp{j}$, we set $M_j = t^j
    / (t-1)$ and analogously for $\f{M}{j} = \hp{j+1}$. If $\f{M}{j} =
    \sum_{i=1}^j t\hp{i} / (1-q_0^{\hp{i}})$, we use $M_j = 2t^{j+1} /
    (1-q_0)$. Otherwise ($\f{M}{j} = 1$ and $\f{M}{j} = j$), we simply take
    $M_j = \f{M}{j}$. These choices allow us to find a closed form for
    $\sum_{j=J_\Sigma}^\infty M_j Q^{j-J_\Sigma}$.

    Proceeding as described above gives an expression consisting of finitely
    many summands containing 
    functions~$b$, which will be handled in the following step.

  \item Let us deal with the function~$b(q,u,w)$ and its derivatives,
    which all are infinite sums. As above, we calculate the first $J_b$
    summands directly for a suitable $J_b$ chosen depending on $t$. Then we add the bound provided by
    Lemmata~\ref{lem:den-bounds-pre} and~\ref{lem:den-bounds} to take care
    of the tails. 

    At this point, we end up with a symbolic expression not containing any
    (visible or hidden) infinite sums; only the variables $t$, $q_0$, $U$ and
    the interval~$I$ occur. Thus, we are almost ready to insert the asymptotic
    expressions or values for these parameters.

  \item Now, we are ready to insert the dominant singularity~$q_0$. On the one
    hand, this can be the asymptotic expansion of $q_0$ as $t\to\infty$ (in our
    case valid for $t\geq30$), cf.\@ Lemma~\ref{lemma:dominant-singularity}. We
    choose $J_\Sigma=J_b=3$. The result will then again be an asymptotic
    expression for~$\sigma_{\mathit{tpl}}^2$.

    On the other hand, we can use a particular value for $q_0$ for given $t$
    (which for us means, more precisely, an interval containing $q_0$). In
    these cases, we choose $J_\Sigma=J_b=4$ for $8\leq t\leq30$ and higher
    values for $t<8$ (up to $J_\Sigma=J_b=14$ for $t=2$). The
    resulting~$\sigma_{\mathit{tpl}}^2$ is then computed using interval
    arithmetic.
  \end{enumerate}

\end{remark}


In order to prove asymptotic normality of the total path length, a different, more probabilistic approach is needed. Standard
theorems from analytic combinatorics no longer apply since the path length
grows faster than, for example, the height, so that mean and variance no longer
have linear order.

We number the internal vertices of a random canonical $t$-ary tree of size $n$
from $1$ to $n$ in a natural top-to-bottom, left-to-right way, starting at the
root. Let $X_{k,n}$ denote the depth of the $k$-th internal vertex~$v_k$ in a
random tree $T \in \mathcal T$ of order $n$. Moreover, set $Y_{k,n} = X_{k+1,n}
- X_{k,n} \in \{0,1\}$. In words, $Y_{k,n}$ is $1$ if the $(k+1)$-th internal
vertex has greater distance from the root than the $k$-th, and $0$
otherwise. It is clear that the height can be expressed as
\begin{equation*}
h(T) = 1 + \max_k X_{k,n} = 1 + X_{n,n} = 1 + \sum_{k=1}^{n-1} Y_{k,n},
\end{equation*}
which would indeed be an alternative approach to the central limit theorem for
the height. More importantly, though, the internal path length can also be
expressed in terms of the random variables $Y_{k,n}$, namely by
\ifproc
\begin{align*}
  \ell_{\mathit{internal}}(T) &= \sum_{k=1}^n X_{k,n} 
  = \sum_{k=1}^n \sum_{j=1}^{k-1} Y_{j,n} \\
  &= \sum_{j=1}^{n-1} (n-j)Y_{j,n}.
\end{align*}
\else
\begin{equation*}
\ell_{\mathit{internal}}(T) = \sum_{k=1}^n X_{k,n} = \sum_{k=1}^n \sum_{j=1}^{k-1} Y_{j,n} = \sum_{j=1}^{n-1} (n-j)Y_{j,n}.
\end{equation*}
\fi
Now
\begin{equation*}
n^{-1} \ell_{\mathit{internal}}(T) = \sum_{j=1}^{n-1} \frac{n-j}{n} Y_{j,n}
\end{equation*}
can be seen as a sum of $n-1$ bounded random variables $Z_{j,n} = \frac{n-j}{n}
Y_{j,n}$. An advantage of this decomposition over other possible decompositions
(e.g., by counting the number of vertices at different depths) is that the
number of variables is not random. Another important point is that the $Z_{j,n}$ are bounded after rescaling, so that they also have bounded moments.


Unfortunately, the $Z_{j,n}$ are neither identically distributed (which is not a major issue) nor independent, which makes standard versions of the Central Limit Theorem for sums of random variables inapplicable. However, they are almost independent in that they satisfy a so-called ``strong mixing condition'' (Inequality~\eqref{eq:mixing}
of the following lemma).

\begin{lemma}\label{lemma:mixing-condition}
Let $\mathcal{F}_{s_1}$ be the
$\sigma$-algebra induced by the random variables
$Z_{1,n},Z_{2,n},\ldots,Z_{s_1,n}$, and let $\mathcal{G}_{s_2}$ be the
$\sigma$-algebra induced by the random variables
$Z_{s_2,n},Z_{s_2+1,n},\ldots,Z_{n-1,n}$. There exist constants $\kappa$ and $\lambda$ (depending only on $t$) such that
\begin{equation}\label{eq:mixing}
\abs{\P(A \cap B) - \P(A) \P(B)} \leq \kappa e^{-\lambda (s_2-s_1)}
\end{equation}
for all $1 \leq s_1 < s_2 \leq n$ and all events $A \in \mathcal{F}_{s_1}$ and
$B \in \mathcal{G}_{s_2}$.
\end{lemma}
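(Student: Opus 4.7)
The plan is to exploit the Markov-like decomposition of canonical trees obtained by cutting at a fixed depth, combined with the uniform-in-$r$ asymptotic count of canonical forests with $r$ roots from Lemma~\ref{lemma:number-canonical-forests}.

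First I would reformulate the events in terms of the level profile. A canonical tree of size~$n$ is uniquely determined by the sequence $(m_0 = 1, m_1, \ldots, m_{h-1})$ counting the internal vertices at each depth, and $Y_j = 1$ precisely when $j$ is a partial sum $m_0 + \cdots + m_k$. Consequently, $A \in \mathcal{F}_{s_1}$ is determined by $(m_0, \ldots, m_{\ell_A - 1})$ and a lower bound on $m_{\ell_A}$, where $\ell_A$ denotes the depth of vertex $s_1+1$; symmetrically, $B \in \mathcal{G}_{s_2}$ is determined by $(m_{\ell_B + 1}, \ldots, m_{h-1})$ and a lower bound on $m_{\ell_B}$, where $\ell_B$ is the depth of vertex $s_2$.

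In the typical case $\ell_A < \ell_B$, summing over admissible ``middle'' configurations $(\ell_A, \ell_B, m_{\ell_A}, \ldots, m_{\ell_B})$, the number of canonical trees of size $n$ compatible with $A \cap B$ factors as (top completions satisfying $A$) times (canonical forests with $m_{\ell_B + 1}$ roots of size $M = n - m_0 - \cdots - m_{\ell_B}$ satisfying $B$). Lemma~\ref{lemma:number-canonical-forests} evaluates the forest factor as $\nu(m_{\ell_B+1}) q_0^{-M}(1 + O(Q^M))$ uniformly in the number of roots; analogous expansions for the total count of canonical trees and for the marginals associated to $A$ and to $B$ then make the leading terms of $\P(A \cap B)$ and of $\P(A)\P(B)$ cancel. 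The residual discrepancy is dominated by the ratio of relative errors $Q^{n - s_2}/Q^{n - s_1} = Q^{s_2 - s_1}$ between the joint expansion and the $A$-marginal expansion, yielding the mixing inequality with $\lambda = -\log Q > 0$.

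The main obstacle is the degenerate case $\ell_A = \ell_B$, in which $A$ and $B$ both constrain a single width $m_{\ell_A}$ and the factorization above breaks down. This case forces $m_{\ell_A} \geq s_2 - s_1$, so it suffices to show that the probability of any single depth carrying that many internal vertices decays exponentially in $s_2 - s_1$. I would establish this by another application of Lemma~\ref{lemma:number-canonical-forests}, cutting the tree below the hypothetical wide depth: the count of canonical trees having some $m_\ell \geq K$ is bounded by summing over~$\ell$ and over $r \geq K$ the product of (top profiles ending in width~$r$, whose contribution is geometrically suppressed by the weight $q_0^{m_0 + \cdots + m_{\ell - 1}}$) and $\nu(r) = \Theta(1)$, giving the desired exponential decay in $K$. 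The contribution of this degenerate case is therefore $O(e^{-\lambda'(s_2 - s_1)})$ and is absorbed into the constant~$\kappa$.
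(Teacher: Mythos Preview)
Your high-level strategy---decompose the tree along a depth and use the uniform forest asymptotics of Lemma~\ref{lemma:number-canonical-forests}---matches the paper's, but the argument as written has a genuine gap in the main case.

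The single cut you describe (at depth $\ell_B$) cannot by itself produce an error that decays like $Q^{s_2-s_1}$. If the bottom piece has $M \approx n - s_2$ internal vertices, the relative error from Lemma~\ref{lemma:number-canonical-forests} is $O(Q^{n-s_2})$, and an analogous cut at $\ell_A$ gives $O(Q^{n-s_1})$. Your sentence ``dominated by the ratio $Q^{n-s_2}/Q^{n-s_1}=Q^{s_2-s_1}$'' is arithmetically wrong (it equals $Q^{s_1-s_2}>1$) and, more importantly, no such ratio controls $|\P(A\cap B)-\P(A)\P(B)|$: even if the leading terms matched exactly, you would only get $O(Q^{\min(n-s_1,n-s_2)})=O(Q^{n-s_2})$, which is useless when $s_2$ is close to $n$. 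Moreover, applying Lemma~\ref{lemma:number-canonical-forests} to ``forests satisfying $B$'' is not legitimate: that lemma counts \emph{all} forests with given roots and size, not a constrained subfamily.

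What the paper does instead is a \emph{three}-part decomposition $T_1\mid T_2\mid T_3$ at the levels of $v_{s_1}$ and $v_{s_2}$. The middle forest $T_2$ has $n_2\approx s_2-s_1$ internal vertices, and it is the asymptotic count of $T_2$ \emph{with a prescribed number $m_2t$ of last-level leaves} that yields the decay in $s_2-s_1$. This requires Theorem~\ref{theorem:distribution-m} (the limit law for $m(T)$ with its exponential tail), not just Lemma~\ref{lemma:number-canonical-forests}; that theorem is precisely what makes the main terms of $\P(A\cap B\cap W)$ and $\P(A\cap W)\P(B\cap W)$ factor. Finally, the paper must first restrict to an event $W$ on which the levels of $v_{s_1}$ and $v_{s_2}$ each carry at most $\delta(s_2-s_1)$ vertices: without this, the number of roots $r_2$ of $T_2$ can be comparable to $n_2$, and the error $O(Q^{n_2-r_2})$ collapses. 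Your degenerate-case analysis covers only $\ell_A=\ell_B$, which is a strict subset of $W^c$; you also need the wide-level tail bound, and the clean route to it is again the $O(U^{mt})$ estimate from Theorem~\ref{theorem:distribution-m}.
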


The main idea of the proof of the strong mixing condition is simple: events $A \in
\mathcal{F}_{s_1}$ describe the shape of the random tree $T$ up to the $s_1$-th
internal vertex $v_{s_1}$, while events $B \in \mathcal{G}_{s_2}$ describe the shape of
the random tree $T$ from the $s_2$-th internal vertex  $v_{s_2}$ on. The probabilities of
such events can be calculated by means of Lemma~\ref{lemma:number-canonical-forests}
and Theorem~\ref{theorem:distribution-m}, and the exponential error
terms that one obtains through this approach yield the estimate~\eqref{eq:mixing}
above. 


\begin{proof}[Proof of Lemma~\ref{lemma:mixing-condition}]
For a canonical tree $T$, let $F_\lambda(T)$ and $F_\rho(T)$ be the
number of internal vertices on the same level as $v_{s_1}$, left and right of
$v_{s_1}$, respectively. Similarly, let $G_\lambda(T)$ and $G_\rho(T)$ be the number
of internal vertices on the same level as $v_{s_2}$, left and right of $v_{s_2}$,
respectively. For fixed $s_1$, $f_\lambda$, $f_\rho$, $s_2$, $g_\lambda$ and
$g_\rho$,
there is a bijection between the following:
\begin{itemize}
\item the set of canonical trees $T$ with $F_\lambda(T)=f_\lambda$,
  $F_\rho(T)=f_\rho$, $G_\lambda(T)=g_\lambda$, $G_\rho(T)=g_\rho$ and such that
  $v_{s_1}$ and $v_{s_2}$ are on different levels, and
\item the set of tuples $(T_1, T_2, T_3)$
  where $T_j$ is a canonical forest with $r_j$ roots, $n_j$ internal vertices
  and $m_jt$ leaves at the last level, where the values of $r_j$ and $n_j$ are
  given in Table~\ref{tab:values-r_j-n_j-m_j}, $T_j$ has no isolated
  roots\footnote{We define an \emph{isolated root} to be a root without children.} and
  $m_jt \ge r_{j+1}$ holds for $j\in\{1,2\}$.
\end{itemize}
An illustration can be found in Figure~\ref{fig:comp-trees}.


\begin{figure}
  \centering

  \begin{tikzpicture}[scale=0.5,
    lines/.style={line width=3pt, draw=lightgray}]

    \def\levelf{-3}
    \def\levelg{-6}
    \def\levelm{-9}
    \def\fl{3}
    \def\fr{2}
    \def\fs{2}
    \def\gl{2}
    \def\gr{3}
    \def\gs{3}

    \coordinate (root1) at (0.5, 0);
    \coordinate [label=above:$s_1$] (s1) at (\fs, \levelf);
    \coordinate [label=above:$s_2$] (s2) at (\gs, \levelg);

    \draw [lines]
    (root1) -- (-\fl-2+\fs, \levelf) -- (\fr+\fs, \levelf) -- cycle;
    \draw [lines]
    (-\fl+\fs, \levelf) -- (\fr+\fs, \levelf)
    -- (\gr+\gs, \levelg) -- (-\gl+\gs-4, \levelg) -- cycle;
    \draw [lines]
    (-\gl+\gs, \levelg) -- (\gr+\gs, \levelg)
    -- (7, \levelm) -- (0, \levelm) -- cycle;

    \def\drawpoint{\fill [black] (\point) circle (6pt)}
    \foreach \point in {root1, s1, s2}
    \drawpoint;
    \foreach \i in {1,...,\fl}
    \def\point{-\i+\fs, \levelf}
    \drawpoint;
    \foreach \i in {1,...,\fr}
    \def\point{\i+\fs, \levelf}
    \drawpoint;
    \foreach \i in {1,...,\gl}
    \def\point{-\i+\gs, \levelg}
    \drawpoint;
    \foreach \i in {1,...,\gr}
    \def\point{\i+\gs, \levelg}
    \drawpoint;

    \node at ($ (root1) + (0, \levelf/2) $) {\large$T_1$};
    \node at (\fs-\fl/2+\fr/2, \levelf/2+\levelg/2) {\large$T_2$};
    \node at (\gs-\gl/2+\gr/2, \levelg/2+\levelm/2) {\large$T_3$};

    \draw [decoration={brace, mirror, raise=2mm, amplitude=4pt}, decorate]
    (\fs-\fl, \levelf) -- node [yshift=-4ex] {\small$f_\lambda$} (\fs-1, \levelf);
    \draw [decoration={brace, mirror, raise=2mm, amplitude=4pt}, decorate]
    (\fs+1, \levelf) -- node [yshift=-4ex] {\small$f_\rho$} (\fs+\fr, \levelf);
    \draw [decoration={brace, mirror, raise=2mm, amplitude=4pt}, decorate]
    (\gs-\gl, \levelg) -- node [yshift=-4ex] {\small$g_\lambda$} (\gs-1, \levelg);
    \draw [decoration={brace, mirror, raise=2mm, amplitude=4pt}, decorate]
    (\gs+1, \levelg) -- node [yshift=-4ex] {\small$g_\rho$} (\gs+\gr, \levelg);

    \node at ($ (root1) + (2, 0) $) [anchor=west]
    {$r_1=1$};
    \node at ($ (s1) + (\fr+2, 0) $) [anchor=west]
    {$r_2=6$, $f_\lambda=3$, $f_\rho=2$};
    \node at ($ (s2) + (\gr+2, 0) $) [anchor=west]
    {$r_3=6$, $g_\lambda=2$, $g_\rho=3$};
  \end{tikzpicture}

  \caption{Decomposition of canonical trees. This decomposition into $T_1$, $T_2$
    and $T_3$ is used in the proof of Lemma~\ref{lemma:mixing-condition}.}
  \label{fig:comp-trees}
\end{figure}
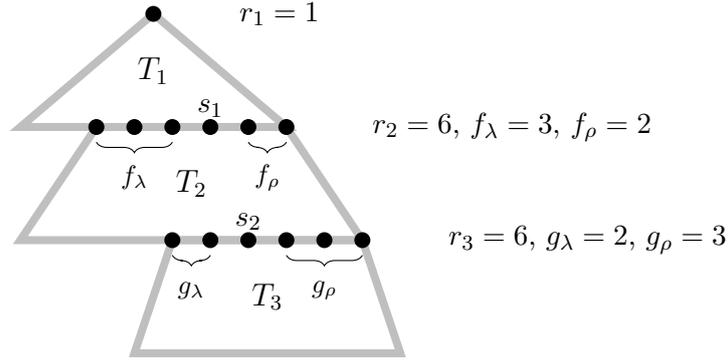


\begin{table}
  \centering{\small
  \begin{equation*}
    \begin{array}{c|c|c}
      j&r_j&n_j\\\hline
      1& 1& s_1-1-f_\lambda\\
      2& f_\lambda+1+f_\rho&s_2-1-g_\lambda-(s_1-1-f_\lambda)\\
      3& g_\lambda+1+g_\rho&n-(s_2-1-g_\lambda)
    \end{array}
  \end{equation*}}
  \caption{Values of $r_j$ and $n_j$ for the decomposition of a random tree.}
  \label{tab:values-r_j-n_j-m_j}
\end{table}


Here, $T_1$ consists of the first levels of $T$ up to and including
the level of $v_{s_1}$, $T_2$ consists of the levels of $T$ from and including the
level of $v_{s_1}$ up to and including the level of $v_{s_2}$, and $T_3$ consists of
the levels of $T$ from and including the level of $v_{s_2}$. Note that the internal
vertices of $T$ are partitioned into those of $T_1$, $T_2$ and $T_3$ as the last
level of a forest does not have any internal vertices by definition.

Note that the definition of a canonical forest does allow an arbitrary number
of isolated roots; by definition, those are leaves and not internal
vertices. In order to use Lemma~\ref{lemma:number-canonical-forests} and
Theorem~\ref{theorem:distribution-m} for our cases, we use the simple bijection
between forests with $n$ internal vertices and $r$ roots all of which are
non-isolated and forests with $n-r$ internal vertices and $rt$ roots realised
by omitting all $r$ roots.

With $Q=\frac12 + (\log 2)/(2t) + 0.06/t^2$
(Lemma~\ref{lemma:dominant-singularity}), $q_*=q_0^{1+1/(t-1)}$
(Proposition~\ref{proposition:p_r-bounds}) and $U=1-(\log 2)/t^2$
(Theorem~\ref{theorem:distribution-m}), we fix $0<\delta<1/4$ such that
\begin{equation}\label{eq:delta-definition}
  \delta j\biggl(\frac{U}{q_*^{1/t}}\biggr)^{\delta j}<Q^{-j/4}
\end{equation}
holds for all $j\ge 1$. We first compute the probability to have at
least $m_1t \ge \delta(s_2-s_1)$ vertices at the level of $v_{s_1}$. To do so, we
use the decomposition as described above with the following modification: we do
not use the full decomposition into $(T_1, T_2, T_3)$, but join the latter
two to have a decomposition $(T_1, T_{23})$ in the obvious way. By
Lemma~\ref{lemma:number-canonical-forests} and
Theorem~\ref{theorem:distribution-m} we have
\begin{equation*}
  \nu(1) q_0^{-n_1} O(U^{m_1t})=O(q_0^{-n_1} U^{m_1t})
\end{equation*}
canonical trees~$T_1$ with $m_1t$ leaves, and there are
\begin{equation*}
  \nu(tr_2)q_0^{-n_2-n_3+r_2}(1+O(Q^{n_2+n_3-r_2}))=O(q_0^{-n_2-n_3})
\end{equation*}
canonical forests~$T_{23}$. Note that we used $\nu(tr_2)=\Theta(1)$
(see Lemma~\ref{lemma:number-canonical-forests}) and
$q_0^{r_2}\leq1$. Therefore, using $U<1$, we find the
desired probability to be
\begin{align*}
  \frac{1}{\nu(1)q_0^{-n}(1+O(Q^n))}&\sum_{m_1\ge \delta(s_2-s_1)/t
    }\sum_{f_\lambda}O(q_0^{-n_1}U^{m_1t}q_0^{-n_2-n_3})\\
  &=\sum_{m_1\ge
    \delta(s_2-s_1)/t}\sum_{f_\lambda}O(U^{m_1t})\\
  &= \sum_{m_1\ge
    \delta(s_2-s_1)/t}O(m_1t\,U^{m_1t})\\
  &= \f[big]{O}{\delta(s_2-s_1)U^{\delta(s_2-s_1)}}
  =\f[big]{O}{(s_2-s_1)U^{\delta(s_2-s_1)}}.
\end{align*}

Analogously, the probability that there are at least $\delta(s_2-s_1)$ vertices
at the level of~$v_{s_2}$ is also $\f[big]{O}{(s_2-s_1)U^{\delta (s_2-s_1)}}$. In
particular, the probability that $v_{s_1}$ and $v_{s_2}$ are on the same level
is bounded by $\f[big]{O}{(s_2-s_1)U^{\delta(s_2-s_1)}}$. From now on, we consider the
event $W$ that $v_{s_1}$ and $v_{s_2}$ are on different levels and that there
at most $\delta(s_2-s_1)$ vertices at each of the levels of $v_{s_1}$ and
$v_{s_2}$, respectively. The previous discussion shows that
\begin{equation}\label{eq:W-probability}
  \P(W)\ge 1-O((s_2-s_1)U^{\delta(s_2-s_1)}).
\end{equation}

Let now two events $A \in \mathcal{F}_{s_1}$ in the $\sigma$-algebra generated
by $Z_{1,n},\ldots,Z_{s_1,n}$ and $B \in \mathcal{G}_{s_2}$ in the
$\sigma$-algebra generated by $Z_{s_2,n},Z_{s_2+1,n},\ldots,Z_{n-1,n}$ be
given. The event $A$ consists of a collection of possible shapes of the random
tree $T$ up to the $s_1$-th vertex $v_{s_1}$, and likewise $B$ consists of a
collection of possible shapes of the random tree $T$ from the $s_2$-th vertex
$v_{s_2}$ onwards. For ease of presentation, we assume that the events $A$ and
$B$ consist of only one such shape up to $s_1$ and from $s_2$ on, respectively;
the general case follows upon summation over all shapes in $A$ and $B$. The
shapes $A$ and $B$ uniquely determine $F_\lambda(T) \equalscolon f_\lambda$ and
$G_\rho(T) \equalscolon g_\rho$,
respectively. On the other hand, $F_\rho(T)$ and $G_\lambda(T)$ will be somewhat restricted
by the shapes in $A$ and $B$, respectively.

Using Lemma~\ref{lemma:number-canonical-forests}, Theorem~\ref{theorem:distribution-m} and
the bijection into a tree and forests described above yields the following estimates for the probabilities we are interested in.
There, the error term $O(Q^n)$ in the denominator will always be absorbed by
the error term in the numerator because $Q^n\le Q^{s_2-s-1}$.
We obtain
\begin{align*}
  \P(A\cap W)&=\frac1{\nu(1)q_0^{-n}(1+O(Q^n))}
  \sum_{f_\rho} \nu(tr_2)q_0^{-n_2-n_3+r_2}(1+O(Q^{n_2+n_3-r_2}))\\
  &=\sum_{f_\rho}\frac{\nu(tr_2)}{\nu(1)}q_0^{n_1+r_2}(1+O(Q^{(1-\delta)(s_2-s_1)}))
\end{align*}
using the inequalities $Q<1$, $r_2\leq \delta(s_2-s_1)$ (since we are in the situation that event $W$ occurs) and $n_2+n_3 \geq
s_2-s_1$. We also get
\begin{align*}
  \P(B\cap W)&=\frac1{\nu(1)q_0^{-n}(1+O(Q^n))}
  \sum_{g_\lambda}\sum_{r_3/t\le m_2\le \delta(s_2-s_1)/t}
  \nu(1)q_0^{-n_1-n_2}\\
  &\qquad\qquad\times p_{m_2}\Biggl(1+O\Biggr(Q^{n_1+n_2}m_2\biggl(\frac{U^t}{q_*}\biggr)^{m_2}\Biggr)\Biggr)\\
  &=\sum_{g_\lambda}\sum_{r_3/t\le m_2\le \delta(s_2-s_1)/t}
  q_0^{n_3}p_{m_2}\Bigl(1+\f[big]{O}{Q^{(3/4-\delta)(s_2-s_1)}}\Bigr)
\end{align*}
by \eqref{eq:delta-definition} with $j=s_2-s_1$ and the inequalities $g_\lambda\le
\delta(s_2-s_1)$ (again because $W$ occurs) and $n_1+n_2\ge n_2\ge (1-\delta)(s_2-s_1)$.
Similarly, we calculate the probability that all three events $A$, $B$
and $W$ occur simultaneously as
\begin{align*}
  \P(A\cap B\cap W)&=\frac1{\nu(1)q_0^{-n}(1+O(Q^n))}\sum_{f_\rho,
  g_\lambda}\sum_{r_3/t\le m_2\le\delta(s_2-s_1)/t}\nu(tr_2)q_0^{-n_2+r_2}\\
&\qquad\qquad\times p_{m_2}\Biggl(1+O\Biggl(Q^{n_2-r_2}m_2\biggl(\frac{U^t}{q_*}\biggr)^{m_2}\Biggr)\Biggr)\\
&=\sum_{f_\rho,
  g_\lambda}\,\sum_{r_3/t\le m_2\le \delta(s_2-s_1)/t}\frac{\nu(tr_2)}{\nu(1)}q_0^{n_1+n_3+r_2}p_{m_2}(1+O(Q^{(3/4-2\delta)(s_2-s_1)})),
\end{align*}
where we additionally used $r_2\leq \delta(s_2-s_1)$. We conclude that
\begin{align*}
  \abs{\P(A\cap  W)\P(B\cap W)-\P(A\cap B\cap W)}
  &\le \sum_{f_\rho,
  g_\lambda}\sum_{r_3/t\le m\le
  \delta(s_2-s_1)}\frac{\nu(tr_2)}{\nu(1)}q_0^{n_1+n_3+r_2}p_m\\
&\qquad\qquad\times
O(Q^{(3/4-2\delta)(s_2-s_1)}))\\
&= O(Q^{(3/4-2\delta)(s_2-s_1)})\P(A\cap B\cap W).
\end{align*}
Combining this with~\eqref{eq:W-probability} yields the strong mixing property
\eqref{eq:mixing}.
\end{proof}

Now we are able to apply the following result of Sunklodas.

\begin{lemma}[{Sunklodas~\cite{sunklodas1984rate}}]\label{lemma:sunklodas}
Let $d$, $s\in(2, 3]$, $\kappa$, $\lambda$, $c_0$ be fixed positive constants. Then
there exists a constant $K$ such that for all positive integers $n$ and random
variables $X_1$, $X_2$, $\ldots$, $X_n$ the following holds:

If
\begin{enumerate}
\item $\E(X_j) = 0$ for all $j$,
\item $\max_{1 \leq j \leq n} \E(|X_j|^s)\le d$,
\item the strong mixing condition
  \begin{equation*}
    \sup_{\substack{A \in \mathcal{F}_{t},\,
        B \in \mathcal{G}_{t+\tau} \\ 1 \leq t \leq n-\tau}}
    |P(A \cap B) - P(A)P(B)| \leq \kappa e^{-\lambda \tau}
  \end{equation*}
  holds for all $\tau$ (where $\mathcal{F}_{t}$ and $\mathcal{G}_{t+\tau}$ are the
  $\sigma$-algebras generated by $X_1$, \ldots, $X_t$ and by $X_{t+\tau}$,
  \ldots,  $X_{n}$, respectively) and
\item the inequality
  \begin{equation*}
    B_n^2 = \Var \biggl( \sum_{j=1}^n X_j \biggr) \geq c_0 n
  \end{equation*}
  holds,
\end{enumerate}
then we have 
\begin{equation*}
\sup_x \biggl\lvert \P\biggl(\frac{1}{B_n}\sum_{j=1}^n X_j < x \biggr)  - \Phi(x)\biggr\rvert \leq \frac{K (\log(B_n/\sqrt{c_0}))^{s-1}}{B_n^{s-2}},
\end{equation*}
where $\Phi(x)=(2\pi)^{-1/2} \int_{-\infty}^x e^{-u^2/2}\,du$ denotes the
distribution function of a standard normal distribution.
\end{lemma}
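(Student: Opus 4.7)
The plan is to use the classical Bernstein blocking technique, which is the standard tool for proving Berry--Esseen-type bounds for strongly mixing sequences. I would first partition $\{1,\ldots,n\}$ into alternating ``big'' blocks of length $p$ and ``small'' blocks of length $q$, where $q$ is chosen of order $(\log n)/\lambda$ so that the mixing error $\kappa e^{-\lambda q}$ is polynomially small in $n$. Let $\xi_k$ denote the sum of the $X_j$ in the $k$-th big block and $\eta_k$ the sum in the $k$-th small block, so that $S_n \colonequals \sum_{j=1}^n X_j = \sum_k \xi_k + \sum_k \eta_k$.

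Next I would dispose of the small blocks. Using $\E(\abs{X_j}^s) \le d$ together with a covariance estimate of Ibragimov type (the strong mixing coefficient controls $\abs{\E(fg) - \E f \cdot \E g}$ for bounded $f$, $g$ measurable with respect to $\mathcal{F}_t$ and $\mathcal{G}_{t+\tau}$ respectively), one obtains $\Var(\sum_k \eta_k) = O(nq)$ with constants depending only on $d$, $\kappa$, $\lambda$. Since $B_n^2 \ge c_0 n$, the small-block contribution after normalisation is of order $\sqrt{q/n}$, which is absorbed into the final error term.

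The crux is the big blocks. Iterating the strong mixing condition on characteristic functions gives
\begin{equation*}
  \abs[Big]{\E \exp\!\Bigl( \mathrm{i} \tfrac{t}{B_n} \sum_k \xi_k \Bigr)
    - \prod_k \E \exp\!\Bigl( \mathrm{i} \tfrac{t}{B_n} \xi_k \Bigr)}
  \le r \kappa e^{-\lambda q},
\end{equation*}
where $r = \lceil n/(p+q) \rceil$ is the number of big blocks. One then applies a Berry--Esseen inequality of order $s$ to the product on the right, which bounds the Kolmogorov distance to $\Phi$ by $C \sum_k \E(\abs{\xi_k}^s) / B_n^s$. A Rosenthal-type moment inequality for mixing sequences (again using assumption (ii) and the mixing coefficient) yields $\E(\abs{\xi_k}^s) = O(p^{s/2})$, so this contribution is $O((p/B_n^2)^{s/2-1})$.

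The final step is to balance the three sources of error (small-block variance, mixing defect $r \kappa e^{-\lambda q}$, and Berry--Esseen term). Setting $q = C (\log B_n)/\lambda$ for $C$ large enough to kill $r \kappa e^{-\lambda q}$, and then choosing $p$ of the form $p \asymp q$ (or a small power of $B_n$, depending on $s$), one arrives at the stated rate $K (\log(B_n/\sqrt{c_0}))^{s-1} / B_n^{s-2}$. The main obstacle I expect is not any single step but rather the careful bookkeeping that shows the logarithmic exponent is precisely $s-1$ (rather than something larger arising from naive estimates) and that all implied constants can be made uniform in the parameters $d$, $s$, $\kappa$, $\lambda$, $c_0$ as required by the statement. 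Since the result is quoted verbatim from Sunklodas~\cite{sunklodas1984rate}, we simply cite it rather than reproduce the proof in this paper.
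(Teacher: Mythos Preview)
Your conclusion is exactly right: the paper does not prove this lemma at all --- it is quoted as an external result from Sunklodas~\cite{sunklodas1984rate} and used as a black box in the proof of Theorem~\ref{thm:pathlength}. The Bernstein-blocking sketch you give is a reasonable outline of how such mixing CLTs are typically established, but it is superfluous here since the paper simply cites the result (accompanied only by a remark clarifying that Sunklodas's formulation, while stated for infinite sequences, applies for each fixed $n$ to the triangular array at hand).
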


\begin{remark}
Actually, Sunklodas gives a stronger statement where $\lambda$ is not necessarily constant, but we will only need this version. Moreover, he technically considers an infinite sequence $X_1,X_2,\ldots$ of random variables and assumes that the conditions above hold for all $n$. However, the statement gives an explicit inequality for each fixed $n$, and the proof of this inequality given in~\cite{sunklodas1984rate} only makes use of the conditions for the same fixed $n$. This is important for us, since we are not considering an infinite sequence, but rather a finite sequence of $n$ random variables that all depend on $n$.
\end{remark}

\begin{proof}[Proof of Theorem~\ref{thm:pathlength}]
  The qualitative behavior of the asymptotics of mean and variance follows from
  the moment generating functions $L_1$ and $L_2$ (see
  Lemmata~\ref{lemma:L_1-functional-equation} and~\ref{lem:tpl:L2}) by using
  the standard tools from singularity
  analysis~\cite{Flajolet-Sedgewick:ta:analy}, as explained earlier. We get the constants
  $\mu_{\mathit{tpl}}$ and $\sigma_{\mathit{tpl}}$ from
  Proposition~\ref{pro:mu_tpl:mu_h} and Lemma~\ref{lem:sigma_tpl}, respectively, by
  inserting either the asymptotic expansion of $q_0$, cf.\
  Lemma~\ref{lemma:dominant-singularity}, or the values of $q_0$ for given $t$
  (see also Remark~\ref{rem:get-asy-tpl}).

Asymptotic normality follows from Sunklodas's result (Lemma~\ref{lemma:sunklodas}) applied to the sequence $X_j = Z_{j,n} - \E (Z_{j,n}) = \frac{n-j}{n} (Y_{j,n} - \E(Y_{j,n}))$, where $Y_{j,n},Z_{j,n}$ are defined as explained earlier in this section. Since $|X_j|$ is bounded by $1$, the first condition of Lemma~\ref{lemma:sunklodas} is trivially satisfied (for any $s$). The second condition (strong mixing property) is exactly Lemma~\ref{lemma:mixing-condition}, and finally we already know that the variance of the sum $n^{-1} \sum_{j=1}^n X_j$, which is equal to the variance of  $n^{-1} \ell_{\mathit{internal}}(T)$, is of linear order, because the variance of $\ell_{\mathit{internal}}$ is of cubic order.

Since the upper bound for $\Delta_n$ in Lemma~\ref{lemma:sunklodas} goes to $0$ as $n \to \infty$, it follows that the distribution of $\ell_{\mathit{internal}}$ (suitably renormalised) converges weakly to a Gaussian distribution. We can even conclude that the speed of convergence is $O(n^{-1/2}\log n)$.
\end{proof}

\bibliographystyle{amsplainurl}
\bibliography{cheub,stephan}
\appendix
\newpage
\section{Details on the Variance of the Total Path Length}

In this appendix, more details of the proof of Lemma~\ref{lem:sigma_tpl} are
given. Some of these calculations were performed with computer assistance using
SageMath~\cite{Stein-others:2015:sage-mathem-6.5}.

For any variable $z$, we write $\Phi_z=z\frac{\partial}{\partial z}$. Such an
operator satisfies the following properties.

\begin{lemma}\label{lem:op-Phi-prop}
For any expressions $a$, $b$ and any variable $z$, we have
\begin{equation*}
  \Phi_z(a+b)=\Phi_z(a)+\Phi_z(b),\qquad
  \Phi_z(ab)=\Phi_z(a)b+a\Phi_z(b).
\end{equation*}
Moreover, for a function~$f$ we have
\begin{equation*}
  \Phi_z^2 f = \Phi_z \biggl(z \frac{\partial f}{\partial z}\biggr)=
  z \frac{\partial f}{\partial z} + z^2 \frac{\partial^2f }{\partial z^2} =
  \Phi_z f + z^2\frac{\partial^2f}{\partial z^2}.
\end{equation*}
If $(z_1,\ldots, z_k)\mapsto f(z_1, \ldots, z_k)$ is a $k$-ary function and
$a_1$, \ldots, $a_k$ are expressions, we have
\begin{equation*}
  \Phi_z(f(a_1, \ldots, a_k))=\sum_{j=1}^{k}\frac{\partial f}{\partial
    z_j}(a_1,\ldots, a_k)\Phi_{z}(a_j).
\end{equation*}
\end{lemma}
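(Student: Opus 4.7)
The plan is to verify each of the four assertions in turn; all of them are routine consequences of the definition $\Phi_z = z\,\partial/\partial z$ together with linearity and the Leibniz rule for $\partial/\partial z$, so no deep input is required. Since Lemma~\ref{lem:op-Phi-prop} is used purely as a bookkeeping device in the preceding computations for $\sigma_{\mathit{tpl}}^2$, the goal is only to record these identities cleanly.

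For the first two identities, linearity of $\partial/\partial z$ gives
\begin{equation*}
  \Phi_z(a+b) = z\frac{\partial(a+b)}{\partial z}
  = z\frac{\partial a}{\partial z} + z\frac{\partial b}{\partial z}
  = \Phi_z(a) + \Phi_z(b),
\end{equation*}
and the Leibniz rule yields
\begin{equation*}
  \Phi_z(ab) = z\Bigl(\frac{\partial a}{\partial z}b + a\frac{\partial b}{\partial z}\Bigr)
  = \Phi_z(a)\,b + a\,\Phi_z(b).
\end{equation*}

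For the identity on $\Phi_z^2 f$, I would iterate: apply $\Phi_z$ to $z\,\partial f/\partial z$ and use the product rule together with the first two parts to obtain
\begin{equation*}
  \Phi_z^2 f = z\frac{\partial}{\partial z}\Bigl(z\frac{\partial f}{\partial z}\Bigr)
  = z\frac{\partial f}{\partial z} + z^2\frac{\partial^2 f}{\partial z^2}
  = \Phi_z f + z^2\frac{\partial^2 f}{\partial z^2}.
\end{equation*}
For the chain rule, I would start from the standard multivariate chain rule
\begin{equation*}
  \frac{\partial}{\partial z}f(a_1,\ldots,a_k)
  = \sum_{j=1}^{k}\frac{\partial f}{\partial z_j}(a_1,\ldots,a_k)\,\frac{\partial a_j}{\partial z},
\end{equation*}
multiply both sides by $z$, and absorb the factor $z$ into each $\partial a_j/\partial z$ to obtain the claimed formula.

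There is no real obstacle; the only point requiring a moment of care is that in the chain-rule identity $\Phi_z$ acts on the outer composition only via the factor $z$ that multiplies $\partial/\partial z$, so it is the inner derivatives $\partial a_j/\partial z$, not the partials $\partial f/\partial z_j$, that are promoted to $\Phi_z(a_j)$ on the right-hand side.
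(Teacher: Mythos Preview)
Your proof is correct. The paper actually states this lemma without proof, treating it as elementary; your write-up supplies exactly the routine verifications one would expect (linearity and Leibniz for $\partial/\partial z$, iteration for $\Phi_z^2$, and the multivariate chain rule multiplied through by $z$).
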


In view of the functions occurring in Section~\ref{sec:path-length}, we have
the following, more specific properties.

\begin{lemma}
Let $(q, u, w)\mapsto f(q, u, w)$ be a function and $j$ be a non-negative integer. We have
\begin{align*}
\Phi_q (f(q, q^{\hp{j}}u^{t^j}, w)) &= (\Phi_q f)(q, q^{\hp{j}}u^{t^j}, w) +
  \frac{\partial f}{\partial u} (q, q^{\hp{j}}u^{t^j}, w) \hp{j}
  q^{\hp{j}}u^{t^j} \\
  &= (\Phi_q f)(q, q^{\hp{j}}u^{t^j}, w)+ \hp{j}(\Phi_u f)(q, q^{\hp{j}}u^{t^j}, w),\\
  \Phi_u(f(q, q^{\hp{j}}u^{t^j}, w))&=\frac{\partial f}{\partial u}(q,
  q^{\hp{j}}u^{t^j}, w) t^j q^{\hp{j}}u^{t^j}= t^j (\Phi_u f)(q, q^{\hp{j}}u^{t^j}, w),\\
  \Phi_w(f(q, q^{\hp{j}}u^{t^j}, w))&=(\Phi_w f)(q, q^{\hp{j}}u^{t^j}, w).
\end{align*}
\end{lemma}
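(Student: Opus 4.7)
The plan is to apply the multivariate chain rule, as recorded in Lemma~\ref{lem:op-Phi-prop}, together with direct computation of the partial derivatives of $q^{\hp{j}}u^{t^j}$ with respect to $q$ and $u$. All three identities are of the same flavour; they just record how the operators $\Phi_q$, $\Phi_u$, $\Phi_w$ interact with the substitution $u \leftarrow q^{\hp{j}}u^{t^j}$ in the second argument of $f$.

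First I would handle $\Phi_w$, which is immediate: the third argument of $f(q,q^{\hp{j}}u^{t^j},w)$ depends on $w$ only directly, so the chain rule yields $\Phi_w(f(q,q^{\hp{j}}u^{t^j},w)) = (\Phi_w f)(q,q^{\hp{j}}u^{t^j},w)$. Next I would treat $\Phi_u$: only the second argument depends on $u$, and since $\frac{\partial}{\partial u}(q^{\hp{j}}u^{t^j}) = t^j q^{\hp{j}}u^{t^j-1}$, multiplying by $u$ gives $u \cdot \frac{\partial}{\partial u}(q^{\hp{j}}u^{t^j}) = t^j q^{\hp{j}}u^{t^j}$. Combining with the chain rule shows
\begin{equation*}
  \Phi_u(f(q,q^{\hp{j}}u^{t^j},w)) = t^j q^{\hp{j}}u^{t^j}\, \frac{\partial f}{\partial u}(q,q^{\hp{j}}u^{t^j},w) = t^j (\Phi_u f)(q,q^{\hp{j}}u^{t^j},w),
\end{equation*}
where the last equality uses $(\Phi_u f)(q,z,w) = z\, \partial_u f(q,z,w)$ at $z = q^{\hp{j}}u^{t^j}$.

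The $\Phi_q$ identity is only slightly more involved, because $q$ appears both as the first argument of $f$ and inside the second argument. The chain rule produces two contributions. The first is $(\Phi_q f)(q,q^{\hp{j}}u^{t^j},w)$, coming from differentiating with respect to the first slot. The second arises from $\frac{\partial}{\partial q}(q^{\hp{j}}u^{t^j}) = \hp{j}q^{\hp{j}-1}u^{t^j}$; multiplying by $q$ gives $\hp{j}q^{\hp{j}}u^{t^j}$, and hence
\begin{equation*}
  \Phi_q(f(q,q^{\hp{j}}u^{t^j},w)) = (\Phi_q f)(q,q^{\hp{j}}u^{t^j},w) + \hp{j}q^{\hp{j}}u^{t^j}\, \frac{\partial f}{\partial u}(q,q^{\hp{j}}u^{t^j},w).
\end{equation*}
The second displayed equality then follows by the same rewriting as in the $\Phi_u$ case, identifying $q^{\hp{j}}u^{t^j} \partial_u f$ with $(\Phi_u f)$ evaluated at the substituted point.

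There is no real obstacle here; the only mild care needed is to be consistent about what $(\Phi_u f)(q,z,w)$ means (namely $z\,\partial_u f$ evaluated at the point $(q,z,w)$, not $u\,\partial_u f$ after substitution), so that the factor $q^{\hp{j}}u^{t^j}$ is correctly absorbed into $\Phi_u f$ at the substituted argument. Once this convention is fixed, each identity is a one-line application of Lemma~\ref{lem:op-Phi-prop}.
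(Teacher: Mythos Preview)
Your proof is correct and is exactly what the paper intends: the lemma is stated without proof in the appendix, immediately after Lemma~\ref{lem:op-Phi-prop}, as a direct specialisation of the chain rule recorded there. Your write-up simply spells out the one-line computations the paper leaves to the reader.
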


We also need derivatives of the products appearing throughout this article.

\begin{lemma}
Let
\begin{equation*}
  P_j(q, u) = \prod_{i=1}^j \frac{q^{\hp{i}}u^{t^i}}{1-q^{\hp{i}}u^{t^i}}.
\end{equation*}
Then we have
\begin{align*}
  \Phi_u P_j(q, u)&=p_j(q,u)P_j(q, u),\\
  \Phi_q P_j(q, u)&=r_j(q, u)P_j(q, u),
\end{align*}
with
\begin{equation*}
  p_j(q, u)=\sum_{i=1}^j\frac{t^i}{1-q^{\hp{i}}u^{t^i}}.
\end{equation*}
\begin{equation*}
  r_j(q, u)=\sum_{i=1}^j\frac{\hp{i}}{1-q^{\hp{i}}u^{t^i}}.
\end{equation*}
\end{lemma}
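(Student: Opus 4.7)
The plan is to use logarithmic differentiation, since $P_j(q,u)$ is a finite product and the operators $\Phi_u$ and $\Phi_q$ behave well with respect to logarithms. Specifically, for any (suitably nonzero) function $f$ and any variable $z$, we have $\Phi_z f = f \cdot \Phi_z \log f$, and by Lemma~\ref{lem:op-Phi-prop} the operator $\Phi_z$ is additive, so applying $\Phi_z \log$ to a product turns it into a sum. Thus the strategy is to compute $\Phi_u \log P_j$ and $\Phi_q \log P_j$ term by term and then multiply back by $P_j$.

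First I would write
\begin{equation*}
  \log P_j(q, u) = \sum_{i=1}^{j} \bigl( \hp{i} \log q + t^i \log u
  - \log(1 - q^{\hp{i}} u^{t^i}) \bigr).
\end{equation*}
For $\Phi_u$, note that $\Phi_u \log u = 1$ while $\Phi_u \log q = 0$, and a direct computation using the chain rule gives
\begin{equation*}
  \Phi_u \log(1 - q^{\hp{i}} u^{t^i})
  = \frac{-t^i\, q^{\hp{i}} u^{t^i}}{1 - q^{\hp{i}} u^{t^i}}.
\end{equation*}
Summing the contributions of the three terms and combining over a common denominator, the cancellation
\begin{equation*}
  t^i + \frac{t^i\, q^{\hp{i}} u^{t^i}}{1 - q^{\hp{i}} u^{t^i}}
  = \frac{t^i}{1 - q^{\hp{i}} u^{t^i}}
\end{equation*}
yields precisely $p_j(q,u)$, and multiplying by $P_j(q,u)$ gives the claimed formula $\Phi_u P_j = p_j\, P_j$.

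The computation for $\Phi_q$ is entirely analogous: $\Phi_q \log q = 1$, $\Phi_q \log u = 0$, and
\begin{equation*}
  \Phi_q \log(1 - q^{\hp{i}} u^{t^i})
  = \frac{-\hp{i}\, q^{\hp{i}} u^{t^i}}{1 - q^{\hp{i}} u^{t^i}}.
\end{equation*}
The same algebraic cancellation (with $t^i$ replaced by $\hp{i}$) produces $r_j(q,u)$, and multiplying by $P_j(q,u)$ completes the proof. There is no genuine obstacle here: the statement is essentially a routine logarithmic differentiation, and the only substantive step is the two identical cancellations combining the ``log of monomial'' contribution with the ``log of $(1 - \cdot)$'' contribution at each factor.
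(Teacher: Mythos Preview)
Your proof is correct and takes essentially the same approach as the paper: both use logarithmic differentiation of the product $P_j$. The paper's one-line proof just packages the per-factor computation as the identity $\Phi_z\bigl(z/(1-z)\bigr) = \bigl(z/(1-z)\bigr)\cdot 1/(1-z)$, which is exactly the cancellation you wrote out explicitly.
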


\begin{proof}
These results follow since $(\Phi_z f) = f(z)/(1-z)$ for $f(z)=z/(1-z)$.
\end{proof}

Next, we consider the infinite sum
\begin{equation*}
  S(q, u, w, M, f):=\sum_{j\ge 0}(-1)^j w^j M(j, q, u, w) f(q,
  q^{\hp{j}}u^{t^j}, w) P_j(q, u),
\end{equation*}
where the function~$M$ depends on $j$, $q$, $u$ and $w$, and the function~$f$ on
$q$, $u$ and $w$.
Note that this notion is slightly more general than $\f{\Sigma}{q, M, \Phi}$ of
Section~\ref{sec:path-length}. The relationship between these two is
\begin{equation*}
  \f{\Sigma}{q, M, \Phi}
  = \left.S(q, u, w, M, \Phi b)\right\vert_{u=1, w=1},
\end{equation*}
where $b$ is defined in
Theorem~\ref{theorem:generating-function-height-and-others} of this article.

Taking derivatives yields the following results.

\begin{lemma}
We have
\begin{align*}
  \Phi_q S(q, u, w, M, f) &= S(q, u, w, \Phi_q M, f) +
  S(q, u, w, M, \Phi_q f) +
  S(q, u, w, M \hp{j}, \Phi_u f)\\&\qquad +
  S(q, u, w, M r_j(q, u), f),\\
  \Phi_u S(q, u, w, M, f) &= S(q, u, w, \Phi_u M, f) + S(q, u,
  w, M t^j, \Phi_u f)+
  S(q, u, w, M p_j(q, u), f),\\
  \Phi_w S(q, u, w, M, f) &= S(q, u, w, M j, f) + S(q, u, w, \Phi_w M, f) + S(q,
  u, w, M, \Phi_w f),
\end{align*}
where $M \f{g}{j, q, u, w}$ is short for $(j, q, u, w) \mapsto
\f{M}{j, q, u, w} \f{g}{j, q, u, w}$.
\end{lemma}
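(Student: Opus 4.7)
The plan is to apply the Leibniz rule to each term of the defining series and then use the lemmas already established in this appendix. Writing the $j$-th summand as the product of four factors
\[
A_j := (-1)^j w^j, \qquad B_j := M(j,q,u,w), \qquad C_j := f(q, q^{\hp{j}} u^{t^j}, w), \qquad D_j := P_j(q,u),
\]
the derivation $\Phi_z$ (for $z \in \{q,u,w\}$) acts on $A_j B_j C_j D_j$ via the product rule, producing four summands. This is a purely formal calculation once termwise differentiation is justified, which follows from the analyticity already available for the relevant series (cf.\ Theorem~\ref{theorem:generating-function-height-and-others} and the surrounding analyticity discussion in Section~\ref{sec:generating-function}); the outer factors $P_j(q,u)$ decay fast enough to permit differentiating under the summation sign.

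Next I would compute each of the four derivative pieces using the earlier lemmas of this appendix. For $\Phi_q$: $\Phi_q A_j = 0$; $\Phi_q B_j = (\Phi_q M)(j,q,u,w)$ by definition; $\Phi_q C_j = (\Phi_q f)(q,q^{\hp{j}}u^{t^j},w) + \hp{j}\,(\Phi_u f)(q,q^{\hp{j}}u^{t^j},w)$ by the chain-rule lemma specialised to $(q,q^{\hp{j}}u^{t^j},w)$; and $\Phi_q D_j = r_j(q,u) P_j(q,u)$. Reassembling the four contributions and recognising each as a sum of the same shape as $S(q,u,w,\cdot,\cdot)$ (with $M$ replaced by $\Phi_q M$, $M\hp{j}$, or $M r_j(q,u)$, and $f$ by $\Phi_q f$ or $\Phi_u f$) gives the claimed formula for $\Phi_q S$.

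For $\Phi_u S$ the same scheme applies: $\Phi_u A_j = 0$, $\Phi_u B_j = \Phi_u M$, $\Phi_u C_j = t^j (\Phi_u f)(q, q^{\hp{j}}u^{t^j}, w)$ (again from the chain-rule lemma since only the middle argument depends on $u$), and $\Phi_u D_j = p_j(q,u) P_j(q,u)$. Similarly, for $\Phi_w S$ the only nontrivial contributions are $\Phi_w A_j = j\,w^j \cdot (-1)^j$ (which contributes the term $S(q,u,w,M j, f)$), $\Phi_w B_j = \Phi_w M$, and $\Phi_w C_j = (\Phi_w f)(q,q^{\hp{j}}u^{t^j},w)$; the factor $P_j(q,u)$ is independent of $w$.

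The only potential obstacle is the justification of termwise differentiation of the infinite series, but this is routine: the sum $S$ converges uniformly on compact subsets of the domain $\{|q u^{t-1}| < 1\}$ where all the involved functions (including $b$ and its derivatives via Lemmata~\ref{lem:den-bounds-pre} and~\ref{lem:den-bounds}) are analytic, and the same bounds show that the series of termwise derivatives converges uniformly as well. Hence $\Phi_z$ commutes with the summation, and collecting the four contributions in each case yields exactly the three identities stated.
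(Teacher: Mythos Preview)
Your proposal is correct and is exactly the argument the paper has in mind: the lemma is stated without proof in the appendix precisely because it follows by the product rule applied termwise together with the two preceding lemmas (the chain rule for $f(q,q^{\hp{j}}u^{t^j},w)$ and the logarithmic derivatives of $P_j$). Your justification of termwise differentiation via uniform convergence on compacta of $\{|qu^{t-1}|<1\}$ is the appropriate routine check.
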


We are now on our way to derive an expression for $L_2(q, 1, 1)$ suitable for
doing singularity analysis (cf.\ the proof of Lemma~\ref{lem:sigma_tpl}). As a
first step, using the properties above we obtain the following
expression for $L_2(q, 1, 1)$ (only leading terms):
\begin{align*}
&\frac{6 \, (\Phi_w a)\left(q, 1, 1\right) (\Phi_w b)\left(q, 1, 1\right) S\left(q, 1, 1, 1, (\Phi_u b)\left(q, 1, 1\right)\right)^{2}}{(1 - \f{b}{q, 1, 1})^{4}} \\
&+\frac{8 \, (\Phi_w b)\left(q, 1, 1\right) S\left(q, 1, 1, 1, (\Phi_u \Phi_w b)\left(q, 1, 1\right)\right) S\left(q, 1, 1, 1, (\Phi_u b)\left(q, 1, 1\right)\right) a\left(q, 1, 1\right)}{(1 - \f{b}{q, 1, 1})^{4}} \\
&+\frac{6 \, (\Phi_w b)\left(q, 1, 1\right)^{2} S\left(q, 1, 1, 1, (\Phi_u b)\left(q, 1, 1\right)\right)^{2} a\left(q, 1, 1\right)}{(1 - \f{b}{q, 1, 1})^{5}} \\
&+\frac{2 \, (\Phi_w^2 b)\left(q, 1, 1\right) S\left(q, 1, 1, 1, (\Phi_u b)\left(q, 1, 1\right)\right)^{2} a\left(q, 1, 1\right)}{(1 - \f{b}{q, 1, 1})^{4}} \\
&- \frac{2 \, (\Phi_w b)\left(q, 1, 1\right)^{2} S\left(q, 1, 1, 1, (\Phi_u^2 b)\left(q, 1, 1\right)\right) a\left(q, 1, 1\right)}{(1 - \f{b}{q, 1, 1})^{4}} \\
&+\frac{4 \, (\Phi_w b)\left(q, 1, 1\right)^{2} S\left(q, 1, 1, 1, S\left(q, 1, 1, t^{j}, (\Phi_u^2 b)\left(q, 1, 1\right)\right)\right) a\left(q, 1, 1\right)}{(1 - \f{b}{q, 1, 1})^{4}} \\
&+\frac{4 \, (\Phi_w b)\left(q, 1, 1\right)^{2} S\left(q, 1, 1, 1, S\left(q, 1, 1, \f{p_j}{q, 1}, (\Phi_u b)\left(q, 1, 1\right)\right)\right) a\left(q, 1, 1\right)}{(1 - \f{b}{q, 1, 1})^{4}} \\
&+\frac{2 \, (\Phi_w b)\left(q, 1, 1\right) S\left(q, 1, 1, 1, (\Phi_u b)\left(q, 1, 1\right)\right) S\left(q, 1, 1, j, (\Phi_u b)\left(q, 1, 1\right)\right) a\left(q, 1, 1\right)}{(1 - \f{b}{q, 1, 1})^{4}}.
\end{align*}
For readability, we have not written the $\mapsto$ formally needed in the
formula above; for example, the $S$-function in the first summand should read
as
\begin{equation*}
  S\left(q, 1, 1, (j, q, u, w) \mapsto 1,
    (q, u, w) \mapsto (\Phi_u b)\left(q, 1, 1\right)\right)
\end{equation*}
Compared to the formula found in the proof of
Lemma~\ref{lem:sigma_tpl}, nested $S$-functions appear. As a next step, we
simplify these nested $S$-functions by means of the following lemma.

\begin{lemma}\label{lem:nested-S}
We have
\begin{multline*}
  S\left(q, 1, 1, 1, (q, u, w) \mapsto S\left(q, u, 1, (j, q, u, w) \mapsto \f{p_j}{q, u}, f\right)\right) \\
  =S\biggl(q, 1, 1, (j, q, u, w) \mapsto t\sum_{i=1}^{j}\frac{\hp{i}}{1-q^{\hp{i}}}, f\biggr)
\end{multline*}
and
\begin{multline*}
  S\left(q, 1, 1, 1, (q, u, w) \mapsto S\left(q, u, 1, (j, q, u, w) \mapsto t^j, f\right)\right) \\
  =S\biggl(q, 1, 1, (j, q, u, w) \mapsto \hp{j+1}, f\biggr).
\end{multline*}
\end{lemma}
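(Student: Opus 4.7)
The plan is to expand both identities directly from the definition of $S$, exploit the key algebraic fact that $\hp{j}+t^j\hp{k}=\hp{j+k}$ so that the nested structure collapses into a single index, and then reindex by $\ell=j+k$ and evaluate the resulting inner finite sum.

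First, I would write out the outer $S$-function on the left-hand sides: with $u=1$, $w=1$, the summation index $k\ge0$ and factor $(-1)^k P_k(q,1)$ appears, multiplied by the inner sum $S(q,q^{\hp{k}},1,M,f)$ for $M(j,q,u,w)=p_j(q,u)$ in the first identity and $M(j,q,u,w)=t^j$ in the second. Expanding the inner $S$, every occurrence of $q^{\hp{j}}u^{t^j}$ becomes $q^{\hp{j}+t^j\hp{k}}=q^{\hp{j+k}}$; in particular $p_j(q,q^{\hp{k}})=\sum_{i=1}^{j}t^i/(1-q^{\hp{i+k}})$ and
\begin{equation*}
P_k(q,1)\cdot P_j(q,q^{\hp{k}}) = \prod_{i=1}^{k}\frac{q^{\hp{i}}}{1-q^{\hp{i}}}\cdot\prod_{i=1}^{j}\frac{q^{\hp{i+k}}}{1-q^{\hp{i+k}}} = P_{j+k}(q,1),
\end{equation*}
so the two products glue together into one product of length $j+k$. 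The sign $(-1)^{k+j}$ is compatible with the single-index form $(-1)^\ell$ upon setting $\ell=j+k$, and the argument $f(q,q^{\hp{j+k}},1)$ only depends on $\ell$.

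After this simplification, both left-hand sides become $\sum_{\ell\ge0}(-1)^\ell f(q,q^{\hp{\ell}},1)P_\ell(q,1)\cdot C_\ell$ for an explicit coefficient $C_\ell$, and the task reduces to verifying that $C_\ell$ matches the right-hand sides. For the second identity this is immediate: $C_\ell=\sum_{k=0}^{\ell}t^{\ell-k}=\sum_{j=0}^{\ell}t^j=\hp{\ell+1}$. For the first identity, swapping the order of summation and substituting $m=i+k$, $j=m-k$ gives
\begin{equation*}
C_\ell=\sum_{k=0}^{\ell}\sum_{i=1}^{\ell-k}\frac{t^i}{1-q^{\hp{i+k}}}=\sum_{m=1}^{\ell}\frac{1}{1-q^{\hp{m}}}\sum_{j=1}^{m}t^{j}=\sum_{m=1}^{\ell}\frac{t\hp{m}}{1-q^{\hp{m}}},
\end{equation*}
using the geometric sum $\sum_{j=1}^{m}t^j=t(t^m-1)/(t-1)=t\hp{m}$. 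This is precisely the coefficient appearing in the right-hand side.

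There is essentially no obstacle here; the proof is a purely formal manipulation resting on the semigroup-type identity $\hp{j}+t^j\hp{k}=\hp{j+k}$ (which is the same identity already used in the proof of Proposition~\ref{pro:mu_tpl:mu_h} to simplify $(\Phi_u b)(q,q^{\hp{j}},1)$), together with the splitting of $P_{j+k}$. Convergence of all involved series is guaranteed throughout by the analyticity assertions of Theorem~\ref{theorem:generating-function-height-and-others} in the domain $|qu^{t-1}|<1$, so that Fubini applies and the interchange of summation is justified without further comment.
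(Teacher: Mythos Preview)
Your proposal is correct and follows essentially the same approach as the paper: expand the nested $S$, use the identity $\hp{j}+t^j\hp{k}=\hp{j+k}$ to collapse $P_k(q,1)P_j(q,q^{\hp{k}})$ into $P_{j+k}(q,1)$, reindex by $\ell=j+k$, and evaluate the remaining finite inner sum. The only cosmetic difference is that the paper first states the general reduction $\sum_{k=0}^{\ell}M(k,q,q^{\hp{\ell-k}},w)$ and then specialises to $M=t^j$ and $M=p_j$, whereas you treat the two cases in parallel; the substitutions and the final geometric-sum identification $\sum_{j=1}^{m}t^j=t\hp{m}$ are identical.
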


\begin{proof}
We have
\begin{align*}
  S\bigl(q, &1, 1, 1, (q, u, w) \mapsto S\left(q, u, 1, (j, q, u, w) \mapsto M\left(j, q, u, w\right), f\right)\bigr)\\
  &=\sum_{k\ge 0}(-1)^k S(q, q^{\hp{k}}, 1, (j, q, u, w) \mapsto M(j, q, u, w), f) P_k(q, 1)\\
  &=\sum_{k\ge 0}(-1)^k P_k(q, 1) \sum_{j\ge 0}(-1)^j M(j, q, q^{\hp{k}}, w)f(q, q^{\hp{j}}q^{\hp{k}t^j}, 1)P_j(q, q^{\hp{k}})\\
  &=\sum_{j, k\ge 0}(-1)^{k+j}   M(j, q, q^{\hp{k}}, w)f(q, q^{\hp{j+k}}, 1) \prod_{i=1}^k \frac{q^{\hp{i}}}{1-q^{\hp{i}}} \prod_{i=1}^j\frac{q^{\hp{i}}q^{\hp{k}t^i}}{1-q^{\hp{i}}q^{\hp{k}t^i}}\\
  &=\sum_{j, k\ge 0}(-1)^{k+j}   M(j, q, q^{\hp{k}}, w)f(q, q^{\hp{j+k}}, 1) \prod_{i=1}^k \frac{q^{\hp{i}}}{1-q^{\hp{i}}} \prod_{i=1}^j\frac{q^{\hp{i+k}}}{1-q^{\hp{i + k}}}\\
  &=\sum_{j, k\ge 0}(-1)^{k+j}   M(j, q, q^{\hp{k}}, w)f(q, q^{\hp{j+k}}, 1) P_{j+k}(q, 1).
\end{align*}
With the substitution $\ell=j+k$, this equals
\begin{multline*}
  \sum_{\ell\ge 0} (-1)^\ell f(q, q^{\hp{\ell}}, 1) P_{\ell}(q, 1) \sum_{j=0}^{\ell} M(j, q, q^{\hp{\ell-j}}, w)\\
  =S\biggl(q, 1, 1, (j, q, u, w) \mapsto \sum_{k=0}^j M(k, q, q^{\hp{j-k}}, w), f\biggr).
\end{multline*}
We now compute the inner sums occurring in the simplified expressions for the nested $S$-functions.
The second one is simply $\sum_{k=0}^j t^k=\hp{j+1}$. The first one is
\begin{equation*}
  \sum_{k=0}^j p_k(q, q^{\hp{j-k}}) = \sum_{k=0}^j \sum_{i=1}^k\frac{t^i}{1-q^{\hp{i}}q^{\hp{j-k}t^i}}
  =\sum_{1\le i\le k\le j}\frac{t^i}{1-q^{\hp{i+j-k}}}.
\end{equation*}
With the substitution $i+j-k=\ell$, this equals
\begin{equation*}
  \sum_{1\le i\le i+j-\ell\le j}\frac{t^i}{1-q^{\hp{\ell}}}
  =\sum_{1\le i\le \ell\le j} \frac{t^i}{1-q^{\hp{\ell}}}
  =\sum_{\ell=1}^{j}\frac{1}{1-q^{\hp{\ell}}} \sum_{i=1}^{\ell}t^i
  =\sum_{\ell=1}^{j}\frac{t\hp{\ell}}{1-q^{\hp{\ell}}}.
\end{equation*}
The result now follows.
\end{proof}

We continue to rewrite $L_2(q, 1, 1)$. Using the previous lemma, we have
\begin{equation*}
  L_2(q, 1, 1) = \frac{V_5(q)}{(1-b(q, 1, 1))^5} + \frac{V_4(q)}{(1 - b(q, 1, 1))^4} + O((1-b(q, 1, 1))^{-3})
\end{equation*}
for suitable $V_5(q)$ and $V_4(q)$.
Using the fact that $b(q_0, 1, 1) = 1$ and the expression for $\Phi_q^2 f$ of Lemma~\ref{lem:op-Phi-prop},
we get
\begin{align*}
  1-b(q, 1, 1)&=1 - \biggl(1 + (q-q_0)\frac{\partial b}{\partial q}(q_0, 1, 1) + \frac{(q-q_0)^2}{2} \frac{\partial^2 b}{\partial q^2}(q_0, 1, 1) + O\bigl((q-q_0)^3\bigr)\biggr)\\
  &=\biggl(1-\frac{q}{q_0}\biggr)(\Phi_q b)(q_0, 1, 1) - \biggl(1-\frac{q}{q_0}\biggr)^2 \frac{\Phi_q^2b-\Phi_q b}{2}(q_0, 1, 1) + O((q-q_0)^3)\\
  &=\biggl(1-\frac{q}{q_0}\biggr)(\Phi_q b)(q_0, 1, 1)\left(1 - \biggl(1-\frac{q}{q_0}\biggr) \frac{\Phi_q^2b-\Phi_q b}{2\Phi_q b}(q_0, 1, 1) + O((q-q_0)^2)\right).
\end{align*}
We also have
\begin{equation*}
  V_5(q)=V_5(q_0)+(q-q_0)\frac{\partial V_5}{\partial q}(q_0)+O((q-q_0)^2)
  =V_5(q_0)- \biggl(1 - \frac{q}{q_0}\biggr)(\Phi_q V_5)(q_0)+O((q-q_0)^2).
\end{equation*}
Therefore, we obtain
\begin{multline*}
  L_2(q, 1, 1) = \frac{V_5(q_0)}{((\Phi_q b)(q_0, 1, 1))^5}\left(1-\frac{q}{q_0}\right)^{-5}\\
  +
\left(-\frac{(\Phi_q V_5)(q_0)}{((\Phi_q b)(q_0, 1, 1))^5} + \frac{5V_5(q_0) (\Phi_q^2b - \Phi_q b)(q_0, 1, 1)}{2((\Phi_q b)(q_0, 1, 1))^6} +\frac{V_4(q)}{((\Phi_q b)(q_0, 1, 1))^4}\right)\left(1-\frac{q}{q_0}\right)^{-4}\\
+ O\biggl(\biggl(1-\frac{q}{q_0}\biggr)^{-3}\biggr),
\end{multline*}
an expression which is suitable for singularity analysis.

\begin{lemma}
We have
\begin{equation*}
  [q^n]\left(1-\frac{q}{q_0}\right)^{-5} = \frac{n^4q_0^{-n}}{24}\left(1+\frac{10}{n} + O\biggl(\frac{1}{n^2}\biggr)\right) = q_0^{-n}\biggl(
  \frac{n^4}{24}+\frac{5n^3}{12}+O(n^2)\biggr).
\end{equation*}
\end{lemma}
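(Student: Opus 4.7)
The plan is to use the generalized binomial series directly; no singularity analysis machinery is actually needed here since the function is a pure power of an affine factor. The identity
\begin{equation*}
  (1-x)^{-5}=\sum_{n\ge 0}\binom{n+4}{4}x^n
\end{equation*}
yields immediately
\begin{equation*}
  [q^n]\left(1-\frac{q}{q_0}\right)^{-5}=\binom{n+4}{4}q_0^{-n},
\end{equation*}
so the remaining task is purely elementary: extract the leading behaviour of the falling (rising) factorial.

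Next I would expand the binomial coefficient in closed form as
\begin{equation*}
  \binom{n+4}{4}=\frac{(n+1)(n+2)(n+3)(n+4)}{24}=\frac{n^4+10n^3+35n^2+50n+24}{24},
\end{equation*}
and then collect the two leading orders: the coefficient of $n^4$ is $1/24$ and the coefficient of $n^3$ is $10/24=5/12$, with remaining terms absorbed into $O(n^2)$. Writing this in the factored form $\frac{n^4}{24}(1+10/n+O(n^{-2}))$ recovers the first representation in the lemma; writing the polynomial directly recovers the second.

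There is no serious obstacle: the only point of care is to make sure the $O(n^2)$ error in the polynomial form and the $O(n^{-2})$ error in the factored form are written consistently, and that the prefactor $q_0^{-n}$ is carried through both representations. One could alternatively view this as a trivial instance of the singularity analysis transfer theorem of Flajolet and Odlyzko (as cited in Section~\ref{sec:generating-function} of the paper), but that would be heavy machinery for what amounts to Pascal's rule, so I would simply present the two-line elementary computation above.
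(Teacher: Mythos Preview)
Your proof is correct and follows exactly the approach the paper indicates: the paper's own proof is the single sentence ``follows directly by expanding into a binomial series,'' and you have carried out precisely that expansion with the details made explicit.
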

The previous lemma follows directly by expanding into a binomial series. We can use it to
extract coefficients of $L_2(q, 1, 1)$ and obtain
\begin{multline*}
  [q^n]L_2(q, 1, 1)=q_0^{-n}\biggl(
  \frac{V_5(q_0)}{24((\Phi_q b)(q_0, 1, 1))^5 }n^4
  \\+\biggl(\frac{5V_5(q_0)}{12((\Phi_q b)(q_0, 1, 1))^5}
-\frac{(\Phi_q V_5)(q_0)}{6((\Phi_q b)(q_0, 1, 1))^5} + \frac{5V_5(q_0) (\Phi_q^2b - \Phi_q b)(q_0, 1, 1)}{12((\Phi_q b)(q_0, 1, 1))^6} \\
+\frac{V_4(q)}{6((\Phi_q b)(q_0, 1, 1))^4}
\biggr)n^3
  +O(n^2)
\biggr).
\end{multline*}
We conclude that the second moment of the total path length is
\begin{multline*}
  \frac{V_5(q_0)}{24((\Phi_q b)(q_0, 1, 1))^4 a_0(q_0, 1, 1)}n^4\\
  +\biggl(\frac{5V_5(q_0)}{12((\Phi_q b)(q_0, 1, 1))^4}
-\frac{(\Phi_q V_5)(q_0)}{6((\Phi_q b)(q_0, 1, 1))^4} + \frac{5V_5(q_0) (\Phi_q^2b - \Phi_q b)(q_0, 1, 1)}{12((\Phi_q b)(q_0, 1, 1))^5} \\
+\frac{V_4(q)}{6((\Phi_q b)(q_0, 1, 1))^3}
\biggr)\frac{n^3}{a_0(q_0, 1, 1)} + O(n^2).
\end{multline*}
Similarly, writing
\begin{equation*}
  L_1(q, 1, 1) = \frac{E_3(q)}{(1-b(q, 1, 1))^3} + \frac{E_2(q)}{(1 - b(q, 1, 1))^2} + O((1-b(q, 1, 1))^{-1})
\end{equation*}
and performing singularity analysis shows that the expectation is
\begin{multline*}
  \frac{E_3(q_0)}{2((\Phi_q b)(q_0, 1, 1))^2 a_0(q_0, 1, 1)}n^2\\
  +\biggl(\frac{3E_3(q_0)}{2((\Phi_q b)(q_0, 1, 1))^2}
-\frac{(\Phi_q E_3)(q_0)}{((\Phi_q b)(q_0, 1, 1))^2} + \frac{3E_3(q_0) (\Phi_q^2b - \Phi_q b)(q_0, 1, 1)}{2((\Phi_q b)(q_0, 1, 1))^3} \\
+\frac{E_2(q)}{(\Phi_q b)(q_0, 1, 1)}
\biggr)\frac{n}{a_0(q_0, 1, 1)} + O(1).
\end{multline*}
From the results above an expression for the constant $\sigma_{\mathit{tpl}}^2$ that occurs in the asymptotic formula for the
variance follows. Using Lemma~\ref{lem:nested-S} to rewrite the nested
S-functions gives the result that was stated in Lemma~\ref{lem:sigma_tpl}.
\end{document}


